\title[$\Ainfty$-structures in monoidal DG cats \& strong
homotopy unitality]{$\Ainfty$-structures in monoidal DG categories and \\  strong homotopy unitality}
\author{Rina Anno}
\email{ranno@math.ksu.edu}
\address{Department of Mathematics \\
Kansas State University \\
138 Cardwell Hall \\
Manhattan, KS 66506\\
USA}
\author{Sergey Arkhipov}
\email{hippie@math.au.dk}
\address{Matematisk Institut, Aarhus Universitet, Ny Munkegade, DK-8000,
Aarhus C, Denmark}
\author{Timothy Logvinenko} 
\email{LogvinenkoT@cardiff.ac.uk} 
\address{School of Mathematics\\ 
Cardiff University\\
Senghennydd Road,\\
Cardiff, CF24 4AG\\
UK}
\let\amsamp=&
\gdef\smallampmatrix{%
  \begingroup
  \let&=\amsamp
  \begin{smallmatrix}%
}
\gdef\endsmallampmatrix{\end{smallmatrix}\endgroup}
\DeclareMathOperator{\obj}{Ob}
\DeclareMathOperator{\mor}{Mor}
\DeclareMathOperator{\homm}{Hom}
\DeclareMathOperator{\eend}{End}
\DeclareMathOperator{\picr}{Pic}
\DeclareMathOperator{\cl}{Cl}
\DeclareMathOperator{\ext}{Ext}
\DeclareMathOperator{\eval}{ev}
\DeclareMathOperator{\composition}{cmps}
\DeclareMathOperator{\action}{act}
\DeclareMathOperator{\modd}{\bf Mod}
\DeclareMathOperator{\lder}{\bf L}
\DeclareMathOperator{\ldertimes}{\overset{\lder}{\otimes}}
\DeclareMathOperator{\id}{Id}
\DeclareMathOperator{\vectspaces}{\bf Vect}
\DeclareMathOperator{\opp}{{opp}}
\DeclareMathOperator{\fg}{{\it fg}}
\DeclareMathOperator{\qrep}{\it \mathcal{Q}r}
\DeclareMathOperator{\hproj}{\mathcal{P}}
\DeclareMathOperator{\acyc}{\it \mathcal{A}c}
\DeclareMathOperator{\semifree}{\mathcal{S}\mathcal{F}}
\DeclareMathOperator{\sffg}{\mathcal{S}\mathcal{F}_{\fg}}
\DeclareMathOperator{\perf}{{\it \mathcal{P}erf}}
\DeclareMathOperator{\hperf}{{\it h\mathcal{P}erf}}
\DeclareMathOperator{\hmtpy}{{Ho}}
\DeclareMathOperator{\tria}{{Tria}}
\DeclareMathOperator{\cx}{{Cx}}
\DeclareMathOperator{\twcx}{{Tw}}
\DeclareMathOperator{\twbicx}{{Twbi}}
\DeclareMathOperator{\pretriag}{{Pre\text{-}Tr}}
\DeclareMathOperator{\DGFun}{{DGFun}}
\DeclareMathOperator{\DGFuntwocat}{{\bf DGFun}}
\DeclareMathOperator{\TPair}{{\bf TPair}}
\DeclareMathOperator{\alg}{{\bf Alg}}
\DeclareMathOperator{\conv}{{Conv}}
\DeclareMathOperator{\unit}{{unit}}
\DeclareMathOperator{\counit}{{counit}}
\DeclareMathOperator{\forget}{{\bf Forg}}
\DeclareMathOperator{\free}{{\bf Free}}
\DeclareMathOperator{\DGModtwocat}{{\bf{DGMod}}}
\DeclareMathOperator{\BarModtwocat}{\bf{DG\overline{Mod}}}
\DeclareMathOperator{\yoneda}{Yoneda}
\DeclareMathOperator{\eilmoor}{{\mathcal{E}}}
\DeclareMathOperator{\coeilmoor}{{co\mathcal{E}}}
\DeclareMathOperator{\kleisli}{{\mathcal{K}\mathcal{S}}}
\DeclareMathOperator{\cokleisli}{{co\mathcal{K}\mathcal{S}}}
\DeclareMathOperator{\cxrow}{{\mathcal{C}xrow}}
\DeclareMathOperator{\cxcol}{{\mathcal{C}xcol}}
\begin{document}

\def\bv{\mathbf{v}}
\def\kgc_{K^*_G(\mathbb{C}^n)}
\def\kgchi_{K^*_\chi(\mathbb{C}^n)}
\def\kgcf_{K_G(\mathbb{C}^n)}
\def\kgchif_{K_\chi(\mathbb{C}^n)}
\def\gpic_{G\text{-}\picr}
\def\gcl_{G\text{-}\cl}
\def\trch_{{\chi_{0}}}
\def\regring{{R}}
\def\regrep{{V_{\text{reg}}}}
\def\givrep{{V_{\text{giv}}}}
\def\lbar{{(\mathbb{Z}^n)^\vee}}
\def\genpx_{{p_X}}
\def\genpy_{{p_Y}}
\def\genpcn_{p_{\mathbb{C}^n}}
\def\gnat{gnat}
\def\twalg{{\regring \rtimes G}}
\def\L{{\mathcal{L}}}
\def\gcd{\mbox{gcd}}
\def\lcm{\mbox{lcm}}
\def\tf{{\tilde{f}}}
\def\tD{{\tilde{D}}}
\def\A{{\mathcal{A}}}
\def\B{{\mathcal{B}}}
\def\C{{\mathcal{C}}}
\def\D{{\mathcal{D}}}
\def\E{{\mathcal{E}}}
\def\F{{\mathcal{F}}}
\def\H{{\mathcal{H}}}
\def\L{{\mathcal{L}}}
\def\M{{\mathcal{M}}}
\def\N{{\mathcal{N}}}
\def\R{{\mathcal{R}}}
\def\T{{\mathcal{T}}}
\def\RF{{\mathcal{R}\mathcal{F}}}
\def\barA{{\bar{\mathcal{A}}}}
\def\barAi{{\bar{\mathcal{A}}_1}}
\def\barAj{{\bar{\mathcal{A}}_2}}
\def\barB{{\bar{\mathcal{B}}}}
\def\barC{{\bar{\mathcal{C}}}}
\def\barD{{\bar{\mathcal{D}}}}
\def\barT{{\bar{\mathcal{T}}}}
\def\barM{{\bar{\mathcal{M}}}}
\def\Aopp{{\A^{\opp}}}
\def\Bopp{{\B^{\opp}}}
\def\Copp{{\C^{\opp}}}
\def\aA{\leftidx{_{a}}{\A}}
\def\bA{\leftidx{_{b}}{\A}}
\def\Aa{{\A_a}}
\def\Ea{E_a}
\def\aE{\leftidx{_{a}}{E}{}}
\def\Eb{E_b}
\def\bE{\leftidx{_{b}}{E}{}}
\def\Fa{F_a}
\def\aF{\leftidx{_{a}}{F}{}}
\def\Fb{F_b}
\def\bF{\leftidx{_{b}}{F}{}}
\def\aM{\leftidx{_{a}}{M}{}}
\def\aN{\leftidx{_{a}}{N}{}}
\def\aMb{\leftidx{_{a}}{M}{_{b}}}
\def\aNb{\leftidx{_{a}}{N}{_{b}}}
\def\rfRFa{\leftidx{_{\RF}}{\RF}{_{\A}}}
\def\aRFrf{\leftidx{_{\A}}{\RF}{_{\RF}}}
\def\biAMA{\leftidx{_{\A}}{M}{_{\A}}}
\def\biAMC{\leftidx{_{\A}}{M}{_{\C}}}
\def\biCMA{\leftidx{_{\C}}{M}{_{\A}}}
\def\biCMC{\leftidx{_{\C}}{M}{_{\C}}}
\def\biALA{\leftidx{_{\A}}{L}{_{\A}}}
\def\biALC{\leftidx{_{\A}}{L}{_{\C}}}
\def\biCLA{\leftidx{_{\C}}{L}{_{\A}}}
\def\biCLC{\leftidx{_{\C}}{L}{_{\C}}}
\def\Na{{N_a}}
\def\vectk{{\vectspaces\text{-}k}}
\def\vectkfg{{\vectspaces_{\text{fg}}\text{-}k}}
\def\modk{{\modd\text{-}k}}
\def\Amod{{\A\text{-}\modd}}
\def\modA{{\modd\text{-}\A}}
\def\modbar{{\overline{\modd}}}
\def\modbarA{{\overline{\modd}\text{-}\A}}
\def\modbarAopp{{\overline{\modd}\text{-}\Aopp}}
\def\modB{{\modd\text{-}\B}}
\def\modC{{\modd\text{-}\C}}
\def\modD{{\modd\text{-}\D}}
\def\modbarB{{\overline{\modd}\text{-}\B}}
\def\modbarC{{\overline{\modd}\text{-}\C}}
\def\modbarD{{\overline{\modd}\text{-}\D}}
\def\modbarBopp{{\overline{\modd}\text{-}\Bopp}}
\def\kmodk{{k\text-\modd\text{-}k}}
\def\bikmodk{{k_\bullet\text-\modd\text{-}k_\bullet}}
\def\AmodA{{\A\text{-}\modd\text{-}\A}}
\def\AmodM{{\A\text{-}\modd\text{-}\M}}
\def\AmodB{{\A\text{-}\modd\text{-}\B}}
\def\AmodT{{\A\text{-}\modd\text{-}\T}}
\def\BmodB{{\B\text{-}\modd\text{-}\B}}
\def\BmodA{{\B\text{-}\modd\text{-}\A}}
\def\DmodD{{\D\text{-}\modd\text{-}\D}}
\def\MmodA{{\M\text{-}\modd\text{-}\A}}
\def\MmodM{{\M\text{-}\modd\text{-}\M}}
\def\TmodA{{\T\text{-}\modd\text{-}\A}}
\def\TmodT{{\T\text{-}\modd\text{-}\T}}
\def\AmodbarA{\A\text{-}{\overline{\modd}\text{-}\A}}
\def\AmodbarB{\A\text{-}{\overline{\modd}\text{-}\B}}
\def\AmodbarC{\A\text{-}{\overline{\modd}\text{-}\C}}
\def\AmodbarD{\A\text{-}{\overline{\modd}\text{-}\D}}
\def\AmodbarM{\A\text{-}{\overline{\modd}\text{-}\M}}
\def\AmodbarT{\A\text{-}{\overline{\modd}\text{-}\T}}
\def\BmodbarA{\B\text{-}{\overline{\modd}\text{-}\A}}
\def\BmodbarB{\B\text{-}{\overline{\modd}\text{-}\B}}
\def\BmodbarC{\B\text{-}{\overline{\modd}\text{-}\C}}
\def\BmodbarD{\B\text{-}{\overline{\modd}\text{-}\D}}
\def\CmodbarA{\C\text{-}{\overline{\modd}\text{-}\A}}
\def\CmodbarB{\C\text{-}{\overline{\modd}\text{-}\B}}
\def\CmodbarC{\C\text{-}{\overline{\modd}\text{-}\C}}
\def\CmodbarD{\C\text{-}{\overline{\modd}\text{-}\D}}
\def\DmodbarA{\D\text{-}{\overline{\modd}\text{-}\A}}
\def\DmodbarB{\D\text{-}{\overline{\modd}\text{-}\B}}
\def\DmodbarC{\D\text{-}{\overline{\modd}\text{-}\C}}
\def\DmodbarD{\D\text{-}{\overline{\modd}\text{-}\D}}
\def\TmodbarA{\T\text{-}{\overline{\modd}\text{-}\A}}
\def\MmodbarA{\M\text{-}{\overline{\modd}\text{-}\A}}
\def\MmodbarM{\M\text{-}{\overline{\modd}\text{-}\M}}
\def\modbarT{{\overline{\modd}\text{-}T}}
\def\freeA{{\free\text{-}A}}
\def\freeAS{{\free_S\text{-}A}}
\def\freepfA{{\free_{pf}\text{-}A}}
\def\Afree{{A\text{-}\free}}
\def\AfreeA{{A\text{-}\free\text{-}A}}
\def\AfreeB{{A\text{-}\free\text{-}B}}
\def\sfA{{\semifree(\A)}}
\def\sfB{{\semifree(\B)}}
\def\sffgA{{\sffg(\A)}}
\def\sffgB{{\sffg(\B)}}
\def\hprojA{{\hproj(\A)}}
\def\hprojB{{\hproj(\B)}}
\def\qrepA{{\qrep(\A)}}
\def\qrepB{{\qrep(\B)}}
\def\opp{{\text{opp}}}
\def\Aperf{{\A^{\text{pf}}}}
\def\hperfA{{\hperf(\A)}}
\def\hperfB{{\hperf(\B)}}
\def\barperf{{\it\mathcal{P}\overline{er}f}}
\def\barperfA{{\barperf\text{-}\A}}
\def\barperfB{{\barperf\text{-}\B}}
\def\barperfC{{\barperf\text{-}\C}}
\def\qrhpr{{\hproj^{qr}}}
\def\qrhprA{{\qrhpr(\A)}}
\def\qrhprB{{\qrhpr(\B)}}
\def\qrsf{{\semifree^{qr}}}
\def\qrsf{{\semifree^{qr}}}
\def\qrsfA{{\qrsf(\A)}}
\def\qrsfB{{\qrsf(\B)}}
\def\Aperfsf{{\semifree^{\A\text{-}\perf}(\AbimB)}}
\def\Bperfsf{{\semifree^{\B\text{-}\perf}(\AbimB)}}
\def\Aprfhpr{{\hproj^{\A\text{-}\perf}(\AbimB)}}
\def\Bprfhpr{{\hproj^{\B\text{-}\perf}(\AbimB)}}
\def\Aqrhpr{{\hproj^{\A\text{-}qr}(\AbimB)}}
\def\Bqrhpr{{\hproj^{\B\text{-}qr}(\AbimB)}}
\def\Aqrsf{{\semifree^{\A\text{-}qr}(\AbimB)}}
\def\Bqrsf{{\semifree^{\B\text{-}qr}(\AbimB)}}
\def\modAopp{{\modd\text{-}\Aopp}}
\def\modBopp{{\modd\text{-}\Bopp}}
\def\AmodA{{\A\text{-}\modd\text{-}\A}}
\def\AmodB{{\A\text{-}\modd\text{-}\B}}
\def\AmodC{{\A\text{-}\modd\text{-}\C}}
\def\BmodA{{\B\text{-}\modd\text{-}\A}}
\def\BmodB{{\B\text{-}\modd\text{-}\B}}
\def\BmodC{{\B\text{-}\modd\text{-}\C}}
\def\CmodA{{\C\text{-}\modd\text{-}\A}}
\def\CmodB{{\C\text{-}\modd\text{-}\B}}
\def\CmodC{{\C\text{-}\modd\text{-}\C}}
\def\CmodD{{\C\text{-}\modd\text{-}\D}}
\def\AbimA{{\A\text{-}\A}}
\def\AbimC{{\A\text{-}\C}}
\def\AbimM{{\A\text{-}\M}}
\def\BbimA{{\B\text{-}\A}}
\def\BbimB{{\B\text{-}\B}}
\def\BbimC{{\B\text{-}\C}}
\def\BbimD{{\B\text{-}\D}}
\def\CbimA{{\C\text{-}\A}}
\def\CbimB{{\C\text{-}\B}}
\def\CbimC{{\C\text{-}\C}}
\def\DbimA{{\D\text{-}\A}}
\def\DbimB{{\D\text{-}\B}}
\def\DbimC{{\D\text{-}\C}}
\def\DbimD{{\D\text{-}\D}}
\def\MbimA{{\M\text{-}\A}}
\def\AhprA{{\hproj\left(\AbimA\right)}}
\def\BhprB{{\hproj\left(\BbimB\right)}}
\def\AhprB{{\hproj\left(\AbimB\right)}}
\def\BhprA{{\hproj\left(\BbimA\right)}}
\def\AbarA{{\overline{\A\text{-}\A}}}
\def\AbarB{{\overline{\A\text{-}\B}}}
\def\BbarA{{\overline{\B\text{-}\A}}}
\def\BbarB{{\overline{\B\text{-}\B}}}
\def\QAbimB{{Q\A\text{-}\B}}
\def\AbimB{{\A\text{-}\B}}
\def\AbimC{{\A\text{-}\C}}
\def\AonebimB{{\A_1\text{-}\B}}
\def\AtwobimB{{\A_2\text{-}\B}}
\def\BbimA{{\B\text{-}\A}}
\def\MddA{{M^{\tilde{\A}}}}
\def\MddB{{M^{\tilde{\B}}}}
\def\MhdA{{M^{h\A}}}
\def\MhdB{{M^{h\B}}}
\def\NhdB{{N^{h\B}}}
\def\Cat{{\it \mathcal{C}at}}
\def\twoCat{{\it 2\text{-}\;\mathcal{C}at}}
\def\DGCat{{DG\text{-}Cat}}
\def\HoDGCat{{\hmtpy(\DGCat)}}
\def\HoDGCatV{{\hmtpy(\DGCat_\mathbb{V})}}
\def\MoDGCat{{Mo(\DGCat)}}
\def\tr{{tr}}
\def\pretr{{pretr}}
\def\kctr{{kctr}}
\def\PreTrCat{{\DGCat^\pretr}}
\def\KcTrCat{{\DGCat^\kctr}}
\def\HoPretrCat{{\hmtpy(\PreTrCat)}}
\def\HoKcTrCat{{\hmtpy(\KcTrCat)}}
\def\Aquasirep{{\A\text{-}qr}}
\def\QAquasirep{{Q\A\text{-}qr}}
\def\Bquasirep{{\B\text{-}qr}} 
\def\lderA{{\tilde{\A}}} 
\def\lderB{{\tilde{\B}}} 
\def\adjunit{{\text{adj.unit}}}
\def\adjcounit{{\text{adj.counit}}}
\def\degzero{{\text{deg.0}}}
\def\degone{{\text{deg.1}}}
\def\degminusone{{\text{deg.-$1$}}}
\def\barzeta{{\overline{\zeta}}}
\def\Ract{{R {\action}}}
\def\barRact{{\overline{\Ract}}}
\def\actL{{{\action} L}}
\def\baractL{{\overline{\actL}}}
\def\Ainfty{{A_{\infty}}}
\def\moddinf{{{\bf Mod}_{\infty}}}
\def\nodd{{{\bf Nod}}}
\def\noddinf{{{\bf Nod}_{\infty}}}
\def\perfinf{{{\bf Perf}_{\infty}}}
\def\conodd{{{\bf coNod}}}
\def\comodd{{{\bf coMod}}}
\def\conoddinf{{{\bf coNod}_{\infty}}}
\def\conodddgstrict{{{\bf coNod}_{dg}^{strict}}}
\def\conodddg{{{\bf coNod}_{dg}}}
\def\conodddghu{{{\bf coNod}_{dg}^{hu}}}
\def\conoddinfshu{{{\bf coNod}_{\infty}^{shu}}}
\def\noddinfstr{{{\bf Nod}^{\text{strict}}_{\infty}}}
\def\noddinfA{{\noddinf\A}}
\def\noddinfB{{\noddinf\B}}
\def\perfinfA{{\perfinf\A}}
\def\perfinfB{{\perfinf\B}}
\def\nodA{{\noddinf\text{-}A}}
\def\nodstrA{{\nodd\text{-}A}}
\def\nodstrhuA{{\nodd^{hu}\text{-}A}}
\def\nodhuA{{\noddinfhu\text{-}A}}
\def\nodhupfA{{\noddinfhupf\text{-}A}}
\def\nodB{{\noddinf\text{-}B}}
\def\nodAbic{{\left(\nodA\right)^{\text{bicat}}}}
\def\Anodbic{{\left(\Anod\right)^{\text{bicat}}}}
\def\nodBbic{{\left(\nodB\right)^{\text{bicat}}}}
\def\Bnodbic{{\left(\Bnod\right)^{\text{bicat}}}}
\def\repA{{A^{\modA}}}
\def\nodrepA{{\noddinf\text{-}\repA}}
\def\pretrA{{A^{\pretriag\A}}}
\def\nodpretrA{{\noddinf\text{-}\pretrA}}
\def\twcxA{{A^{\twcxub\A}}}
\def\nodtwcxA{{\noddinf\text{-}\twcxA}}
\def\twcxrepA{{A^{\twcxub\modA}}}
\def\nodtwcxrepA{{\noddinf\text{-}\twcxrepA}}
\def\Anod{{A\text{-}\noddinf}}
\def\Bnod{{B\text{-}\noddinf}}
\def\AnodA{{A\text{-}\noddinf\text{-}A}}
\def\AnodB{{A\text{-}\noddinf\text{-}B}}
\def\noddinfAB{{\noddinf\AbimB}}
\def\noddinfBA{{\noddinf\BbimA}}
\def\noddinfu{{({\bf Nod}_{\infty})_u}}
\def\noddinfuA{{(\noddinfA)_u}}
\def\noddinfhu{{{\bf Nod}_{\infty}^{hu}}}
\def\noddinfhupf{{{\bf Nod}_{\infty}^{hupf}}}
\def\noddinfhuA{{(\noddinfA)_{hu}}}
\def\noddinfhuB{{(\noddinfB)_{hu}}}
\def\noddinfdg{{({\bf Nod}_{\infty})_{dg}}}
\def\noddinfdgA{{(\noddinfA)_{dg}}}
\def\noddinfdgAA{{(\noddinf\AbimA)_{dg}}}
\def\noddinfdgAB{{(\noddinf\AbimB)_{dg}}}
\def\noddinfdgB{{(\noddinfB)_{dg}}}
\def\moddinf{{\modd_{\infty}}}
\def\moddinfA{{\modd_{\infty}\A}}
\def\naug{{\text{na}}}
\def\infbar{B_\infty}
\def\infbarl{{B^{l}_\infty}}
\def\infbarr{{B^{r}_\infty}}
\def\infbarbi{{B^{bi}_\infty}}
\def\infbarnaug{{B^{\naug}_\infty}}
\def\infcobar{{CB_\infty}}
\def\infbarres{{\bar{B}_\infty}}
\def\infcobarres{{C\bar{B}_\infty}}
\def\infbarA{{B^A_\infty}}
\def\infbarB{{B^B_\infty}}
\def\inftimes{{\overset{\infty}{\otimes}}}
\def\infhom{{\overset{\infty}{\homm}}}
\def\barhom{{\rm H\overline{om}}}
\def\barend{{\overline{\eend}}}
\def\bartimes{{\;\overline{\otimes}}}
\def\bartimesA{{\;\overline{\otimes}_\A\;}}
\def\bartimesB{{\;\overline{\otimes}_\B\;}}
\def\bartimesC{{\;\overline{\otimes}_\C\;}}
\def\triaA{{\tria \A}}
\def\TPairdg{{\TPair^{dg}}}
\def\algA{{\alg(\A)}}
\def\Ainfty{{A_{\infty}}}
\def\gpmu{{\boldsymbol{\mu}}}
\def\odd{{\text{odd}}}
\def\even{{\text{even}}}
\def\twcxub{\twcx^{\pm}}
\def\twcxmns{\twcx^{-}}
\def\twcxpls{\twcx^{+}}
\def\twbicxub{\twbicx^{\pm}}
\def\twbicxmns{\twbicx^{-}}
\def\twbicxpls{\twbicx^{+}}
\def\twbios{\twbicx_{os}}
\def\twbiosub{{\twbicx_{os}^{\pm}}}
\def\twbiosmns{{\twbicx_{os}^{-}}}
\def\twbiospls{{\twbicx_{os}^{+}}}
\def\pretriagub{{\pretriag^{\pm}}}
\def\pretriagmns{{\pretriag^{-}}}
\def\pretriagpls{{\pretriag^{+}}}
\def\eilmoordg{\eilmoor^{\text{dg}}}
\def\hprojemdg{\hproj^{\text{dg}}}
\def\hperfemdg{\hperf^{\text{dg}}}
\def\coeilmoordg{\coeilmoor^{\text{dg}}}
\def\kleisliA{{\kleisli(A)}}
\def\kleisliB{{\kleisli(B)}}
\def\kleisliAS{{\kleisli_S(A)}}
\def\kleisliwk{\kleisli^{\text{wk}}}
\def\eilmoorwk{\eilmoor^{\text{wk}}}
\def\eilmoorwkpf{\eilmoor^{\text{wk,perf}}}
\def\eilmoorwmor{\eilmoor^{\text{dg,wkmor}}}
\def\coeilmoorwk{\coeilmoor^{\text{wk}}}
\def\coeilmoorwmor{\coeilmoor^{\text{dg,wkmor}}}
\def\dnat{{d_{\text{nat}}}}
\def\bareta{{\tilde{\eta}}}
\def\barepsilon{{\tilde{\epsilon}}}
\def\infalg{{\alg_\infty}}
\def\enhcatkc{{\bf EnhCat_{kc}}}
\def\enhcatkcdg{{\bf EnhCat_{kc}^{dg}}}

\def\conodA{{\conoddinf\text{-}A}}
\def\conodB{{\conoddinf\text{-}B}}
\def\conodC{{\conoddinf\text{-}C}}
\def\conodstrC{{\conodd\text{-}C}}
\def\Aconod{{A\text{-}\conoddinf}}
\def\Cconod{{C\text{-}\conoddinf}}
\def\CconodD{{C\text{-}\conoddinf\text{-}D}}
\def\conodhuA{{{\bf coNod}_{\infty}^{hu}\text{-}A}}
\def\conodshuA{{{\bf coNod}_{\infty}^{shu}\text{-}A}}
\def\conodhuB{{{\bf coNod}_{\infty}^{hu}\text{-}B}}
\def\conodshuB{{{\bf coNod}_{\infty}^{shu}\text{-}B}}
\def\conodhuC{{{\bf coNod}_{\infty}^{hu}\text{-}C}}
\def\conodshuC{{{\bf coNod}_{\infty}^{shu}\text{-}C}}
\def\cokleisliC{{\cokleisli(C)}}
\def\cokleisliD{{\cokleisli(D)}}
\def\cokleisliCS{{\cokleisli_S(C)}}

\theoremstyle{definition}
\newtheorem{defn}{Definition}[section]
\newtheorem*{defn*}{Definition}
\newtheorem{exmpl}[defn]{Example}
\newtheorem*{exmpl*}{Example}
\newtheorem{exrc}[defn]{Exercise}
\newtheorem*{exrc*}{Exercise}
\newtheorem*{chk*}{Check}
\newtheorem{remark}[defn]{Remark}
\newtheorem*{remark*}{Remark}
\theoremstyle{plain}
\newtheorem{theorem}{Theorem}[section]
\newtheorem*{theorem*}{Theorem}
\newtheorem{conj}[defn]{Conjecture}
\newtheorem*{conj*}{Conjecture}
\newtheorem{question}[defn]{Question}
\newtheorem*{question*}{Question}
\newtheorem{prps}[defn]{Proposition}
\newtheorem*{prps*}{Proposition}
\newtheorem{cor}[defn]{Corollary}
\newtheorem*{cor*}{Corollary}
\newtheorem{lemma}[defn]{Lemma}
\newtheorem*{claim*}{Claim}
\newtheorem{Specialthm}{Theorem}
\renewcommand\theSpecialthm{\Alph{Specialthm}}
\numberwithin{equation}{section}
\renewcommand{\textfraction}{0.001}
\renewcommand{\topfraction}{0.999}
\renewcommand{\bottomfraction}{0.999}
\renewcommand{\floatpagefraction}{0.9}
\setlength{\textfloatsep}{5pt}
\setlength{\floatsep}{0pt}
\setlength{\abovecaptionskip}{2pt}
\setlength{\belowcaptionskip}{2pt}

\begin{abstract}
We define $\Ainfty$-structures -- algebras, coalgebras, modules, and
comodules -- in an arbitrary monoidal DG category or bicategory by
rewriting their definitions in terms of unbounded twisted complexes.
We develop new notions of strong homotopy unitality and bimodule
homotopy unitality  to work at this level of generality. For a strong
homotopy unital $\Ainfty$-algebra we construct Free-Forgetful homotopy
adjunction, its Kleisli category, and its derived category of modules.
Analogous constructions for $\Ainfty$-coalgebras require  bicomodule
homotopy counitality.  We define  homotopy adjunction for
$\Ainfty$-algebra and $\Ainfty$-coalgebra and show such pair to be
derived module-comodule equivalent.  As an application,  we obtain the
notions of an $\Ainfty$-monad and of an enhanced exact monad. We also
show that for any adjoint triple $(L,F,R)$ of functors between
enhanced triangulated categories the adjunction monad $RF$ and the
adjunction comonad $LF$ are derived module-comodule equivalent. 
\end{abstract}

\maketitle

\section{Introduction}
\label{section-introduction}

The notion of an $\Ainfty$-algebra originated in the work of Stasheff
\cite{Stasheff-HomotopyAssociativityOfHSpacesIandII} on higher
homotopy associativity. It comes from wanting to relax the associativity 
condition on the multiplication of a
differentially graded (DG) algebra $A$ to only hold ``up to homotopy'', 
i.e.~to a boundary of the differential.
An $\Ainfty$-algebra is a graded vector space $A$ equipped 
with operations $m_i\colon A^i \rightarrow A$ where $m_1$ is the differential, 
$m_2$ the multiplication, $m_3$ the homotopy up to which associativity
holds, and $m_{i \geq 4}$ are the higher homotopies describing the interaction 
of $m_1, \dots, m_{i-1}$. These and their generalisations 
were actively studied since, especially in the context of homological 
mirror symmetry \cite{Kadeishvili-OnTheTheoryOfHomologyOfFiberSpaces}
\cite{Fukaya-MorseHomotopyAinftyCategoryAndFloerHomologies}
\cite{Kontsevich-HomologicalAlgebraOfMirrorSymmetry}
\cite{Keller-IntroductionToAInfinityAlgebrasAndModules}
\cite{Fukaya-FloerHomologyAndMirrorSymmetryII} 
\cite{Lyubashenko-CategoryOfAinftyCategories}
\cite{Lefevre-SurLesAInftyCategories}\cite{KontsevichSoibelman-NotesOnAInftyAlgebrasAInftyCategoriesAndNoncommutativeGeometry}. 

Crucially, this standard definition 
takes place in the monoidal category of graded vector spaces.
Existing generalisations are via slight modifications 
of this monoidal category: an extra grading by objects of
a set yields $\Ainfty$-categories. 

In this paper, we rewrite the definitions of $\Ainfty$-algebras and
their (bi)modules, and $\Ainfty$-coalgebras and (bi)comodules, 
in the language of twisted complexes
\cite{BondalKapranov-EnhancedTriangulatedCategories}. This decouples
them from the differential $m_1$ so that they work in any
monoidal DG category $\A$. New definitions make implicit use of 
the internal differential provided in $\A$ by Yoneda embedding.  
We can now talk about $\Ainfty$-algebras in the monoidal DG
categories of DG functors and of enhanced exact functors, yielding 
the new notions of an \em $\Ainfty$-monad \rm and \em enhanced exact
monad\rm.  

We study modules over such $\Ainfty$-algebras and 
construct their derived categories. First, we develop a new 
notion of \em strong homotopy unitality. \rm There are several existing 
notions of homotopy unitality for $\Ainfty$-algebras 
\cite{Fukaya-MorseHomotopyAinftyCategoryAndFloerHomologies}\cite{Lyubashenko-HomotopyUnitalAinftyAlgebras}\cite{Lefevre-SurLesAInftyCategories}\cite{KontsevichSoibelman-NotesOnAInftyAlgebrasAInftyCategoriesAndNoncommutativeGeometry}. 
In the classical setting our notion is a small fraction of the data 
fixed in \cite{Fukaya-MorseHomotopyAinftyCategoryAndFloerHomologies}\cite{KontsevichSoibelman-NotesOnAInftyAlgebrasAInftyCategoriesAndNoncommutativeGeometry}.
It is still enough to define the derived category $D(A)$ of
an $\Ainfty$-algebra $A$ as 
the triangulated cocomplete hull of its homotopy unital $\Ainfty$-modules. 
We get the \em Free-Forgetful homotopy adjunction \rm 
for such modules. We define the \em Kleisli category \rm of $A$, a usual 
$\Ainfty$-category
\cite{Fukaya-MorseHomotopyAinftyCategoryAndFloerHomologies}\cite{Lefevre-SurLesAInftyCategories},
and prove it is derived equivalent 
to $A$ for strongly homotopy unital $A$. All this 
also works for $\Ainfty$-coalgebras, only with 
stronger notions of \em bicomodule homotopy counitality \rm for coalgebras
and \em strong homotopy counitality \rm for comodules. 
The derived category of an $\Ainfty$-coalgebra 
is only well-behaved if the free comodules generated by perfect objects 
of $\A$ are perfect. 

Finally, we define $\Ainfty$-algebra $A$ and
$\Ainfty$-coalgebra $C$ being homotopy adjoint. Such 
$C$ and $A$ are homotopy adjoint as objects of $\A$ and have 
a coherent system of higher homotopies describing how the adjunction 
interacts with their operations. For a strong homotopy
unital $A$ and bicomodule homotopy counital $C$ which are homotopy
adjoint, we prove \em derived module-comodule equivalence
\rm $D_c(A) \simeq D_c(C)$ by relating their Kleisli and co-Kleisli
categories. 
We show that for any homotopy adjoint triple $(L,F,R)$ 
in $\A$ the algebra $RF$ and the coalgebra $LF$ are homotopy adjoint. This
yields derived module-comodule equivalence $D_c(RF)
\simeq D_c(LF)$ for any adjoint triple $(L,F,R)$ of 
functors between enhanced triangulated categories.  

This paper was motivated by the following perspective. In non-derived algebra 
we can view algebras and coalgebras as monoids and comonoids in 
the abelian category of modules over the ground field or ring $k$. 
The hierarchy is completed by defining modules and comodules over 
the algebras and coalgebras. It turns out that this whole three level tower of constructions works just as well in any monoidal abelian category. For example, in the category of endofunctors of an abelian category. One arrives thus at
the notion of monads and comonads, and of modules and comodules over them. One important class of comonads arises when gluing quasicoherent sheaves over an affine covering of a scheme, that is -- it is built into the procedure of Zariski 
or flat or {\'e}tale descent. 

Difficulties apppear on several levels when one tries to repeat these
constructions in the setting of derived categories and of their DG enhancements. The substitutions for algebras, coalgebras, modules and comodules in general monoidal DG categories
(especially, in the functor categories) do not allow naive definitions mimicking the non-derived setting. This led some to do derived algebra and, in particular, the generalisations of functor adjunctions and of
Barr-Beck theorem using stable $\infty$-categories \cite{Lurie-HigherToposTheory}\cite{Lurie-DAG1StableInfinityCategories}. One example is the desired notion of gluing data up to homotopy for complexes of quasicoherent sheaves
on an affine/{\'e}tale/flat covering of a scheme. While the datum of gluing with higher homotopies that provide corrections is easy to write down, a comparison of glued categories with the derived category of quasicoherent sheaves on the scheme itself lacked both a setting and precise statements of theorems. As a remedy one referred to 
Barr-Beck-Lurie theorem \cite[\S3.4]{Lurie-DAG2NoncommutativeAlgebra} and homotopy descent for stable $\infty$-categories --- a topological machinery of a great level of abstraction that glues $\infty$-categories as a whole but makes it difficult to talk about models for glued objects and for glued
morphisms. 

The present article is devoted to creation of the setting that would fill the existing gap: we complete the hierarchy of a monoidal
category, algebras/coalgebras, modules/comodules in the seting of DG and $\Ainfty$-enhanced triangulated categories, thus allowing for explicit computations. 
We will apply it to proving a version of Barr-Beck
theorem for DG categories in a future work.

We now describe our results in more detail. We first note
that though they are formulated for an arbitrary monoidal DG
category $\A$, they all work equally well for $\A$ being any DG
bicategory. However, the language of bicategories is less intuitive, 
so we chose to write this paper in the language of monoidal
categories.  

\subsection{$\Ainfty$-structures in arbitrary DG monoidal categories}

Fix a ground field or
commutative ring $k$ and recall the usual definition of $\Ainfty$-algebra:
\begin{defn}[\cite{Lefevre-SurLesAInftyCategories}, D{\'e}finition 1.2.1.1]
\label{defn-intro-usual-Ainfty-algebra}
An \em $\Ainfty$-algebra \rm $(A,m_\bullet)$ is a graded $k$-module
$A$ and a collection $\left\{ m_i \right\}_{i \geq 1}$ of degree 
$2-i$ graded maps $A^i \rightarrow A$ satisfying
\begin{equation}
\label{eqn-intro-Ainfty-algebra-definition-equalities}
\forall\;\; i \geq 1
\quad \quad 
\sum_{j+k+l = i} (-1)^{jk + l} m_{j+1+l} \circ (\id^j \otimes m_k
\otimes \id^l) = 0.
\end{equation}
\end{defn}
The operations $m_i$ define a $k$-linear map 
$\bigoplus_{i \geq 0} (A[1])^i \rightarrow A$ from the free tensor 
coalgebra $\bigoplus_{i \geq 0} A^i[i]$. By the universal property of
free coalgebras this induces a unique coderivation of  
$\bigoplus_{i \geq 0} A^i[i]$. The condition
\eqref{eqn-intro-Ainfty-algebra-definition-equalities} is equivalent
to this coderivation squaring to zero, and thus defining a
differential on $\bigoplus_{i \geq 0} A^i[i]$. The resulting 
DG coalgebra is the \em bar-construction \rm of $(A,m_\bullet)$. 

Let $\modk$ be the DG category of DG $k$-modules. Tensor product
$\otimes_k$ makes it into a monoidal DG category. 
Definition \ref{defn-intro-usual-Ainfty-algebra} then reads:
an $\Ainfty$-algebra is an object $A$ and a collection $\left\{ m_i
\right\}_{i \geq 2}$ of morphisms $m_i\colon A^i \rightarrow A$ in
$\modk$ such that if we set $m_1$ to be the internal differential of $A$
the conditions \eqref{eqn-intro-Ainfty-algebra-definition-equalities} are 
satisfied. This doesn't generalise naively to an arbitrary monoidal
DG category $\A$ because its objects have no internal differentials. 
Even if we use Yoneda embedding $\A \rightarrow \modA$ to obtain
internal differentials, these are not compatible with the monoidal 
operation of $\A$: we can no longer express the internal differentials of 
$A^i$ in terms of internal differential $m_1$ of $A$. We can 
not hope to restate Definition \ref{defn-intro-usual-Ainfty-algebra}
with equations involving only $m_1$ and $\left\{ m_i
\right\}_{i \geq 2}$. To make matters worse, internal differentials are not morphisms in $\modA$ as they do not commute with
$\A$-action. 

Think instead of Definition \ref{defn-intro-usual-Ainfty-algebra} as saying
that the coderivation of $\bigoplus_{i \geq 0} A^i[i]$ induced by $\left\{ m_i \right\}_{i \geq 2}$ modifies its 
natural differential in $\modA$ to a new one. Now, the language for modifying 
the existing differential of the sum of shifted representable $\A$-modules 
exists - it is the language of twisted complexes over $\A$
\cite{BondalKapranov-EnhancedTriangulatedCategories}. One of its  
advantages is that it is phrased entirely in terms of the category $\A$, 
hiding the Yoneda embedding and the internal differentials into the
woodwork. We can work with twisted complexes over non-small
$\A$ where $\modA$ is not well-defined. However, 
the usual twisted complexes are bounded, while
due to the infinite number of terms in $\bigoplus_{i \geq 0} A^i[i]$ we 
need unbounded ones. We develop their theory  
in the companion paper \cite{AnnoLogvinenko-UnboundedTwistedComplexes},
and thus can define: 
\begin{defn}
\label{defn-intro-ainfty-algebra-a-monoidal-category}
Let $\A$ be a monoidal DG category. An \em $\Ainfty$-algebra
$(A,m_\bullet)$ in $\A$ \rm is an object $A \in \A$ and 
a collection $\left\{m_i\right\}_{i \geq 2}$ of degree $2-i$
morphisms $A^i \rightarrow A$ whose 
(non-augmented) bar-construction $\infbarnaug(A)$ is 
a twisted complex over $\A$:
\begin{tiny}
\begin{equation}
\label{eqn-intro-nonaugmented-bar-construction-of-A-m_i}
\begin{tikzcd}[column sep = 2.5cm]
\dots
\ar{r}[']{\begin{smallmatrix}A^3 m_2  - A^2m_2A + \\ + Am_2A^2 - m_2 A^3 \end{smallmatrix}}
\ar[bend left=20]{rr}[description]{A^2m_3 + Am_3A +  m_3 A^2}
\ar[bend left=25]{rrr}[description]{Am_4 - m_4A}
\ar[bend left=30]{rrrr}[description]{m_5}
&
A^4
\ar{r}[']{A^2m_2 - A m_2 A + m_2 A^2}
\ar[bend left=20]{rr}[description]{-Am_3 - m_3A}
\ar[bend left=25]{rrr}[description]{m_4}
& 
A^3
\ar{r}[']{Am_2 - m_2A}
\ar[bend left=20]{rr}[description]{m_3}
&
A^2
\ar{r}[']{m_2}
&
\underset{\degzero}{A}.
\end{tikzcd}
\end{equation}
\end{tiny}
\end{defn}

We give this definition in
\S\ref{section-ainfty-structures-in-monoidal-dg-categories}-\ref{section-free-modules-and-bimodules-over-Ainfty-algebra} along with
similar definitions of the DG categories $\nodA$ and $\Anod$ of 
right and left $\Ainfty$-$A$-modules, and the DG category $\AnodB$ of
$\Ainfty$-$A$-$B$-bimodules.
In \S\ref{section-Yoneda-embedding-of-A-modules-into-repA-modules}-\ref{section-twisted-complexes-of-ainfty-modules} we construct an analogue of 
Yoneda embedding for $\nodA$ which acts as the target 
for convolutions of twisted complexes. In 
\S\ref{section-the-homotopy-lemma} we prove the Homotopy Lemma:
a morphism $f_\bullet$ in
$\nodA$ is a homotopy equivalence if and only if $f_1$ 
is a homotopy equivalence in $\A$. For $\A = \modk$ 
for a field $k$, this is the well-known result that 
all $\Ainfty$-quasi-isomorphisms of $\Ainfty$-modules are 
homotopy equivalences, 
cf.~\cite[Prop.~2.4.1.1]{Lefevre-SurLesAInftyCategories}. 

In \S\ref{section-bar-construction-as-a-complex-of-ainfty-A-modules}
we show that the bar-construction $\infbar(E, p_\bullet)$ of an
$\Ainfty$-$A$-module $(E,p_\bullet)$ admits a natural lift
from a twisted complex over $\A$ to one over $\nodA$. 
The Homotopy Lemma then shows that if $(E,p_\bullet)$ is \em
$H$-unital\rm, i.e.~if $\infbar(E, p_\bullet)$ is null-homotopic in $\A$, then 
it is also null-homotopic in $\nodA$. We thus get a functorial 
\em bar-resolution \rm $\infbarres$ resolving any $H$-unital 
module by free modules, cf. \S\ref{section-bar-resolution}. 

\subsection{Strong homotopy unitality for $\Ainfty$-algebras} 
Several notions of homotopy unitality exist for usual $\Ainfty$-algebras. 
\em $H$-unitality \rm asks for 
the bar-construction to be null-homotopic and 
\em (weak) homotopy unitality \rm asks for a unital structure 
to exist in the homotopy category \cite[\S4]{Lefevre-SurLesAInftyCategories}
\cite{Lyubashenko-CategoryOfAinftyCategories}. 
A notion in
\cite{FukayaOhOhtaOno-LagrangianIntersectionFloerTheoryAnomalyAndObstruction}\cite{KontsevichSoibelman-NotesOnAInftyAlgebrasAInftyCategoriesAndNoncommutativeGeometry} fixes a large coherent system of higher homotopies up to which the
unitality holds. These are all equivalent for 
$k$ a field \cite{Lefevre-SurLesAInftyCategories} or any  
commutative ring \cite{LyubashenkoManzyuk-UnitalAinftyCategories}.  

The unitality notion of
\cite{FukayaOhOhtaOno-LagrangianIntersectionFloerTheoryAnomalyAndObstruction}
is easily defined in our generality, but we no longer know it
to be equivalent to the simpler notions above. 
In \S\ref{section-strong-homotopy-unitality} we introduce two new notions: 
\em strong homotopy unitality \rm 
and \em bimodule homotopy unitality\rm. Both
are small truncations of the structure in
\cite{FukayaOhOhtaOno-LagrangianIntersectionFloerTheoryAnomalyAndObstruction}
and are easier to construct, while still being 
enough to make our theory work. We do not know
whether in our generality there are analogues of the results in
\cite{LyubashenkoManzyuk-UnitalAinftyCategories} showing that
these simpler notions imply the existence of the full structure in
\cite{FukayaOhOhtaOno-LagrangianIntersectionFloerTheoryAnomalyAndObstruction}. 

The motivation of our definitions is as follows. 
An $\Ainfty$-algebra $(A,m_\bullet)$ is weakly homotopy unital 
if there exists a morphism $\eta\colon \id \rightarrow A$ in $\A$ 
such that 
\begin{equation}
\label{eqn-intro-weak-homotopy-unitality-conditions}
m_2 \circ \eta A  = \id_A + d h^r 
\quad \text{ and } \quad 
m_2 \circ A \eta  = \id_A + d h^l, 
\end{equation}
for some degree $-1$ endomorphisms $h^l, h^r$ of $A$ in $\A$. 
Now, for a strict algebra $(A,m_2)$ the maps
$m_2 \circ \eta A$ and $m_2 \circ A \eta$ are strict morphisms of 
right and left $A$-modules, respectively. It is natural to ask for
the homotopies between these and $\id_A$ to also be morphisms of left 
and right $A$-modules:
\begin{defn}[Defn.~\ref{defn-strong-homotopy-unitality-for-ainfty-algebras}]
$\Ainfty$-algebra $(A,m_i)$ is \em strongly homotopy unital \rm if there 
exists a unit morphism $\eta\colon \id \rightarrow A$ in $\A$ and
degree $-1$ endomorphisms $h^r_{\bullet}$ and $h^l_{\bullet}$ of $A$
in $\nodA$ and $\Anod$, respectively, such that 
\begin{equation}
\label{eqn-intro-strong-homotopy-unitality-conditions}
\mu_2 \circ {\eta}A = \id_A + d h^r_{\bullet}  \text{ in } \nodA
\quad \text{ and } \quad 
\mu_2 \circ A{\eta} = \id_A + d h^l_{\bullet} \text{ in } \Anod. 
\end{equation}
\end{defn}
The maps $\mu_2$ are natural lifts of
the operation $m_2\colon A^2 \rightarrow A$ from $\A$ into $\nodA$ and 
$\Anod$, see Defn.~\ref{defn-morphisms-pi_i-and-mu_i}. 
Thus our definition simply lifts the weak homotopy unitality conditions 
\eqref{eqn-intro-weak-homotopy-unitality-conditions} from $\A$ to $\nodA$
and $\Anod$. Instead of just the homotopies $h^l$
and $h^r$, we ask for two systems of homotopies $h^l_\bullet$
and $h^r_\bullet$ to exist. 

\em Bimodule homotopy unitality \rm asks 
for a degree $0$ morphism $\bareta_{\bullet\bullet}\colon \id_\A
\rightarrow A$ with a prescribed differential
to exist in the category $\AnodA$ of $\Ainfty$-$A$-$A$-bimodules, 
see Defn.~\ref{def-bimodule-homotopy-unitality}. However, the
following theorem explains it better than its definition:
\begin{theorem}[Theorem
\ref{theorem-conditions-for-bimodule-homotopy-unitality}]
\label{theorem-intro-conditions-for-bimodule-homotopy-unitality}
$(A,m_\bullet)$ is bimodule homotopy unital iff there exist 
\begin{itemize}
\item a closed degree $0$ morphism $\eta\colon \id_{\A} \rightarrow A$ in
$\A$,  
\item a degree $-1$ morphism $h^l_\bullet\colon A \rightarrow A$
in $\Anod$,
\item a degree $-1$ morphism $h^r_\bullet\colon A \rightarrow A$ 
in $\nodA$,
\item a degree $-2$ morphism $\kappa_{\bullet\bullet} \colon A^2
\rightarrow A$ in $\AnodA$, 
\end{itemize}
such that
$dh^r_\bullet = \id - \mu_2 \circ \eta{A}$,
$dh^l_\bullet = \id - \mu_2 \circ A\eta$,
and $d\kappa_{\bullet\bullet} = \mu_2 \circ (h^l_\bullet{A} -
{A}h^r_\bullet) - \mu_3 \circ A \eta A$.
\end{theorem}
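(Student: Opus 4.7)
The plan is to unfold the definition of bimodule homotopy unitality component-by-component and match the resulting family of equations with those stated in the theorem. A morphism $\bareta_{\bullet\bullet}\colon \id_\A \to A$ in $\AnodA$ is, by the construction of $\Ainfty$-bimodule morphisms parallel to what has been done for left and right modules, a collection of components indexed by pairs $(i,j)$ with $i,j\ge 0$, corresponding to the ways of inserting $i$ copies of $A$ on the left and $j$ on the right before applying a map to $A$. The differential on $\homm_{\AnodA}(\id_\A,A)$ decomposes in the standard way as a sum of two pieces: one that ``pre-composes'' components with the $\Ainfty$-bimodule action on the source, and one that ``post-composes'' with the operations $m_\bullet$ of $A$ on the target. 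The prescribed right-hand side for $d\bareta_{\bullet\bullet}$ in Defn.~\ref{def-bimodule-homotopy-unitality} likewise decomposes into pieces homogeneous in the bi-indices $(i,j)$.

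For the forward direction, suppose $\bareta_{\bullet\bullet}$ exists with the prescribed differential. I would read off the data as follows. The $(0,0)$ component $\bareta_{0,0}$ is a closed degree $0$ morphism $\id_\A \to A$ in $\A$; set $\eta := \bareta_{0,0}$. The family $\{\bareta_{0,j}\}_{j\ge 0}$ assembles into a degree $-1$ endomorphism of $A$ in $\nodA$ (its bimodule nature in the left index being trivial), and I would define $h^r_\bullet$ as this morphism; an analogous reading gives $h^l_\bullet$ from $\{\bareta_{i,0}\}_{i\ge 0}$. The components $\bareta_{1,1}$ and their higher analogues $\{\bareta_{i,j}\}_{i,j\ge 1}$ together form a degree $-2$ morphism $A^2 \to A$ in $\AnodA$, which is $\kappa_{\bullet\bullet}$. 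The differential condition $d\bareta_{\bullet\bullet} = \text{(prescribed)}$ then projects onto the purely-right components to give $dh^r_\bullet = \id - \mu_2 \circ \eta A$, onto purely-left components to give $dh^l_\bullet = \id - \mu_2 \circ A\eta$, and onto mixed components to give precisely the stated equation for $\kappa_{\bullet\bullet}$.

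For the reverse direction, given $\eta$, $h^l_\bullet$, $h^r_\bullet$, and $\kappa_{\bullet\bullet}$ satisfying the listed equations, I would simply reverse the extraction procedure: take $\bareta_{0,0} = \eta$, let $\bareta_{0,\bullet}$ be the components of $h^r_\bullet$, $\bareta_{\bullet,0}$ the components of $h^l_\bullet$, and $\bareta_{\bullet,\bullet}$ (mixed) the components of $\kappa_{\bullet\bullet}$. The same indexwise decomposition of the differential then shows $d\bareta_{\bullet\bullet}$ equals the prescribed expression, because each piece was built to satisfy exactly one of the listed equations.

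The main obstacle is the bookkeeping: verifying that the prescribed differential of $\bareta_{\bullet\bullet}$ in Defn.~\ref{def-bimodule-homotopy-unitality}, when expanded using the formula for $d$ in $\homm_{\AnodA}(\id_\A, A)$ and the $\Ainfty$-operations of $A$, cleanly separates into exactly the four pieces above with the correct signs matching the equations $dh^r_\bullet = \id-\mu_2\circ\eta A$, $dh^l_\bullet = \id-\mu_2\circ A\eta$, and $d\kappa_{\bullet\bullet} = \mu_2\circ(h^l_\bullet A - A h^r_\bullet) - \mu_3 \circ A\eta A$. The term $\mu_3 \circ A\eta A$ in particular must arise naturally from the $(1,1)$ component of $d\bareta_{\bullet\bullet}$, coming from applying $m_3$ to the target with $\eta$ inserted in the middle; verifying that no other spurious terms remain at the mixed indices $(i,j)$ with $i,j\ge 1$ is where the key calculation lies, and it reduces to the defining Stasheff relations \eqref{eqn-intro-Ainfty-algebra-definition-equalities} for $m_\bullet$ together with the module compatibilities already baked into the morphism categories $\nodA$, $\Anod$, and $\AnodA$.
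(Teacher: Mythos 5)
Your proposal follows essentially the same route as the paper's proof: extract $\eta$, $h^r_\bullet$, $h^l_\bullet$, $\kappa_{\bullet\bullet}$ from the $(0,0)$, row-$0$, column-$0$, and mixed components of $\bareta_{\bullet\bullet}$, match the index-homogeneous pieces of the differential condition, and obtain the $\mu_3 \circ A\eta A$ term exactly as you predict, from $m_3$ applied across the middle insertion of $\eta$. The one point your sketch compresses --- and which you correctly flag as the crux --- is that the split is not a clean projection: the bimodule differentials of $\eta$, $h^l_\bullet$, $h^r_\bullet$ leak into the mixed region (e.g.\ $d_{\mathrm{bimod}}\eta = \mu_2\circ\eta A + \mu_2\circ A\eta - \mu_3\circ A\eta A$, and $d_{\mathrm{bimod}}h^l_\bullet$ contains the extra summand $\mu_2\circ h^l_\bullet A$), so the $\kappa_{\bullet\bullet}$ equation is precisely what absorbs these cross terms, a bookkeeping the paper organizes by identifying $\infbarbi(\id_\A)$ with the direct sum of $\id_\A$, $\infbarl(A)[1]$, $\widehat{\infbarr(A)}[1]$, and $\infbarbi(A^2)[1,1]$ rather than by invoking the Stasheff relations.
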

In other words, the data of $\bareta_{\bullet\bullet}$ comprises
the homotopy unit $\eta$, strong homotopy unitality systems 
$h^l_\bullet$ and $h^r_\bullet$, and the system
$\kappa_{\bullet\bullet}$ of higher homotopies describing the
interaction of $\eta$, $h^l_\bullet$ and $h^r_\bullet$
with the operations $m_i$ of $A$. 
 
\subsection{Strong homotopy unitality for $\Ainfty$-modules} 

Existing notions of homotopy unitality for $\Ainfty$-modules 
\cite[\S4]{Lefevre-SurLesAInftyCategories} are: a (right) $\Ainfty$-$A$-module 
$(E,p_\bullet)$ is \em $H$-unital \rm if
its bar-construction is null-homotopic and \em (weakly) homotopy
unital \rm if  
\begin{equation}
\label{eqn-intro-weak-homotopy-unitality-for-modules}
E \xrightarrow{E\eta} EA \xrightarrow{p_2} E = \id_E + dh \quad \quad \text{ in } \A 
\end{equation}
for some degree $-1$ endomorphism $h$ of $E$ in $\A$.
Here $\eta$ is the homotopy unit of $A$. 

To define \em strong homotopy unitality \rm for modules 
we lift \eqref{eqn-intro-weak-homotopy-unitality-for-modules}
from $\A$ to $\nodA$. Turns out $EA$ must lift to the 
whole of the bar-resolution $\infbarres(E,p_\bullet)$. This requires
$A$ to be bimodule homotopy unital. We use
$\bareta_{\bullet\bullet}$ to cook up 
$$ \chi\colon (E,p_\bullet) \rightarrow \infbarres(E,p_\bullet) 
$$
which lifts $E\eta$ in \eqref{eqn-intro-weak-homotopy-unitality-for-modules}
to $\nodA$, see \S\ref{section-unitality-conditions-for-A-modules}. 
The lift of $p_2$ is the bar-resolution 
$$ \rho\colon \infbarres(E,p_\bullet) \rightarrow (E,p_\bullet). $$
\begin{defn}
\label{defn-intro-strong-homotopy-unitality}
A module $(E,p_\bullet) \in \nodA$ is \em strongly homotopy 
unital \rm if there exists a degree $-1$ endomorphism $h_\bullet$ 
of $(E,p_\bullet)$ in $\nodA$  such that 
$$ (E,p_\bullet) \xrightarrow{\chi} \infbarres(E,p_\bullet)
\xrightarrow{\rho} (E,p_\bullet) \quad = \quad \id + dh_\bullet 
\quad \quad \text{ in } \nodA. $$
\end{defn}

We prove all these notions 
of homotopy unitality to be equivalent: 
\begin{theorem}[Theorem
\ref{theorem-tfae-unitality-conditions-for-A-modules}]
\label{theorem-intro-tfae-unitality-conditions-for-A-modules}
Let $A$ be strongly homotopy unital and let $(E,p_\bullet)$ be an 
$\Ainfty$-$A$-module. The following are equivalent:
\begin{enumerate}
\item 
$(E,p_\bullet)$ is homotopy unital, 
\item 
$(E,p_\bullet)$ is $H$-unital. 
\end{enumerate}
If $A$ is bimodule homotopy unital, these are further equivalent to:
\begin{enumerate}
\setcounter{enumi}{2} 
\item 
$(E,p_\bullet)$ is strongly homotopy unital. 
\end{enumerate}
\end{theorem}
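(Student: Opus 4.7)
The plan is to prove the cycle (1) $\Rightarrow$ (2) $\Rightarrow$ (3) $\Rightarrow$ (1), with the passage through (3) only invoked when $A$ is bimodule homotopy unital. The equivalence (1) $\Leftrightarrow$ (2) is essentially the classical result of \cite{Lefevre-SurLesAInftyCategories} translated into the setting of twisted complexes over $\A$: it concerns the bar construction as a twisted complex in $\A$ alone, with no use of the free-module structure, and so transplants without serious new difficulty. For (1) $\Rightarrow$ (2), I would build a contracting homotopy of $\infbar(E, p_\bullet)$ in $\A$ inductively on the number of tensor factors, using the weak unit datum $(\eta, h)$, the operations $p_i$, and the strong unit data of $A$. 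The converse (2) $\Rightarrow$ (1) is immediate: extract the $E \to E$ component of any null-homotopy of $\infbar(E, p_\bullet)$.

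The direction (3) $\Rightarrow$ (1) is a formal one-step reduction: applying the underlying-object functor $\nodA \to \A$ to the equation $\rho \circ \chi = \id_E + dh_\bullet$ in $\nodA$, and using $\chi_1 = E\eta$ and $\rho_1 = p_2$ by the construction in \S\ref{section-unitality-conditions-for-A-modules}, yields $p_2 \circ E\eta = \id_E + d(h_\bullet)_1$ in $\A$, which is precisely weak homotopy unitality.

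The substantive content is (2) $\Rightarrow$ (3). By \S\ref{section-bar-resolution}, the augmentation $\rho \colon \infbarres(E, p_\bullet) \to (E, p_\bullet)$ has cone (up to shift) equal to $\infbar(E, p_\bullet)$; under $H$-unitality this cone is null-homotopic in $\A$ and hence in $\nodA$ by the Homotopy Lemma, so $\rho$ is a homotopy equivalence in $\nodA$ and admits some section $\chi'$ with $\rho \circ \chi' = \id + dh'_\bullet$ in $\nodA$. To replace $\chi'$ with the canonical $\chi$ of Definition \ref{defn-intro-strong-homotopy-unitality} built from the bimodule homotopy unit $\bareta_{\bullet\bullet}$ of $A$, I would show that $\chi - \chi'$ is null-homotopic in $\nodA$: the underlying component $(\chi - \chi')_1$ vanishes by $\chi_1 = \chi'_1 = E\eta$, and the higher components are killed inductively along the layers of $\infbarres$ using the coherence homotopies inside $\bareta_{\bullet\bullet}$, in particular the higher homotopy $\kappa_{\bullet\bullet}$ of Theorem \ref{theorem-intro-conditions-for-bimodule-homotopy-unitality}. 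Correcting $h'_\bullet$ by the resulting null-homotopy yields the desired $h_\bullet$. The main obstacle is precisely this inductive matching: the Homotopy Lemma provides only abstract existence of a section of $\rho$, and it is the full bimodule coherence of $\bareta_{\bullet\bullet}$ -- not merely the one-sided data of strong homotopy unitality of $A$ -- that pins the abstract section down to the canonical $\chi$. One must verify at each order that the obstruction to extending the partial null-homotopy is closed and lies in the image of the differential, which is where the precise relations $dh^{r}_\bullet = \id - \mu_2\circ\eta A$, $dh^{l}_\bullet = \id - \mu_2\circ A\eta$, and $d\kappa_{\bullet\bullet} = \mu_2\circ(h^{l}_\bullet A - A h^{r}_\bullet) - \mu_3\circ A\eta A$ of Theorem \ref{theorem-intro-conditions-for-bimodule-homotopy-unitality} are used.
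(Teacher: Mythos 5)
Your overall architecture has the right shape, and your (3)$\Rightarrow$(1) one-liner is correct, but two of your three substantive steps have genuine gaps, and in both cases the missing ingredient is the same lemma the paper builds the whole proof around. First, (2)$\Rightarrow$(1) is \emph{not} immediate. A null-homotopy $H$ of $\infbar(E,p_\bullet)$ in $\pretriagmns\A$ has, in its $E \to E$ component, the identity $\id_E = d(H_{00}) + \sum_{k\geq 1}\pm\, p_{k+1}\circ H_{0k}$, where the $H_{0k}\colon E \to EA^k$ are arbitrary components of $H$. This exhibits $\id_E$ as homotopic to \emph{some} map factoring through the action, but not to $p_2 \circ E\eta$ for the \emph{given} unit $\eta$ of $A$ --- which is what homotopy unitality demands. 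Tying an arbitrary null-homotopy to the specific unit is precisely where strong homotopy unitality of $A$ is consumed: the paper's Cor.~\ref{cor-chi-rho-lemma-for-the-first-components} shows that $E\eta$ is a \emph{left} homotopy inverse of $\forget(\rho)$, via an explicit homotopy built from the contracting homotopy of $\forget(\infbarl(A))$ assembled out of $\eta$ and $h^l_\bullet$. Then $H$-unitality (equivalently, $\forget(\rho)$ being a homotopy equivalence, since $\infbar(E,p_\bullet)$ is the total complex of $\forget(\rho)$) forces the left inverse $E\eta$ to be two-sided, and $\forget(\rho)\circ E\eta = p_2\circ E\eta \simeq \id_E$. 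Your inductive contracting homotopy for (1)$\Rightarrow$(2) suffers from the dual problem: in the Homotopy Lemma's induction the closed obstruction at each stage is killed using contractibility of $E$, but here $E$ is not contractible and $p_2\circ E\eta \simeq \id_E$ gives no mechanism for killing an arbitrary closed obstruction, so the induction does not close up.

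Second, in (2)$\Rightarrow$(3) the ``inductive matching'' of the abstract section $\chi'$ with the canonical $\chi$ is exactly where the theorem lives, and your sketch does not supply it. The Homotopy Lemma yields \emph{some} $\chi'$ with $\rho\circ\chi'\simeq\id$; its first component need not equal $E\eta$, so your base case already fails, and more fundamentally the complex $\homm_{\nodA}\bigl((E,p_\bullet),\infbarres(E,p_\bullet)\bigr)$ is not acyclic, so closedness of the order-$n$ obstruction does not imply exactness --- there is no general reason your partial null-homotopy of $\chi-\chi'$ extends. Note also that, given $\rho\chi'\simeq\id$ and $\rho$ an equivalence, the statement $\chi\simeq\chi'$ is logically equivalent to the goal $\rho\chi\simeq\id$, so the induction would have to secretly prove the theorem. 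The paper sidesteps all of this by proving the Chi-Rho Lemma (Lemma~\ref{lemma-chi-rho-lemma}) \emph{first}: $\chi\circ\rho\simeq\id_{\infbarres(E,p_\bullet)}$ holds unconditionally for bimodule homotopy unital $A$, with an explicit homotopy $\xi$ transported from the contracting homotopy $\zeta\circ\bareta$ of $\infbarl(A)$ (Lemma~\ref{lemma-the-contracting-homotopy-zeta-of-infbarl-a}) via the identity $[\xi,\infbarres(\pi_1)]=\chi\circ\rho$. With $\chi$ established as a left homotopy inverse of $\rho$, both halves of the theorem reduce to the formal fact that a map with a left homotopy inverse is an equivalence iff that left inverse is also a right inverse: applied to $\forget(\rho)$ with $E\eta$ in the strongly homotopy unital case, and to $\rho$ with $\chi$ (plus the Homotopy Lemma to pass between $\A$ and $\nodA$) in the bimodule case. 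You correctly sensed that the coherence data $h^l_\bullet$, $h^r_\bullet$, $\kappa_{\bullet\bullet}$ of Theorem~\ref{theorem-conditions-for-bimodule-homotopy-unitality} is the raw material, but the proof needs it packaged once, as the explicit global homotopy of the Chi-Rho Lemma, not spent order by order on an induction whose obstructions you cannot control.
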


\subsection{Free-Forgetful homotopy adjunction and Kleisli category} 

For any strict and strictly unital algebra $A$ in $\A$ we have 
Free-Forgetful adjunction. For any $E \in \A$ and $(F,q) \in
\modd\text{-}A$ 
we have a natural isomorphism
$$ \homm_{\A}(E,F) \xrightarrow{\sim} \homm_{\modd\text{-}A}(EA, (F,q)). $$
For any strongly homotopy unital $\Ainfty$-algebra in $\A$
we construct in \S\ref{section-free-forgetful-homotopy-adjunction} 
a Free-Forgetful homotopy adjunction. For any $E \in \A$
and $(F,q_\bullet) \in \nodhuA$, the category of homotopy unital 
$\Ainfty$-$A$-modules, we have a natural homotopy equivalence
\begin{equation}
\label{eqn-intro-free-forgetful-homotopy-adjunction}
\homm_{\A}(E,F) \xrightarrow{\sim} \homm_{\nodhuA}(EA, (F,q_\bullet)).
\end{equation}

This motivates the following definition in
\S\ref{section-kleisli-category}:
\begin{defn}
\label{defn-intro-kleisli-category-of-an-ainfty-algebra}
Let $A$ be an $\Ainfty$-algebra in a monoidal DG category $\A$. Define
its \em Kleisli category \rm $\kleisliA$ to be the usual $\Ainfty$-category
\cite{Lefevre-SurLesAInftyCategories} defined by 
\begin{itemize}
\item Its objects are the objects of $\A$. 
\item For any $E,F \in \A$ the $\homm$-complex between them is
\begin{equation}
\homm_{\kleisliA}(E,F) := \homm_{\A}(E,FA).
\end{equation}
\item For any $E_1, E_2, \dots, E_{n+1} \in \A$ and any $\alpha_i \in 
\homm_{\kleisliA}(E_i, E_{i+1})$ define 
\begin{equation}
\label{eqn-intro-defn-of-ainfty-structure-on-kleisli-category}
m_n^{\kleisliA}(\alpha_1, \dots, \alpha_n) := 
E_1 \xrightarrow{\alpha_1} E_2A \xrightarrow{\alpha_2 A} \dots
\xrightarrow{\alpha_n A^{n-1}} E_{n+1}A^n \xrightarrow{E_{n+1}m_n^A}
E_{n+1}A.
\end{equation}
\end{itemize}
\end{defn}

We construct an $\Ainfty$-functor 
$f_\bullet\colon \kleisliA \rightarrow \freeA $
which sends any $E \in \A$ to $EA \in \freeA$ 
and whose $f_1$ is the Free-Forgetful 
adjunction map \eqref{eqn-intro-free-forgetful-homotopy-adjunction}. 
It is a quasi-equivalence for strongly homotopy unital $A$, 
The Kleisli category of a usual $\Ainfty$-category is it 
itself. Thus $f_\bullet$ generalises 
the Yoneda embedding of a usual $\Ainfty$-category into its
DG category of $\Ainfty$-modules \cite[\S7.1]{Lefevre-SurLesAInftyCategories}. 

\subsection{Derived category}

In \S\ref{section-the-derived-category-the-general-case} we define 
the \em derived category \rm of an $\Ainfty$-algebra $(A,m_\bullet)$  
in a monoidal DG category $\A$. By the Homotopy Lemma there 
is no need to invert quasi-isomorphisms. Following
\cite[\S4]{Lefevre-SurLesAInftyCategories}, we want 
the derived category $D(A)$ to be the homotopy category of the 
category $\nodhuA$ of $H$-unital $\Ainfty$-$A$-modules. 
However, we expect the derived category to be triangulated and
cocomplete, while for an arbitrary $\A$
the homotopy category $H^0(\nodhuA)$ is not apriori either.  
We need to fix an embedding of $\A$
into a cocomplete closed monoidal pretriangulated DG category $\B$. 
We also assume that $H^0(\A)$ is compactly generated within
$H^0(\B)$. When $\A$ is small, we can always take $\B = \modA$. Often 
we take $\B$ to be $\A$ itself, for example in the classical setting of $\A =
\modk$.  

Let $\noddinf\text{-}A^\B$ be the category of $\Ainfty$-$A$-modules in
$\B$. Its homotopy category is triangulated and cocomplete, so we define $D(A)$
to be the cocomplete triangulated hull of $\nodhuA$ in 
$H^0(\noddinf\text{-}A^\B)$. When $A$ is strongly
homotopy unital, the compact derived category $D_c(A)$ admits a
particularly nice description:
\begin{theorem}[Theorem
\ref{theorem-compact-derived-category-of-A-is-that-of-frees-and-kleisli}]
\label{theorem-intro-compact-derived-category-of-A-is-that-of-frees-and-kleisli}
Let $A$ be a strongly homotopy unital $\Ainfty$-algebra in a monoidal 
DG category $\A$. Let $S$ be a set of compact generators of $H^0(\A)$. Then 
$$ D_c(A) \simeq D_c(\freeAS) \simeq D_c(\kleisliAS). $$
\end{theorem}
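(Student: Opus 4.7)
The plan is to split the claim into two independent equivalences and handle each via its own ingredient: for $D_c(\freeAS) \simeq D_c(\kleisliAS)$, the Kleisli quasi-equivalence from Section \ref{section-kleisli-category}; for $D_c(A) \simeq D_c(\freeAS)$, a compact generation argument inside the ambient $H^0(\nodA^\B)$.

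For the second equivalence $D_c(\freeAS) \simeq D_c(\kleisliAS)$, I would invoke the $\Ainfty$-functor $f_\bullet\colon \kleisliA \to \freeA$ whose first component is the Free-Forgetful homotopy adjunction map \eqref{eqn-intro-free-forgetful-homotopy-adjunction}. Since $A$ is strongly homotopy unital, $f_\bullet$ is an $\Ainfty$-quasi-equivalence. Restricting to full subcategories with object set $S$ preserves this property, so $f_\bullet\colon \kleisliAS \to \freeAS$ is again a quasi-equivalence. A quasi-equivalence of (possibly $\Ainfty$-) categories induces an equivalence between their derived categories of $\Ainfty$-modules, which restricts to an equivalence of compact parts.

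For $D_c(A) \simeq D_c(\freeAS)$, I would prove that $\{EA : E \in S\}$ is a set of compact generators of $D(A)$; then $D_c(A)$, being the thick closure of any set of compact generators, coincides with the thick closure of $\freeAS$ in $D(A)$, which is precisely $D_c(\freeAS)$. For compactness, the Free-Forgetful homotopy adjunction \eqref{eqn-intro-free-forgetful-homotopy-adjunction}, after passage to $H^0$ and extension to the ambient category, yields
\begin{equation*}
\homm_{D(A)}(EA, M) \;\simeq\; \homm_{H^0(\B)}(E, U(M)),
\end{equation*}
where $U$ is the forgetful functor to $H^0(\B)$. Since $U$ preserves filtered colimits (it reads off the underlying object of a module) and $E$ is compact in $H^0(\B)$ by hypothesis, $EA$ is compact. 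For generation, any $H$-unital module $M$ is resolved by its bar-resolution $\infbarres(M) \to M$ whose terms are free modules $E_iA$ with $E_i \in \A$; each such $E_i$ lies in the cocomplete triangulated hull of $S$ in $H^0(\B)$, and since tensoring with $A$ preserves this hull structure, each $E_iA$, and hence $M$, lies in the cocomplete triangulated hull of $\freeAS$ in $D(A)$.

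The main obstacle is verifying that the Free-Forgetful homotopy adjunction, originally established between $\A$ and $\nodhuA$, extends correctly to the enlarged category $\nodA^\B$ in which $D(A)$ is formed, and computes $\homm_{D(A)}(EA, -)$ on the nose as the $H^0(\B)$-level hom of underlying objects. Once this is secured, the compact generation argument runs on familiar rails: it only uses that free modules behave well under the forgetful functor, together with the fact, supplied by the bar-resolution, that every $H$-unital module can be built from free modules on objects of $\A$, combined with the compact generation of $H^0(\A)$ in $H^0(\B)$ by $S$.
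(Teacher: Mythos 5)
Your proposal is correct and follows essentially the same route as the paper: the equivalence $D_c(\freeAS) \simeq D_c(\kleisliAS)$ via the quasi-equivalence of Theorem \ref{theorem-ainfty-quasi-equivalence-from-kleisli-to-free}, and $D_c(A) \simeq D_c(\freeAS)$ by showing $\freeAS$ compactly generates $D(A)$ --- compactness of free modules from the Free-Forgetful homotopy adjunction plus the forgetful functor commuting with direct sums (the paper's Lemma \ref{lemma-perfect-in-nodhuaA-iff-perfect-in-A}), generation from the bar-resolution (Prop.~\ref{prps-derived-category-of-A-is-triangulated-cocomplete-hull-of-freeA}), and then identifying $D_c(A)$ with the Karoubi-complete triangulated hull of $\freeAS$ (the paper's Prop.~\ref{prps-perfect-iff-lies-in-hperf-of-perfect-generator-frees} together with the $\hperf$ universal-property embedding, which is the precise form of your final ``thick closure equals $D_c(\freeAS)$'' step). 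The ``obstacle'' you flag is handled in the paper exactly as you suggest: $A$ is also an $\Ainfty$-algebra in $\B$, so the adjunction of Theorem \ref{theorem-free-forgetful-homotopy-adjunction} applies with $\B$ as the ambient category, giving $\homm_{D(A)}(EA,-) \simeq \homm_{H^0(\B)}(E,\forget(-))$ on the nose (note only commutation with coproducts, not filtered colimits, is needed or available here).
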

Note that $D_c(\freeAS)$ and $D_c(\kleisliAS)$ are the usual compact
derived categories of the DG category $\freeAS$ and 
the $\Ainfty$-category $\kleisliAS$. This shows
that $D_c(A)$ is independent of the choice of $\B$. 

\subsection{$\Ainfty$-coalgebras}

In \S\ref{section-ainfty-coalgebras-and-comodules} we translate 
the definitions and results we obtained for
$\Ainfty$-algebras, modules and bimodules in \S\ref{section-ainfty-structures-in-monoidal-dg-categories}-\S\ref{section-the-derived-category}
to $\Ainfty$-coalgebras, comodules, and
bicomodules. Most translate straightforwardly and we give
all the key definitions. However there are some subtleties. The chief
is that the Homotopy Lemma fails for $\Ainfty$-comodules, 
see the introduction to
\S\ref{section-ainfty-coalgebras-and-comodules}. However, it turns 
out that it still holds for strongly homotopy counital
$\Ainfty$-comodules, see Lemma
\ref{lemma-coalgebra-analogue-of-the-homotopy-lemma}. 

The bottom line: working only with bicomodule homotopy counital 
$\Ainfty$-coalgebras and with strongly homotopy counital
$\Ainfty$-comodules over them, we have 
the analogues of all the definitions and results in 
\S\ref{section-ainfty-structures-in-monoidal-dg-categories}-\S\ref{section-strong-homotopy-unitality}. Consequently, we define 
the \em derived category of an $\Ainfty$-coalgebra $(C,
\Delta_\bullet)$ \rm as the cocomplete triangulated hull of $\conodshuC$ in 
$H^0(\conoddinf\text{-}C^\B)$. Here $\conodshuC$ is the category of 
strongly homotopy unital $\Ainfty$-$C$-comodules. We have 
for $D(C)$ the analogues of all the results in 
\S\ref{section-the-derived-category} provided the following additional 
assumption holds: free comodules $EC$ generated by compact $E \in \A$ 
are compact in $D(C)$. In particular we have:
\begin{theorem}[Theorem
\ref{theorem-compact-derived-category-of-bimodule-homotopy-unital-C-is-that-of-frees-and-cokleisli}]
Let $C$ be a bicomodule homotopy counital $\Ainfty$-coalgebra in a
monoidal DG category $\A$ and $S$ be a set of compact generators 
of $H^0(\A)$ in $H^0(\B)$.
If for every perfect $E \in \A$, the free comodule $EC$ is perfect,
then
$$ D_c(C) \simeq D_c(\free_S\text{-}C) \simeq D_c(\cokleisliCS). $$
\end{theorem}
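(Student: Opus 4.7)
The plan is to mirror the proof of Theorem \ref{theorem-intro-compact-derived-category-of-A-is-that-of-frees-and-kleisli} for $\Ainfty$-algebras, dualizing the argument throughout and invoking the coalgebra machinery developed in \S\ref{section-ainfty-coalgebras-and-comodules}. Bicomodule homotopy counitality of $C$ provides, for every strongly homotopy counital $\Ainfty$-$C$-comodule, a cobar-resolution by cofree comodules; Lemma \ref{lemma-coalgebra-analogue-of-the-homotopy-lemma} supplies the version of the Homotopy Lemma we need within $\conodshuC$; and the compactness hypothesis $EC$ perfect for $E$ perfect is exactly what transplants the algebra proof into the coalgebra setting.

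First I would establish the equivalence $D_c(C) \simeq D_c(\free_S\text{-}C)$. By definition $D(C)$ is the cocomplete triangulated hull of $\conodshuC$ inside $H^0(\conoddinf\text{-}C^\B)$. The cobar-resolution presents every object of $\conodshuC$ as a homotopy colimit of cofree comodules $EC$ with $E \in \A$. Since $S$ compactly generates $H^0(\A)$ inside $H^0(\B)$, the cofrees $\{EC : E \in S\}$ generate all cofrees under the relevant colimits, hence generate $D(C)$ as a cocomplete triangulated category. The compactness hypothesis guarantees these generators lie in $D_c(C)$, and the standard argument for compactly generated triangulated categories then identifies $D_c(C)$ with the idempotent-complete triangulated hull of $\free_S\text{-}C$, which is precisely $D_c(\free_S\text{-}C)$.

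Next I would construct a quasi-equivalence $\cokleisliCS \simeq \free_S\text{-}C$ of $\Ainfty$-categories, from which $D_c(\free_S\text{-}C) \simeq D_c(\cokleisliCS)$ follows by passing to compact derived categories. The co-Kleisli category $\cokleisliC$ has objects those of $\A$ and hom-complexes $\homm_{\cokleisliC}(E,F) := \homm_{\A}(EC,F)$, with higher operations built dually to \eqref{eqn-intro-defn-of-ainfty-structure-on-kleisli-category} out of the comultiplications $\Delta_n$ of $C$. The Cofree-Forgetful homotopy adjunction (the coalgebra analogue of \eqref{eqn-intro-free-forgetful-homotopy-adjunction}) supplies natural homotopy equivalences on hom-complexes which assemble into an $\Ainfty$-functor from $\cokleisliC$ to the full subcategory of $\conodshuC$ on cofree comodules, sending $E \mapsto EC$. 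Restricting to $S$ yields the required quasi-equivalence, and combining the two equivalences gives the statement.

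The main obstacle is verifying that the compactness hypothesis genuinely reduces the coalgebra case to a proof parallel to the algebra one. Without it, the cofree comodules $\{EC : E \in S\}$ still generate $D(C)$ but need not lie in $D_c(C)$, breaking the identification of $D_c(C)$ with $D_c(\free_S\text{-}C)$. The rest of the argument dualizes cleanly, with Lemma \ref{lemma-coalgebra-analogue-of-the-homotopy-lemma} replacing the classical Homotopy Lemma at each invocation, which is precisely why one must restrict to $\conodshuC$ throughout.
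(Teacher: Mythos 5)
Your proposal is correct and is essentially the paper's own argument: the paper proves this theorem precisely by declaring that the arguments of \S\ref{section-the-derived-category-the-strong-homotopy-unital-case} (the Keller-style identification of $D_c$ with the Karoubi-complete triangulated hull of $\free_S\text{-}C$, plus the co-Kleisli quasi-equivalence via the Forgetful-Free homotopy adjunction, the dual of Theorem \ref{theorem-ainfty-quasi-equivalence-from-kleisli-to-free}) translate once the perfectness hypothesis on free comodules is imposed, and your write-up carries out exactly that translation. You also correctly pinpoint the one place where the dualization would otherwise break: the hypothesis that $EC$ is perfect for perfect $E$ substitutes for Lemma \ref{lemma-perfect-in-nodhuaA-iff-perfect-in-A}, which fails for comodules because the adjunction direction is reversed.
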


The authors are aware that for usual DG and $\Ainfty$-coalgebras there
are subtleties involved in constructing their derived categories.
These are explored at length by Positselski in
\cite{Positselski-TwoKindsOfDerivedCategoriesKoszulDualityAndComoduleContramoduleCorrespondence}. 
We discuss this briefly in \S\ref{section-comparison-to-known-constructions}. 

\subsection{Module-comodule correspondence}

In \S\ref{section-homotopy-adjoint-Ainfty-algebras-and-coalgebras} we
define an $\Ainfty$-algebra $(A,m_\bullet)$ and an 
$\Ainfty$-coalgebra $(C,\Delta_\bullet)$ being \em homotopy adjoint\rm. 
We ask for $A$ and $C$ to be adjoint as objects of $\A$ with 
the unit $\eta_1\colon \id \rightarrow CA$ and counit 
$\epsilon_1\colon AC \rightarrow \id$ and for
coherent systems of higher homotopies $\eta_i\colon \id \rightarrow C^iA$ 
and $\epsilon_i\colon A^iC \rightarrow \id$ which describe 
how $\eta_1$ and $\epsilon_1$ interact with the operations
$m_\bullet$ and $\Delta_\bullet$. 
A non-trivial example is provided by any homotopy adjoint
triple $(L,F,R)$ of objects in $\A$:
the strict algebra $RF$ and strict the coalgebra $LF$ are homotopy adjoint, 
see Prop.~\ref{prps-LF-and-RF-are-homotopy-adjoint-in-a-monoidal-category}. 

Finally, we prove that any such pair are derived module-comodule equivalent: 
\begin{theorem}[Theorem \ref{theorem-module-comodule-correspondence-in-a-monoidal-dg-category}]
Let $(A,m_\bullet)$ be a strongly homotopy unital $\Ainfty$-algebra 
in a DG monoidal category $\A$. Let $(C,\Delta_\bullet)$
be a bicomodule homotopy counital $\Ainfty$-coalgebra. 
If $A$ and $C$ are homotopy adjoint as in 
Defn.~\ref{defn-homotopy-adjoint-ainfty-coalgebra-and-algebra},
then
$$ D_c(C) \simeq D_c(A). $$ 
\end{theorem}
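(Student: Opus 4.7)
The plan is to establish the equivalence through a chain
$$ D_c(A) \;\simeq\; D_c(\kleisliAS) \;\simeq\; D_c(\cokleisliCS) \;\simeq\; D_c(C), $$
where $S$ is a set of compact generators of $H^0(\A)$ in $H^0(\B)$. The outer equivalences are furnished by Theorem~\ref{theorem-compact-derived-category-of-A-is-that-of-frees-and-kleisli} on the algebra side and Theorem~\ref{theorem-compact-derived-category-of-bimodule-homotopy-unital-C-is-that-of-frees-and-cokleisli} on the coalgebra side, so the core of the argument is the middle quasi-equivalence of $\Ainfty$-categories $\kleisliAS \simeq \cokleisliCS$. Before applying the coalgebra theorem one must check its hypothesis that $EC$ is perfect for every $E \in S$; this is immediate from the homotopy adjunction, since $EA$ and $EC$ are homotopy adjoint as objects of $\A$ (via $E\eta_1$ and $E\epsilon_1$) and compactness in $H^0(\B)$ is preserved under a homotopy adjunction.

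To build the middle quasi-equivalence $\Phi\colon \kleisliAS \rightarrow \cokleisliCS$, I take $\Phi$ to be the identity on objects and, on hom-complexes, to be the classical unit–counit passage
$$ \Phi_1\colon \homm_\A(E, FA) \longrightarrow \homm_\A(EC, F), \qquad \alpha \;\longmapsto\; F\epsilon_1 \circ (\alpha \, C). $$
That $\Phi_1$ is a homotopy equivalence on every hom-complex is precisely what it means for $\eta_1, \epsilon_1$ to present $A$ and $C$ as a homotopy adjoint pair in $\A$; its homotopy inverse is $\beta \mapsto (\beta \, A) \circ E\eta_1$. For $n \geq 2$ the higher components $\Phi_n$ are assembled from the coherent homotopies $\eta_i\colon \id \rightarrow C^i A$ and $\epsilon_i\colon A^i C \rightarrow \id$ of the homotopy adjoint structure: given $\alpha_j \in \homm_\A(E_j, E_{j+1}A)$, the map $\Phi_n(\alpha_1, \dots, \alpha_n)\colon E_1 C \rightarrow E_{n+1}$ is the composition that feeds the $\alpha_j$'s past a single $C$ on the right, inserting and contracting additional factors of $A$ and $C$ by means of the $\eta_i$'s, $\epsilon_i$'s, $m_i$'s and $\Delta_i$'s, with signs determined by the standard $\Ainfty$-functor convention.

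The $\Ainfty$-functor axioms for $\Phi_\bullet$ unpack, after substituting the formulae of Definition~\ref{defn-intro-kleisli-category-of-an-ainfty-algebra} and its coalgebra dual, into precisely the coherence equations that characterise a homotopy adjoint pair $(A, C)$ as in Defn.~\ref{defn-homotopy-adjoint-ainfty-coalgebra-and-algebra}. Verifying this matching is the main obstacle: one has to arrange the bookkeeping so that on either side the operations $m_i^A$, $\Delta_i^C$, $\eta_i$, $\epsilon_i$ appear in the correct positions with compatible signs, and then check that each defining equation of a homotopy adjoint pair translates term-by-term into an $\Ainfty$-functor relation for $\Phi_\bullet$. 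In essence, the axioms in Defn.~\ref{defn-homotopy-adjoint-ainfty-coalgebra-and-algebra} are tailored so that this translation is tautological. Once it is in place, $\Phi$ is a quasi-equivalence, so passing to compact derived categories yields $D_c(\kleisliAS) \simeq D_c(\cokleisliCS)$, and composing with the outer equivalences completes the proof.
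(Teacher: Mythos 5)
Your proposal has the same skeleton as the paper's proof: the identical chain $D_c(A) \simeq D_c(\kleisliAS) \simeq D_c(\cokleisliCS) \simeq D_c(C)$ with the outer equivalences supplied by Theorem~\ref{theorem-compact-derived-category-of-A-is-that-of-frees-and-kleisli} and Theorem~\ref{theorem-compact-derived-category-of-bimodule-homotopy-unital-C-is-that-of-frees-and-cokleisli}, and the middle step an $\Ainfty$-quasi-equivalence assembled from the homotopy adjunction data. The paper builds the middle functor in the opposite direction, $E_\bullet\colon \cokleisliCS \rightarrow \kleisliAS$, using \emph{only} the $\eta_i$'s: $E_n$ precomposes with $F_1\eta_n$ and then feeds the $\alpha_j$'s through, and the $\Ainfty$-functor equations are exactly the twisted complex condition of Defn.~\ref{defn-homotopy-adjoint-ainfty-coalgebra-and-algebra}\eqref{item-homotopy-adjoint-ainfty-coalgebra-and-algebra-eta-twisted-complex}. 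Your mirror functor $\Phi$ works just as well, but note that its higher components have a clean closed form using a \emph{single} $\epsilon_n$, namely $\Phi_n(\alpha_1,\dots,\alpha_n) = E_{n+1}\epsilon_n \circ \alpha_n A^{n-1}C \circ \dots \circ \alpha_2 AC \circ \alpha_1 C$, with the functor equations then being precisely condition \eqref{item-homotopy-adjoint-ainfty-coalgebra-and-algebra-epsilon-twisted-complex} of that definition; your description invoking an ad hoc mixture of $\eta_i$'s, $\epsilon_i$'s, $m_i$'s and $\Delta_i$'s overcomplicates a step that the definition was engineered to make tautological, exactly as you suspected.

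The genuine gap is your verification of the hypothesis of Theorem~\ref{theorem-compact-derived-category-of-bimodule-homotopy-unital-C-is-that-of-frees-and-cokleisli}. What that theorem requires is that the free \emph{comodule} $EC$ be perfect, i.e.\ compact in $D(C)$: the functor $\homm_{\conoddinf\text{-}C^\B}(EC,-)$ must commute with small direct sums \emph{of comodules}. Your argument shows at best that the \emph{object} $EC$ is compact in $H^0(\B)$, which is a different and strictly weaker statement --- perfectness of the underlying object does not imply perfectness of the (co)module, compare Remark~\ref{remark-not-all-perfect-object-modules-are-perfect} and the discussion in \S\ref{section-the-derived-category-of-ainfty-coalgebra}, where the paper identifies exactly this as the obstruction on the coalgebra side. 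The one-line adjunction trick that works for modules fails here because the comodule Forgetful-Free adjunction has the free functor as the \emph{right} adjoint: $\homm_{C}(G, HC)\simeq\homm_\B(G,H)$ computes morphisms \emph{into} free comodules, not out of them, so compactness of $E$ cannot be transported directly to $EC$. The paper fills this with Lemma~\ref{lemma-has-right-adjoint-implies-perfect-objects-generate-perfect-comodules}: one replaces $\bigoplus_i H_i$ by its cobar resolution (legitimate because strong $H$-counitality is preserved under direct sums, which uses bicomodule homotopy counitality of $C$), reduces each term $\homm_{C}(EC,(\bigoplus_i H_i)C^{j})$ by Forgetful-Free adjunction to a $\homm$-complex in $\B$, and only then applies the homotopy adjunction of $C$ and $A$ in $\A$ together with compactness of $E$ in $H^0(\B)$. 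So your instinct that the adjunction $\eta_1,\epsilon_1$ is the key input to this step is correct, but it enters through this cobar-resolution argument, not through the slogan ``compactness is preserved under a homotopy adjunction,'' which as stated proves the wrong claim.
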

The proof works by showing that if $C$ has a left homotopy adjoint in
$\A$, then any $\Ainfty$-coalgebra $(C,\Delta_\bullet)$ satisfies the
condition that free modules generated by perfect objects are perfect. 
Hence $D_c(C) \simeq D_c(\cokleisliC)$, while we also have $D_c(A) \simeq
D_c(\kleisliA)$. Finally, the homotopy adjunction of $A$ and $C$ 
induces an $\Ainfty$-quasi-equivalence $\kleisliA \simeq \cokleisliC$. 

\subsection{Examples and applications}

In \S\ref{section-examples-associative-algebras}-\S\ref{section-examples-dg-and-ainfty-categories}
we study how the classical notions of
associative/DG/$\Ainfty$-algebras, 
and DG/$\Ainfty$-categories fit into our setting. 

In \S\ref{section-classical-and-ainfty-monads} we set
$\A$ to be the DG bicategory $\DGFuntwocat$ of small DG categories, DG
functors and DG natural transformations. We get 
a new notion of an \em $\Ainfty$-monad \rm structure $(T,m_\bullet)$
on a DG endofunctor $T$ of a DG category $\C$. We define
its \em weak Eilenberg-Moore category $\eilmoorwk_T$ \rm which is just 
its DG category of homotopy unital $\Ainfty$-modules. This gives 
the right notion of the weak Eilenberg-Moore category for usual DG monads. 
In \S\ref{section-ainfty-modules-over-the-identity-functor} we look at
$\Ainfty$-modules over the identity endofunctor -- and 
show that these are the \em $\Ainfty$-idempotents \rm which appear in 
\cite[\S4]{GorskyHogancampWedrich-DerivedTracesOfSoergelCategories}. 

Finally, in \S\ref{section-enhanced-monads} we 
set $\A$ to be the bicategory $\enhcatkc$ 
of Karoubi complete enhanced triangulated categories, exact
functors, and natural transformation. We get  
a new notion of an \em enhanced exact monad \rm over an 
enhanced triangulated category and its \em weak Eilenberg-Moore
category\rm. In \S\ref{section-adjunction-monads-and-comonads}
for any adjoint pair $(F,R)$ of enhanced exact functors 
we construct enhanced monad and comonad structures 
on $RF$ and $FR$ which are strict and bimodule homotopy
unital and counital (Theorem
\ref{theorem-bimodule-homotopy-unitality-for-enhanced-adjunction-monads-and-comonads}). This leads to the following important result:
\begin{theorem}[Theorem
\ref{theorem-module-comodule-correspondence-for-enhanced-adjoint-triples}]
\label{theorem-intro-module-comodule-correspondence-for-enhanced-adjoint-triples}

Let $\C$ and $\D$ be enhanced triangulated categories, 
$F\colon \C \rightarrow \D$ be an enhanced exact functor, and 
$L,R\colon \D \rightarrow \C$ its left and right adjoints. 

Then the enhanced adjunction monad $RF$ and the enhanced adjunction monad $LF$ 
are derived module-comodule equivalent:
$D_c(RF) \simeq D_c(LF)$.
\end{theorem}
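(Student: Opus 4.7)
The plan is to assemble three prior results in the paper, working in the DG bicategory $\A = \enhcatkc$ (with any fixed source $\C$ and target $\D$ identified as appropriate $1$-morphism sources). The claim is essentially a specialisation of the abstract module-comodule correspondence Theorem \ref{theorem-module-comodule-correspondence-in-a-monoidal-dg-category} to this bicategory, once all hypotheses have been supplied by the preceding machinery of \S\ref{section-adjunction-monads-and-comonads}.

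First, I would invoke Theorem \ref{theorem-bimodule-homotopy-unitality-for-enhanced-adjunction-monads-and-comonads} applied to the adjoint triple $(L,F,R)$. This yields an $\Ainfty$-algebra structure on $RF$ in $\A$ which is strict and bimodule homotopy unital, and an $\Ainfty$-coalgebra structure on $LF$ in $\A$ which is strict and bimodule homotopy counital. By Theorem \ref{theorem-intro-conditions-for-bimodule-homotopy-unitality} the bimodule homotopy unitality datum $\bareta_{\bullet\bullet}$ on $RF$ packages the strong homotopy unitality data $\eta$, $h^l_\bullet$, $h^r_\bullet$ of Defn. \ref{defn-strong-homotopy-unitality-for-ainfty-algebras}, so $RF$ is in particular strongly homotopy unital, and similarly for $LF$ on the coalgebra side. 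Second, I would invoke Prop. \ref{prps-LF-and-RF-are-homotopy-adjoint-in-a-monoidal-category} for the adjoint triple $(L,F,R)$ to obtain that the algebra $RF$ and the coalgebra $LF$ are homotopy adjoint in the sense of Defn. \ref{defn-homotopy-adjoint-ainfty-coalgebra-and-algebra}, with unit $\eta_1\colon \id \to (LF)(RF)$ and counit $\epsilon_1\colon (RF)(LF)\to \id$ coming from the ambient adjunctions $F\dashv R$ and $L\dashv F$, together with the full coherent tower of higher homotopies $\eta_i$, $\epsilon_i$ supplied by the proposition.

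Third, I would apply Theorem \ref{theorem-module-comodule-correspondence-in-a-monoidal-dg-category} with $A = RF$ and $C = LF$. Its hypotheses of strong homotopy unitality of $A$, bicomodule homotopy counitality of $C$, and homotopy adjunction of $A$ and $C$ are exactly the outputs of the two previous paragraphs. The conclusion is $D_c(RF)\simeq D_c(LF)$, which is precisely the assertion of the theorem. Unpacking the proof of the correspondence, this equivalence is realised concretely through the chain $D_c(RF)\simeq D_c(\kleisli(RF))\simeq D_c(\cokleisli(LF))\simeq D_c(LF)$, where the middle step comes from the $\Ainfty$-quasi-equivalence of Kleisli and co-Kleisli categories induced by the homotopy adjunction, and the outer equivalences come from Theorem \ref{theorem-intro-compact-derived-category-of-A-is-that-of-frees-and-kleisli} and its coalgebra counterpart.

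The main obstacle I would need to address is that Theorem \ref{theorem-module-comodule-correspondence-in-a-monoidal-dg-category} is stated in the language of monoidal DG categories, whereas $\enhcatkc$ is a DG bicategory; the authors flag at the start of the introduction that the results transfer, but one must still choose an ambient cocomplete closed pretriangulated $\B$ in which $\A$ is compactly generated at the level of $H^0$, so that the compact derived categories $D_c(RF)$ and $D_c(LF)$ are unambiguously defined. For the coalgebra side one also needs the perfectness condition that free $LF$-comodules $E(LF)$ on perfect $E\in\A$ are perfect, which follows because $LF$ admits a left homotopy adjoint in $\A$ (namely $FR$, or rather its composition with the unit), placing us in precisely the situation where the coalgebra analogue of Theorem \ref{theorem-intro-compact-derived-category-of-A-is-that-of-frees-and-kleisli} applies. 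Once this foundational setup is in place, the theorem follows purely formally from the combination of the three ingredients above.
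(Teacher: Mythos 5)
Your architecture is the same as the paper's: the paper's proof is exactly the two-line combination of Prop.~\ref{prps-LF-and-RF-are-homotopy-adjoint-in-a-monoidal-category} (homotopy adjointness of the strict algebra $RF$ and strict coalgebra $LF$, plus strong homotopy unitality of $RF$), Theorem~\ref{theorem-bimodule-homotopy-unitality-for-enhanced-adjunction-monads-and-comonads} applied to the pair $(L,F)$ (bicomodule homotopy counitality of $LF$), and the abstract correspondence Theorem~\ref{theorem-module-comodule-correspondence-in-a-monoidal-dg-category}, with the bicategory-versus-monoidal-category point handled exactly as you say. However, there is one substantive deviation in your sourcing of the unitality of $RF$. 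You take it from the bimodule homotopy unital structure of Theorem~\ref{theorem-bimodule-homotopy-unitality-for-enhanced-adjunction-monads-and-comonads}, whose multiplication is $(\composition^2)F$ of Defn.~\ref{defn-multiplication-and-comultiplication-for-RF-and-FR}; but the homotopy adjointness you invoke next is verified in Prop.~\ref{prps-LF-and-RF-are-homotopy-adjoint-in-a-monoidal-category} for the adjunction multiplication $R\epsilon_r F$, and the paper only asserts that these two multiplications agree up to homotopy in $\enhcatkc$ under the identification by $\gamma$. Since the conditions of Defn.~\ref{defn-homotopy-adjoint-ainfty-coalgebra-and-algebra} are strict twisted-complex conditions on $m_\bullet$, the adjointness data does not automatically transport between the two structures, so your combination of hypotheses is not literally verified. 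The paper sidesteps this by equipping $RF$ with the Prop's own strongly homotopy unital structure (the homotopies $Rh^r_F$, $h^r_R F$) --- strong, not bimodule, homotopy unitality is all the correspondence theorem needs on the algebra side, and the Remark following the Prop warns that this structure is genuinely not bimodule homotopy unital, so the two structures really do differ as unitality data. Theorem~\ref{theorem-bimodule-homotopy-unitality-for-enhanced-adjunction-monads-and-comonads} is needed only on the coalgebra side, where bicomodule counitality cannot be avoided.

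One further slip in your unpacking: the perfectness of free $LF$-comodules on perfect objects requires a homotopy \emph{right} adjoint to $LF$ via Lemma~\ref{lemma-has-right-adjoint-implies-perfect-objects-generate-perfect-comodules}, and that adjoint is $RF$ (from $L \dashv F \dashv R$ one gets $LF \dashv RF$, which is precisely condition~(1) of Defn.~\ref{defn-homotopy-adjoint-ainfty-coalgebra-and-algebra}); your candidate $FR$ is an endofunctor of $\D$, not of $\C$, so it cannot be adjoint to $LF$ at all. This step sits inside the proof of Theorem~\ref{theorem-module-comodule-correspondence-in-a-monoidal-dg-category} and needs no re-derivation, so it does not affect your main chain, but as stated it is wrong.
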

We will use this theorem as the key ingredient in our DG Barr-Beck 
construction, in other words - for homotopy descent and codescent. 
In a sense, it is an analogue of Positselski's derived comodule-contramodule 
correspondence
\cite[\S5]{Positselski-TwoKindsOfDerivedCategoriesKoszulDualityAndComoduleContramoduleCorrespondence}.

\em Acknowledgements: \rm We would like to thank Lino Amorim,
Alexander Efimov, Dmitri Kaledin, and Leonid Positselski 
for useful discussion. The first author would like to thank Kansas State
University for providing a stimulating research environment while
working on this paper. The third author would like to offer similar
thanks to Cardiff University. The second and the third author would
like to also thank Max-Planck-Institut f{\"u}r Mathematik Bonn. 

\section{Preliminaries}
\label{section-preliminaries}

\subsection{DG and $\Ainfty$-categories}

The main technical language of this paper is that of DG categories. 
For an overview in the same notation as this paper see 
\cite{AnnoLogvinenko-SphericalDGFunctors}, 
\S2-4. Other sources include
\cite{Keller-DerivingDGCategories},
\cite{Toen-TheHomotopyTheoryOfDGCategoriesAndDerivedMoritaTheory},
\cite{Toen-LecturesOnDGCategories}, and 
\cite{LuntsOrlov-UniquenessOfEnhancementForTriangulatedCategories}.

Throughout the paper we work in a fixed universe $\mathbb{U}$ of sets 
containing an infinite set. We also fix the base field or commutative 
ring $k$. We write $\modk$ for the category of 
$\mathbb{U}$-small complexes of $k$-modules. 
It is a cocomplete closed symmetric monoidal category
with monoidal operation $\otimes_k$ and unit $k$. A \em DG category \rm 
is a category enriched over $\modk$. In particular, any DG category 
is locally small. Given a small DG category $\A$, we write $\modA$ for 
the DG category of (right) $\A$-modules, that is -- of functors 
$\Aopp \rightarrow \modk$. 

For an introduction to the usual $\Ainfty$-categories see 
\cite{Keller-AInfinityAlgebrasModulesAndFunctorCategories}, 
for a comprehensive technical text --
\cite{Lefevre-SurLesAInftyCategories} and \cite{Lyubashenko-CategoryOfAinftyCategories}.  
For the summary of the technical details relevant to this paper, see
\cite[\S2]{AnnoLogvinenko-BarCategoryOfModulesAndHomotopyAdjunctionForTensorFunctors}.
In particular, in 
\S\ref{section-enhanced-monads}-\S\ref{section-adjunction-monads-and-comonads}
we make extensive use of bar categories of modules and bimodules introduced in \cite[\S3-4]{AnnoLogvinenko-BarCategoryOfModulesAndHomotopyAdjunctionForTensorFunctors}.

\subsection{Unbounded twisted complexes}
\label{section-unbounded-twisted-complexes}

To define $\Ainfty$-structures in a DG bicategory, 
we rewrite them in 
\S\ref{section-ainfty-structures-in-monoidal-dg-categories} in the
language of twisted complexes 
\cite{BondalKapranov-EnhancedTriangulatedCategories}. We need 
the unbounded version introduced in
\cite{AnnoLogvinenko-UnboundedTwistedComplexes}. We 
give a summary below: 

\begin{defn}
\label{defn-unbounded-twisted-complexes} 
Let $\A$ be a DG category with a fully faithful embedding into 
a DG category $\B$ which has countable direct sums and shifts. 
An \em unbounded twisted complex \rm over $\A$ relative to $\B$ consists 
of 
\begin{itemize}
\item $\forall\; i \in \mathbb{Z}$, an object $a_i$ of $\A$,
\item $\forall\; i,j \in \mathbb{Z}$, 
a degree $i - j + 1$ morphism $\alpha_{ij}\colon a_i \rightarrow a_j$ in $\A$,
\end{itemize}
satisfying 
\begin{itemize}
\item $\sum \alpha_{ij}$ is an endomorphism of $\bigoplus_{i \in
\mathbb{Z}} a_i[-i]$ in $\B$, 
\item The twisted complex condition 
\begin{equation}
\label{eqn-the-twisted-complex-condition}
(-1)^j d\alpha_{ij} + \sum_k \alpha_{kj} \circ \alpha_{ik} = 0. 
\end{equation}
\end{itemize}

We make such complexes into a DG category $\twcxub_\B(\A)$ by setting
\begin{equation}
\label{eqn-the-hom-complex-of-unbounded-twisted-complexes-naive}
\homm^\bullet_{\twcxub_{\B}(\A)}\bigl((a_i, \alpha_{ij}),(b_i,
\beta_{ij})\bigr) := \homm^\bullet_{\B}(\bigoplus_{k \in \mathbb{Z}} a_k[-k],
 \bigoplus_{l \in \mathbb{Z}} b_l[-l]) 
\end{equation}
where each $f \in \homm^q_\A(a_k, b_l)$ has degree $q + l - k$ and
\begin{equation}
\label{eqn-the-hom-complex-of-twisted-complexes-differential}
 df := (-1)^l d_{\A} f + \sum_{m \in
\mathbb{Z}}\left( \beta_{lm} \circ f - (-1)^{q + l -k} f 
\circ \alpha_{mk} \right).
\end{equation}
\end{defn}

We have a fully faithful convolution functor
$\conv\colon\; \twcxub_{\B}(\A) \hookrightarrow \modB$
and an embedding $\A \hookrightarrow \twcxub_{\B}(\A)$
sending objects to complexes concentrated in degree $0$.  

A DG category $\B$ 
\em admits convolutions of unbounded twisted complexes \rm
if $\B  \hookrightarrow \twcxub_{\B}(\B)$ is an equivalence. 
For such $\B$ the convolution functor $\twcxub_{\B}(\A)
\rightarrow \modB$ for any $\A \subseteq \B$ takes values in $\B$. 

Where the choice of $\B$ is not relevant we write 
$\twcxub(\A)$ for $\twcxub_{\B}(\A)$. We 
define $\twcxpls(\A)$ and $\twcxmns(\A)$ to be the full subcategories
of $\twcxub(\A)$ consisting of all
bounded above and all bounded below twisted complexes, respectively. 
A twisted complex $(a_i, \alpha_{ij})$ is \em one-sided \rm 
if $\alpha_{ij} = 0$ for $j \leq i$. We define $\pretriag^{\bullet}(\A)$,
to be the full subcategory of $\twcx^{\bullet}(\A)$
consisting of one-sided twisted complexes. 

\subsection{The setting}
\label{section-the-setting}
In this paper we work in a general setting of an arbitrary
monoidal DG category $\A$. See 
\cite[\S{VII.1}]{MacLane-CategoriesfortheWorkingMathematician}
for the definition of a monoidal category.
We denote the monoidal operation of $\A$ by $\otimes$, the identity 
object by $1_\A$, and the associator and left and right unitor
isomorphisms by $\alpha$, $\rho$, and $\lambda$. 

A monoidal category is a special instance of a bicategory:
it is a bicategory with one object. See
\cite{Benabou-IntroductionToBicategories} 
\cite{GyengeKoppensteinerLogvinenko-TheHeisenbergCategoryOfACategory}
for an introduction to bicategories. Their language is less intuitive, so we  
wrote this paper in the language of monoidal categories. All our
results work just as well for $\A$ a DG bicategory.

Another expository choice is the use of $\modA$ throughout 
\S\ref{section-ainfty-structures-in-monoidal-dg-categories}-\ref{section-strong-homotopy-unitality} as the ambient category $\B$
containing $\A$ which is necessary to define unbounded twisted
complexes over $\A$, cf.~\S\ref{section-unbounded-twisted-complexes}. 
In those sections the Yoneda embedding $\A
\hookrightarrow \modA$ can be replaced by a monoidal embedding of $\A$
into any monoidal DG category $\B$ which is closed, cocomplete (closed
under small direct sums) and admits convolutions of twisted
complexes. The last condition can also be relaxed to just being
strongly pretriangulated. Any cocomplete strongly pretriangulated
category admits convolutions of one-sided bounded below and bounded
above twisted complexes, and these are the only unbounded complexes we
consider. 

From \S\ref{section-the-derived-category} onwards, 
we use this ambient category not just for unbounded twisted
complexes, but also to construct derived categories of 
$\Ainfty$-algebras and $\Ainfty$-coalgebras. Hence we 
switch from using $\modA$ to the fully general notation where
it is denoted by $\B$. We also make some additional assumptions on
$\B$, see the introduction to \S\ref{section-the-derived-category}. 
When $\A$ is small, all these assumptions are still satisfied by $\modA$. 

When $\A$ is not small, we could enlarge the universe to make it small
and then still use $\modA$. However, this would make all objects of 
$\A$ compact, and yield a wrong derived category. Indeed, this is 
the reason for the formalism of an arbitrary ambient category $\B$ 
satisfying a list of assumptions. For example when $\A = \modC$ for 
small $\C$, we can set $\B$ to be $\A$ itself.  

\section{$\Ainfty$-structures in monoidal DG categories via twisted complexes}
\label{section-ainfty-structures-in-monoidal-dg-categories}

\subsection{$\Ainfty$-algebras}
\label{section-Ainfty-algebras-in-a-monoidal-category}

The central idea of this paper is to use unbounded twisted 
complexes to reformulate the definitions 
of $\Ainfty$-algebras and modules 
\cite[\S2]{Lefevre-SurLesAInftyCategories}. 
Traditionally, $\Ainfty$-algebra formalism was defined 
in the DG category $\modk$ of DG complexes of $k$-modules 
with the monoidal structure given by the tensor product of complexes
\cite[\S2]{Lefevre-SurLesAInftyCategories}. For 
$\Ainfty$-categories, one extends this from $\modk$ to 
$k_S$-$\modd$-$k_S$ for some set $S$ 
\cite[\S5]{Lefevre-SurLesAInftyCategories}. Here $k_S$ is the category whose
object set is $S$ and whose morphisms are the scalar multiples
of identity. 

In $k_S$-$\modd$-$k_S$, the defining equations of an $\Ainfty$-algebra
involve the internal differential $m_1$ of the object as 
a complex of $k$-modules. In an arbitraty monoidal DG category $\A$, 
the objects do not apriori possess an internal differential, see the
discussion after Cor.~\ref{cor-equivalence-of-our-definition-and-usual-for-mod-k}. 
The language of twisted complexes allows to define 
an $\Ainfty$-algebra structure on $a \in \A$ 
while referring explicitly only to operations 
$\left\{ m_i \right\}_{i \geq 2}$:

\begin{defn}
\label{defn-algebra-bar-construction-in-a-monoidal-category}
Let $\A$ be a monoidal DG category, let $A \in \A$ and let 
$\left\{m_i\right\}_{i \geq 2}$ be a collection of degree $2-i$
morphisms $A^i \rightarrow A$. 
The \em (non-augmented) bar-construction $\infbarnaug(A)$ \rm of $A$ 
is the collection of objects $A^{i+1}$ for all $i \geq 0$ each placed
in degree $-i$ and of degree $k-1$ maps 
$d_{(i+k)i}\colon A^{i+k} \rightarrow A^i$ 
defined by
\begin{equation}
\label{eqn-differentials-in-non-aug-bar-construction}
d_{(i+k)i} := (-1)^{(i-1)(k+1)} \sum_{j = 0}^{i-1} (-1)^{jk} \id^{i-j-1}
\otimes m_{k+1} \otimes \id^{j}. 
\end{equation} 

\begin{tiny}
\begin{equation}
\label{eqn-nonaugmented-bar-construction-of-A-m_i}
\begin{tikzcd}[column sep = 2.5cm]
\dots
\ar{r}[']{\begin{smallmatrix}A^3 m_2  - A^2m_2A + \\ + Am_2A^2 - m_2 A^3 \end{smallmatrix}}
\ar[bend left=15]{rr}[description]{A^2m_3 + Am_3A +  m_3 A^2}
\ar[bend left=20]{rrr}[description]{Am_4 - m_4A}
\ar[bend left=25]{rrrr}[description]{m_5}
&
A^4
\ar{r}[']{A^2m_2 - A m_2 A + m_2 A^2}
\ar[bend left=15]{rr}[description]{-Am_3 - m_3A}
\ar[bend left=20]{rrr}[description]{m_4}
& 
A^3
\ar{r}[']{Am_2 - m_2A}
\ar[bend left=15]{rr}[description]{m_3}
&
A^2
\ar{r}[']{m_2}
&
\underset{\degzero}{A}
\end{tikzcd}
\end{equation}
\end{tiny}

\end{defn}

\begin{defn}
\label{defn-ainfty-algebra-in-a-monoidal-category}
Let $\A$ be a monoidal DG category. An \em $\Ainfty$-algebra $(A,m_i)$ \rm 
in $\A$ is an object $A \in \A$ equipped 
with operations $m_i \colon A^i \rightarrow A$ for all $i \geq 2$
which are degree $2-i$ morphisms in $\A$ such that their non-augmented 
bar-construction $\infbarnaug(A)$ is a twisted complex over $\A$. 
\end{defn}

Next, we compare 
Defn.~\ref{defn-ainfty-algebra-in-a-monoidal-category} 
with the standard definition of an $\Ainfty$-algebra:
\begin{defn}[\cite{Lefevre-SurLesAInftyCategories}, D{\'e}finition 1.2.1.1]
Let $S$ be a set and $\A = k_S\text{-}\modd\text{-}k_S$ with the
tensor product monoidal structure. 
An \em $\Ainfty$-algebra \rm in $\A$ is an object $A \in \A$
equipped with degree $2-k$ morphisms $m_k\colon A^{k} \rightarrow A$ 
for all $k \geq 1$ such that $m_1$ is the internal differential of $A$, and 
\begin{equation}
\label{eqn-Ainfty-algebra-definition-equalities}
\forall\;\; i \geq 1
\quad \quad 
\sum_{j+k+l = i} (-1)^{jk + l} m_{j+1+l} \circ (\id^j \otimes m_k \otimes \id^l) = 0 
\end{equation}
\end{defn}
 
By \cite[Lemme 1.2.2.1]{Lefevre-SurLesAInftyCategories}
this is equivalent to the following. 
By the universal property of free coalgebras, the morphism  
$ \bigoplus_{i \geq 1} (A[1])^i \rightarrow A[1] $
induced by $\sum_{i \geq 1} m_i$ defines a coderivation 
$b\colon 
\bigoplus_{i \geq 1} (A[1])^i \rightarrow \bigoplus_{i \geq 1}
(A[1])^i$.
Operations $(m_k)_{k \geq 1}$ define an $\Ainfty$-algebra structure on
$A$ if and only if $b$ is a differential on the free coalgebra 
$\bigoplus_{i \geq 1} (A[1])^i$, i.e. $b^2 = 0$. 

The coderivation of $\bigoplus_{i \geq 1} (A[1])^i$
induced by $m_1$ is its internal differential. 
On the other hand, the differentials in  
\eqref{eqn-nonaugmented-bar-construction-of-A-m_i}
were chosen to ensure the following: 

\begin{lemma}
\label{lemma-total-endomorphism-of-nonaug-bar-constr-is-a-coderivation-induced-by-m_i}
Let $\A$ be a monoidal DG category, let $A \in \A$ and let 
$m_k\colon A^k \rightarrow A$ be a collection of degree $2-k$
of morphisms in $\A$. In $\modA$, let 
$$ \mu \colon  \bigoplus_{i \geq 1} A^i[i-1] \rightarrow
\bigoplus_{i \geq 1} A^i[i-1] $$
be the sum of all the differentials in the non-augmented bar-construction 
\eqref{eqn-nonaugmented-bar-construction-of-A-m_i}. 

Denote by $w\colon A[1] \rightarrow A$ the degree $1$ map given 
by $\id_A$. For any $i \geq 0$, we can view $w^i$ as an isomorphism 
$(A[1])^i \xrightarrow{\sim} A^i[i]$. The resulting isomorphism 
in $\modA$
$$ \sum_{i\geq 1} w^i\colon \bigoplus_{i \geq 1} (A[1])^i \rightarrow
\bigoplus_{i \geq 1} A^i[i] $$
identifies the coderivation of $\bigoplus (A[1])^i$ induced by 
$\sum_{i \geq 2} m_i$ with $\mu[1]$. 
\end{lemma}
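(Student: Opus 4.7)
The plan is to match the two endomorphisms of $\bigoplus_{i \geq 1} A^i[i]$ component by component. It suffices to fix $n \geq 1$ and $k \geq 2$ and check that for all $j, l \geq 0$ with $j+k+l = n$ the $(j,k,l)$-summand of each side agrees.

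First I would recall how the coderivation $b$ on the free tensor coalgebra $T^c(A[1]) = \bigoplus_{i \geq 1}(A[1])^i$ is constructed from its corestriction $\sum_{k\geq 2} m_k[1]\colon T^c(A[1]) \to A[1]$. By cofreeness, $b$ restricted to $(A[1])^n$ decomposes as
$$b|_{(A[1])^n} = \sum_{j+k+l = n,\; k \geq 2} \id_{A[1]}^{\otimes j} \otimes (m_k[1]) \otimes \id_{A[1]}^{\otimes l},$$
where $m_k[1] = w^{-1} \circ m_k \circ w^{\otimes k}\colon (A[1])^k \to A[1]$ is the shifted operation and $w^{\otimes k}$ is taken with the Koszul sign rule.

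Next I would translate the $(j,k,l)$-summand through the isomorphism $\sum w^i$. The composite
$$A^n[n] \xrightarrow{(w^n)^{-1}} (A[1])^n \xrightarrow{\id^j \otimes (m_k[1]) \otimes \id^l} (A[1])^{n-k+1} \xrightarrow{w^{n-k+1}} A^{n-k+1}[n-k+1]$$
equals, as a map of graded objects in $\modA$, the naive tensor $\id^j \otimes m_k \otimes \id^l$ multiplied by a sign arising from Koszul commutation of the various copies of $w$ (each of degree one) past the $\id_A$ tensor factors, past $m_k$ (of degree $2-k$), and past each other. Pulling the $w^{-1}$'s of the rightmost arrow inward through $\id_{A[1]}^{\otimes j} \otimes (-) \otimes \id_{A[1]}^{\otimes l}$ to absorb into $w^{\otimes k}$ inside $m_k[1]$, and pulling the $w$'s of the leftmost arrow inward to cancel with the $w^{-1}$ in front of $m_k$, yields an overall sign depending polynomially on $j, k, l$. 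The claim is that this sign equals $(-1)^{(i-1)(k+1) + jk}$ with $i = j+1+l$, matching the prefactor in \eqref{eqn-differentials-in-non-aug-bar-construction} after accounting for the extra $[1]$-shift distinguishing $\mu[1]$ from $\mu$.

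The main obstacle is precisely this Koszul sign bookkeeping: the signs in \eqref{eqn-differentials-in-non-aug-bar-construction} were designed so that the shift isomorphism $\sum w^i$ intertwines $\mu[1]$ with the coderivation $b$, and verifying the agreement requires the computation to be carried out faithfully across all three source-target-middle arrows. Once the signs are pinned down summand by summand, the lemma follows by summing over $(j,k,l)$ and $n$, and by linearity, since both $\mu[1]$ and $b$ are determined on $\bigoplus A^i[i]$ by their $(j,k,l)$-components.
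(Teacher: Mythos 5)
Your strategy coincides with the paper's: restrict the cofreely induced coderivation $b$ to its components, conjugate by $\sum_{i\geq 1} w^i$, and track Koszul signs summand by summand. The problem is that your write-up stops exactly where the lemma's content begins. The only substantive assertion --- that the accumulated sign is $(-1)^{(i-1)(k+1)+jk}$ --- is introduced as ``the claim'' and then explicitly deferred (``the main obstacle is precisely this Koszul sign bookkeeping \dots verifying the agreement requires the computation to be carried out faithfully''). But this lemma \emph{is} that bookkeeping: the paper's proof consists precisely of setting $b_{k+1} := -s \circ m_{k+1} \circ w^{k+1}$, using that $w^n$ and $s^n$ are mutually inverse only up to the sign $w^n \circ s^n = (-1)^{n(n-1)/2}\id$ (a fact your notation $(w^n)^{-1}$ silently absorbs but which the computation must invoke), and pushing the shifts through the tensor factors to arrive at the components $(-1)^{1+(i-1)(k+1)+jk}\,\bigl(\id^{i-j-1}\otimes m_{k+1}\otimes \id^{j}\bigr)$, which match $\mu[1]=-\mu$. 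An outline that names the sign as the obstacle and does not resolve it is a plan, not a proof.

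Moreover, the one convention you do fix introduces a genuine sign error. You define the shifted operation as $m_k[1] = w^{-1}\circ m_k\circ w^{\otimes k}$, with no sign, whereas the paper's convention for the coderivation induced by $\sum m_i$ uses $b_k = -s\circ m_k\circ w^k$. With your convention the conjugated coderivation comes out with components exactly $(-1)^{(i-1)(k+1)+jk}\,\bigl(\id^{i-j-1}\otimes m_{k+1}\otimes\id^{j}\bigr)$, i.e.\ it equals $\mu$ itself, the literal sum of the differentials in \eqref{eqn-nonaugmented-bar-construction-of-A-m_i}, and \emph{not} $\mu[1]=-\mu$ as the lemma asserts. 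Your closing hedge ``after accounting for the extra $[1]$-shift distinguishing $\mu[1]$ from $\mu$'' cannot repair this, because $\mu[1]=-\mu$ is precisely the global sign at stake rather than an independent bookkeeping step one may invoke at the end: either you build the minus sign into the shifted operations (as the paper does), in which case your claimed target sign acquires an extra $-1$, or you keep your convention, in which case the statement you would prove differs from the lemma by a sign. This matters downstream, since Corollary \ref{cor-equivalence-of-our-definition-and-usual-for-mod-k} and Proposition \ref{prps-comparing-new-and-old-defns-of-ainfty-algebra} rely on the identification being with $\mu[1]$ on the nose.
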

Here and below we use the fact that 
the monoidal structure on $\A$ induces the monoidal structure on $\modA$
\cite[\S4.5]{GyengeKoppensteinerLogvinenko-TheHeisenbergCategoryOfACategory}.

\begin{proof}
Recall the definition of the coderivation of $\bigoplus_{i\geq 1} (A[1])^i$
induced by $\sum_{i \geq 2} m_i$. 
Let $s\colon A \rightarrow A[1]$ be the degree $-1$ map defined by
$\id_A$. It is the inverse of $w$, however  
$s^i \circ w^i = w^i \circ s^i = (-1)^{i(i-1)/2} \id_{A^k}$. 
Let $b_k\colon (A[1])^k \rightarrow A[1]$ be
$- s \circ m_k \circ w^k$. We have a degree $1$ map 
$ \sum_{i \geq 2} b_i\colon \bigoplus (A[1])^i \rightarrow A[1]$. 
By the universal property of free coalgebras 
it corresponds to the degree $1$ coderivation
$b$ of $\bigoplus_{i \geq 1} (A[1])^i$
whose components are the maps 
$(A[1])^{i+k} \rightarrow (A[1])^i$ with $k \geq 1$ given by 
$ \sum_{j = 0}^{i-1} \id^{i-j-1} \otimes b_{k+1} \otimes \id^j$. 

The isomorphism $\sum w^i$ and its inverse $\sum (-1)^{i(i-1)/2} s^i$
identify $b$ with the endomorphism of $\bigoplus_{i \geq 1} A^i[i]$ 
whose components $A^{i+k}[i+k] \rightarrow A^{i}[i]$
with $k \geq 1$ are 
\begin{small}
\begin{align*}
& \sum_{j = 0}^{i-1} (-1)^{(i+k)(i + k - 1)/2} w^i \circ (\id^{i-j-1} \otimes
b_{k+1} \otimes \id^j) \circ s^{i+k} = 
\\  
= & 
\sum_{j = 0}^{i-1} (-1)^{(i+k)(i + k - 1)/2 + 1 } w^i \circ 
(\id^{i-j-1} \otimes (s \circ m_{k+1} \circ w^{k+1}) \otimes \id^j) 
\circ s^{i+k} = 
\\
= &
\sum_{j = 0}^{i-1} (-1)^{(i+k)(i + k - 1)/2 + 1 + j } 
(w^{i-j-1} \otimes \id \otimes w^j) \circ 
(\id^{i-j-1} \otimes (m_{k+1} \circ w^{k+1}) \otimes \id^j) 
\circ s^{i+k} = 
\\
= &
\sum_{j = 0}^{i-1} (-1)^{(i+k)(i + k - 1)/2 + 1 + j + (i-1)(k+1) + j(k+1)} 
(\id^{i-j-1} \otimes m_{k+1} \otimes \id^j) 
\circ
w^{i+k} \circ s^{i+k} = 
\\
= &
\sum_{j = 0}^{i-1} (-1)^{1 + (i-1)(k+1) + jk)} 
(\id^{i-j-1} \otimes m_{k+1} \otimes \id^j). 
\end{align*}
\end{small}
This only differs from
\eqref{eqn-differentials-in-non-aug-bar-construction}
by a minus sign, whence the resulting endomorphism of 
$\bigoplus_{i \geq 1} A^i[i]$ equals $\mu[1] = -\mu$. 
\end{proof}

Our Defn.~\ref{defn-ainfty-algebra-in-a-monoidal-category} asks 
for \eqref{eqn-nonaugmented-bar-construction-of-A-m_i} to be a twisted
complex, that is –– adding $\mu$ to the internal differential of 
$\bigoplus_{i \geq 1} A^i[i-1]$ gives 
a new differential on the latter. This is equivalent to the condition
that adding $\mu[1]$ to the internal differential of 
$\bigoplus_{i \geq 1} A^i[i]$ also gives a new differential on the latter. 
The isomorphism $\sum_{i \geq 1} w^i$ identifies the internal
differential of $\bigoplus_{i \geq 1} A^i[i]$ with the internal
differential of $\bigoplus_{1 \geq 1} (A[1])^i$, i.e with the
coderivation induced by $m_1$. On the other hand, it identifies 
$\mu[1]$ with the coderivation induced by $\sum_{k \geq 2} m_k$.  
We thus obtain:
\begin{cor}
\label{cor-equivalence-of-our-definition-and-usual-for-mod-k}
For $\A$ either $\modk$ or $k_S$-$\modd$-$k_S$, the following
are equivalent:
\begin{enumerate}
\item Our Defn.~\ref{defn-ainfty-algebra-in-a-monoidal-category}
\item The usual definition \cite[D{\'e}finition
1.2.1.1]{Lefevre-SurLesAInftyCategories} of an $\Ainfty$-algebra. 
\item The coderivation induced by $\sum_{i \geq 1} m_i$ on 
the free coalgebra $\bigoplus_{i \geq 0} (A[1])^i$ is a differential. 
\end{enumerate}
\end{cor}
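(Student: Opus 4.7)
The plan is to establish the chain (1) $\Leftrightarrow$ (3) $\Leftrightarrow$ (2). The equivalence (3) $\Leftrightarrow$ (2) is literally \cite[Lemme 1.2.2.1]{Lefevre-SurLesAInftyCategories}, which is already recalled in the paragraph preceding the corollary. So the genuine content is (1) $\Leftrightarrow$ (3), and this is essentially a bookkeeping exercise built on Lemma \ref{lemma-total-endomorphism-of-nonaug-bar-constr-is-a-coderivation-induced-by-m_i}.

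For (1) $\Leftrightarrow$ (3), I would first unfold Definition \ref{defn-ainfty-algebra-in-a-monoidal-category} via Definition \ref{defn-unbounded-twisted-complexes}: the bar construction \eqref{eqn-nonaugmented-bar-construction-of-A-m_i} is a twisted complex over $\A$ precisely when the operator $d_{\mathrm{int}} + \mu$ squares to zero on $\bigoplus_{i \geq 1} A^i[i-1]$ in $\modA$, where $d_{\mathrm{int}}$ is the internal differential induced by Yoneda and $\mu$ is the total map of Lemma \ref{lemma-total-endomorphism-of-nonaug-bar-constr-is-a-coderivation-induced-by-m_i}. Shifting once and passing to $\bigoplus_{i \geq 1} A^i[i]$, the condition becomes $(d_{\mathrm{int}} + \mu[1])^2 = 0$ on that object. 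Now apply the isomorphism $\sum w^i\colon \bigoplus (A[1])^i \xrightarrow{\sim} \bigoplus A^i[i]$: by Lemma \ref{lemma-total-endomorphism-of-nonaug-bar-constr-is-a-coderivation-induced-by-m_i} it identifies $\mu[1]$ with the coderivation induced by $\sum_{i \geq 2} m_i$, while, crucially, it identifies $d_{\mathrm{int}}$ with the coderivation induced by $m_1$ (see below). Induced coderivations being additive in the generating data, the total operator corresponds to the coderivation $b$ induced by $\sum_{i \geq 1} m_i$. Hence (1) holds iff $b^2 = 0$, which is (3).

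The main step requiring care is the identification, under $\sum w^i$, of the Yoneda-induced internal differential on $\bigoplus A^i[i] \in \modA$ with the coderivation induced by $m_1$ on $\bigoplus (A[1])^i$. This is where the hypothesis $\A \in \{\modk,\; k_S\text{-}\modd\text{-}k_S\}$ is actually used: objects of $\A$ are genuine DG complexes, the Yoneda embedding $\A \hookrightarrow \modA$ is a monoidal DG functor, and it sends the internal differential of $A \in \A$ (which is $m_1$ by convention) to the internal differential of the representable module. Compatibility of Yoneda with the monoidal structure then propagates this identification to all tensor powers, and a sign computation analogous to the one inside the proof of Lemma \ref{lemma-total-endomorphism-of-nonaug-bar-constr-is-a-coderivation-induced-by-m_i} (using $s$, $w$ and the Koszul sign rule) matches it with the Leibniz-extended coderivation of $m_1$. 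Once this matching is in place, the corollary is immediate.
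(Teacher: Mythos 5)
Your proposal is correct and follows essentially the same route as the paper: the paper's argument (in the paragraph preceding the corollary) likewise unfolds the twisted complex condition as $(d_{\mathrm{int}}+\mu)^2=0$, shifts to $\bigoplus_{i\geq 1} A^i[i]$, and uses the isomorphism $\sum w^i$ of Lemma \ref{lemma-total-endomorphism-of-nonaug-bar-constr-is-a-coderivation-induced-by-m_i} to identify the internal differential with the coderivation induced by $m_1$ and $\mu[1]$ with the coderivation induced by $\sum_{k\geq 2} m_k$, then invokes \cite[Lemme 1.2.2.1]{Lefevre-SurLesAInftyCategories} for the equivalence with the classical definition. Your extra care about where the hypothesis $\A \in \{\modk,\, k_S\text{-}\modd\text{-}k_S\}$ enters (objects having genuine internal differentials compatible with the monoidal structure, so that $d_{\mathrm{int}}$ is the Leibniz coderivation of $m_1$) is exactly the point the paper treats implicitly.
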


When $\A$ is an arbitrary monoidal DG category, its objects do 
not apriori have internal differentials. This is fixed by the 
Yoneda embedding $\A \hookrightarrow \modA$ which sends $A$ to 
$\homm_\A(-,A)$. The latter does have an internal differential
given by the differential of $\A$. However it doesn't 
respect the right action of $\A$ by pre-composition, and so  
isn't a morphism in $\modA$, but only in $\modd\text{-}k_\A$. 

For an arbitrary monoidal category, 
our Defn.~\ref{defn-ainfty-algebra-in-a-monoidal-category} is still
equivalent to the condition that adding the coderivation induced 
by $\sum_{i \geq 2} m_i$ to the natural differential of  
$\bigoplus_{i \geq 1} (A[1])^i$ gives a new differential. 
Thus $\Ainfty$-structures on $A$ are still in
bijection with differentials on $\bigoplus_{i \geq 1} (A[1])^i$ 
generalising \cite[Lemme 1.2.2.1]{Lefevre-SurLesAInftyCategories}. 

What about the definition in terms of the explicit
equalities \eqref{eqn-Ainfty-algebra-definition-equalities} for 
$(m_k)_{k \geq 1}$ to satisfy? Is 
our Defn.~\ref{defn-ainfty-algebra-in-a-monoidal-category}
equivalent to the internal differential $m_1$ of $A$
and the higher operations $(m_k)_{k \geq 2}$ satisfying
\eqref{eqn-Ainfty-algebra-definition-equalities} in some appropriate
category. No: we can not even write down
\eqref{eqn-Ainfty-algebra-definition-equalities}
for general $\A$. Since $m_1$ is not a morphism in $\modA$, it is not, apriori,
compatible with its monoidal structure. 
For example, we can't even define the operation $\id \otimes m_1$ on 
the module $\homm_\A(-,A^2)$. 
Similarly, we can't define any of the maps 
$\id^{i-l-1} \otimes m_1 \otimes \id^l: A^{i} \rightarrow A^{i}$ 
in \eqref{eqn-Ainfty-algebra-definition-equalities}. 

However, even though individual maps $\id^{i-l-1} \otimes m_1 \otimes \id^l$
do not exist, we can still make sense of their sum
$\sum_{i=0}^{i-1} \id^{i-l-1} \otimes m_1 \otimes \id^l$.
When $\A = \modk$, this is the natural differential $d_{\A}$ on 
$\homm_\A(-,A^i)$. Hence the sum of all terms involving $m_1$ in 
\eqref{eqn-Ainfty-algebra-definition-equalities} equals $d_\A m_i$.
Rewriting \eqref{eqn-Ainfty-algebra-definition-equalities} to 
replace the former with the latter yields those twisted
complex conditions for \eqref{eqn-nonaugmented-bar-construction-of-A-m_i}   
which involve the differentials of the arrows labelled with $m_i$.  
This is the way to compare  
Defn.~\ref{defn-ainfty-algebra-in-a-monoidal-category} with the 
original definition \cite[D{\'e}finition
1.2.1.1]{Lefevre-SurLesAInftyCategories} of an $\Ainfty$-algebra:

\begin{prps}
\label{prps-comparing-new-and-old-defns-of-ainfty-algebra}
Let $\A$ be a monoidal DG category, let $A \in \A$ and let 
$(m_i)_{i \geq 2}$  be a collection of degree $2-i$
morphisms $A^i \rightarrow A$ in $\A$. 

\begin{enumerate}
\item 
\label{item-defining-equalities-of-Ainfty-are-subset-of-twisted-complex-conditions}
Take the defining equalities 
\eqref{eqn-Ainfty-algebra-definition-equalities} in the original 
definition of $\Ainfty$-algebra and replace all terms involving $m_1$ with $d_\A m_i$:
\begin{equation}
\label{eqn-defining-equalities-for-new-definition-of-Ainfinity-algebra}
\forall\; i \geq 2 \quad\quad 
d_\A m_i + \sum_{\begin{smallmatrix}j+k+l = i, \; k \geq
2\end{smallmatrix}} (-1)^{jk+l} m_{j+1+l} \circ \left(\id^j \otimes
m_k \otimes \id^l\right) = 0. 
\end{equation}
The resulting equalities are a subset of twisted complex conditions  \eqref{eqn-the-twisted-complex-condition} for non-augmented bar
construction \eqref{eqn-nonaugmented-bar-construction-of-A-m_i}. It consists of those conditions which involve differentiating the twisted differentials $m_i\colon A^k \rightarrow A$.
 
\item 
\label{item-twcx-condition-on-m-i-imply-the-rest}
If the equalities \eqref{eqn-defining-equalities-for-new-definition-of-Ainfinity-algebra} hold, then so do the rest of the twisted complex conditions for \eqref{eqn-nonaugmented-bar-construction-of-A-m_i}. 
Thus  $(A,m_i)$ is an $\Ainfty$-algebra in
the sense of Defn.~\ref{defn-ainfty-algebra-in-a-monoidal-category} if and only if $m_i$ satisfy the equalities 
\eqref{eqn-defining-equalities-for-new-definition-of-Ainfinity-algebra}. 
\end{enumerate} 
\end{prps}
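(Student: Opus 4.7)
The plan is to separate the twisted complex conditions for the non-augmented bar construction \eqref{eqn-nonaugmented-bar-construction-of-A-m_i} into two groups according to the target of the arrow: those with target $A$ (position $\degzero$) and those with target $A^i$ for $i \geq 2$. I will show that the former are exactly the equalities \eqref{eqn-defining-equalities-for-new-definition-of-Ainfinity-algebra}, and that they formally imply the latter via a coderivation argument.

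For part \eqref{item-defining-equalities-of-Ainfty-are-subset-of-twisted-complex-conditions}, I would write out the twisted complex condition \eqref{eqn-the-twisted-complex-condition} for each arrow $A^{n+1} \to A$ with $n \geq 1$. Because the target sits in position $0$ and because the sum in \eqref{eqn-differentials-in-non-aug-bar-construction} collapses to the single summand $m_{n+1}$ when $i=1$, the condition reads
\[
d_\A m_{n+1} + \sum_{k=1}^{n-1} m_{k+1} \circ d_{(n+1)(k+1)} = 0.
\]
Expanding $d_{(n+1)(k+1)}$ via \eqref{eqn-differentials-in-non-aug-bar-construction} and making the substitution $j' = k-j$, $k' = n-k+1$, $l' = j$, one checks that $j'+k'+l' = n+1$, $j'+1+l'=k+1$, and $k' \geq 2$, so that every summand takes the shape $m_{j'+1+l'} \circ (\id^{j'} \otimes m_{k'} \otimes \id^{l'})$ appearing in \eqref{eqn-defining-equalities-for-new-definition-of-Ainfinity-algebra}. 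A direct mod-$2$ calculation shows that the sign $(-1)^{k(n-k+1) + j(n-k)}$ coming from the bar construction equals $(-1)^{j'k' + l'}$ from \eqref{eqn-defining-equalities-for-new-definition-of-Ainfinity-algebra}, identifying the above equation with the $(n+1)$-st equation of \eqref{eqn-defining-equalities-for-new-definition-of-Ainfinity-algebra}. Since $i=1$ is precisely the case where $d_{(n+1)1}$ reduces to the single morphism $m_{n+1}\colon A^{n+1}\to A$, these are also the only twisted complex conditions whose $d\alpha$-term is a single $d_\A m_k$ rather than a sum of tensored copies.

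For part \eqref{item-twcx-condition-on-m-i-imply-the-rest}, I would pass to $\modA$ via the Yoneda embedding (or to an appropriate ambient category $\B$ as in \S\ref{section-unbounded-twisted-complexes}) and invoke Lemma \ref{lemma-total-endomorphism-of-nonaug-bar-constr-is-a-coderivation-induced-by-m_i}. The whole of \eqref{eqn-nonaugmented-bar-construction-of-A-m_i} is a twisted complex if and only if $(d + \mu)^2 = 0$ as an endomorphism of $\bigoplus_{i \geq 1} A^i[i-1]$ in $\modA$. Shifting by $[1]$ and applying the isomorphism $\sum w^i$, this becomes $(d + b)^2 = 0$ on the free tensor coalgebra $\bigoplus_{i \geq 1} (A[1])^i$, where $b$ is the coderivation induced by $\sum_{k\geq 2} m_k$. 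The natural differential $d$ on $\bigoplus_{i \geq 1} (A[1])^i$ is itself a coderivation of the free coalgebra structure, hence so is $(d + b)^2$. By the universal property of the free tensor coalgebra, such a coderivation vanishes if and only if its composition with the projection onto the cogenerators $A[1]$ vanishes. That projection records precisely the twisted complex conditions for arrows ending at $A$, i.e. the $i=1$ conditions matched with \eqref{eqn-defining-equalities-for-new-definition-of-Ainfinity-algebra} in part \eqref{item-defining-equalities-of-Ainfty-are-subset-of-twisted-complex-conditions}.

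The main obstacle is the sign bookkeeping in part \eqref{item-defining-equalities-of-Ainfty-are-subset-of-twisted-complex-conditions}: reconciling the signs $(-1)^{j'k'+l'}$ in \eqref{eqn-defining-equalities-for-new-definition-of-Ainfinity-algebra} with the signs $(-1)^{(i-1)(k+1)+jk}$ built into \eqref{eqn-differentials-in-non-aug-bar-construction} and the $(-1)^q$ appearing in \eqref{eqn-the-twisted-complex-condition} is routine but error-prone. The coderivation step in part \eqref{item-twcx-condition-on-m-i-imply-the-rest} is conceptual and rests on standard properties of free tensor coalgebras; the only subtle point is to verify that the natural differential on $\bigoplus_{i \geq 1}(A[1])^i$ in $\modA$ is itself a coderivation for the free coalgebra structure, which is immediate from the fact that the monoidal structure on $\A$ extends to $\modA$ as used throughout \S\ref{section-ainfty-structures-in-monoidal-dg-categories}.
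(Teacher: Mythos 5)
Your proposal is correct and takes essentially the same route as the paper's own proof: part \eqref{item-defining-equalities-of-Ainfty-are-subset-of-twisted-complex-conditions} is the same expansion of the twisted complex conditions for the arrows $A^i \rightarrow A$ followed by the same reindexing (your $j' = k-j$, $k' = n-k+1$, $l' = j$ is the paper's substitution $l = q$, $j = p-q-1$, $k = i-p+1$ up to renaming, and your mod-$2$ sign check indeed comes out to $2j(n-k) \equiv 0$). Part \eqref{item-twcx-condition-on-m-i-imply-the-rest} likewise matches the paper: it identifies $\mu[1]$ with the coderivation $b$ via Lemma \ref{lemma-total-endomorphism-of-nonaug-bar-constr-is-a-coderivation-induced-by-m_i} and invokes the universal property that a coderivation of the free tensor coalgebra $\bigoplus_{i \geq 1}(A[1])^i$ vanishes if and only if its corestriction to the cogenerators $A[1]$ does, your $(d+b)^2$ being exactly the paper's $d_\A(b) + b \circ b$.
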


\begin{proof}
\underline{\eqref{item-defining-equalities-of-Ainfty-are-subset-of-twisted-complex-conditions}}:
Let $i \geq 2$. Taking the formula \eqref{eqn-differentials-in-non-aug-bar-construction} for the twisted differentials in the non-augmented bar-construction \eqref{eqn-nonaugmented-bar-construction-of-A-m_i} and using it to write out the twisted complex condition \eqref{eqn-the-twisted-complex-condition} for the twisted differential $m_i: A^i \rightarrow A$ in \eqref{eqn-nonaugmented-bar-construction-of-A-m_i} we obtain:
\begin{scriptsize}
\begin{align*}
& d_\A m_i + \sum_{p = 2}^{i-1} m_p \circ 
\left(
(-1)^{(p-1)(i-p+1)}
\sum_{q = 0}^{p-1}
(-1)^{q(i-p)} \id^{p-q-1} \otimes m_{i-p+1} \otimes \id^q 
\right) = 0 \; \Leftrightarrow
\\
\Leftrightarrow \;\; & d_\A m_i + 
\sum_{p = 2}^{i-1}
\sum_{q = 0}^{p-1}
(-1)^{(p-q+1)(i-p+1)+ q}
 m_p \circ 
 \id^{p-q-1} \otimes m_{i-p+1} \otimes \id^q 
  = 0.
\end{align*}
\end{scriptsize}
A change of summation indices $l = q$, $j = p-q-1$, and $k = i - j - l = i - p + 1$ transforms the above into the equality 
\eqref{eqn-defining-equalities-for-new-definition-of-Ainfinity-algebra}, as desired. 

\underline{\eqref{item-twcx-condition-on-m-i-imply-the-rest}}:
Let $\mu$ be the endomorphism of 
$\bigoplus_{i \geq 1} A^i[i-1]$ defined by the maps in 
\eqref{eqn-nonaugmented-bar-construction-of-A-m_i}. The twisted
complex condition \eqref{eqn-the-twisted-complex-condition} for each 
map $A^i \rightarrow A^j$ in \eqref{eqn-nonaugmented-bar-construction-of-A-m_i}
is equivalent to the vanising of 
the $A^i[i-1] \rightarrow A^j[j-1]$ component of 
\begin{equation}
\label{eqn-twisted-complex-condition-on-mu}
d_\A (\mu) + \mu \circ \mu. 
\end{equation}
It suffices therefore to show that 
\eqref{eqn-twisted-complex-condition-on-mu} is completely determined
by its  $A^k[k-1] \rightarrow A$ components. 
By Lemma  
\ref{lemma-total-endomorphism-of-nonaug-bar-constr-is-a-coderivation-induced-by-m_i},
natural isomorphisms $w^i\colon (A[1])^i \rightarrow A^i[i]$ identify 
$\mu[1]$ with the coderivation $b$ of $\bigoplus (A[1])^i$ induced by 
$\sum_{i \geq 2} m_i$. By the universal property of tensor coalgebras, 
any coderivation of $\bigoplus (A[1])^i$ is uniquely determined by its
$(A[1])^k \rightarrow A[1]$ components. In particular, this is true of
$d_\A (b) + b \circ b$. Hence, it is also true of 
$d_\A (\mu[1]) + \mu[1] \circ \mu[1]$, and thus of
\eqref{eqn-twisted-complex-condition-on-mu}. 
\end{proof}

We next define morphisms of $\Ainfty$-algebras in $\A$ in a similar
way:

\begin{defn} 
\label{defn-the-bar-construction-of-a-morphism-of-ainfty-algebras}
Let $(A, m_k)$ and $(B, n_k)$ be $\Ainfty$-algebras in $\A$. 
Let $(f_i)_{i \geq 1}$ be a collection of degree $1 - i$ 
morphisms $A^i \rightarrow B$. 
The \em bar-construction \rm $\infbar(f_\bullet)$ is the morphism 
$\infbarnaug(A) \rightarrow \infbarnaug(B)$ in $\pretriagmns(\A)$
whose $A^{i+k} \rightarrow B^i$ component is 
$$ \sum_{t_1 + \dots + t_i = i + k} 
(-1)^{\sum_{l=2}^{i}(1-t_l)\sum_{n=1}^l t_n}
f_{t_1}\otimes\ldots \otimes f_{t_i}. $$
\begin{small}
\begin{equation*}
\begin{tikzcd}[column sep = 2.6cm, row sep = 2cm]
\dots
\ar{r}
&
A^4
\ar{r}
\ar{d}[description, pos = 0.80]{f_1f_1f_1f_1}
\ar{dr}[description, pos = 0.60]{f_1f_1f_2 - f_1 f_2 f_1 + f_2 f_1 f_1}
\ar{drr}[description, pos = 0.30]{f_1f_3 + f_2 f_2 + f_3 f_1}
\ar{drrr}[description, pos = 0.10]{f_4}
&
A^3
\ar{r}
\ar{d}[description, pos = 0.80]{f_1f_1f_1}
\ar{dr}[description, pos = 0.60]{-f_1f_2 + f_2 f_1}
\ar{drr}[description, pos = 0.30]{f_3}
&
A^2
\ar{r}
\ar{d}[description, pos = 0.80]{f_1f_1}
\ar{dr}[description, pos = 0.60]{f_2}
&
A
\ar{d}[description, pos = 0.80]{f_1}
\\
\dots
\ar{r}
&
B^4
\ar{r}
&
B^3
\ar{r}
&
B^2
\ar{r}
&
B.
\end{tikzcd}
\end{equation*}
\end{small}
\end{defn}
\begin{defn}
\label{defn-morphism-of-ainfty-algebras}
A morphism $f_\bullet\colon (A,m_k) \rightarrow (B, n_k)$ of 
$\Ainfty$-algebras is a collection 
$(f_i)_{i \geq 1}$ of degree $1 - i$ morphisms $A^i \rightarrow B$ 
whose bar-construction is a closed degree $0$ morphism of twisted
complexes. 
\end{defn}

As before, this definition translates into the language of twisted
complexes the corresponding morphism of tensor coalgebras. We also 
have the following:

\begin{prps}
\label{prps-defining-equalities-of-Ainfty-morphism}
Let $\A$ be a monoidal DG category,
let $(A, m_k)$ and $(B, n_k)$ be $\Ainfty$-algebras in $\A$, 
and let $(f_i)_{i \geq 1}$ be a collection of degree $1 - i$ morphisms 
$A^i \rightarrow B$.

\begin{enumerate}
\item 
\label{item-defining-equalities-of-Ainfty-morphism-are-subset-of-twisted-complex-conditions}
Take the defining equalities in the  
definition of a morphism of $\Ainfty$-algebras in
\cite[Defn.~1.2.1.2]{Lefevre-SurLesAInftyCategories} 
and replace all terms involving $m_1$ with $d_\A f_i$:
\begin{scriptsize}
\begin{align}
\label{eqn-defining-equalities-for-new-definition-of-Ainfinity-algebra-morphism}
\forall\; i \geq 1 \quad\quad 
d_\A f_i & + 
\sum_{i_1 + \dots + i_r = i} (-1)^{\sum_{2\leq u < r}\left( (1-i_u)
\sum_{1 \leq v \leq u} i_v \right)} m_r \circ \left(f_{i_1} \otimes \dots
\otimes f_{i_r}\right) - 
\\
\nonumber
& -  
\sum_{\begin{smallmatrix}j+k+l = i, \\ k \geq 2\end{smallmatrix}}
(-1)^{jk+l} f_{j+1+l} \circ \left(\id^j \otimes m_k \otimes
\id^l\right) = 0. 
\end{align}
\end{scriptsize}
The resulting equalities are a subset of the set of equalities which 
equate each component $A^i \rightarrow B^j$ 
of the twisted complex map $d\infbar(B_\bullet)$ with zero. 
This subset consists of the equalities for the components $A^i \rightarrow B$.
 
\item 
\label{item-closed-condition-on-f-i-imply-the-rest}
If the equalities 
\eqref{eqn-defining-equalities-for-new-definition-of-Ainfinity-algebra-morphism}
hold, then so do the rest of the equalities for the components of
$d\infbar(B_\bullet)$ to be zero. 
Thus $f_\bullet$ is a morphism of $\Ainfty$-algebra in
the sense of Defn.~\ref{defn-morphism-of-ainfty-algebras}
if and only if $f_i$ satisfy the equalities 
\eqref{eqn-defining-equalities-for-new-definition-of-Ainfinity-algebra-morphism}. 
\end{enumerate} 
\end{prps}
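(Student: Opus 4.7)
The plan is to follow the same two-step strategy used in the proof of Proposition~\ref{prps-comparing-new-and-old-defns-of-ainfty-algebra}. Part~\eqref{item-defining-equalities-of-Ainfty-morphism-are-subset-of-twisted-complex-conditions} will be a direct sign-bookkeeping computation, while Part~\eqref{item-closed-condition-on-f-i-imply-the-rest} will follow from the universal property of free tensor coalgebras: the vanishing of the $A^i \to B$ components of $d\infbar(f_\bullet)$ forces the vanishing of all the $A^i \to B^j$ components.

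For Part~\eqref{item-defining-equalities-of-Ainfty-morphism-are-subset-of-twisted-complex-conditions}, I apply formula~\eqref{eqn-the-hom-complex-of-twisted-complexes-differential} to $\infbar(f_\bullet)$ regarded as a morphism $\infbarnaug(A) \to \infbarnaug(B)$ in $\twcxmns(\A)$. The resulting $A^i \to B$ component splits into three kinds of summands: the $d_\A f_i$ contribution coming from the ambient differential; terms of the form $n_r \circ (f_{i_1} \otimes \dots \otimes f_{i_r})$ arising by post-composing $\infbar(f_\bullet)$ with those twisted differentials of $\infbarnaug(B)$ that land in $B$; and terms of the form $f_{j+1+l} \circ (\id^j \otimes m_k \otimes \id^l)$ with $k \geq 2$ arising by pre-composing with the twisted differentials of $\infbarnaug(A)$. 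A reindexing identical to the one used in the algebra case in Prop.~\ref{prps-comparing-new-and-old-defns-of-ainfty-algebra}, together with an accounting of the sign factor $(-1)^{\sum_{l=2}^{i}(1-t_l)\sum_{n=1}^l t_n}$ introduced in Defn.~\ref{defn-the-bar-construction-of-a-morphism-of-ainfty-algebras}, then identifies this expression with the left-hand side of~\eqref{eqn-defining-equalities-for-new-definition-of-Ainfinity-algebra-morphism}.

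For Part~\eqref{item-closed-condition-on-f-i-imply-the-rest}, I prove a morphism-level analogue of Lemma~\ref{lemma-total-endomorphism-of-nonaug-bar-constr-is-a-coderivation-induced-by-m_i}. The shift isomorphisms $w^i\colon (A[1])^i \xrightarrow{\sim} A^i[i]$ and their $B$-counterparts identify $\infbar(f_\bullet)$ with the map of free tensor coalgebras $\bigoplus_{i \geq 1} (A[1])^i \to \bigoplus_{j \geq 1} (B[1])^j$ induced by $\sum_i f_i$ via the universal property. Under this identification, $d\infbar(f_\bullet)$ becomes a coderivation along this coalgebra morphism, and the universal property of the free tensor coalgebra says that any such coderivation is uniquely determined by its corestriction to the cogenerator $B[1]$, which is precisely the tuple of $A^i \to B$ components of $d\infbar(f_\bullet)$. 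So once those components vanish, $d\infbar(f_\bullet)$ vanishes identically.

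The main obstacle will be the sign verification in Part~\eqref{item-defining-equalities-of-Ainfty-morphism-are-subset-of-twisted-complex-conditions}: the signs from Defn.~\ref{defn-the-bar-construction-of-a-morphism-of-ainfty-algebras} interact nontrivially with the signs in~\eqref{eqn-differentials-in-non-aug-bar-construction} and with the shift sign $(-1)^l$ in~\eqref{eqn-the-hom-complex-of-twisted-complexes-differential}, and checking that they combine to precisely the signs appearing in~\eqref{eqn-defining-equalities-for-new-definition-of-Ainfinity-algebra-morphism} --- on the nose rather than up to a global prefactor --- is the technical heart of the argument. Everything else is formal, parallel to what has already been established in the algebra case.
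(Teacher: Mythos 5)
Your proposal is correct and matches the paper's proof, which simply reads ``Similar to the proof of Prps.~\ref{prps-comparing-new-and-old-defns-of-ainfty-algebra}'': part one is the same reindexing-and-sign computation identifying the $A^i \to B$ components of $d\infbar(f_\bullet)$ with the equalities \eqref{eqn-defining-equalities-for-new-definition-of-Ainfinity-algebra-morphism}, and part two is the same coalgebra argument, with your $(F,F)$-coderivation-along-a-morphism observation being precisely the morphism-level analogue of Lemma~\ref{lemma-total-endomorphism-of-nonaug-bar-constr-is-a-coderivation-induced-by-m_i} that the paper's cross-reference implicitly invokes. Your reading of $m_r$ in \eqref{eqn-defining-equalities-for-new-definition-of-Ainfinity-algebra-morphism} as the operations $n_r$ of $B$ is also the correct interpretation.
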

\begin{proof}
Similar to the proof of
Prps.~\ref{prps-comparing-new-and-old-defns-of-ainfty-algebra}. 
% with the following difference. 
% In \eqref{item-closed-condition-on-f-i-imply-the-rest} we can not
% apply the universal property of tensor coalgebra morphisms to the
% differential of the coalgebra morphism 
% $F: \bigoplus (A[1])^i \rightarrow \bigoplus (B[1])^i$ induced by
% $\sum_{i\geq 1} f_i$. This is because $dF$ is not a coalgebra
% morphism. It is, however, an $(F,F)$-coderivation, Therefore
% by the universal property of the coderivations of tensor
% colagebras \cite[Lemme~1.1.2.2]{Lefevre-SurLesAInftyCategories} 
% the map $dF$ is uniquely determined by its $(A[1])^k \rightarrow B[1]$ 
% components. 
\end{proof}

\subsection{$\Ainfty$-modules}
\label{section-Ainfty-modules-in-a-monoidal-category}

We define left and right $\Ainfty$-modules using the same ideas 
we used to define $\Ainfty$-algebras in
\S\ref{section-Ainfty-modules-in-a-monoidal-category}. 
\begin{defn}
\label{defn-right-module-bar-construction-in-a-monoidal-category}
Let $(A,m_i)$ be an $\Ainfty$-algebra in a monoidal DG category $\A$. 
Let $E \in \A$ and let $\left\{p_i\right\}_{i \geq 2}$ be a collection 
of degree $2-i$ morphisms $E \otimes A^{i-1} \rightarrow E$. 
The \em right module bar-construction $\infbar(E)$ \rm of $(E,p_i)$
comprises objects $E \otimes A^{i}$ for $i \geq 0$ 
placed in degree $-i$ and degree $1-k$ maps 
$d_{(i+k)i}\colon E \otimes A^{i+k-1} \rightarrow E \otimes A^{i-1}$ 
with
\begin{scriptsize}
\begin{equation}
\label{eqn-differentials-in-right-module-bar-construction}
d_{(i+k)i} := (-1)^{(i-1)(k+1)} 
\left(\sum_{j = 0}^{i-2} \left( (-1)^{jk} \id^{i-j-1} \otimes m_{k+1} \otimes
\id^{j}\right) + (-1)^{(i-1)k}p_{k+1}\otimes \id^{i-1} \right). 
\end{equation} 
\end{scriptsize}
\begin{tiny}
\begin{equation}
\label{eqn-right-module-bar-construction-of-A-m_i}
\begin{tikzcd}[column sep = 2.4cm]
\dots
\ar{r}[']{\begin{smallmatrix}EA^2 m_2  - EAm_2A + \\ + Em_2A^2 - p_2 A^3 \end{smallmatrix}}
\ar[bend left=20]{rr}[description]{EA m_3 + Em_3A + p_3 A^2 }
\ar[bend left=25]{rrr}[description]{Em_4 - p_4A}
\ar[bend left=30]{rrrr}[description]{p_5}
&
EA^3
\ar{r}[']{EAm_2 - E m_2 A + p_2 A^2}
\ar[bend left=20]{rr}[description]{-Em_3 - p_3A}
\ar[bend left=25]{rrr}[description]{p_4}
& 
EA^2
\ar{r}[']{Em_2 - p_2A}
\ar[bend left=20]{rr}[description]{p_3}
&
EA
\ar{r}[']{p_2}
&
\underset{\degzero}{E}.
\end{tikzcd}
\end{equation}
\end{tiny}
\end{defn}

\begin{defn}
\label{defn-left-module-bar-construction-in-a-monoidal-category}
For $E \in \A$ and a collection 
$\left\{p_i\right\}_{i \geq 2}$ of
maps $ A^{i-1}\otimes E \rightarrow E$ of degree $2-i$,
its \em left module bar-construction $\infbar(E)$ \rm
comprises objects $A^{i}\otimes E$ for $i \geq 0$ placed
in degree $-i$ and degree $1-k$ maps 
$d_{(i+k)i}\colon A^{i+k-1}\otimes E \rightarrow A^{i-1}\otimes E$ 
with
 \begin{equation}
 \label{eqn-differentials-in-left-module-bar-construction}
 d_{(i+k)i} := (-1)^{(i-1)(k+1)} 
 \left(\sum_{j = 1}^{i-1} \left( (-1)^{jk} \id^{i-j-1} \otimes m_{k+1} \otimes
 \id^{j}\right) + \id^{i-1} \otimes p_{k+1} \right). 
 \end{equation} 
% 
% \begin{tiny}
% \begin{equation}
% \label{eqn-left-module-bar-construction-of-A-m_i}
% \begin{tikzcd}[column sep = 2.4cm]
% \dots
% \ar{r}[']{\begin{smallmatrix}A^3 p_2  - A^2m_2E + \\ + Am_2AE - m_2 A^2E \end{smallmatrix}}
% \ar[bend left=25]{rr}[description]{A^2 p_3 + Am_3E + m_3 AE }
% \ar[bend left=30]{rrr}[description]{Ap_4 - m_4E}
% \ar[bend left=35]{rrrr}[description]{p_5}
% &
% A^3E
% \ar{r}[']{A^2p_2 - A m_2 E + m_2 AE}
% \ar[bend left=25]{rr}[description]{-Ap_3 - m_3E}
% \ar[bend left=30]{rrr}[description]{p_4}
% & 
% A^2E
% \ar{r}[']{Ap_2 - m_2E}
% \ar[bend left=25]{rr}[description]{p_3}
% &
% AE
% \ar{r}[']{p_2}
% &
% \underset{\degzero}{E}.
% \end{tikzcd}
% \end{equation}
% \end{tiny}
\end{defn}

Note that the defining formulas
\eqref{eqn-differentials-in-right-module-bar-construction} 
of the right module and the left module bar-constructions can be
obtained from the algebra bar-construction formula 
\eqref{eqn-differentials-in-non-aug-bar-construction}
by replacing each $m_i$ with $p_i$ if its domain involves $E$.
 
\begin{defn}
Let $\A$ be a monoidal DG category and let $(A,m_i)$ be an
$\Ainfty$-algebra in $\A$. A \em right (resp. left) $\Ainfty$-module 
$(E, p_i)$ over $A$ \rm is an object $E \in \A$ and a collection  
$\left\{p_i\right\}_{i \geq 2}$ of degree $2-i$
morphisms $E \otimes A^{i-1} \rightarrow E$ (resp. $A^{i-1} \otimes E
\rightarrow E$) such that $\infbar(E)$ is a twisted complex. 
\end{defn} 

\begin{defn} 
\label{defn-right-module-bar-constructions-of-an-Ainfty-morphism}
Let $\A$ be a monoidal DG category and let $(A,m_i)$ be an
$\Ainfty$-algebra in $\A$.
Let $(E, p_k)$ and $(F, q_k)$ be right $\Ainfty$-modules over $A$ in $\A$. 

A \em degree $j$ morphism \rm $f_\bullet\colon (E,p_k) \rightarrow (F, q_k)$ of 
right $\Ainfty$-$A$-modules is a collection $(f_i)_{i \geq 1}$ of 
degree $j - i + 1$ morphisms $E \otimes A^{i-1} \rightarrow F$. Its
\em bar-construction \rm $\infbar(f_\bullet)$ is 
the morphism $\infbar(E) \rightarrow \infbar(F)$ in $\pretriagmns(\A)$
whose components are  
$$
E \otimes A^{i+k-1} \rightarrow F \otimes A^{i-1}\colon \;
(-1)^{j(i-1)}  f_{k+1}\otimes \id^{i-1}. $$
We illustrate the case when $f_\bullet$ is of odd degree:
\begin{small}
\begin{equation*}
\begin{tikzcd}[column sep = 2cm, row sep = 2cm]
\dots
\ar{r}
&
EA^3
\ar{r}
\ar{d}[description, pos = 0.85]{-f_1A^3}
\ar{dr}[description, pos = 0.65]{f_2 A^2}
\ar{drr}[description, pos = 0.30]{-f_3 A} 
\ar{drrr}[description, pos = 0.10]{f_4}
&
EA^2
\ar{r}
\ar{d}[description, pos = 0.85]{f_1 A^2}
\ar{dr}[description, pos = 0.70]{-f_2A}
\ar{drr}[description, pos = 0.30]{f_3}
&
EA
\ar{r}
\ar{d}[description, pos = 0.85]{-f_1A}
\ar{dr}[description, pos = 0.65]{f_2}
&
E
\ar{d}[description, pos = 0.85]{f_1}
\\
\dots
\ar{r}
&
FA^3
\ar{r}
&
FA^2
\ar{r}
&
FA
\ar{r}
&
F.
\end{tikzcd}
\end{equation*}
\end{small}
\end{defn}

The corresponding definition for the left $\Ainfty$-modules differs only
in signs:
\begin{defn} 
\label{defn-left-module-bar-constructions-of-an-Ainfty-morphism}
Let $\A$ be a monoidal DG category and let $(A,m_i)$ be an
$\Ainfty$-algebra in $\A$.
Let $(E, p_k)$ and $(F, q_k)$ be left $\Ainfty$-modules over $A$ in $\A$. 

A degree $j$ morphism $f_\bullet\colon (E,p_k) \rightarrow (F, q_k)$ of 
left $\Ainfty$-$A$-modules is a collection $(f_i)_{i \geq 1}$ of 
degree $j - i + 1$ morphisms $A^{i-1} \otimes E \rightarrow F$. 
Its \em bar-construction \rm $\infbar(f_\bullet)$ is 
the morphism $\infbar(E) \rightarrow \infbar(F)$ in $\pretriagmns(\A)$
whose components are 
$$ A^{i+k-1} \otimes E \rightarrow A^{i-1} \otimes F \colon \;
(-1)^{(j+k)(i-1)}  \id^{i-1} \otimes f_{k+1}. $$
\end{defn}

We define the DG categories of left and right modules over $A$ in the
unique way where the bar-constructions become
faithful functors into $\pretriagmns(\A)$:
\begin{defn}
Let $\A$ be a monoidal DG category and $A$ an $\Ainfty$-algebra in
$\A$. Define the \em DG category $\nodA$ of
right $\Ainfty$-$A$-modules in $\A$ \rm by:
\begin{itemize}
\item Its objects are right $\Ainfty$-$A$-modules in $\A$,
\item For any $E,F \in \obj \noddinf A$, 
$\homm^\bullet_{\noddinf A}(E,F)$
consists of $\Ainfty$-morphisms $f_\bullet\colon E \rightarrow F$
with their natural grading. The differential and
the composition are induced from those of their bar-constructions. 
\item The identity morphism of $E \in \noddinf A$ is the morphism 
$(f_\bullet)$ with $f_1 = \id_E$ and $f_{\geq 2} = 0$ whose
corresponding twisted complex morphism is $\id_{\infbar(E)}$. 
\end{itemize}
The \em DG category $\Anod$ of left $\Ainfty$-$A$-modules 
in $\A$ \rm is defined analogously.  
\end{defn}

\subsection{$\Ainfty$-bimodules}
\label{section-Ainfty-bimodules-in-a-monoidal-category}

The definitions for $\Ainfty$-bimodules are similar to those for left
and right $\Ainfty$-modules, but we define the bimodule bar-construction
to be a bigraded collection of objects and morphisms whose
totalisation can be identified with the coderivations and the
morphisms of the corresponding free bi-comodules. It 
is thus a twisted bicomplex, 
see \cite{AnnoLogvinenko-UnboundedTwistedComplexes}, \S 4.

\begin{defn}
\label{defn-bimodule-bar-construction-in-a-monoidal-category}
Let $(A,m_i)$ and $(B,n_i)$ be $\Ainfty$-algebras 
in a monoidal DG category $\A$. 
Let $M \in \A$ and let $\left\{p_{ij}\right\}_{i + j \geq 1}$ be a collection 
of degree $1-i-j$ morphisms $A^i \otimes M \otimes B^{j} \rightarrow M$. 
The \em bimodule bar-construction $\infbar(M)$ \rm 
comprises objects $A^i \otimes M \otimes B^{j}$ with $i,j \geq 0$ 
placed in bidegree $-i,-j$ and degree $1+p+q-k-l$ maps 
$A^{p} \otimes M \otimes B^{q} \rightarrow A^i \otimes M \otimes B^j$ 
best defined as the components of the maps 
$ A^{p} \otimes M \otimes B^{q} \longrightarrow 
\bigoplus_{i + j = p + q - k} A^i \otimes M \otimes B^j $
defined by
\begin{equation}
\label{eqn-differentials-in-the-bimodule-bar-construction}
(-1)^{(i+j)(k+1)} 
\sum_{r = 0}^{i+j} (-1)^{rk} \id^{i + j - r} \otimes (pmn)_{k+1}
\otimes \id^{r},
\end{equation} 
where $(pm)_{k+1}$ denotes the unique operation ––– 
either $p_{s,t}$ with $s+t = k$ or $m_{k+1}$ or $n_{k+1}$ --- 
that can be applied to the corresponding length $k+1$ factor
of $A^{p} \otimes M \otimes B^{q}$. 

% \begin{tiny}
% \begin{equation*}
% \begin{tikzcd}[row sep=3cm, column sep = 3.5cm]
% A^2MB^2
% \ar{r}[description]{A^2Mn_2 - A^2p_{01}B}
% \ar{d}[pos = 0.75, description]{Ap_{10}B^2 - m_2MB^2}
% \ar[bend left=10]{rr}[pos=0.8, description]{A^2p_{02}}
% \ar[bend right=35]{dd}[pos=0.85, description]{p_{20}B^2}
% \ar{rd}[pos = 0.7, description]{Ap_{11}B}
% \ar{rdd}[pos = 0.86, description]{-p_{21}B}
% \ar{rrd}[pos = 0.8, description]{Ap_{12}}
% \ar[bend right=10]{rrdd}[pos = 0.8, description]{p_{22}}
% &
% A^2MB
% \ar{r}[pos = 0.7, description]{A^2p_{01}}
% \ar{d}[pos = 0.78, description]{-Ap_{10}B + m_2MB}
% \ar[bend left=35]{dd}[pos = 0.86, description]{-p_{20}B}
% \ar{rd}[pos = 0.7, description]{-Ap_{11}}
% \ar{rdd}[pos=0.75, description]{p_{21}}
% &
% A^2 M
% \ar{d}[pos = 0.6, description]{Ap_{10} - m_2M}
% \ar[bend left=35]{dd}[pos=0.85, description]{p_{20}}
% \\
% AMB^2
% \ar{r}[pos = 0.65, description]{AMn_2 - Ap_{01}B}
% \ar{d}[description]{p_{10}B^2}
% \ar[bend left=10]{rr}[pos = 0.83, description]{-Ap_{02}}
% \ar{rd}[pos = 0.83, description]{-p_{11}B}
% \ar{rrd}[pos = 0.75, description]{p_{12}}
% &
% AMB
% \ar{r}[pos = 0.75, description]{Ap_{01}}
% \ar{d}[pos = 0.62, description]{-p_{10	}B}
% \ar{rd}[description]{p_{11}}
% &
% AM
% \ar{d}[description]{p_{10}}
% \\
% MB^2
% \ar{r}[pos = 0.75, description]{Mn_2 - p_{01}B}
% \ar[bend right = 10]{rr}[pos = 0.9, description]{p_{02}}
% &
% MB
% \ar{r}[pos = 0.55, description]{p_{01}}
% &
% M
% \end{tikzcd}
% \end{equation*}
% \end{tiny}
\end{defn}
Note that the defining formula
\eqref{eqn-differentials-in-the-bimodule-bar-construction}
in the bimodule bar-construction is
almost identical to the analogous 
formula \eqref{eqn-differentials-in-non-aug-bar-construction}
in the algebra bar-construction, only with 
the bimodule operations $p_{s,t}$ with $s+t = k$ being used 
along with the algebra operations $m_{k+1}$ and $n_{k+1}$ as approriate, 
and the formula now defining simultaneously the differentials 
from a given $A^{p} \otimes M \otimes B^q$ to 
all $A^{i} \otimes M \otimes B^j$ with $i + j = p + q - k$.   

\begin{defn}
An \em $\Ainfty$-bimodule \rm over $\Ainfty$-algebras $(A,m_i)$ and $(B,n_i)$ 
in a monoidal DG category $\A$ is
an object $M \in \A$ and a collection $\left\{p_{ij}\right\}_{i + j \geq 1}$ 
of degree $1-i-j$ morphisms $A^i \otimes M \otimes B^{j} \rightarrow M$
such that $\infbar(M)$ is a twisted complex. 
\end{defn}

The remaining definitions are then analogous to those for left and
right modules:

\begin{defn} 
Let $\A$ be a monoidal DG category and let $(A,m_i)$ and $(B, n_i)$ be 
$\Ainfty$-algebras in $\A$. Let $(M, p_{ij})$ and $(N, q_{ij})$ be
$\Ainfty$-$A$-$B$-bimodules. 

A \em degree $k$ morphism \rm $f_{\bullet\bullet}\colon 
(M,p_{ij}) \rightarrow (N, q_{ij})$ of 
$\Ainfty$-$A$-$B$-bimodules is a collection $(f_{lm})_{l + m \geq 0}$ of 
degree $k - l - m$ morphisms $A^l \otimes M \otimes B^{m} \rightarrow N$. 
Its \em bar-construction \rm $\infbar(f_{\bullet\bullet})$ is 
the morphism $\infbar(M) \rightarrow \infbar(N)$ in $\twbiosmns(\A)$
whose components are  
$$
A^{i+l} \otimes M \otimes B^{j+m} \rightarrow A^i \otimes N \otimes B^{j}\colon \;
(-1)^{i(l+m) + k (i+j)}  \id^{i} \otimes f_{l,m} \otimes \id^{j}. $$
% We illustrate the case when $f_{\bullet\bullet}$ is of odd degree:
% \begin{equation}
% \begin{tikzcd}[row sep = 1.25cm]
% AMB
% \ar[dotted]{r}
% \ar[dotted]{d}
% \ar[dotted]{dr}
% \ar[bend right = 15]{ddrr}[pos = 0.6, description]{Af_{00}B}
% \ar[bend right = 20]{dddrrr}[pos = 0.52, description]{f_{11}}
% \ar[bend right = 17]{dddrr}[pos = 0.525, description]{- f_{10}B}
% \ar[bend left = 10]{ddrrr}[pos = 0.6, description]{Af_{01}}
% &
% AM
% \ar[dotted]{d}
% \ar[bend left = 25]{ddrr}[pos=0.5, description]{-Af_{00}}
% \ar[bend left = 27]{dddrr}[pos=0.45, description]{f_{10}}
% &
% &
% \\
% MB
% \ar[dotted]{r}
% \ar[bend right = 25]{ddrr}[pos=0.35, description]{-f_{00}B~}
% \ar[bend right = 25]{ddrrr}[pos=0.35, description]{f_{01}}
% &
% M
% \ar[bend left = 17]{ddrr}[pos = 0.3, description]{f_{00}}
% &
% &
% \\
% &
% &
% ANB
% \ar[dotted]{r}
% \ar[dotted]{d}
% \ar[dotted]{dr}
% &
% AN
% \ar[dotted]{d}
% \\
% &
% &
% NB
% \ar[dotted]{r}
% &
% N.
% \end{tikzcd}
% \end{equation}
% 
 \end{defn}
 
\begin{defn}
Let $(A,m_i)$ and $(B,n_i)$ be $\Ainfty$-algebras in a monoidal DG category 
$\A$. Define the \em DG category $\AnodB$ of
$\Ainfty$-$A$-$B$-bimodules in $\A$ \rm by:
\begin{itemize}
\item Its objects are $\Ainfty$-$A$-$B$-bimodules in $\A$,
\item For any $M,N \in \obj A\text{-}\noddinf\text{-}B$, the complex 
$\homm^\bullet_{A\text{-}\noddinf\text{-}B}(M,N)$
consists of $\Ainfty$-morphisms $f_{\bullet\bullet}\colon M \rightarrow N$
with their natural grading. The differential and the composition
are induced from those of their bar-constructions.
\item The identity morphism of $M \in A\text{-}\noddinf\text{-}B$ is 
the morphism $(f_{\bullet\bullet})$ with 
$f_{00} = \id_M$ and $f_{ij} = 0$ for $i + j \geq 1$ whose
corresponding twisted complex morphism is $\id_{\infbar(M)}$. 
\end{itemize}
\end{defn}

Next, we show that the category of $\Ainfty$-bimodules is isomorphic
to the category of left $\Ainfty$-modules in the category of right
$\Ainfty$-modules and vice versa. However, since $\nodA$ and $\Anod$
do not have a natural monoidal structure, we first need to explain 
how such setup fits into our framework of definitions. 
\begin{defn}
Define the DG bicategory 
\begin{equation}
\nodAbic
:= 
\quad 
\begin{tikzcd}[column sep = 5em]
\bullet
\ar{r}{\nodA}
&
\bullet 
\ar[out=30, in=-30,loop,distance=6em]{}{\A}
\end{tikzcd}
\end{equation}
whose $1$-composition is given by the monoidal structure on $\A$ and
by the functor $\A \otimes \nodA \rightarrow \nodA$
which sends $(Q, (E,p_i))$ to $(Q \otimes E, \id \otimes p_i)$. 
Define similarly the DG bicategory $\Anodbic$. 
% \begin{equation}
% \Anodbic
% := 
% \quad 
% \begin{tikzcd}[column sep = 5em]
% \bullet 
% \ar[out=150, in=-150,loop,distance=6em]{}[']{\A}
% \ar{r}{\Anod}
% &
% \bullet
% \end{tikzcd}
% \end{equation} 
\end{defn}

Any other $\Ainfty$-algebra in $\A$ is also an $\Ainfty$-algebra in $\nodAbic$
and $\Anodbic$. We can therefore define:
\begin{defn}
Let $(A,m_i)$ and $(B,n_i)$ be $\Ainfty$-algebras in 
a monoidal DG category $\A$. 
Define $(\Anod)\text{-}B$ to be the DG category of right 
$\Ainfty$-$B$-modules in $\Anodbic$ whose underlying $1$-morphisms 
lie in $\Anod$. Similarly, define $A\text{-}\left(\nodB\right)$ 
to be the DG category of left $\Ainfty$-$A$-modules in $\nodBbic$ 
whose underlying $1$-morphisms lie in $\nodB$.   
\end{defn}

In  \cite[Defn. 4.2]{AnnoLogvinenko-UnboundedTwistedComplexes} we define the functors 
\begin{align*}
\cxrow\colon 
\twcx_\B^{\pm}\left(\twcx_\B^{\pm}\left(\A\right)\right) 
& \rightarrow 
\twbicx_\B^{\pm}(\A), \\\
\cxcol\colon 
\twcx_\B^{\pm}\left(\twcx_\B^{\pm}\left(\A\right)\right) 
& \rightarrow 
\twbicx_\B^{\pm}(\A), 
\end{align*}
that make a bicomplex out of a twisted complex of twisted complexes, interpreting
the complexes (with a sign twist) as rows or columns of the bicomplex respectively.

\begin{theorem}
\label{theorem-ainfty-bimodules-are-ainfty-modules-in-cat-of-ainfty-modules}
Let $(A,m_i), (B,n_i)$ be $\Ainfty$-algebras in 
a monoidal DG category $\A$. 
There exist isomorphisms of DG categories 
\begin{equation}
(\Anod)\text{-}B 
\simeq 
\AnodB 
\simeq
A\text{-}\left(\nodB\right) 
\end{equation}
intertwining the functors $\cxcol \circ \infbarA \circ \infbarB$, 
$\infbar^{A\text{-}B}$, and $\cxrow \circ \infbarB \circ \infbarA$ into 
$\twbiosmns \A$. 
\end{theorem}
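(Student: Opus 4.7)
The plan is to build each isomorphism as an explicit bijection on underlying data and then to verify it intertwines the bar-constructions by matching twisted bicomplexes on the nose. Concretely, an $\Ainfty$-bimodule structure on $M$ is a family $\{p_{ij}\}_{i+j\geq 1}$. Its restriction to $j=0$ furnishes $(M,\{p_{i0}\}) \in \Anod$, and the remaining data is repackaged as a right $\Ainfty$-$B$-module structure on this object of $\Anod$ inside $\Anodbic$: for each $j\geq 1$, I set $q_j\colon M \otimes B^{j-1} \to M$ to be the morphism in $\Anod$ whose $\ell$-th component, a degree $3-j-\ell$ map $A^{\ell-1} \otimes M \otimes B^{j-1} \to M$ in $\A$, is $p_{\ell-1,j-1}$ with a suitable sign. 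A degree count makes $q_j$ a degree $2-j$ morphism in $\Anod$, as required. Swapping the roles of $A$ and $B$ gives the analogous candidate bijection with $A\text{-}(\nodB)$. Morphisms are treated by the same recipe: a degree $k$ bimodule morphism $f_{\bullet\bullet}\colon M \to N$ is sent to the collection $\tilde f_{j+1}\colon M \otimes B^j \to N$ in $\Anod$ whose $\ell$-th component is $f_{\ell-1,j}$, up to sign.

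Next I identify the bar-constructions as twisted bicomplexes. The bimodule bar-construction $\infbar^{A\text{-}B}(M)$ has $(-i,-j)$-th object $A^i \otimes M \otimes B^j$ and arrows given uniformly by \eqref{eqn-differentials-in-the-bimodule-bar-construction}. Starting from $(M, q_\bullet) \in (\Anod)\text{-}B$, applying $\infbarB$ via \eqref{eqn-differentials-in-right-module-bar-construction} produces a twisted complex over $\Anod$ whose $-j$-th object is $M \otimes B^j$. Applying $\infbarA$ to each object and each morphism of this complex via \eqref{eqn-differentials-in-left-module-bar-construction} yields a twisted complex of twisted complexes over $\A$, and $\cxcol$ rearranges it into a twisted bicomplex. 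A term-by-term comparison shows this matches $\infbar^{A\text{-}B}(M)$ once the signs in the underlying-data bijection are chosen to absorb the $\cxcol$ sign twist. The same argument with $\cxrow$ handles the $A\text{-}(\nodB)$ side.

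With the bar-constructions identified, the rest is formal. An element of either side is an $\Ainfty$-object precisely when its bar-construction is a valid twisted bicomplex, and the bijection carries one condition to the other. Similarly, the closedness of a bimodule morphism's bar-construction and the closedness of its repackaged counterpart are literally the same equation in $\twbiosmns(\A)$. Since the differentials and compositions in all three $\homm$-complexes are pulled back from their bar-constructions, the bijection is automatically a DG isomorphism and the intertwining of the bar-construction functors is built in by construction.

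The main obstacle will be the sign reconciliation underlying the second step. The bimodule formula uses the uniform sign $(-1)^{(i+j)(k+1)+rk}$, while the iterated bar-construction emits signs one layer at a time, to which $\cxcol$ (or $\cxrow$) adds its own twist. Verifying that a single coherent choice of signs in the underlying-data bijection absorbs all of this is a finite but delicate case analysis, split according to whether the applied operation is an $m_{k+1}$ acting on an $A$-factor, an $n_{k+1}$ acting on a $B$-factor, or a $p_{s,t}$ straddling $M$. Once the signs line up, both the isomorphism and the intertwining assertions follow without further work.
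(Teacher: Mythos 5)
Your overall architecture is the paper's: all three categories are compared through their faithful bar-construction functors into $\twbiosmns\A$, and since the differentials and compositions of all the $\homm$-complexes are pulled back along these functors, an identification of images immediately yields the isomorphisms of DG categories. On the $(\Anod)\text{-}B$ side this works exactly as you say: the paper checks that $\cxcol\circ\infbarA\circ\infbarB$ and $\infbar^{A\text{-}B}$ have literally the same image (with $\cxcol$'s built-in sign twist already accounting for the layer-by-layer signs), so there your ``term-by-term comparison'' is the right and complete argument, and your explicit repackaging $q_j \leftrightarrow \{p_{\ell-1,j-1}\}$ (whose degree count is correct) makes precise what the paper leaves as ``readily checked.''

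The gap is in your claim that ``the same argument with $\cxrow$ handles the $A\text{-}(\nodB)$ side'' with the discrepancy absorbed into signs in the underlying-data bijection. That absorption is impossible. The sign twist separating $\cxrow\circ\infbarB\circ\infbarA$ from $\infbar^{A\text{-}B}$ is nontrivial on arrows of the bar bicomplex built purely from the algebra operations $m_\bullet$ and $n_\bullet$: for instance, an arrow applying $m_{k+1}$ inside the $A$-factors goes from $A^p\otimes M\otimes B^q$ in bidegree $(-p,-q)$ to $A^{p-k}\otimes M\otimes B^q$, and the two sign conventions differ there by $(-1)^{pq+(p-k)q}=(-1)^{kq}$, which is $-1$ already for $k=q=1$. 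These arrows do not involve the bimodule data $\{p_{ij}\}$ or the morphism components $\{f_{lm}\}$ at all, so no re-signing in your bijection can change them, and your ``finite but delicate case analysis'' would terminate in a contradiction rather than a coherent choice. The paper's fix is global rather than data-level: it post-composes with the sign-twisting automorphism of $\twbiosmns\A$ sending $(a_{ij},\alpha_{ijkl})$ to $(a_{ij},(-1)^{ij+kl}\alpha_{ijkl})$ and $(f_{ijkl})$ to $((-1)^{ij+kl}f_{ijkl})$ (equivalently, conjugation by the diagonal maps $(-1)^{ij}\id$), after which the images coincide and everything formal in your last step goes through verbatim. With that one correction --- and the understanding that the second isomorphism then intertwines $\infbar^{A\text{-}B}$ and $\cxrow\circ\infbarB\circ\infbarA$ only up to this automorphism --- your proof is complete and coincides with the paper's.
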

\begin{proof}
It can be readily checked that the fully faithful functors 
$$ \cxcol \circ \infbar^A \circ \infbar^B\colon 
(\Anod)\text{-}B \rightarrow \twbiosmns \A, $$
$$ \infbar^{A \text{-} B}\colon 
\AnodB \rightarrow \twbiosmns \A, $$
have the same image in $\twbiosmns \A$ and hence we have
an isomorphism 
$$ (\Anod)\text{-}B \rightarrow \AnodB. $$

The other isomorphism is obtained considering
fully faithful functors $\infbar^{A \text{-} B}$ and
$$\cxrow \circ \infbar^B \circ \infbar^A\colon 
A\text{-}(\nodB) \rightarrow \twbiosmns \A.$$
Their images in $\twbiosmns \A$ do not coincide, however they 
are identified by the sign-twisting automorphism of $\twbiosmns \A$
which sends a twisted bicomplex $(a_{ij}, \alpha_{ijkl})$ to 
$(a_{ij}, (-1)^{ij+kl} \alpha_{ijkl})$
and a morphism $(f_{ijkl})$ of twisted bicomplexes to
$((-1)^{ij+kl} f_{ijkl})$. 
\end{proof}

\subsection{Free modules and bimodules}
\label{section-free-modules-and-bimodules-over-Ainfty-algebra}

\begin{defn}
\label{defn-free-Ainfty-modules}
Let $\A$ be a monoidal DG category and $(A,m_i)$ be an $\Ainfty$-algebra
in it. 
Let $E \in \A$. The \em free right $\Ainfty$-$A$-module generated by $E$ \rm is the module $(E \otimes A, p_i)$ where 
$ p_i\colon E \otimes A^i \rightarrow E \otimes A $
is the map $\id_E \otimes m_i$. 
\end{defn}

We can view any $A^i$ with $i \geq 1$ as the free right 
$\Ainfty$-$A$-module generated by $A^{i-1}$.  

\begin{defn}
Let $\A$ be a monoidal DG category and $(A,m_i)$ be an $\Ainfty$-algebra
in it. 
The \em category $\freeA$ of free right $\Ainfty$-$A$-modules \rm
is the full subcategory of $\nodA$ consisting of free modules. 
The \em free module \rm functor 
$\free\colon \A \rightarrow \freeA$, 
sends each $E \in \A$ to $E \otimes A$ and each 
$\alpha \in \homm_\A(E,F)$ to the morphism whose 
first component is $\alpha \otimes \id$ and whose higher components are zero. 
The \em forgetful \rm functor $\forget\colon \nodA \rightarrow \A$
sends any $(E,p_\bullet) \in \nodA$ to $E \in \A$ and 
any $f_\bullet \in
\homm_{\freeA}((E,p_\bullet),(F,q_\bullet))$ to 
$f_1 \in \homm_\A(E,F)$. 
\end{defn}

The corresponding notions for left modules and bimodules are defined
similarly.

\subsection{Yoneda embedding}
\label{section-Yoneda-embedding-of-A-modules-into-repA-modules}

We state the notions and results of sections \S\ref{section-Yoneda-embedding-of-A-modules-into-repA-modules}-\ref{section-bar-resolution} 
only for right $\Ainfty$-$A$-modules. The left module case is similar. 

The standard Yoneda embedding of $\nodA$ is the fully faithful functor 
\begin{equation}
\label{eqn-the-standard-yoneda-for-nodA}
\nodA \rightarrow \modd\text{-}\left(\nodA\right)
\end{equation}
which sends $(E,p_\bullet)$ to 
$\homm_{\nodA}(-, (E,p_\bullet))$. We now define a more economical 
version of this, where the Yoneda embedding is only used 
to embed $E$ into $\modA$. 

In \cite[\S4.5]{GyengeKoppensteinerLogvinenko-TheHeisenbergCategoryOfACategory}
it was shown that if $\A$ is a monoidal DG category, then there is a
natural monoidal structure on $\modA$ such that the Yoneda embedding
$\A \hookrightarrow \modA$ is strong monoidal. 
An $\Ainfty$-algebra $A$ in $\A$ can therefore be also viewed as 
an $\Ainfty$-algebra in $\modA$. Where the difference matters, we 
write $A$ and $\repA$ for the corresponding algebras in $\A$ and 
$\modA$, respectively.

\begin{defn}
\label{defn-yoneda-embedding-into-repA-modules}
Let $(A,m_\bullet)$ be an $\Ainfty$-algebra in a monoidal DG category $\A$.
Define the \em Yoneda embedding \rm 
\begin{equation}
\label{eqn-yoneda-embedding-into-T^*-modules}
\Upsilon\colon \nodA \rightarrow \nodrepA
\end{equation}
to be the fully faithful functor induced by the monoidal embedding
$\A \hookrightarrow \modA$.  
\end{defn}

The following is both a direct analogue of the classical result
for modules over ordinary monads in \cite[\S5]{Street-TheFormalTheoryOfMonads} 
and its homotopy generalisation:

\begin{prps}
\label{prps-yoneda-embedding-gives-both-fiber-and-homotopy-fiber-square}
The following is both a fiber square of DG-categories and a homotopy 
fiber square thereof: 
\begin{equation}
\begin{tikzcd}[column sep = 2cm]
\label{eqn-yoneda-embedding-of-Ainfty-A-modules-as-homotopy-fiber-square}
\nodA
\ar[hookrightarrow]{r}{\yoneda} 
\ar{d}{\forget}
&
\nodrepA
\ar{d}{\forget}
\\
\A
\ar[hookrightarrow]{r}{\yoneda} 
&
\modA.
\end{tikzcd}
\end{equation}
\end{prps}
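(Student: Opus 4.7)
\smallskip

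\textbf{Plan.} The bottom arrow $\yoneda\colon\A\hookrightarrow\modA$ is fully faithful and strong monoidal (as cited from the Heisenberg-category paper), so it preserves and reflects the monoidal composition, the differentials, and the vanishing of morphisms. I want to exploit this on two levels: at the level of underlying data (giving the strict fiber square) and then at the level of higher homotopies (giving the homotopy fiber square), with the Homotopy Lemma from \S\ref{section-the-homotopy-lemma} supplying the bridge.

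\smallskip

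\textbf{Strict fiber square.} First I would unpack the strict fiber product $\A\times_{\modA}\nodrepA$. Its objects are pairs $\bigl(E,(M,p_\bullet)\bigr)$ with $M=\yoneda(E)$ and $p_i\colon\yoneda(E)\otimes\repA^{i-1}\to\yoneda(E)$ in $\modA$ satisfying the bar-construction twisted-complex conditions. Because $\yoneda$ is fully faithful and strong monoidal, each such $p_i$ is the image under $\yoneda$ of a unique morphism $\bar p_i\colon E\otimes A^{i-1}\to E$ in $\A$, and the twisted-complex relations in $\modA$ coincide on the nose with the corresponding relations in $\A$. The same computation applied to hom-complexes shows that morphisms $f_\bullet$ in the fiber product agree with morphisms in $\nodA$. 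The resulting DG functor $\nodA\to\A\times_{\modA}\nodrepA$ is thus an isomorphism of DG categories.

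\smallskip

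\textbf{Homotopy fiber square.} Write $\Phi\colon\nodA\to\A\times^{h}_{\modA}\nodrepA$ for the canonical comparison functor; I need to check it is a quasi-equivalence. Quasi-fully-faithfulness is automatic from the strict fiber square together with the quasi-fully-faithfulness of $\yoneda$ on $\A$. For essential surjectivity on $H^0$, I take a triple $\bigl(E,(M,p_\bullet),\phi\bigr)$ with $E\in\A$, $(M,p_\bullet)\in\nodrepA$ and $\phi\colon\yoneda(E)\to M$ a homotopy equivalence in $\modA$, and produce an object of $\nodA$ mapping to a triple isomorphic to it in $H^0$. The idea is to transport the $\Ainfty$-structure along $\phi$: using a quasi-inverse of $\phi$ and a chosen contracting homotopy, I inductively build operations $p'_i\colon\yoneda(E)\otimes\repA^{i-1}\to\yoneda(E)$ and higher components $\phi_i$ so that $\phi_\bullet\colon(\yoneda(E),p'_\bullet)\to(M,p_\bullet)$ is a morphism in $\nodrepA$ with $\phi_1=\phi$. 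The Homotopy Lemma then upgrades $\phi_\bullet$ to a homotopy equivalence in $\nodrepA$, and the strict fiber square from the first step descends $(\yoneda(E),p'_\bullet)$ to a unique object $(E,p'_\bullet)\in\nodA$, as required.

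\smallskip

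\textbf{Main obstacle.} The delicate step is the inductive homotopy-transport construction of $p'_\bullet$ and $\phi_\bullet$. In the classical setting this is the standard homotopy transfer theorem for $\Ainfty$-modules, but in the present generality one has to perform the induction directly inside $\modA$, solving at each stage an equation of the form $d(\phi_n)=(\text{correction in lower-order data})$ by applying the contracting homotopy for $\phi$. The twisted-complex formalism of \S\ref{section-unbounded-twisted-complexes} makes this bookkeeping manageable: the obstruction at each level is a closed morphism in $\A$ that becomes exact after post-composition with $\phi$'s homotopy inverse, which is exactly the input the inductive step requires.
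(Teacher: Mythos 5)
Your strict fiber square argument is fine and essentially the paper's: full faithfulness and strong monoidality of $\yoneda$ identify $\nodA$ with the full subcategory of $\nodrepA$ of modules whose underlying object lies in $\A$. The gap is in the homotopy fiber square. You work with an explicit model of the homotopy pullback of DG categories as a category of triples $\bigl(E,(M,p_\bullet),\phi\bigr)$ with $\phi$ a homotopy equivalence, and then try to show the comparison functor is a quasi-equivalence. But in the Dwyer--Kan model structure on DG categories (Tabuada) no such naive comma-style construction is known to compute homotopy pullbacks: path objects in $\DGCat$ are not given by ``triples with a homotopy equivalence,'' morphisms between such triples would need homotopy-coherence data you never specify, and without that your claim that quasi-fully-faithfulness is ``automatic from the strict fiber square'' has no content --- in any correct model the hom-complexes of the homotopy pullback mix the three categories through the homotopies, and comparing them with $\homm_{\nodA}$ is precisely what has to be proved. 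As written, the homotopy-pullback half of your argument presupposes the hardest point rather than establishing it.

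The paper's route avoids this entirely: since Tabuada's model structure is right proper, a strict pullback along a fibration is automatically a homotopy pullback, so it suffices to check that $\forget\colon\nodrepA\rightarrow\modA$ is a fibration in To\"en's sense. That means two concrete things: surjectivity on morphism complexes, which holds because $\forget$ has a DG section sending $E\in\modA$ to $(E,p_\bullet)$ with all $p_i=0$ (a step absent from your proposal), and lifting of homotopy equivalences, which is exactly homotopy transfer of $\Ainfty$-structure along a homotopy equivalence --- the step you flag as your ``main obstacle'' and sketch inductively. That transfer statement is available as a citation (\cite{AnnoLogvinenko-UnboundedTwistedComplexes}, Theorem A.2), so your inductive reconstruction, while plausible in outline, is both unnecessary and imprecisely stated: a homotopy equivalence $\phi$ has no ``contracting homotopy''; what the induction actually uses are a quasi-inverse $\psi$ and the homotopies witnessing $\psi\phi\simeq\id$ and $\phi\psi\simeq\id$. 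If you restructure your argument as: strict pullback (your first step) $+$ $\forget$ is a fibration (section with zero structure, plus transfer) $+$ right properness, you recover a complete proof.
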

\begin{proof}
For a fiber square, as the Yoneda embedding of $\A$ into
$\modA$ is fully faithful, it suffices to show that $\nodA$ is the
full subcategory of $\nodrepA$ consisting of $(E,p_\bullet)$ with $E
\in \A$. Since $\A$ is a full monoidal subcategory of $\modA$, 
for any such $(E,p_\bullet)$ its structure morphisms 
$p_i\colon E\otimes A^{i-1} \rightarrow E$ all lie in $\A$, and so 
do all components of all $\Ainfty$-morphisms of such modules.  

To show that 
\eqref{eqn-yoneda-embedding-of-Ainfty-A-modules-as-homotopy-fiber-square}
is a homotopy fiber square, it suffices to show that 
\begin{equation}
\label{eqn-the-forgetful-functor-from-nodrepA-to-modA}
\forget\colon \nodrepA \rightarrow \modA, 
\end{equation}
is a fibration. This is because 
the category of small DG-categories with the Dwyer-Kan
model structure due to Tabuada
\cite{Tabuada-UneStructureDeCategorieDeModelesDeQuillenSurLaCategorieDesDG-Categories}
is right proper, and in a right proper model category any pushout
along a fibration is also the homotopy pushout, see 
\cite[IX, \S4.1]{BousfieldKan-HomotopyLimitsCompletionsAndLocalizations}
\cite[Prop.~1.19]{Barwick-OnLeftAndRightModelCategoriesAndLeftAndRightBousfieldLocalizations}. 

Now, a DG-functor is a fibration if
it is surjective on complexes of morphisms and reflects homotopy
equivalences, see 
\cite[Defn.~2.1]{Toen-TheHomotopyTheoryOfDGCategoriesAndDerivedMoritaTheory}. 
The functor \eqref{eqn-the-forgetful-functor-from-nodrepA-to-modA}
satisfies the former as it has a right inverse
which sends any $E \in \modA$ to $(E,p_\bullet)$ with 
$p_i = 0$ for all $i$
and sends any $f\colon E \rightarrow F$ to $(f_\bullet)$ with 
$f_1 = f$ and $f_i = 0$ for $i > 1$. 
It satisfies the latter as given $(E,p_\bullet) \in \nodrepA$ 
and a homotopy equivalence 
$\phi\colon E \xrightarrow{\sim} F$ in $\modA$, 
we can transfer the $\Ainfty$-structure along $\phi$ 
(\cite{AnnoLogvinenko-UnboundedTwistedComplexes}, Theorem A.2)
to construct $(F,q_\bullet)$ homotopy equivalent to $(E, p_\bullet)$. 
\end{proof}

An useful consequence of the universal property of a homotopy fiber square is: 
\begin{cor}
Let $(E,p_\bullet) \in \nodrepA$. It is homotopy equivalent to
some $(F,q_\bullet) \in \nodA$ if and only if $E \in \modA$ is homotopy
equivalent to $F \in \A$. 
\end{cor}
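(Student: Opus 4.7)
The plan is to derive both implications from Proposition \ref{prps-yoneda-embedding-gives-both-fiber-and-homotopy-fiber-square}, using the square \eqref{eqn-yoneda-embedding-of-Ainfty-A-modules-as-homotopy-fiber-square} as a homotopy fiber square together with the homotopy transfer result cited in its proof.

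For the ``only if'' direction, I would simply apply the forgetful functor $\forget\colon \nodrepA \rightarrow \modA$. A homotopy equivalence $(E,p_\bullet) \simeq (F,q_\bullet)$ in $\nodrepA$ is a pair of $\Ainfty$-morphisms whose compositions are homotopic to identities via chain homotopies in $\nodrepA$; applying $\forget$, which is a DG-functor, produces a homotopy equivalence $E \simeq F$ in $\modA$, and by hypothesis $F \in \A$.

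For the ``if'' direction, suppose $\phi\colon E \xrightarrow{\sim} F$ is a homotopy equivalence in $\modA$ with $F \in \A$. Then, exactly as in the last step of the proof of Proposition \ref{prps-yoneda-embedding-gives-both-fiber-and-homotopy-fiber-square}, one invokes the homotopy transfer theorem (Theorem A.2 of \cite{AnnoLogvinenko-UnboundedTwistedComplexes}) to transport the $\Ainfty$-$\repA$-module structure $p_\bullet$ on $E$ along $\phi$ to an $\Ainfty$-$\repA$-module structure $q_\bullet$ on $F$, together with a homotopy equivalence $(E,p_\bullet) \simeq (F,q_\bullet)$ in $\nodrepA$. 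It remains to observe that $(F,q_\bullet)$ actually lies in $\nodA$: by the fiber-square half of Proposition \ref{prps-yoneda-embedding-gives-both-fiber-and-homotopy-fiber-square}, $\nodA$ is precisely the full subcategory of $\nodrepA$ on those objects whose underlying module lies in the image of the Yoneda embedding $\A \hookrightarrow \modA$, and $F \in \A$ by assumption.

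The only mildly delicate point is checking that the transferred structure maps $q_i\colon F \otimes A^{i-1} \rightarrow F$ are morphisms in $\A$ rather than merely in $\modA$; but since the Yoneda embedding is fully faithful and $F$ and all $F \otimes A^{i-1}$ are representable, this is automatic. Alternatively, and perhaps more conceptually, one can avoid any explicit transfer by invoking the universal property of a homotopy fiber square: the essential image of $\yoneda\colon\nodA \hookrightarrow \nodrepA$ in the homotopy category consists exactly of those objects whose image under $\forget$ lies in the essential image of $\yoneda\colon \A \hookrightarrow \modA$, which is the content of the corollary. I do not anticipate any substantial obstacle beyond correctly applying the cited homotopy transfer statement.
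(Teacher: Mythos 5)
Your proposal is correct and matches the paper's intent: the corollary is stated in the paper without proof as a direct consequence of Proposition \ref{prps-yoneda-embedding-gives-both-fiber-and-homotopy-fiber-square}, and your argument simply unwinds that — the forgetful DG-functor for the ``only if'' direction, and for the ``if'' direction the very same homotopy transfer (Theorem A.2 of \cite{AnnoLogvinenko-UnboundedTwistedComplexes}) already invoked in the proposition's proof, combined with the fiber-square identification of $\nodA$ as the full subcategory of $\nodrepA$ on objects with underlying module in $\A$. Your observation that full faithfulness of the Yoneda embedding makes the transferred $q_i$ land in $\A$ is exactly the point the fiber-square half of the proposition encapsulates, so no gap remains.
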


\subsection{Twisted complexes of $\Ainfty$-modules}
\label{section-twisted-complexes-of-ainfty-modules}

We now fix our conventions regarding twisted 
complexes of $\Ainfty$-modules. As explained in
\S\ref{section-unbounded-twisted-complexes} unbounded 
twisted complexes over a DG category $\A$ need to be defined relative 
to its fully faithful embedding into some DG category closed 
under shifts and infinite direct sums. 

Our conventions are: we define $\twcxub \A$ and $\twcxub \modA$ 
relative to $\modA$. Thus twisted complexes in $\twcxub \A$ and
$\twcxub \modA$ can have infinite number of differentials and/or morphism 
components emerge from a single object, but only if their sum still defines 
a morphism in $\modA$. Since $\modA$ admits change of differential, 
the convolution functor $\twcxub \modA \hookrightarrow \modA$ 
is an equivalence. 

We define both $\twcxub \nodA$ and $\twcxub \nodrepA$ relative to $\nodrepA$. 
Since the latter admits convolutions of unbounded twisted complexes
\cite[Cor.~5.14]{AnnoLogvinenko-UnboundedTwistedComplexes}, 
the natural embedding $\nodrepA \hookrightarrow \twcxub \nodrepA$
is an equivalence. On the other hand, $\nodA \hookrightarrow \twcxub
\nodA$ is an equivalence if and only if $\A \hookrightarrow \twcxub \A$
is, and similarly for $\twcxpls$, $\pretriagmns$, et cetera \cite[Cor.~5.13]{AnnoLogvinenko-UnboundedTwistedComplexes}. 

\subsection{The homotopy lemma}
\label{section-the-homotopy-lemma}
In this section we give an analogue of the theorem that all $\Ainfty$-quasi-isomorphisms of usual $\Ainfty$-modules are homotopy equivalences,
cf.~\cite[Prop.~2.4.1.1]{Lefevre-SurLesAInftyCategories}. 
This is proved by showing that a usual $\Ainfty$-module is
null-homotopic if and only if its underlying complex of vector spaces 
is acyclic.

In our generality, it is not clear how to even state 
the latter result, since there are no underlying complexes of vector spaces. 
However, a complex of vector spaces is acyclic if and only if it 
is null-homotopic in $\modk$. We thus define:

\begin{defn}
\label{defn-acyclicity-of-modules-and-twisted-complexes}
An $\Ainfty$-$A$-module $(E,p_\bullet)$ is \em acyclic \rm if 
$E$ is null-homotopic~in~$\A$. A twisted complex 
in $\twcxub\nodA$ is \em acyclic \rm if its underlying
twisted complex in $\twcxub\A$ is null-homotopic.  
\end{defn}

We then have: 

\begin{lemma}[The homotopy lemma]~
\label{lemma-the-homotopy-lemma-for-nodA}
\begin{enumerate}
\item 
\label{item-acyclic-if-and-only-if-acyclic-in-A}
An $\Ainfty$-$A$-module $(E,p_\bullet)$ is null-homotopic  
if and only if it is acyclic. 
\item 
\label{item-complex-acyclic-if-and-only-if-acyclic-in-pretriagA}
A twisted complex in $\twcxub\nodA$ is
null-homotopic if and only if it is acyclic. 
\item 
\label{item-homotopy-equivalence-if-and-only-if-one-in-A}
A closed degree zero 
morphism $f_\bullet\colon (E,p_\bullet) \rightarrow (F,q_\bullet)$
is a homotopy equivalence in $\nodA$ if and only if $f_1$
is a homotopy equivalence in $\A$. 
\end{enumerate}
\end{lemma}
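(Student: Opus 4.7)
My plan is to reduce parts (1) and (3) to part (2), then prove (2) by inductively constructing a null-homotopy order by order in the $\Ainfty$-structure. For the reductions: part (1) is the special case of part (2) with $T$ a trivial twisted complex concentrated in a single degree. For part (3), the forgetful DG functor $\twcxub\nodA \to \twcxub\A$ commutes with cones, so $\forget(\cone(f_\bullet)) = \cone(f_1)$; since a closed degree zero morphism in either of these pretriangulated categories is a homotopy equivalence iff its cone is null-homotopic, (2) applied to $\cone(f_\bullet)$ gives (3). The easy direction of (2) is immediate: the forgetful DG functor sends null-homotopies to null-homotopies.

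For the hard direction of (2), suppose $T = (E_m, \alpha_{mn})$ is a twisted complex in $\twcxub\nodA$ and $s = (s_{mn}\colon E_m \to E_n)$ is a null-homotopy of $\forget(T)$ in $\twcxub\A$. I will construct a degree $-1$ endomorphism $s'$ of $T$ in $\twcxub\nodA$ whose components $s'_{mn}$ are $\Ainfty$-morphisms in $\nodA$ given by tuples $((s'_{mn})_k\colon E_m \otimes A^{k-1} \to E_n)_{k\geq 1}$, with $(s'_{mn})_1 = s_{mn}$ and $ds' = \id$. The construction proceeds by induction on $k$. Decomposing $ds' = \id$ by $\Ainfty$-order yields, for each triple $(m,n,k)$, an equation of the form
$$d_\A (s'_{mn})_k = -R^k_{mn},$$
where $R^k_{mn}\colon E_m \otimes A^{k-1} \to E_n$ is an explicit combination of the twisted differentials $(\alpha_{pq})_\bullet$, the structure maps $p^\bullet_\bullet$ and $m_\bullet$, and the previously constructed $(s'_{pq})_j$ with $j<k$. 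The case $k=1$ is exactly the hypothesis $ds = \id$. For $k \geq 2$, the $\Ainfty$-relations for the $E_m$, the twisted complex conditions on the $\alpha_{mn}$, and the inductive hypothesis together force $R^k_{mn}$ to be closed in $\A$; one then kills the obstruction by setting $(s'_{mn})_k$ equal to an appropriate combination of post-compositions of $R^k_{m\bullet}$ with components of the base null-homotopy, schematically of the form $\sum_p \pm\, s_{pn} \circ R^k_{mp}$, so that differentiating and applying the identity $ds = \id$ produces exactly $-R^k_{mn}$.

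The main obstacle is the sign and bookkeeping: verifying both that each $R^k_{mn}$ is closed and that the closing formula for $(s'_{mn})_k$ actually solves the equation requires careful manipulation of the $\Ainfty$-relations intertwined with the twisted complex conditions. A more structural route would apply $\infbar$ to pass to twisted bicomplexes over $\A$, where one would invoke contractibility of bounded-above one-sided twisted complexes with null-homotopic terms (using the results from \cite{AnnoLogvinenko-UnboundedTwistedComplexes}). However, pulling a null-homotopy back through the bar-construction functor demands it have the specific ``lower-triangular'' form coming from the image of $\infbar$, which ultimately reduces to the same inductive bookkeeping.
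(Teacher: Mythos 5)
Your proposal is correct, but it is organized differently from the paper. You prove part (2) directly, by induction on the $\Ainfty$-order $k$ of the homotopy components, with part (1) recovered as the one-term special case; the paper goes the other way around. It first proves (1) by exactly the induction you describe, but in the simplest possible setting of a single module: given a contracting homotopy $h$ of $E$ in $\A$, it sets $h_1 = h$, observes that $d(h_1,\dots,h_n,0) = (\id_E,0,\dots,0,x_{n+1})$ with $x_{n+1}$ closed, and corrects by $h_{n+1} = -h\circ x_{n+1}$. It then deduces (2) not by redoing this induction with twisted differentials in the picture, but by invoking the fully faithful interchange functor $\Phi\colon \twcxub(\nodA) \hookrightarrow \nodtwcxA$ from the companion paper (which commutes with the forgetful functors to $\twcxub\A$), so that a twisted complex of $\Ainfty$-modules becomes a single $\Ainfty$-module over $A$ in the category of twisted complexes, and (1) applies verbatim there. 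Part (3) is handled identically in both treatments, via the two-term complex/cone of $f_\bullet$. The trade-off: the paper's route outsources all the bookkeeping you identify as the main obstacle to a structural result, keeping the inductive argument in the setting where the obstruction-closedness is a one-line observation; your route is self-contained but must carry the $\alpha_{mn}$'s through the induction. On that point, one refinement to your write-up: the obstruction $R^k = (R^k_{mn})$ is not in general closed for $d_\A$ componentwise, but rather closed as a morphism $\forget(T)\otimes A^{k-1} \to \forget(T)$ for the twisted $\homm$-differential of $\twcxub\A$ (the commutators with the $(\alpha_{mn})_1$ survive in the order-$k$ component once the lower-order components vanish); it is precisely this stronger closedness, together with $ds = \id$ holding for the same twisted differential, that makes your correction $s\circ R^k$ satisfy $d(s\circ R^k) = R^k$ and closes the induction. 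With that reading, your schematic formula $\sum_p \pm\, s_{pn}\circ R^k_{mp}$ is the right one, and summability of the resulting components in the ambient category is automatic since $s$ is a genuine morphism of $\twcxub\A$ and each $R^k$ is a finite combination of legitimate compositions.
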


\begin{proof}
\eqref{item-acyclic-if-and-only-if-acyclic-in-A}:
The ``only if'' implication is clear. For the ``if'' implication, let
$h$ be the contracting homotopy of $E$ in $\A$:
a degree $-1$ endomorphism of $E$ with $dh = \id_E$. It suffices 
to construct the contracting homotopy $h_\bullet$ of $(E,p_\bullet)$ in 
$\nodA$. In other words, we need to find $h_\bullet$ such that 
$$ d(h_1, h_2, \dots , h_n, \dots) = (\id_E, 0, 0, \dots, 0, \dots). $$

By our definition of $\nodA$, 
the differential of any $x_\bullet: (E,p_\bullet) \rightarrow (E,p_\bullet)$ is
computed by differentiating the map $\infbar x_\bullet$ of twisted
complexes over $\A$. Note, that the first $n$ terms of $dx_\bullet$ 
are completely determined by $x_1, \dots, x_n$. Define 
$$ d(x_1, \dots, x_n) := \bigl( (dx_\bullet)_1, \dots, (dx_\bullet)_n \bigr). $$ 
In this notation, the diagram above makes it clear that for any 
$x_{n+1}\colon EA^n \rightarrow E$ 
\begin{align*}
d(0,\dots, 0, x_{n+1}) & = (0,\dots, 0, dx_{n+1}). 
\end{align*}

We now proceed by induction. Suppose we've found $h_1, \dots, h_n$ such that 
\begin{equation*}
d(h_1, h_2, \dots, h_n) = (\id_E, 0, \dots, 0)  
\end{equation*}
We start with $n = 1$ where we set $h_1 = h$. 
For the induction step, observe that  
$$ d(h_1, h_2, \dots, h_n,0) = (\id_E, 0, \dots,0, x_{n+1}), $$
for some $x_{n+1}\colon EA^n \rightarrow E$. The map $(\id_E, 0, \dots, x_{n+1})$ 
is a boundary and the map $(\id_E, 0, \dots, 0)$ is closed. 
So $(0,\dots, 0, x_{n+1})$ must also be closed. We conclude that 
that $dx_{n+1} = 0$, and hence $d(h \circ
x_{n+1})= x_{n+1}$. Set 
$h_{n+1} = - h \circ x_{n+1}$, then  
\begin{align*}
d(h_1, \dots, h_n, h_{n+1}) & = 
d(h_1, \dots, h_n, 0) + 
d(0, \dots, 0 , h_{n+1}) = \\
& = (\id_E, 0, \dots, 0, x_{n+1}) + 
(0,0,\dots, 0, -x_{n+1}) \\ 
& = (\id_E, 0, \dots, 0, 0). 
\end{align*}

\eqref{item-complex-acyclic-if-and-only-if-acyclic-in-pretriagA}:
By \cite{AnnoLogvinenko-UnboundedTwistedComplexes}, Prop.~5.12
we have a fully faithful functor
$$ \Phi\colon \twcxub\left(\nodA\right) \hookrightarrow \nodtwcxA. $$
By construction, $\Phi$  commutes with the forgetful functors
into $\twcxub \A$. The desired assertion follows 
by applying \eqref{item-acyclic-if-and-only-if-acyclic-in-A} 
to $\nodtwcxA$ or rather to $\nodtwcxrepA$. 

\eqref{item-homotopy-equivalence-if-and-only-if-one-in-A}:
The morphism $f_\bullet\colon (a,p_\bullet) \rightarrow
(b,q_\bullet)$ is a homotopy equivalence if and only if \begin{equation}
\label{eqn-two-step-twisted-complex-in-nodA}
(a,p_\bullet) \xrightarrow{f_\bullet} \underset{\degzero}{(b,q_\bullet)},
\end{equation}
is null-homotopic in $\pretriag \nodA$. 
The desired assertion follows by applying 
\eqref{item-complex-acyclic-if-and-only-if-acyclic-in-pretriagA}
to the twisted complex \eqref{eqn-two-step-twisted-complex-in-nodA}. 
\end{proof}

\subsection{Bar-construction as a complex of $\Ainfty$-$A$-modules}
\label{section-bar-construction-as-a-complex-of-ainfty-A-modules}

We now upgrade the bar-construction of an $\Ainfty$-$A$-module 
from a twisted complex of objects of $\A$ to a twisted complex of 
$\Ainfty$-$A$-modules. 

We follow the method detailed in 
\cite[\S2.10]{AnnoLogvinenko-BarCategoryOfModulesAndHomotopyAdjunctionForTensorFunctors}:
\begin{defn}
\label{defn-morphisms-pi_i-and-mu_i}
Let $(A,m_\bullet)$ be an $\Ainfty$-algebra in a monoidal DG category $\A$.
Let $(E,p_\bullet)$ be a right $\Ainfty$-$A$-module. 

For any $i \geq 1$ consider $E \otimes A^i$ as the free $A$-module  
$E \otimes A^{i-1} \otimes A$.
Define a morphism of right $\Ainfty$-$A$-modules
$$ \pi_i\colon E \otimes A^{i-1} \rightarrow (E, p_\bullet) $$ 
by setting each $(\pi_i)_j$ to be the map
$$ E \otimes A^{i-1} \otimes A^{j-1} \xrightarrow{(-1)^{(i-1)(j-1)}p_{i + j - 1}} E. $$

For left $\Ainfty$-$A$-modules, the definition is similar,  but with $(\pi_i)_j = p_{i+j-1}$. 
\end{defn}

When $E$ is a free module $F \otimes A$, 
we write instead of $\pi_{i}$
$$ \id \otimes \mu_i \colon F \otimes A^i \rightarrow F \otimes A $$ 
for the corresponding $\Ainfty$-morphism because its
$j$-th component is $(-1)^{(i-1)(j-1)}\id \otimes m_{i+j-1}$. 
When $F = \id_\A$, we further write $\mu_i\colon  A^i \rightarrow A$
for this $\Ainfty$-morphism. 

On the other hand, given a morphism $\alpha \colon E \rightarrow F$
in $\A$, by abuse of notation we write $\alpha \otimes \id$ for
the right $\Ainfty$-$A$-module morphism 
$$ \free(\alpha)\colon E \otimes A \rightarrow F \otimes A. $$
It is the strict $\Ainfty$-morphism whose first component is $\alpha
\otimes \id$. 

\begin{prps}
\label{prps-bar-construction-as-a-complex-of-Ainfty-modules}
Let $(A,m_\bullet)$ be an $\Ainfty$-algebra in a monoidal DG category $\A$. Let $(E,p_\bullet)$ be a right $\Ainfty$-$A$-module. 

Take the right module bar-construction of $(E,p_\bullet)$ and lift it from $\A$ to $\nodA$ as follows:
\begin{itemize}
	\item Replace its objects by $(E,p_\bullet)$ and free $\Ainfty$-modules $E \otimes A^i$ with $ i > 0$.  
	\item For differentials, in the summands where the operation $m_{k}$ or $p_{k}$ doesn't involve the rightmost copy of $A$ replace this operation by its image under the free functor. 
	\item In those summands where it does --- replace it by $\mu_{k}$ or $\pi_{k}$, respectively.  
\end{itemize}

Then the result is a twisted complex of right $\Ainfty$-$A$-modules:
\begin{tiny}
\begin{equation}
\label{eqn-right-module-bar-construction-Ainfty-version}
\begin{tikzcd}[column sep = 2.4cm]
\dots
\ar{r}[']{\begin{smallmatrix}EA^2 \mu_2  - EAm_2A + \\ + Em_2A^2 - p_2 A^3 \end{smallmatrix}}
\ar[bend left=20]{rr}[description]{EA \mu_3 + Em_3A + p_3 A^2 }
\ar[bend left=25]{rrr}[description]{E\mu_4 - p_4A}
\ar[bend left=30]{rrrr}[description]{\pi_5}
&
EA^3
\ar{r}[']{EA\mu_2 - E m_2 A + p_2 A^2}
\ar[bend left=20]{rr}[description]{-E\mu_3 - p_3A}
\ar[bend left=25]{rrr}[description]{\pi_4}
& 
EA^2
\ar{r}[']{E\mu_2 - p_2A}
\ar[bend left=20]{rr}[description]{\pi_3}
&
EA
\ar{r}[']{\pi_2}
&
\underset{\degzero}{E}.
\end{tikzcd}
\end{equation}
\end{tiny}
\end{prps}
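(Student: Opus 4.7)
The plan is to verify the twisted complex conditions for the diagram \eqref{eqn-right-module-bar-construction-Ainfty-version} by reducing them, via the faithful bar-construction functor $\infbar\colon \nodA \hookrightarrow \pretriagmns(\A)$, to the already established twisted complex condition for $\infbar(E,p_\bullet)$ in $\A$, which holds because $(E,p_\bullet)$ is an $\Ainfty$-$A$-module.

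First I would check that the stated morphisms are well-defined $\Ainfty$-morphisms of right $\Ainfty$-$A$-modules of the correct degrees. The $\mu_k$ and $\pi_k$ are $\Ainfty$-morphisms by Defn.~\ref{defn-morphisms-pi_i-and-mu_i} of degree $2-k$, while the non-rightmost $m_k$ and $p_k$ become strict morphisms of free modules via the free functor, carrying the same degrees as in the original formula \eqref{eqn-differentials-in-right-module-bar-construction}. In particular the sum of all proposed components from a given node defines a valid morphism in the ambient category $\nodrepA$, so the data does assemble into an object of $\twcxub(\nodA)$; what remains is the twisted complex equation.

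Next I would apply $\infbar$ to each node and morphism in \eqref{eqn-right-module-bar-construction-Ainfty-version}. The free module $EA^i$ maps to its bar-construction, naturally identified with $EA^{i-1}\otimes \infbarnaug(A)$, while $(E,p_\bullet)$ maps to $\infbar(E,p_\bullet)$. Assembling these via the $\cxcol$ functor of \cite[Defn.~4.2]{AnnoLogvinenko-UnboundedTwistedComplexes} yields a twisted bicomplex in $\twbiosmns(\A)$ whose total is naturally isomorphic to $\infbar(E,p_\bullet) \in \twcxmns(\A)$ itself: the bigrading merely refines the grading by recording which free module $EA^i$ the term originated in and which position within that module's bar-construction it occupies. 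Under this identification, the twisted bicomplex condition follows immediately from the twisted complex condition for $\infbar(E,p_\bullet)$, which holds by hypothesis. Since $\infbar$ is a faithful DG functor preserving differential and composition, the twisted complex condition then holds in $\nodA$ itself, as desired.

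The main obstacle I anticipate is sign bookkeeping: the signs in the components of $\infbar(\mu_k)$ and $\infbar(\pi_k)$ from Defn.~\ref{defn-right-module-bar-constructions-of-an-Ainfty-morphism}, combined with those introduced by $\cxcol$, must exactly reproduce the sign pattern of \eqref{eqn-differentials-in-right-module-bar-construction} and match the signs displayed in \eqref{eqn-right-module-bar-construction-Ainfty-version}. This is mechanical but tedious — the signs in Defn.~\ref{defn-morphisms-pi_i-and-mu_i} and \ref{defn-right-module-bar-constructions-of-an-Ainfty-morphism} were chosen precisely to make the identification in the previous paragraph work — so once the identification is set up carefully, the verification reduces to a combinatorial check parallel to the one used in Prop.~\ref{prps-comparing-new-and-old-defns-of-ainfty-algebra}.
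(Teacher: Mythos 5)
Your first paragraph (well-definedness and degrees of the proposed components, assembling into an object of the ambient category) is fine, and the general strategy of exploiting faithfulness of $\infbar\colon \nodA \rightarrow \pretriagmns(\A)$ is the right one. But the central identification in your second paragraph is false, and the proof collapses there. When you apply $\infbar$ to every node of \eqref{eqn-right-module-bar-construction-Ainfty-version} and totalise via $\cxcol$, the result is \emph{not} isomorphic to $\infbar(E,p_\bullet)$: the term $EA^n$ occurs once in $\infbar(E,p_\bullet)$ (from the node $(E,p_\bullet)$) and once more in $\infbar(EA^i)$ for each $1 \le i \le n$, so it appears in the total with multiplicity $n+1$ rather than once. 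The bigrading does not ``merely refine'' the grading, because a term $EA^n$ has no well-defined node of origin. The analogous identification in Theorem \ref{theorem-ainfty-bimodules-are-ainfty-modules-in-cat-of-ainfty-modules} works only because there the two bar directions act on \emph{opposite} sides, so each $A^i \otimes M \otimes B^j$ occurs exactly once; here both bar constructions are taken on the same (right) side, and what you have built is a bar-of-bar object -- a resolution of $\infbar(E,p_\bullet)$, not $\infbar(E,p_\bullet)$ itself. Hence the twisted bicomplex condition does not ``follow immediately'' from the twisted complex condition for $\infbar(E,p_\bullet)$.

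What survives of your plan is this: by faithfulness of $\infbar$, the twisted complex condition in $\nodA$ is equivalent to the vanishing of $(-1)^j\, d(\infbar\,\alpha_{ij}) + \sum_k \infbar(\alpha_{kj}) \circ \infbar(\alpha_{ik})$ for every pair of nodes, and unwinding these gives a family of identities among compositions of the $m$'s and $p$'s in $\A$. These must actually be verified -- either componentwise, or by reducing to the components landing in the cogenerating term via cofreeness of the tensor coalgebra, as in Prps.~\ref{prps-comparing-new-and-old-defns-of-ainfty-algebra}\eqref{item-twcx-condition-on-m-i-imply-the-rest}. The signs $(\pi_i)_j = (-1)^{(i-1)(j-1)} p_{i+j-1}$ in Defn.~\ref{defn-morphisms-pi_i-and-mu_i} were indeed chosen so that these identities follow from the $\Ainfty$-relations for $(A,m_\bullet)$ and $(E,p_\bullet)$, but that computation is precisely the content of the proposition; the paper itself delegates it to \cite[\S2.10]{AnnoLogvinenko-BarCategoryOfModulesAndHomotopyAdjunctionForTensorFunctors}, where it is carried out directly. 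Your argument replaces this verification with an isomorphism that does not exist, which is a genuine gap.
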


\begin{proof}
See 
\cite[\S2.10]{AnnoLogvinenko-BarCategoryOfModulesAndHomotopyAdjunctionForTensorFunctors}. 
\end{proof}

We then upgrade $\infbar$ from a functor
into $\pretriagmns(\A)$ to a functor into $\pretriagmns(\nodA)$:
\begin{defn}
\label{defn-bar-construction-as-functor-into-pretriagmns-nodA}
\begin{enumerate}
\item For any object $(E, p_\bullet) \in \nodA$ 
define
$$ \infbar(E,p_\bullet) \in \pretriagmns(\nodA) $$
to be the twisted complex 
\eqref{eqn-right-module-bar-construction-Ainfty-version}. 

\item For any morphism $f_\bullet\colon (E, p_\bullet) \rightarrow  
(F, q_\bullet)$ in $\nodA$ define
$$ \infbar(f_\bullet) \in
\homm_{\pretriagmns(\nodA)}\left(\infbar(E,p_\bullet),
\infbar(F,q_\bullet)\right) $$
to be the lift of the bar-construction of $f_\bullet$ from $\A$ to $\nodA$ where in each component where $f_k$ doesn't involve the rightmost $A$ we replace $f_k$ by its image under the free functor, and in each component where it does we replace it by the morphism $f_{\bullet + k}\colon E \otimes A^k \rightarrow (F,q_\bullet)$ defined by $ (f_{\bullet + i})_j = f_{j+i}$. 

As usual, we illustrate the case when $f_\bullet$ is of odd degree:
\end{enumerate}
\begin{scriptsize}
\begin{equation}
\label{eqn-bar-complex-map-corresponding-to-Ainfty-morphism-Ainfty-version}
\begin{tikzcd}[row sep=1.5cm, column sep = 2.0cm]
\dots
\ar{r}{}
& 
EA^3
\ar{r}
\ar{d}[description, pos=0.6]{-f_1 A^3}
\ar{dr}[description, near start]{f_2 A^2}
\ar{drr}[description, near start]{-f_3A}
\ar{drrr}[description, near start]{f_{\bullet + 3}}
& 
EA^2
\ar{r}
\ar{d}[description, pos=0.6]{f_1A^2}
\ar{dr}[description, near start]{-f_2A}
\ar{drr}[description, near start]{f_{\bullet +2}}
&
EA
\ar{r}
\ar{d}[description, pos=0.6]{-f_1A}
\ar{dr}[description, near start]{f_{\bullet+1}}
&
(E,p_\bullet) 
\ar{d}[description, pos=0.6]{f_{\bullet}}
\\
\dots
\ar{r}{}
& 
FA^3
\ar{r}
& 
FA^2
\ar{r}
&
FA
\ar{r}
&
\underset{\degzero}{(F,q_\bullet)},
\end{tikzcd}
\end{equation}
\end{scriptsize}
\end{defn}

\begin{prps}
The assignments in 
Definition \ref{defn-bar-construction-as-functor-into-pretriagmns-nodA}
define the DG-functor
\begin{equation}
\infbar: \nodA \rightarrow \pretriagmns(\nodA)
\end{equation}
such that the following diagram of functors commutes:
\begin{equation}
\label{eqn-two-infbar-interwoven-by-forgetful-functor-nodA-to-A}
\begin{tikzcd} 
\nodA
\ar{r}{\infbar}
\ar{dr}[']{\infbar}
& 
\pretriagmns(\nodA)
\ar{d}{\forget}
\\
&
\pretriagmns(\A).
\end{tikzcd}
\end{equation}
\end{prps}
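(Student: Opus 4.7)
The plan is to verify the DG-functor axioms and the commutativity of \eqref{eqn-two-infbar-interwoven-by-forgetful-functor-nodA-to-A} separately. Commutativity is essentially built into Defn.~\ref{defn-bar-construction-as-functor-into-pretriagmns-nodA}. Applying the forgetful functor $\forget\colon \nodA \to \A$ component-wise to each object and to each morphism-component of \eqref{eqn-right-module-bar-construction-Ainfty-version} and \eqref{eqn-bar-complex-map-corresponding-to-Ainfty-morphism-Ainfty-version} recovers \eqref{eqn-right-module-bar-construction-of-A-m_i} and the morphism diagram of Defn.~\ref{defn-right-module-bar-constructions-of-an-Ainfty-morphism}. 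Indeed, by Defn.~\ref{defn-morphisms-pi_i-and-mu_i} the first components of $\mu_i$ and $\pi_i$ are $m_i$ and $p_i$; the strict $\Ainfty$-morphism $\free(f_k A^{i-1})$ has first component $f_k A^{i-1}$; and $(f_{\bullet+k})_1 = f_{k+1}$ by Defn.~\ref{defn-bar-construction-as-functor-into-pretriagmns-nodA}. The signs agree on the nose.

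Next, I would check that $\infbar(f_\bullet)$ is a valid element of $\homm^\bullet_{\pretriagmns(\nodA)}(\infbar E, \infbar F)$. Each component is indeed a morphism in $\nodA$: the map $\free(f_k A^{i-1})$ is a strict $\Ainfty$-morphism, while $f_{\bullet+k}$ is the $\Ainfty$-morphism with components $(f_{\bullet+k})_j := f_{k+j}$. Their $\nodA$-degrees match those prescribed by \eqref{eqn-the-hom-complex-of-unbounded-twisted-complexes-naive}. Preservation of the identity, $k$-linearity, and preservation of grading are then immediate from the defining formulas.

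The substance of the argument lies in verifying the two remaining axioms, $d(\infbar f_\bullet) = \infbar(df_\bullet)$ and $\infbar(g_\bullet \circ f_\bullet) = \infbar(g_\bullet) \circ \infbar(f_\bullet)$ in $\pretriagmns(\nodA)$. These reduce to equalities between corresponding morphism-components in $\nodA$. Since by construction the DG structure on $\nodA$ is the one for which the $\A$-level bar-construction $\infbar\colon \nodA \hookrightarrow \pretriagmns(\A)$ is a faithful DG-functor, each such $\nodA$-equality further reduces, after applying this bar-construction componentwise, to an identity in $\pretriagmns(\A)$. The resulting identities then follow by combining Prps.~\ref{prps-bar-construction-as-a-complex-of-Ainfty-modules} applied to $E$ and $F$, the module analogue of Prps.~\ref{prps-defining-equalities-of-Ainfty-morphism}, and the explicit formula \eqref{eqn-differentials-in-right-module-bar-construction} for the bar-construction differentials.

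The main obstacle will be the sign bookkeeping in this last step: each sign in \eqref{eqn-right-module-bar-construction-Ainfty-version} and \eqref{eqn-bar-complex-map-corresponding-to-Ainfty-morphism-Ainfty-version} has to be tracked through the double nesting of bar-constructions in $\pretriagmns(\nodA)$. This is mitigated by the coalgebra-level interpretation of Lemma \ref{lemma-total-endomorphism-of-nonaug-bar-constr-is-a-coderivation-induced-by-m_i}, under which the signs are precisely those dictated by the natural coderivation squaring to zero; they should therefore lift to the double-nested setting without modification.
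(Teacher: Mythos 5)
Your proposal is correct and takes essentially the same route as the paper: the paper's own proof is simply a citation to \cite[\S2.10]{AnnoLogvinenko-BarCategoryOfModulesAndHomotopyAdjunctionForTensorFunctors}, where exactly this componentwise verification is carried out --- commutativity with $\forget$ read off from the first components of $\mu_i$, $\pi_i$, and $f_{\bullet+k}$, and the DG-functor axioms reduced through the faithful $\A$-level bar-construction to identities of twisted-complex maps over $\A$. One minor caution: the signs do not lift entirely ``without modification,'' since totalising the doubly-nested bar-construction involves the row/column sign twist built into the $\cxrow$/$\cxcol$ formalism of \cite{AnnoLogvinenko-UnboundedTwistedComplexes}, but this is a mechanical adjustment already fixed on objects by Prps.~\ref{prps-bar-construction-as-a-complex-of-Ainfty-modules}.
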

\begin{proof}
See 
\cite[\S2.10]{AnnoLogvinenko-BarCategoryOfModulesAndHomotopyAdjunctionForTensorFunctors}.  
\end{proof}

\subsection{Bar-resolution}
\label{section-bar-resolution}

Having realised the bar-construction of an $\Ainfty$-$A$-module as 
a twisted complex in $\nodA$, we arrive naturally at the notion of the 
bar-resolution. It is the free part of the bar-construction. 
Thus, when the bar-construction is acyclic, it gives a resolution 
of an $\Ainfty$-$T$-module by free modules:

\begin{defn}
\label{defn-bar-resolution-functor-for-Ainfty-A-modules}
Define the \em bar-resolution \rm DG-functor
\begin{equation}
\infbarres\colon 
\nodA \rightarrow \pretriagmns(\freeA)
\end{equation}
by setting $\infbarres(E,p_\bullet)$ to be the shifted subcomplex 
$\infbar(E,p_\bullet)_{\deg \leq -1}[-1]$
of $\infbar(E, p_\bullet)$
\begin{equation}
\label{eqn-bar-resolution-of-Ainfty-A-module}
\begin{tikzcd}[column sep = 2.75cm]
\dots
\ar{r}[']{\begin{smallmatrix}- EA^2 \mu_2  + EAm_2A - \\ - Em_2A^2 + p_2 A^3 \end{smallmatrix}}
\ar[bend left=15]{rr}[description]{- EA \mu_3 - Em_3A - p_3 A^2 }
\ar[bend left=20]{rrr}[description]{-E\mu_4 + p_4A}
&
EA^3
\ar{r}[']{- EA\mu_2 + E m_2 A - p_2 A^2}
\ar[bend left=15]{rr}[description]{E\mu_3 + p_3A}
& 
EA^2
\ar{r}[']{-E\mu_2 + p_2A}
&
\underset{\degzero}{EA}
\end{tikzcd}
\end{equation}
and setting $\infbarres(f_\bullet)$ to be the
corresponding restriction of the twisted complex map 
$\infbar(f_\bullet)$ sign-twisted by $(-1)^{\deg(f)}$. 
We illustrate the odd degree case:
\begin{equation}
\label{eqn-bar-resolution-of-Ainfty-morphism}
\begin{tikzcd}[row sep=1.5cm, column sep = 2.0cm]
\dots
\ar{r}{}
& 
EA^3
\ar{r}
\ar{d}[description, pos=0.75]{f_1 A^3}
\ar{dr}[description, pos = 0.5]{-f_2 A^2}
\ar{drr}[description, near start]{f_3A}
& 
EA^2
\ar{r}
\ar{d}[description, pos=0.75]{- f_1A^2}
\ar{dr}[description, pos = 0.5]{f_2A}
&
EA
\ar{d}[description, pos=0.75]{f_1A}
\\
\dots
\ar{r}
& 
FA^3
\ar{r}
& 
FA^2
\ar{r}
&
\underset{\degzero}{FA}
\end{tikzcd}
\end{equation}
This defines a DG-functor because
$\infbar$ maps $\Ainfty$-$A$-modules to one-sided
twisted complexes and $\Ainfty$-morphisms thereof 
to one-sided maps of twisted complexes. 
\end{defn}

By definition, we have a natural inclusion 
$\freeA \hookrightarrow \nodA$, 
and therefore a natural inclusion  
\begin{equation}
\label{eqn-inclusion-of-pretriagmns-freeA-into-pretriagmns-nodA}
\pretriagmns(\freeA) \hookrightarrow \pretriagmns(\nodA). 
\end{equation}
Using
\eqref{eqn-inclusion-of-pretriagmns-freeA-into-pretriagmns-nodA}, 
we can consider the bar-resolution to be the functor
\begin{equation*}
\infbarres\colon \nodA \rightarrow \pretriagmns(\nodA). 
\end{equation*}
By abuse of notation, write  
\begin{equation*}
\id_{\nodA}\colon \nodA \hookrightarrow
\pretriagmns(\nodA), 
\end{equation*}
for the tautological fully faithful inclusion which sends each element of 
$\nodA$ to itself considered as a twisted complex concentrated 
in degree $0$. 
\begin{defn}
\label{defn-natural-tranformation-rho-Ainfty-version}
Define the homotopy natural transformation 
\begin{equation}
\label{eqn-natural-tranformation-rho-Ainfty-version}
\rho\colon\infbarres \rightarrow \id_{\nodA}
\end{equation}
of DG-functors $\nodA \rightarrow \pretriagmns(\nodA)$
by setting $\rho(E,p_\bullet)$ to consist of the components
of $\infbar(E,p_\bullet)$ which were discarded to obtain 
$\infbarres(E, p_\bullet)$, that is:
\begin{equation}
\label{eqn-natural-transformation-infbarres-into-identity-Ainfty-version}
\begin{tikzcd}[row sep=1.0cm, column sep = 1.0cm]
\dots
\ar{r}{}
& 
EA^3
\ar{r}
\ar{drr}[description]{\pi_4}
& 
EA^2
\ar{r}
\ar{dr}[description]{\pi_3}
&
EA
\ar{d}[description]{\pi_2}
\\
& 
& 
&
\underset{\degzero}{(E,p_\bullet)}. 
\end{tikzcd}
\end{equation}
\end{defn}

By calling $\rho$ a homotopy natural transformation we mean that it
gives a natural transformation of the induced functors on the homotopy 
categories. To see this, note that for any $f_\bullet\colon
(E,p_\bullet) \rightarrow (F, q_\bullet)$ in $\nodA$ 
we can rewrite the twisted complex morphism $\infbar(f_\bullet)$
depicted on 
\eqref{eqn-bar-complex-map-corresponding-to-Ainfty-morphism-Ainfty-version}
as
\begin{equation}
\label{eqn-infbar-f_bullet-rewritten-as-square-with-bar-resolution-Ainfty-version}
\begin{tikzcd}[column sep = 3cm]
\infbarres(E,p_\bullet) 
\ar{r}{\rho}
\ar{d}[description]{(-1)^{\deg(f)} \infbarres(f_\bullet)}
\ar{dr}[description]{f_{\bullet + \bullet}}
&
(E,p_\bullet)
\ar{d}[description]{f_\bullet}
\\
\infbarres(F, q_\bullet)
\ar{r}{\rho}
&
(F,q_\bullet),
\end{tikzcd}
\end{equation}
where $f_{\bullet + \bullet}$ is a twisted complex
morphism whose 
$EA^i \rightarrow (b,q_\bullet)$ component is $f_{\bullet + i}$.  
If $f_\bullet$ is closed of degree $0$, then so is $\infbar(f_\bullet)$. 
We then see from 
\eqref{eqn-infbar-f_bullet-rewritten-as-square-with-bar-resolution-Ainfty-version}
that $\rho$ is natural on $f_\bullet$ up to the homotopy given 
by $f_{\bullet + \bullet}$. Note, that this is also true when 
$df_\bullet$ is strict, and thus $df_{\bullet + \bullet} = 0$. 

\begin{prps}
\label{prps-natural-transformation-rho-is-acyclic-on-H-unital-modules-Ainfty-version}
The homotopy natural transformation
\eqref{eqn-natural-tranformation-rho-Ainfty-version}
is a homotopy equivalence on objects of $\nodA$
whose bar-construction is acyclic. 
\end{prps}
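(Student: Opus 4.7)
The plan is to exhibit $\infbar(E,p_\bullet)$ as the cone of $\rho(E,p_\bullet)$ inside $\twcxub(\nodA)$, and then appeal to the Homotopy Lemma together with the standard fact that a closed degree zero morphism in a DG category is a homotopy equivalence exactly when its cone is null-homotopic.

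First, I would read off from Definitions \ref{defn-bar-construction-as-functor-into-pretriagmns-nodA}, \ref{defn-bar-resolution-functor-for-Ainfty-A-modules} and \ref{defn-natural-tranformation-rho-Ainfty-version} that the twisted complex $\infbar(E,p_\bullet)$ depicted in \eqref{eqn-right-module-bar-construction-Ainfty-version} splits naturally into a two-step filtration in $\twcxub(\nodA)$: the terms in degrees $\leq -1$, after the shift $[-1]$ of Definition \ref{defn-bar-resolution-functor-for-Ainfty-A-modules}, form $\infbarres(E,p_\bullet)$; the term in degree $0$ is $(E,p_\bullet)$; and the remaining twisted differentials going from the first piece to the second are precisely the components $\pi_i$ comprising $\rho(E,p_\bullet)$ in \eqref{eqn-natural-transformation-infbarres-into-identity-Ainfty-version}. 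Consequently $\infbar(E,p_\bullet)$ is canonically identified with $\cone(\rho(E,p_\bullet))$ in $\twcxub(\nodA)$.

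Second, the triangle \eqref{eqn-two-infbar-interwoven-by-forgetful-functor-nodA-to-A} shows that the underlying twisted complex of $\infbar(E,p_\bullet)$ in $\twcxub(\A)$ coincides with the object-level bar-construction of $(E,p_\bullet)$, which is null-homotopic by hypothesis. Hence $\cone(\rho(E,p_\bullet))$ is acyclic in the sense of Definition \ref{defn-acyclicity-of-modules-and-twisted-complexes}, and Lemma \ref{lemma-the-homotopy-lemma-for-nodA}\eqref{item-complex-acyclic-if-and-only-if-acyclic-in-pretriagA} upgrades this to a null-homotopy of $\cone(\rho(E,p_\bullet))$ in $\twcxub(\nodA)$. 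A contracting homotopy of the cone decomposes, via the matrix structure along the underlying direct sum, into a homotopy inverse $(E,p_\bullet) \to \infbarres(E,p_\bullet)$ together with the two homotopies witnessing that this is a two-sided inverse to $\rho(E,p_\bullet)$.

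The main obstacle is purely bookkeeping in the first step: verifying that with the sign conventions of Definitions \ref{defn-bar-construction-as-functor-into-pretriagmns-nodA} and \ref{defn-bar-resolution-functor-for-Ainfty-A-modules} the identification $\cone(\rho(E,p_\bullet)) \cong \infbar(E,p_\bullet)$ is honest on the nose, and not merely correct up to a sign twist on each component. Once this is pinned down, the remaining steps are formal consequences of the already-proven Homotopy Lemma.
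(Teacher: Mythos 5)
Your argument is exactly the paper's: the proof in the text observes that the total complex of the $\pretriagmns(\nodA)$ map $\rho(E,p_\bullet)$ is $\infbar(E,p_\bullet)$ (your cone identification, with the sign twist already built into Definition \ref{defn-bar-resolution-functor-for-Ainfty-A-modules} via the shift $[-1]$), and then applies Lemma \ref{lemma-the-homotopy-lemma-for-nodA}\eqref{item-complex-acyclic-if-and-only-if-acyclic-in-pretriagA} to conclude that acyclicity of the bar-construction makes the cone null-homotopic in $\nodA$, hence $\rho$ a homotopy equivalence. Your proposal is correct and follows essentially the same route, just spelled out in more detail.
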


Recall that we say that a twisted complex over $\nodA$ is acyclic 
if its underlying twisted complex over $\A$ is null-homotopic. 

\begin{proof}
For any $(E,p_\bullet) \in \nodA$, 
the total complex of the $\pretriagmns(\nodA)$ map
$\rho(E,p_\bullet)$
is $\infbar(E,p_\bullet)$. 
By the Homotopy Lemma (Lemma \ref{lemma-the-homotopy-lemma-for-nodA})
if $\infbar (E,p_\bullet) \in \pretriagmns(\nodA)$ is acyclic, then 
it is null-homotopic, and hence $\rho(E,p_\bullet)$ is a homotopy equivalence. 
\end{proof}

\section{Strong homotopy unitality}
\label{section-strong-homotopy-unitality}

Throughout this section, let $\A$ be a monoidal DG category
and $(A,m_i)$ be an $\Ainfty$-algebra in $\A$. 

\subsection{Unitality conditions for $A$} 
\label{section-unitality-conditions-for-algebras}

The simplest notions of unitality one can ask for in the case of an 
$\Ainfty$-algebra are strict unitality and (weak) homotopy unitality:

\begin{defn}
\label{defn-strict-and-homotopy-unitality}
We say that $A$ is \em strictly unital \rm if there exists a unit
morphism $\eta\colon \id \rightarrow A$ in $\A$ 
such that:
\begin{enumerate}
\item $m_2 \circ (\eta \otimes \id) = m_2 \circ (\id \otimes \eta) = \id_A$. 
\item $m_i \circ (\id^j \otimes \eta \otimes \id^k) = 0$ for $i \geq
3$ and all appropriate $j$ and $k$. 
\end{enumerate}
 
We say that $A$ is \em weakly homotopy unital \rm if $(A, m_2)$ is a unital  
algebra in $H^0(\A)$. Explicitly, this means
that there exists a unit morphism $\eta$ in $\A$ as above 
such that 
\begin{equation}
\label{eqn-weak-homotopy-unitality-conditions}
m_2 \circ (\eta \otimes \id) = \id_A + d h^r 
\quad \text{ and } \quad 
m_2 \circ (\id \otimes {\eta}) = \id_A + d h^l, 
\end{equation}
for some degree $-1$ endomorphisms $h^l, h^r$ of $A$ in $\A$. 
\end{defn}

The following notion is based on the one introduced  
in \cite[Defn.~4.1.2.5]{Lefevre-SurLesAInftyCategories}:

\begin{defn}
We say that $A$ is \em $H$-unital \rm if its non-augmented bar
construction  $\infbarnaug A$ is null-homotopic in $\pretriagmns \A$. 
\end{defn}

It is clear that strict unitality implies weak 
homotopy unitality with $h^l = h ^r = 0$.
To see that strict unitality implies $H$-unitality, we consider
the standard contracting homotopy of the twisted complex
$\infbarnaug A$ whose components are the 
maps $\eta \otimes \id^i\colon A^i \rightarrow A^{i+1}$. 
However, when $\eta$ is only a weak homotopy unit, it doesn't seem 
to be possible to cook up such natural contracting homotopy 
with it.

In \cite[Cor.~4.1.2.7]{Lefevre-SurLesAInftyCategories} it is shown 
that weak homotopy unitality implies $H$-unitality for the usual
$\Ainfty$-algebras. In \S\ref{section-Ainfty-algebras-in-a-monoidal-category}
we've shown that this translates in our context to the situation 
where $\A = \kmodk$ with the monoidal structure given by the tensor
product. Unfortunately, the proof in loc.~cit. relies on the existence of 
minimal models which uses the fact that in $k$-$\modd$-$k$
every bimodule is homotopic to the sum of its cohomologies. 
This already doesn't hold when $\A = \BmodB$ 
over an arbitrary DG category $\B$. There is no way to generalise that proof
for an arbitrary monoidal DG category $\A$, and the authors believe that
in our generality weak homotopy unitality doesn't imply $H$-unitality. 

To fix this we introduce the notion of strong homotopy unitality:

\begin{defn}
\label{defn-strong-homotopy-unitality-for-ainfty-algebras}
$\Ainfty$-algebra $(A,m_i)$ is \em strongly homotopy unital \rm if there 
exists a unit morphism $\eta\colon \id \rightarrow A$ in $\A$ and
degree $-1$ endomorphisms $h^r_{\bullet}$ and $h^l_{\bullet}$ of $A$, 
respectively, such that 
\begin{equation}
\label{eqn-strong-homotopy-unitality-conditions}
\mu_2 \circ {\eta}A = \id_A + d h^r_{\bullet}  \text{ in } \nodA
\quad \text{ and } \quad 
\mu_2 \circ A{\eta} = \id_A + d h^l_{\bullet} \text{ in } \Anod. 
\end{equation}
Here $\mu_2$ denotes the $\nodA$
and $\Anod$ morphisms $A^2 \rightarrow A$ of Defn.~\ref{defn-morphisms-pi_i-and-mu_i}. 
\end{defn}

The existence of either $h^l_\bullet$ or $h^r_\bullet$
is enough for us to be able to construct 
the contracting homotopy of $\infbarnaug A$ in the same way as 
in the strictly unital case:
\begin{lemma}
\label{lemma-strong-homotopy-unital-implies-H-unital}
If $A$ is strongly homotopy unital, then it is $H$-unital.  
\end{lemma}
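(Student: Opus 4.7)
The plan is to identify $\infbarnaug A$ with the right-module bar construction of $A$ over itself, lift to $\nodA$, and reduce to showing the bar-resolution augmentation is a homotopy equivalence there.

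First, I observe that $\infbarnaug A$ coincides with the right-module bar construction $\infbar(A, m_\bullet)$, where $A$ is viewed as a right $\Ainfty$-$A$-module over itself with $p_k = m_k$. A direct comparison of the differential formulas \eqref{eqn-differentials-in-non-aug-bar-construction} and \eqref{eqn-differentials-in-right-module-bar-construction} verifies this. By Proposition \ref{prps-bar-construction-as-a-complex-of-Ainfty-modules}, this twisted complex in $\pretriagmns\A$ admits a natural lift to $\pretriagmns\nodA$ whose underlying complex in $\A$ is $\infbarnaug A$. By the Homotopy Lemma (Lemma \ref{lemma-the-homotopy-lemma-for-nodA}\eqref{item-complex-acyclic-if-and-only-if-acyclic-in-pretriagA}), it suffices to show the lift is null-homotopic in $\pretriagmns\nodA$.

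The lifted complex is the mapping cone (up to shift) of the bar-resolution augmentation $\rho\colon \infbarres(A, m_\bullet) \to (A, m_\bullet)$ from Definition \ref{defn-natural-tranformation-rho-Ainfty-version}: removing the top term $(A, m_\bullet)$ from the lifted $\infbar(A, m_\bullet)$ and shifting by one produces precisely $\infbarres(A, m_\bullet)$, with the connecting morphisms $\pi_i\colon A^i \to (A, m_\bullet)$ assembling into $\rho$. Thus null-homotopy of the cone is equivalent to $\rho$ being a homotopy equivalence in $\pretriagmns\nodA$.

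To build a homotopy inverse $\phi\colon (A, m_\bullet) \to \infbarres(A, m_\bullet)$, I set the degree-$0$ component $\phi_0\colon (A, m_\bullet) \to A^2$ to be the $\nodA$-morphism $\eta A$, the free-functor image of $\eta\colon \id_\A \to A$. By the strong homotopy unitality condition \eqref{eqn-strong-homotopy-unitality-conditions}, $\rho \circ \phi_0 = \pi_2 \circ \eta A = \id_A + d h^r_\bullet$, so $\rho \circ \phi_0$ is homotopic to $\id_A$ in $\nodA$. The higher components $\phi_n\colon (A, m_\bullet) \to A^{n+2}$ are constructed inductively, using the higher components of $h^r_\bullet$ to kill the obstruction to closedness at each level; a symmetric induction yields $\phi \circ \rho \sim \id_{\infbarres(A, m_\bullet)}$.

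The main obstacle is the inductive construction of the $\phi_n$. At each step, one must show that the obstruction to closedness of the partial $\phi$ in $\pretriagmns\nodA$ is a boundary, and then choose an explicit null-homotopy as the next component. The requisite null-homotopies are packaged within the $\Ainfty$-morphism $h^r_\bullet$: its component $(h^r_\bullet)_n$, together with its coherent interaction with the operations $m_k$ and the lower $(h^r_\bullet)_j$'s, supplies precisely the data needed to continue the induction. This mirrors the level-by-level construction in the proof of Lemma \ref{lemma-the-homotopy-lemma-for-nodA}\eqref{item-acyclic-if-and-only-if-acyclic-in-A}, but carried out in $\pretriagmns\nodA$ rather than within a single module.
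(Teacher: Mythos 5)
Your opening move is sound and matches the paper: identifying $\infbarnaug A$ with the right-module bar-construction of $A$ over itself is exactly how the paper's proof begins. But from there you take a strictly harder route, and the crucial step fails to be justified. You propose to prove null-homotopy of the \emph{lift} of $\infbar(A,m_\bullet)$ in $\pretriagmns(\nodA)$, equivalently that $\rho\colon \infbarres(A,m_\bullet) \to (A,m_\bullet)$ is a homotopy equivalence of twisted complexes over $\nodA$. This is precisely the statement that the free module $A$ is strongly homotopy unital in the sense of Defn.~\ref{defn-strong-homotopy-unitality}, and the paper only establishes it under the stronger hypothesis of \emph{bimodule} homotopy unitality (Lemma \ref{lemma-free-modules-are-strong-homotopy-unital}, flagged there as ``surprisingly non-trivial'' and as using all the higher homotopies of $\bareta_{\bullet\bullet}$). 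Your inductive construction of the higher components $\phi_n$ is where this surfaces: unlike the induction in the proof of Lemma \ref{lemma-the-homotopy-lemma-for-nodA}\eqref{item-acyclic-if-and-only-if-acyclic-in-A}, where a genuine contraction $h$ with $dh = \id$ kills each closed obstruction, here you only have $dh^r_\bullet = \id - \mu_2\circ\eta A$, which does not let you write an arbitrary closed obstruction as a boundary. The ``coherent interaction'' of $h^r_\bullet$ with the $m_k$ that you invoke is exactly the missing datum $\kappa_{\bullet\bullet}$ of Theorem \ref{theorem-conditions-for-bimodule-homotopy-unitality}; under mere strong homotopy unitality the paper explicitly warns that even the one-sided lift $\chi$ of $E\eta$ cannot be constructed (see \S\ref{section-unitality-conditions-for-A-modules}), and the remark after Prop.~\ref{prps-LF-and-RF-are-homotopy-adjoint-in-a-monoidal-category} exhibits natural strongly homotopy unital algebras that are not bimodule homotopy unital. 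Your closing claim that ``a symmetric induction yields $\phi\circ\rho\sim\id$'' is also not symmetric at all: that side is the content of the Chi-Rho Lemma (Lemma \ref{lemma-chi-rho-lemma}), which again needs $\bareta_{\bullet\bullet}$.

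The paper's proof sidesteps all of this by working \emph{downstairs} in $\pretriagmns\A$, where the required null-homotopy can be written in closed form with no induction. One takes the degree $-1$ endomorphism $\beta$ of $\infbarr A$ whose components are $\pm\,\eta A^i\colon A^i \to A^{i+1}$, and computes directly that $d\beta = \infbar(\mu_2\circ\eta A)$, the bar-construction of the $\nodA$-morphism $\mu_2\circ\eta A$. Strong homotopy unitality \eqref{eqn-strong-homotopy-unitality-conditions} then gives $d\beta = \id_{\infbarr A} + d\bigl(\infbar h^r_\bullet\bigr)$, so $\beta - \infbar h^r_\bullet$ contracts $\infbarr A$ in $\pretriagmns\A$, which is all that $H$-unitality asks. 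The moral: the single identity $\mu_2\circ\eta A = \id + dh^r_\bullet$ \emph{in} $\nodA$ already packages, via the bar-construction functor, the entire infinite system of homotopies you were trying to rebuild by hand in $\pretriagmns(\nodA)$ --- and it does so without ever needing $\rho$ to be invertible there.
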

\begin{proof}
As a twisted complex, non-augmented algebra bar-construction $\infbarnaug A$ coincides with right module bar-construction $\infbarr A$. It suffices therefore to show that $\infbarr A$ is null-homotopic in $\pretriagmns \A$. 

Consider the following endomorphism of $\infbarr A$ in $\pretriagmns \A$:
\begin{small}
\begin{equation*}
\beta : = 
\begin{tikzcd}[column sep = 3.25cm, row sep = 1cm]
\cdots\quad
A^4
\ar{r}[description]{A^2m_2 - A m_2 A + m_2 A^2}
\ar[bend left=10]{rr}[description]{-Am_3 - m_3A}
\ar[bend left=11]{rrr}[description]{m_4}
& 
A^3
\ar{r}[description]{Am_2 - m_2A}
\ar[bend left=10]{rr}[description]{m_3}
\ar{dl}[description]{\eta A^3}
&
A^2
\ar{r}[description]{m_2}
\ar{dl}[description]{- \eta A^2}
&
\underset{\degzero}{A}
\ar{dl}[description]{\eta A}
\\
\cdots\quad
A^4
\ar{r}[description]{A^2m_2 - A m_2 A + m_2 A^2}
\ar[bend right=10]{rr}[description]{-Am_3 - m_3A}
\ar[bend right=11]{rrr}[description]{m_4}
& 
A^3
\ar{r}[description]{Am_2 - m_2A}
\ar[bend right=10]{rr}[description]{m_3}
&
A^2
\ar{r}[description]{m_2}
&
\underset{\degzero}{A}. 
\end{tikzcd}
\end{equation*}
\end{small}
It can be readily checked that
\begin{equation*}
d\beta : = 
\begin{tikzcd}[column sep = 3.25cm, row sep  = 2.2cm]
\cdots\quad
A^4
\ar{r}
\ar[shift left = 0.25cm]{d}[description, pos = 0.82]{m_2A^3 \circ \eta A^4}
\ar{dr}[description, pos = 0.82]{- m_3A^2 \circ \eta A^4}
\ar{drr}[description, pos = 0.82]{m_4A \circ \eta A^4}
\ar{drrr}[description, pos = 0.8]{- m_5 \circ \eta A^4}
&
A^3
\ar{r}
\ar{d}[description, pos = 0.68]{m_2A^2 \circ \eta A^3}
\ar{dr}[description, pos = 0.7]{- m_3A \circ \eta A^3}
\ar{drr}[description, pos = 0.67]{m_4 \circ \eta A^3}
&
A^2
\ar{r}
\ar{d}[description, pos = 0.37]{m_2 A \circ \eta A^2}
\ar{dr}[description, pos=0.52]{- m_3 \circ \eta A^2}
&
A 
\ar{d}[description, pos = 0.4]{m_2 \circ \eta A}
\\
\cdots\quad
A^4
\ar{r}
&
A^3
\ar{r}
&
A^2
\ar{r}
&
A,
\end{tikzcd}
\end{equation*}
thus it is the bar-construction of 
the morphism $\mu_2 \circ \eta A$ in $\nodA$. 

Since $A$ is strongly homotopy unital, we have in $\nodA$
$$ \mu_2 \circ \eta A = \id_A + dh^r_{\bullet}. $$
Hence in $\pretriagmns \A$ we have
$$ d\beta = \id_{\infbarr A} + d\left(\infbar
h^r_{\bullet}\right), $$
whence $\infbarr A$ is contractible, as desired. 
\end{proof}

We see in the next section that the condition of strong homotopy
unitality is enough to ensure that for $\Ainfty$-$A$-modules
$H$-unitality is equivalent to ordinary homotopy unitality. However, to have 
a good notion of strong homotopy unitality for $A$-modules, we need
the following stronger notion for $A$ itself:

\begin{defn}
\label{def-bimodule-homotopy-unitality}
Let $(A,m_i)$ be an $\Ainfty$-algebra in a monoidal DG category $\A$. 
We say that $A$ is \em bimodule homotopy unital \rm if there is 
a degree $0$ morphism 
$$ \bareta_{\bullet\bullet}\colon \id_\A \rightarrow A, $$
in $\AnodA$ such that
\begin{equation}
\label{eqn-bimodule-homotopy-unitality-condition}
(d\bareta)_{\bullet\bullet}
= \left\{
 (d\bareta)_{10} = \id_A,  (d\bareta)_{01} = \id_A
\right\}. 
\end{equation}
Here the $\Ainfty$-$A$-$A$-bimodule structures on $A$ and $\id_\A$ are
given by the natural action of $A$ on itself with $p_{ij} = m_{i+j+1}$, 
and the zero action of $A$ on $\id_\A$ with $p_{ij} = 0$. 
\end{defn}

This definition is best understood by the means of the following result:

\begin{theorem}
\label{theorem-conditions-for-bimodule-homotopy-unitality}
$A$ is bimodule homotopy unital if and only if there exist
\begin{itemize}
\item a degree $0$ morphism $\eta\colon \id_{\A} \rightarrow A$ in
$\A$,  
\item a degree $-1$ morphism $h^l_\bullet\colon A \rightarrow A$
in $\Anod$,
\item a degree $-1$ morphism $h^r_\bullet\colon A \rightarrow A$ 
in $\nodA$,
\item a degree $-2$ morphism $\kappa_{\bullet\bullet} \colon A^2
\rightarrow A$ in $\AnodA$, 
\end{itemize}
such that in these categories 
\begin{align*}
d\eta &= 0, \\
dh^r_\bullet &= \id_{\A} - \mu_2 \circ \eta{A}, \\
dh^l_\bullet &= \id_{\A} - \mu_2 \circ A\eta, \\
d\kappa_{\bullet\bullet} &= \mu_2 \circ (h^l_\bullet{A} -
{A}h^r_\bullet) - \mu_3 \circ A \eta A.
\end{align*}
\end{theorem}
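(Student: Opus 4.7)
The plan is to decompose a degree-$0$ morphism $\bareta_{\bullet\bullet}\colon \id_\A \rightarrow A$ in $\AnodA$ into four pieces corresponding to the partition of $\mathbb{Z}_{\geq 0}^2$ into $\{(0,0)\}$, $\{(i,0): i \geq 1\}$, $\{(0,j): j \geq 1\}$, and $\{(i,j): i,j \geq 1\}$. Concretely, set $\eta := \bareta_{0,0}$, $h^l_i := \bareta_{i,0}$ for $i \geq 1$, $h^r_j := \bareta_{0,j}$ for $j \geq 1$, and $\kappa_{l,m} := \bareta_{l+1, m+1}$ for $l, m \geq 0$. A degree check shows these match the types in the theorem: $\bareta_{i,j}\colon A^{i+j} \to A$ has degree $-i-j$, and this agrees with the $i$-th component of a degree $-1$ morphism $A \to A$ in $\Anod$, the $j$-th component of a degree $-1$ morphism $A \to A$ in $\nodA$, and the $(l,m)$-component of a degree $-2$ morphism $A^2 \to A$ in $\AnodA$. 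Because $\id_\A$ carries zero bimodule operations, no coherence conditions beyond those of degree are imposed on the raw data of $\bareta_{\bullet\bullet}$, so this decomposition is a bijection between such quadruples and underlying graded data of $\bareta$.

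The essential content of the theorem is the translation of the closedness condition $(d\bareta)_{\bullet\bullet} = \{(d\bareta)_{1,0} = (d\bareta)_{0,1} = \id_A\}$ into the four DG-category equations. Applying the bimodule bar-differential formula \eqref{eqn-differentials-in-the-bimodule-bar-construction} and using that $\id_\A$ has zero operations, each $(d\bareta)_{I,J}$ is the sum of $d_\A \bareta_{I,J}$ and terms of the form $m_{k+1} \circ (\id^a \otimes \bareta_{p,q} \otimes \id^b)$ with $(p,q) \in [0,I] \times [0,J]$. The key structural observation is that the quadrant in which $(p,q)$ lies determines which piece of $(\eta, h^l_\bullet, h^r_\bullet, \kappa_{\bullet\bullet})$ contributes: $(d\bareta)_{n,0}$ sees only $\eta$ and $h^l_1, \dots, h^l_n$; $(d\bareta)_{0,n}$ sees only $\eta$ and $h^r_1, \dots, h^r_n$; and $(d\bareta)_{n,m}$ with $n,m \geq 1$ sees all four pieces depending on whether the target operation acts entirely within the left factors, entirely within the right factors, or spans across the central $\id_\A$.

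For the forward direction, the $(0,0)$ component gives $d\eta = 0$, the $(1,0)$ and $(0,1)$ components give the first components of the $h^l$ and $h^r$ equations, and the $(1,1)$ component gives the first component of the $\kappa$ equation, with the cross-term $\mu_3 \circ A\eta A$ arising from the unique target operation $m_3$ applied to the three-input block spanning $A \otimes \id \otimes A$. The higher-order vanishings $(d\bareta)_{n,0} = 0$ for $n \geq 2$ repackage, via the identification $h^l_i = \bareta_{i,0}$, into the higher components of $dh^l_\bullet = \id - \mu_2 \circ A\eta$ in $\Anod$: expanding $(\mu_2 \circ A\eta)_n$ by the $\Ainfty$-composition formula in $\Anod$ yields $m_{n+1} \circ A^n\eta$, which is exactly the contribution of $\bareta_{0,0} = \eta$ to $(d\bareta)_{n,0}$. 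Analogous unpackings handle the $(0,n)$ conditions and recover $dh^r_\bullet = \id - \mu_2 \circ \eta A$ in $\nodA$, and the $(n,m)$ conditions (with $n,m \geq 1$) recover the $\AnodA$ equation for $\kappa_{\bullet\bullet}$. The converse direction simply reverses this bookkeeping.

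The main obstacle is sign-tracking: one must verify that the signs appearing in the bimodule bar-differential \eqref{eqn-differentials-in-the-bimodule-bar-construction} agree with those produced by $\Ainfty$-composition in $\Anod$, $\nodA$, and $\AnodA$, and that the cross terms in the $\kappa$-equation reproduce exactly the terms in $(d\bareta)_{n,m}$ (with $n,m \geq 1$) coming from target operations spanning across the central $\id$. This compatibility is effectively built into Theorem \ref{theorem-ainfty-bimodules-are-ainfty-modules-in-cat-of-ainfty-modules}, which identifies the bimodule bar-construction with the iterated one-sided bar-constructions via $\cxrow$ and $\cxcol$, but spelling it out constitutes the routine but lengthy core of the verification.
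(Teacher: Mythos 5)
Your proposal is correct and takes essentially the same route as the paper's proof: there, $\bareta_{\bullet\bullet}$ is likewise split according to the decomposition of $\infbarbi(\id_\A)$ into the four pieces $\id_\A$, $\infbarl(A)[1]$, $\widehat{\infbarr(A)}[1]$, and $\infbarbi(A^2)[1,1]$ (your quadrant partition), and closedness of $\bareta$ is translated piece by piece into the four equations, with the cross-terms $\mu_2 \circ h^l_\bullet{A}$, $\mu_2 \circ {A}h^r_\bullet$, and $\mu_3 \circ A\eta A$ arising exactly as you describe. One detail to record when carrying out the sign bookkeeping you defer: the identification of the row-$0$ piece $\widehat{\infbarr(A)}$ with $\infbarr(A)$ introduces alternating signs, so the correct dictionary is $h^r_i = (-1)^{i-1}\bareta_{0,i}$ rather than the untwisted $h^r_j := \bareta_{0,j}$ you propose.
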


Here in each category, $\eta{A}$, $A{\eta}$,
$h^l_\bullet{A}$, ${A}h^r_\bullet$ and $A\eta{A}$ denote 
$\Ainfty$-morphisms whose only non-zero components they specify. 
E.g.~$A\eta{A}$ is a degree $0$ morphism $A^2 \rightarrow A^3$ in $\A$,
however in $\AnodA$ it denotes the degree $2$ 
morphism $\id_A \rightarrow A^3$ whose only non-zero component is
the $(1,1)$-th component $A\eta{A}$. Moreover, in each
respective category, $\mu_k\colon A^k \rightarrow A$ is the
$\Ainfty$-morphism defined in 
\S\ref{section-bar-construction-as-a-complex-of-ainfty-A-modules}. 
The bimodule version is defined by $(\mu_k)_{ij} = (-1)^{(k-1)j} m_{i+j+1}$, 
and left/right module versions are obtained from it by setting $i$ or 
$j$ to be zero. 

\begin{proof}
A degree $0$ morphism 
$\bareta_{\bullet\bullet} \colon \id \rightarrow A$ 
in the category of $\Ainfty$ $A$-$A$-bimodules is 
a collection of degree $2-i-j$ natural transformations
$$ \bareta_{ij}\colon A^{i-1} A^{j-1} \rightarrow A \quad \quad i,j \geq 1. $$

Let $\infbarbi(\bareta_{\bullet\bullet})$ be the induced map of
twisted bicomplexes. Its source
$\infbarbi(\id_\A)$ breaks up as a direct sum of four
twisted bicomplexes: 
\begin{itemize}
\item $\id_\A$ concentrated in degree $(0,0)$, 
\item $\infbarl(A)[1]$ concentrated in column $0$,
\item $\widehat{\infbarr(A)}[1]$ concentrated in row $0$, 
\item $\infbarbi(A^2)[1,1]$. 
\end{itemize}
Here $\widehat{(-)}$ is the automorphism of the DG category of twisted
complexes which sign-twists every differential $\alpha_{ij}$ and every 
morphism component $f_{ij}$ by $(-1)^{i+j}$. It is isomorphic to the 
identity functor via the isomorphism $(E, \alpha_{ij}) \simeq 
\widehat{(E, \alpha_{ij})}$ whose components are the maps 
$(-1)^{i} \id\colon E_i \rightarrow E_i$. 

Since $\widehat{\infbarr(A)} \simeq \infbarr(A)$,  
$\infbarbi(\id_\A)$ is isomorphic 
to the total complex of the following twisted bicomplex 
of twisted bicomplexes: 
\begin{equation}
\label{eqn-bar-bicomplex-of-Id-subcomplexes-ainfty}
\begin{tikzcd}
\infbarbi(A^2) 
\ar{r}{0}
\ar{d}{0}
&
\infbarl(A) 
\ar{d}{0}
\\
\infbarr(A)
\ar{r}{0}
& 
\id_\A
\end{tikzcd}
\end{equation}
In the rest of the proof we use this to implicitly identify 
$\infbarbi(\bareta_{\bullet\bullet})$ with
\eqref{eqn-bar-bicomplex-of-Id-subcomplexes-ainfty}. 

For any $\Ainfty$-bimodule morphism 
$f_{\bullet\bullet}\colon \id \rightarrow A$
write $f^0$, $f^r$, $f^l$, and $f^{bi}$ for the restrictions of the
twisted bicomplex map $\infbarbi(f_{\bullet\bullet})$
to the four elements of
$\eqref{eqn-bar-bicomplex-of-Id-subcomplexes-ainfty}$. 
These factor through the subcomplexes of $\infbarbi(A)$ given by 
the degree $(0,0)$ element, the $0$th row, the $0$th column, and 
the whole of $\infbarbi(A)$. We identify the first three with 
$A$, $\infbarr(A)$, and $\infbarl(A)$, respectively. 
The restriction of $\infbarbi(f_{\bullet\bullet})$
to each of $\id_\A$, $\infbarr(A)$, $\infbarl(A)$, and $\infbarbi(A^2)$
is given by the respective bar-construction of 
the morphisms of $f_{\bullet\bullet}$ which originate in 
that twisted complex plus the corrections which come from the 
bimodule bar-constructions of those morphisms of 
of $\bareta_{\bullet\bullet}$ which originate in the higher 
degrees of the bicomplex.  

Viewing $\bareta_{\bullet\bullet}$ as a collection
of maps from the elements of $\infbarbi(\id_\A)$ to $A$, 
use the identification of  $\infbarbi(\id_\A)$ and 
$\eqref{eqn-bar-bicomplex-of-Id-subcomplexes-ainfty}$
to write $\eta$, $h^r_\bullet$, $h^l_\bullet$, and
$\kappa_{\bullet\bullet}$ for the restrictions
of $\bareta_{\bullet\bullet}$ to 
$\id_\A$, $\infbarr(A)$, $\infbarl(A)$, and $\infbarbi(A^2)$. 
In other words, $\eta = \bareta_{00}$, $h^r_i = (-1)^{i-1}
\bareta_{0i}$, $h^l_i = \bareta_{i0}$, and $\kappa_{ij} = 
\bareta_{(i+1)(j+1)}$. 

Conversely we use the identification of 
$\eqref{eqn-bar-bicomplex-of-Id-subcomplexes-ainfty}$
with $\infbarbi(\id_\A)$, to view any one of
$\eta$, $h^r_\bullet$, $h^l_\bullet$, or $\kappa_{\bullet\bullet}$
as a collection of maps from the elements of
$\infbarbi(\id_\A)$ to $A$, 
and thus as an $\Ainfty$-bimodule morphism $\id_\A \rightarrow A$.
In this sense
\begin{equation}
\label{eqn-decomposition-of-bareta}
\bareta_{\bullet\bullet} = \eta + h^r_\bullet + h^l_\bullet + 
\kappa_{\bullet\bullet}. 
\end{equation}
We then write e.g.~$\eta^r$ or $(h^r_\bullet)^{bi}$ for 
the corresponding component of its bar-construction. 
For example, $(h^r_\bullet)^0 = (h^r_\bullet)^l = 0$, 
while $(h^r_\bullet)^r = \infbarr(h^r_\bullet)$, and similarly for
$h^l_\bullet$. Also  
\begin{equation}
\label{eqn-bareta-infbarl-A-second-component-r}
\eta^r = 
\begin{tikzcd}
\dots 
\ar{r}
&
A^3  
\ar{r}
\ar{dl}[description]{\eta A^3}
&
A^2  
\ar{r}
\ar{dl}[description]{-\eta A^2}
&
A  
\ar{dl}[description]{\eta A}
\\
\dots 
\ar{r}
&
A^3  
\ar{r}
&
A^2  
\ar{r}
&
A, 
\end{tikzcd}
\end{equation}
\begin{equation}
\label{eqn-bareta-infbarl-A-second-component-l}
\eta^l = 
\begin{tikzcd}
\dots 
\ar{r}
&
A^3  
\ar{r}
\ar{dl}[description]{A^3 \eta}
&
A^2  
\ar{r}
\ar{dl}[description]{A^2 \eta}
&
A  
\ar{dl}[description]{A \eta}
\\
\dots 
\ar{r}
&
A^3  
\ar{r}
&
A^2  
\ar{r}
&
A. 
\end{tikzcd}
\end{equation}

With this notation in mind, we have:
\begin{itemize}
\item $(\bareta_{\bullet\bullet})^0$ 
is the degree $0$ morphism $\eta\colon \id_\A \rightarrow A$ itself. 
\item $(\bareta_{\bullet\bullet})^{r}$
is the degree $-1$ morphism $ \infbarr(h^r_\bullet) + \eta^r$. 
\item $(\bareta_{\bullet\bullet})^l$
is the degree $-1$ morphism $ \infbarl(h^l_\bullet) + \eta^l$. 
\item $(\bareta_{\bullet\bullet})^{bi}$
is the degree $-2$ morphism $\infbarbi(\kappa_{\bullet \bullet}) 
+ (h^l_\bullet)^{bi} + (h^{r}_{\bullet})^{bi} +
\eta^{bi}$. 
\end{itemize}

The condition that $(d\bareta)_{\bullet\bullet}$ has only two 
non-zero components
$$ (d\bareta)_{10} = \id_A \quad\text{ and }\quad (d\bareta)_{01} =
\id_A, $$
is therefore equivalent to:
\begin{itemize}
\item $d\left((\bareta_{\bullet\bullet})^{0}\right) = 0$. This
simply means that $d\eta = 0$ in $\A$. 

\item $d((\bareta_{\bullet\bullet})^l) = \infbarl(\id^l_A)$,where
$\id^l_A$ is the identity map of $A$ in $\Anod$. By above we
have
$$ d((\bareta_{\bullet\bullet})^l) =  \infbarl(dh^l_\bullet) +
d(\eta^l) $$
and while $\eta^l$ is not a left-module 
bar-construction of something, we have $d(\eta^l) = \infbarl(\mu_2 \circ
A\eta)$. We conclude that this condition is equivalent to  
\begin{equation}
\label{eqn-proof-bimodule-homotopy-unitality-l-condition}
dh^l_\bullet = \id^l_A - \mu_2 \circ A\eta \quad \quad \text{ in }
\quad \Anod. 
\end{equation}

\item $d((\bareta_{\bullet\bullet})^r) = \infbarr(\id^r_A)$, 
where $\id^r_A$ is the identity map of $A$ in $\nodA$. 
Similarly, this condition is equivalent to 
\begin{equation}
\label{eqn-proof-bimodule-homotopy-unitality-r-condition}
dh^r_\bullet = \id^r_A - \mu_2 \circ \eta{A}
\quad \quad \text{ in } \quad \nodA. 
\end{equation}

\item $d((\bareta_{\bullet\bullet})^{bi}) = (\id^{l}_T)^{bi} + 
(\id^{r}_T)^{bi}$. 
From \eqref{eqn-decomposition-of-bareta} we have
$$ d((\bareta_{\bullet\bullet})^{bi}) = \infbarbi(d\kappa_{\bullet
\bullet})
+ d(h^{l}_\bullet)^{bi} + d(h^{r}_{\bullet})^{bi} + d\eta^{bi}. $$
We have $d\eta^{bi} =  (d_{bimod}\eta)^{bi}$. Now
note that $(d_{{bimod}}\eta)_{00} = d\eta = 0$ and 
$$ (d_{\text{bimod}}\eta)_{ij} = m_{i+j+1} \circ A^i \eta A^j 
\quad \quad i + j \geq 1. $$
Split $d_{{bimod}}\eta$ up into the summands
originating in $\infbarr(A)$, $\infbarl(A)$, and $\infbarbi(A^2)$.
We can write these summands as $\mu_2 \circ {\eta}A - 
\mu_3 \circ A{\eta}A$, $\mu_2 \circ A{\eta} - \mu_3 \circ A{\eta}A$, 
and $\mu_3 \circ A{\eta}A$, respectively. Indeed, we have componentwise 
\begin{align*}
(\mu_2 \circ {\eta}A)_{ij} & = m_{i+j+1} \circ A^i (\eta A) A^{j-1},
\quad \quad i \geq 0, j \geq 1,
 \\
(\mu_2 \circ {A\eta})_{ij} & = m_{i+j+1} \circ A^{i-1} (A\eta) A^j, 
\quad \quad i \geq 1, j \geq 0,
\\
(\mu_3 \circ A{\eta}A)_{ij} & = m_{i+j+1} \circ A^{i-1}(A{\eta}A) A^{j-1},
\quad \quad i \geq 1, j \geq 1,
\end{align*}
with the remaining components being zero. Thus 
$$ d_{{bimod}}\eta = \mu_2 \circ {\eta}A + \mu_2 \circ A{\eta}   
- \mu_3 \circ A{\eta}A. $$

Next, split $d_{\text{bimod}} h^l_\bullet$ into the summands
originating in $\infbarl(A)$ and  $\infbarbi(A^2)$. The former 
has the same components as $\Anod$ morphism $d_{{lmod}} h^l_\bullet$ 
which equals $\id_A^{l} - \mu_2 \circ A{\eta}$ by
\eqref{eqn-proof-bimodule-homotopy-unitality-l-condition}. We can 
therefore write this summand in $\AnodA$ as  
$\id_A^{l} - \mu_2 \circ A{\eta} + \mu_3 \circ A{\eta}A$. 
As before, $\mu_3 \circ A{\eta}A$ is needed to cancel out 
the components of $\mu_2 \circ A{\eta}$ which originate outside $\infbarl(A)$.  
On the other hand, the summand of $d_{\text{bimod}} h^l_\bullet$ 
originating in $\infbarbi(A^2)$ can be rewritten as 
$\mu_2 \circ h^l_\bullet{A}$. Thus 
\begin{align*}
d_{{bimod}} h^l_\bullet
 = \id_A^{l} - \mu_2 \circ A{\eta} + \mu_3 \circ A{\eta}A + \mu_2 \circ h^l_\bullet{A}, 
\end{align*}
and computing $d_{{bimod}} h^r_\bullet$ in a similar way we obtain
\begin{align*}
d_{{bimod}} h^r_\bullet
&=  \id_A^{r} - \mu_2 \circ {\eta}A + \mu_3 \circ A{\eta}A - \mu_2 \circ {A}h^r_\bullet. 
\end{align*}

We conclude that the condition that 
$d((\bareta_{\bullet\bullet})^{bi}) = (\id^{l}_T)^{bi} + (\id^{r}_T)^{bi}$
is equivalent to 
$$ \infbarbi(d\kappa_{\bullet \bullet}) = \left(\mu_2 \circ
(h^l_\bullet{A} - {A}h^r_\bullet) + \mu_3 \circ A \eta A\right)^{bi}. $$
As $\mu_2 \circ (h^l_\bullet{A} - {A}h^r_\bullet) + \mu_3 \circ A \eta A$ 
originates in $\infbarbi(A^2)$, we have 
$$ \infbarbi \left( \mu_2 \circ (h^l_\bullet{A} - {A}h^r_\bullet) + \mu_3
\circ A \eta A\right) = \left( \mu_2 \circ (h^l_\bullet{A} - {A}h^r_\bullet)
+ \mu_3 \circ A \eta A \right)^{bi}, $$
and therefore the above is equivalent to 
$$ d\kappa_{\bullet\bullet} = \mu_2 \circ (h^l_\bullet{A} -
{A}h^r_\bullet) + \mu_3 \circ A \eta A \quad \quad \text{ in } \AnodA. $$
\end{itemize}
\end{proof}

\begin{cor}
Bimodule homotopy unitality implies strong homotopy unitality. 
\end{cor}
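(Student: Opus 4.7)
The plan is to observe that this corollary is essentially immediate from the characterization of bimodule homotopy unitality given by Theorem \ref{theorem-conditions-for-bimodule-homotopy-unitality}, since the latter packages more data than strong homotopy unitality and includes the strong homotopy unitality conditions explicitly as part of its equivalent formulation.

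Concretely, I would start by assuming $A$ is bimodule homotopy unital, i.e.\ that there exists $\bareta_{\bullet\bullet}\colon\id_\A\to A$ in $\AnodA$ with $(d\bareta)_{10}=\id_A$ and $(d\bareta)_{01}=\id_A$. By Theorem \ref{theorem-conditions-for-bimodule-homotopy-unitality}, this is equivalent to the existence of a quadruple $(\eta, h^l_\bullet, h^r_\bullet, \kappa_{\bullet\bullet})$ satisfying the four listed relations. The first three are $d\eta = 0$, $dh^r_\bullet = \id_\A - \mu_2\circ\eta A$ in $\nodA$, and $dh^l_\bullet = \id_\A - \mu_2\circ A\eta$ in $\Anod$.

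Then I would compare these with the strong homotopy unitality conditions of Definition \ref{defn-strong-homotopy-unitality-for-ainfty-algebras}, namely $\mu_2\circ\eta A = \id_A + d\tilde h^r_\bullet$ in $\nodA$ and $\mu_2\circ A\eta = \id_A + d\tilde h^l_\bullet$ in $\Anod$. Setting $\tilde h^r_\bullet := -h^r_\bullet$ and $\tilde h^l_\bullet := -h^l_\bullet$ (degree $-1$ endomorphisms of $A$ in $\nodA$ and $\Anod$ respectively), the relations above give exactly the required identities, with $\eta$ providing the unit morphism. Thus $(A,m_\bullet)$ is strongly homotopy unital, and the extra datum $\kappa_{\bullet\bullet}$ is simply discarded.

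There is no genuine obstacle here: once Theorem \ref{theorem-conditions-for-bimodule-homotopy-unitality} is in hand, the corollary reduces to a forgetful operation on the structure, together with a trivial sign adjustment. The real content is in that theorem; the corollary merely records the implication in the hierarchy of unitality notions.
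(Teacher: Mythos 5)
Your proposal is correct and is exactly the argument the paper intends: the corollary is stated without proof precisely because Theorem \ref{theorem-conditions-for-bimodule-homotopy-unitality} exhibits $(\eta, h^l_\bullet, h^r_\bullet)$ satisfying the strong homotopy unitality identities up to the sign flip you perform, with $\kappa_{\bullet\bullet}$ discarded. Your sign bookkeeping ($\tilde h^r_\bullet = -h^r_\bullet$, $\tilde h^l_\bullet = -h^l_\bullet$) matches the conventions of Definition \ref{defn-strong-homotopy-unitality-for-ainfty-algebras}.
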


\subsection{Unitality conditions for $A$-modules} 
\label{section-unitality-conditions-for-A-modules}

The notions defined in the previous section have analogues 
for $\Ainfty$-$A$-modules:
\begin{defn}
A right $\Ainfty$-$A$-module $(E,p_\bullet)$ is \em $H$-unital \rm 
if its bar-construction $\infbar(E,p_\bullet)$ is acyclic. 
We write $\nodhuA$ for the full subcategory of $\nodA$ comprising
$H$-unital modules. 

If $A$ is strongly homotopy unital, then: 
\begin{itemize}
\item  $(E,p_\bullet)$ is \em strictly
unital \rm if in $\A$ the composition
$$ EA^i \xrightarrow{EA^j\eta A^{i-j}} EA^{i+1} \xrightarrow{p_{i+2}} E $$
equals $\id_E$ for $i = 0$ and $0$ otherwise. We write
$\moddinf\text{-}A$ for the full subcategory of $\nodA$ consisting of
strictly unital modules. 

\item $(E,p_\bullet)$ is \em homotopy unital \rm if $(E,p_2)$ is
a strictly unital $H^0(A)$-module in $H^0(\A)$. Explicitly, 
we have a degree $-1$ endomorphism $h$ of $E$ in $\A$ with 
\begin{equation}
\label{item-homotopy-unitality-condition-for-A-modules}
E \xrightarrow{E\eta} EA \xrightarrow{p_2} E = \id_E + dh. 
\end{equation}
\end{itemize}
These notions are defined in the same way for left $\Ainfty$-$A$-modules.
\end{defn}

The notion of strong homotopy unitality for $\Ainfty$-$A$-modules is
more difficult. It turns out that for a good notion we need 
to replace $EA$ in 
\eqref{item-homotopy-unitality-condition-for-A-modules} by
the whole bar-resolution $\infbarres(E,p_\bullet)$. The map $p_2\colon
EA \rightarrow A$ in $\A$ lifts naturally to the map 
$\rho\colon \infbarres(E,p_\bullet) \rightarrow (E,p_\bullet)$ in 
$\pretriagmns (\noddinf(\A))$. 
However, the closed degree zero map $E\eta\colon E \rightarrow EA$ in
$\A$ isn't even closed as a map in $\noddinf(A)$. 
We therefore need to construct its lift to a closed map in 
$\pretriagmns (\noddinf(A))$:
\begin{equation}
\chi = 
\begin{tikzcd}[row sep=1.3cm]
& & & 
(E,p_\bullet)
\ar{d}[description]{\chi_{{1}\bullet}}
\ar{dl}[description]{\chi_{{2}\bullet}}
\ar{dll}[description]{\chi_{{3}\bullet}}
\ar[phantom]{dlll}[description]{\cdots}
\\
\dots 
\ar{r}
\ar[bend left=20]{rr}
\ar[bend left=20]{rrr}
&
EA^3 
\ar{r}
\ar[bend left=20]{rr}
&
EA^2
\ar{r}
&
EA. 
\end{tikzcd}
\end{equation}

Strong homotopy unitality of $A$ is not sufficient for this, 
but bimodule homotopy unitality is.
View $A$ and $\id_\A$ as $\Ainfty$ $A$-$A$-bimodules with the
natural action on the former and the zero action on the latter. 
The bimodule homotopy unitality structure on $A$ is an 
$\AnodA$-morphism $\bareta_{\bullet\bullet}\colon \id_\A \rightarrow A$
with $d\bareta_{\bullet\bullet} = \id^l_A + \id^r_A$. 
Here $\id^l_A$ and $\id^r_A$ are the $\Ainfty$-bimodule 
maps $\id_\A \rightarrow A$ whose only component is $\id_A$ 
in degree $(1,0)$ and $(0,1)$, respectively. 
See \S\ref{section-unitality-conditions-for-algebras} for more
details. 

With this, we construct $\chi$ as follows.  
Write $\delta_i\colon \id_A \rightarrow
A^i$ for the morphism of right $\Ainfty$-$A$-modules whose sole component 
is $\id_{A^i}$. Write 
$$ \Delta\colon \id_\A \rightarrow \infbarl(\id_\A) $$
for the closed degree $0$ 
map $\sum (-1)^i\delta_i$ of twisted complexes of right
$\Ainfty$-$A$-modules. 

We now make implicit use of the category isomorphism 
$\AnodA \simeq A\text{-}(\nodA)$ of Theorem  
\ref{theorem-ainfty-bimodules-are-ainfty-modules-in-cat-of-ainfty-modules}
to view $\id_\A$ and $\A$ as left $\Ainfty$-$A$-modules in 
the category of right $\Ainfty$-$A$-modules and 
$\bareta_{\bullet\bullet}$ as a morphism thereof. 
We thus define
\begin{equation}
\label{eqn-bicomplex-version-of-chi-underlying}
\chi^0 : = \id_\A \xrightarrow{\Delta}
\infbarl(\id_\A)
\xrightarrow{\infbarl(\bareta_{\bullet\bullet})}
\infbarl(A) \quad \quad \text{ in } \pretriagmns\left(\nodA\right).
\end{equation}

\begin{lemma}
$\chi^0$ is a degree $0$ closed morphism. 
\end{lemma}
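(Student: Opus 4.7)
The plan is as follows. Both $\Delta$ and $\infbarl(\bareta_{\bullet\bullet})$ are degree $0$ morphisms of twisted complexes in $\pretriagmns(\nodA)$, so their composition $\chi^0$ is degree $0$. For closedness I apply the Leibniz rule
\[
d\chi^0 = d\bigl(\infbarl(\bareta_{\bullet\bullet})\bigr) \circ \Delta + \infbarl(\bareta_{\bullet\bullet}) \circ d\Delta.
\]
Functoriality of $\infbarl$ together with the bimodule homotopy unitality identity $d\bareta_{\bullet\bullet} = \id^l_A + \id^r_A$ in $\AnodA$, transported to $A\text{-}(\nodA)$ via Theorem \ref{theorem-ainfty-bimodules-are-ainfty-modules-in-cat-of-ainfty-modules}, reduces the claim to verifying $(a)$ $d\Delta = 0$, and $(b)$ $\bigl(\infbarl(\id^l_A) + \infbarl(\id^r_A)\bigr) \circ \Delta = 0$.

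For $(a)$, observe that the twisted differentials in the target $\infbarl(\id_\A)$ are given by the left module bar-construction formula with $p_{k+1} = 0$ (the left $A$-action on $\id_\A$ vanishes), so they are built purely from the algebra operations $m_k$ applied to $A$-factors. On the other hand, the internal $\nodA$-differential of each $\delta_i$ — whose sole nontrivial $\Ainfty$-component is $(\delta_i)_{i+1} = \id_{A^i}$ — comes from the right $\Ainfty$-$A$-module structure on $A^i$, which is itself assembled from the same operations $m_k$. The alternating signs $(-1)^i$ in $\Delta = \sum_i (-1)^i \delta_i$ are precisely what is required to make these two families of contributions cancel. This is a routine sign computation using the explicit bar-construction formulas.

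For $(b)$, I first identify $\id^l_A$ and $\id^r_A$ under the isomorphism $\AnodA \simeq A\text{-}(\nodA)$: the bimodule morphism $\id^l_A$ becomes a left $\Ainfty$-$A$-module morphism in $\nodA$ concentrated at left-length $1$, equal to the strict $\nodA$-identity of $A$; the morphism $\id^r_A$ becomes concentrated at left-length $0$ and consists of the $\nodA$-morphism $\id_\A \to A$ whose sole right $\Ainfty$-component at right-length $2$ equals $\id_A$. Applying the bar-construction-of-morphisms formula of Definition \ref{defn-left-module-bar-constructions-of-an-Ainfty-morphism}, $\infbarl(\id^l_A)$ acts by a single diagonal family of strict identities $\pm\id_{A^m}$ mapping position $-m$ of the source to position $-(m-1)$ of the target, while $\infbarl(\id^r_A)$ acts at each position $-m$ within the same position via the $\nodA$-morphism $\id^m \otimes (\id^r_A)^L_1$. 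A direct computation of $\Ainfty$-compositions shows $\bigl(\id^m \otimes (\id^r_A)^L_1\bigr) \circ \delta_m = (-1)^m \delta_{m+1}$, from which the sought cancellation in each target position follows termwise against the $-\delta_{m+1}$ contribution coming from $\infbarl(\id^l_A) \circ \Delta$.

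The main obstacle is sign bookkeeping across the several conventions in use: twisted complex morphism signs, $\Ainfty$-morphism component indexing, the bimodule-to-left-module-in-right-modules correspondence, and the bar-construction functor $\infbarl$. Once these are aligned, both $(a)$ and $(b)$ reduce to explicit combinatorial identities.
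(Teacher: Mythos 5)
Your proposal is correct and takes essentially the same route as the paper: the Leibniz rule together with $d\bareta_{\bullet\bullet} = \id^l_A + \id^r_A$ reduces closedness to the vanishing of $\infbarl(\id^l_A + \id^r_A)\circ\Delta$, which in both arguments comes down to the alternating-sign cancellation of the two identity components (the left-length-one strict identity against the right-index-two component) along the diagonals of the bar-construction. The only differences are presentational: you verify $d\Delta = 0$ explicitly, whereas the paper builds closedness into the definition of $\Delta$, and you compute componentwise in $\pretriagmns(\nodA)$ where the paper packages the identical cancellation through the bicomplex formalism, intertwining $\infbarbi$ with the sign-twisted $\cxrow\circ\infbarr\circ\infbarl$ and invoking the fact that the diagonal map kills any bimodule morphism whose components on each diagonal sum to zero with alternating signs.
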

\begin{proof}
It is of degree $0$ because both $\Delta$ and $\bareta_{\bullet\bullet}$ 
were defined to be of degree $0$.  
Since $\Delta$ is a closed map, to show
$\chi^0$ to be closed it remains to show that 
$$ \id_\A \xrightarrow{\Delta}
\infbarl(\id_\A)
\xrightarrow{\infbarl(d \bareta_{\bullet\bullet})} \infbarl(A)
 \; = 0 \quad \quad \quad \text{ in } \pretriagmns\left(\nodA\right). 
$$

We have  
$d\bareta_{\bullet\bullet} = \id^l_A + \id^r_A$ in $\AnodA$. 
% In $A\text{-}(\nodA)$ this becomes a morphism 
% $f_\bullet\colon \id_A \rightarrow A$ which has two components: $f_1 =
% \delta_1$ and $f_2 = \id_A$. Its left module bar construction 
% $\infbarl(f_\bullet)$ is the morphism of twisted complexes whose 
% components are maps $(-1)^i A^i \delta_1$ and $\id_{A^i}$ from each $A^i$ in
% $\infbarl(\id_\A)$. Its composition with $\Delta = \sum (-1)^i
% \delta_i$ is zero, because  
% $$ A^i \delta_1 \circ \delta_i = \delta_{i+1}. $$
The isomorphism 
of Theorem  
\ref{theorem-ainfty-bimodules-are-ainfty-modules-in-cat-of-ainfty-modules}
intertwines $\infbarbi$ with sign-twisted 
$\cxrow \circ \infbarr \circ \infbarl$, so it remains to show
that $\infbarbi(\id^l_A + \id^r_A)$ composes to zero with
$\cxrow \circ \infbarr\left((-1)^{ij + kl}\Delta\right)$. 
The latter is the diagonal map which takes $\infbarr(\id_\A)$, 
viewed as a bicomplex concentrated in the row $0$, and maps its
elements to the diagonals of $\infbarbi(\id_\A)$ using the sign-twisted
identity maps $(-1)^{i}\id\colon A^{i+j} \rightarrow A^iA^j$. 
It composes to zero with the bar-construction
of any $\Ainfty$-bimodule morphism 
$\id_A \rightarrow A$ whose components
in every diagonal sum to zero with alternating signs. 
In particular, with that of $\id^l_A + \id^r_A$. 
\end{proof}

Let $(E,p_\bullet) \in \nodA$. Let $(E,0) \in \nodA$ be
$E$ with the zero action of $A$. 
We have canonically
$(E,0) \simeq E \otimes \id_\A$
and
$\infbarres (E,0) \simeq E \otimes \infbarres(\id_\A)$.
Twisted complexes $\infbarres(\id_\A)$ and 
$\infbarl(A)$ differ only in signs. We identify
them with the isomorphism $\phi = \sum_i (-1)^{i+1}\id_{A^i}$
and thus have a closed degree $0$ map 
\begin{equation}
\label{eqn-bicomplex-version-of-chi-zero} 
E(\phi\circ \chi^0) := 
(E,0) \xrightarrow{E\chi^0}
E\infbarl(A)
\xrightarrow{E\phi}
\infbarres(E,0) \quad \text{ in } \pretriagmns\left(\nodA\right). 
\end{equation}
We also have non-closed degree $0$ maps 
$\id\colon (E,p_\bullet) \rightarrow (E,0)$ and 
$\id\colon (E,0) \rightarrow (E,p_\bullet)$
whose only non-zero component is $\id_E$. 

\begin{defn}
\label{defn-a-infinity-version-of-map-chi}
For any $(E,p_\bullet) \in \nodA$ 
define a degree $0$ map  
$$ \chi \colon (E,p_\bullet) \xrightarrow{\id}
(E,0) \xrightarrow{E(\phi\circ \chi^0)}
\infbarres(E,0) \xrightarrow{\infbarres(\id)}
\infbarres(E,p_\bullet) 
\quad \quad
\text{ in } \pretriagmns\left(\nodA\right).
$$ 
\end{defn}

Explicitly, the components of $\chi$ are 
$\chi_{i\bullet}\colon (E,p_\bullet) \rightarrow EA^i$
and we have
$\chi_{ij}  = 0$ if $i>j$ and
$\chi_{ij} = (-1)^{(i-1)j}EA^{i-1}\sum_{k=0}^{j-i} (-1)^k
\eta_{k,j-i-k}$ otherwise. 

\begin{prps}
The maps $\chi$ in
Defn.~\ref{defn-a-infinity-version-of-map-chi}
define a closed, degree zero natural transformation
$\chi\colon \id \rightarrow \infbarres$
of functors $\nodA \rightarrow \pretriagmns(\nodA)$. 
\end{prps}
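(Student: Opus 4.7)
The plan is to verify the three assertions—degree zero, closedness, and naturality—using the factorisation
\begin{equation*}
\chi \;=\; \infbarres(\id)\circ E(\phi\circ\chi^0)\circ \id
\end{equation*}
from Definition~\ref{defn-a-infinity-version-of-map-chi}. Degree zero is immediate, since each of the three factors has degree zero (the identities and $\phi$ by construction, and $\chi^0$ by the preceding lemma).

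For closedness, I would apply the graded Leibniz rule to the factorisation. The middle term $\infbarres(\id)\circ E(\phi\circ d\chi^0)\circ \id$ vanishes because $\chi^0$ is closed. The remaining two terms record the failures of $\id\colon(E,p_\bullet)\to(E,0)$ and $\id\colon(E,0)\to(E,p_\bullet)$ to be closed in $\nodA$; a direct calculation identifies these as, up to sign, the $\Ainfty$-morphisms whose components package the operations $p_\bullet$. Substituting these back into the expansion and using the explicit formula for $\chi_{ij}$ given after Definition~\ref{defn-a-infinity-version-of-map-chi}, the two contributions cancel. The cancellation is a $p_\bullet$-twisted analogue of the identity from the preceding lemma: composing $\cxrow\circ\infbarr(\Delta)$ with $\infbarbi(\id^l_A+\id^r_A)$ vanishes because $d\bareta_{\bullet\bullet}=\id^l_A+\id^r_A$ pairs the source-action obstruction with $\id^r_A$ and the target-action obstruction with $\id^l_A$.

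Naturality asks that $\infbarres(f_\bullet)\circ\chi_E=\chi_F\circ f_\bullet$ in $\pretriagmns(\nodA)$ for every $f_\bullet\colon(E,p_\bullet)\to(F,q_\bullet)$ in $\nodA$. The key observation is that $\chi_{ij}=(-1)^{(i-1)j}\,EA^{i-1}\sum_{k=0}^{j-i}(-1)^k\,\eta_{k,j-i-k}$ is built only from $E$ and from $\bareta_{\bullet\bullet}$, independent of $p_\bullet$. I would check the equation componentwise: on each side the $(i,j)$-component is a sum of compositions of $\bareta$-terms with components $f_r$ of $f_\bullet$, and the match is forced by reading off how the components of $\infbarres(f_\bullet)$ from~\eqref{eqn-bar-resolution-of-Ainfty-morphism} act on the free summands of $\chi_E$, against how $\Ainfty$-composition in $\nodA$ redistributes the $f_r$'s when forming $\chi_F\circ f_\bullet$. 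The principal difficulty lies in the closedness calculation, where organising the sign- and term-bookkeeping so that the structural identity from the preceding lemma becomes visible—rather than rediscovered by brute expansion—is the main technical obstacle.
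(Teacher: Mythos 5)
Your handling of degree zero and of naturality is essentially the paper's: after the factorisation $\chi=\infbarres(\id)\circ E(\phi\circ\chi^0)\circ\id$ the outer squares in the naturality diagram commute trivially, and the remaining componentwise identity amounts to sliding the components $f_{k+1}$ of $f_\bullet$ past the $\bareta$-insertions, which is exactly DG bifunctoriality of the monoidal operation — your ``reading off'' description is this computation. One point in your favour that turns out to be crucial: you state naturality for \emph{every} morphism $f_\bullet$ of $\nodA$, of arbitrary degree and not necessarily closed, and the bifunctoriality argument indeed works in that generality.

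The closedness step, however, is misdirected. The mechanism you propose for the cancellation of the two leftover Leibniz terms — a ``$p_\bullet$-twisted analogue'' of the vanishing of $\infbarbi(\id^l_A+\id^r_A)$ against $\cxrow\circ\infbarr(\Delta)$ — is not what is at work there. That identity, i.e.\ $d\bareta_{\bullet\bullet}=\id^l_A+\id^r_A$ paired against the alternating diagonal, is precisely what makes $\chi^0$ closed, and it is already fully spent in killing the \emph{middle} Leibniz term $\infbarres(\id)\circ E(\phi\circ d\chi^0)\circ\id$. For the two remaining terms no fresh computation is needed at all: since $d(\id)\colon(E,p_\bullet)\to(E,0)$ and $d(\id)\colon(E,0)\to(E,p_\bullet)$ equal $-\pi_1\circ\id$ and $\id\circ\pi_1$, where $\pi_1$ is the non-closed degree-one endomorphism of $(E,p_\bullet)$ with components $p_j$, the Leibniz rule gives
\[
d\bigl(\chi_{(E,p_\bullet)}\bigr)\;=\;\infbarres(\pi_1)\circ\chi_{(E,p_\bullet)}\;-\;\chi_{(E,p_\bullet)}\circ\pi_1,
\]
and this vanishes by the naturality you have already established, applied to the morphism $f_\bullet=\pi_1$ — here it matters that your naturality statement was not restricted to closed degree-zero morphisms. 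This is exactly how the paper argues: naturality first, then closedness as a one-line corollary. So the ``principal difficulty'' you flag dissolves; as planned, your direct expansion would have you hunting for a cancellation identity of the wrong ($d\bareta$) type, and it would only close up once you recognised the leftover terms as an instance of naturality at $\pi_1$.
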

\begin{proof}
Let $f_{\bullet}\colon (E,p_\bullet) \rightarrow (F, q_\bullet)$ be
a morphism in $\nodA$ and  
$f_\bullet\colon (E,0) \rightarrow (F,0)$ be the morphism with the same components. Consider the following diagram:
\begin{equation}
\begin{tikzcd}[column sep = 1.9cm]
(E,p_\bullet) 
\ar{r}{\id}
\ar{d}[description]{f_\bullet}
&
(E,0) 
\ar{r}{E(\phi \circ \chi^0)}
\ar{d}[description]{f_\bullet}
&
\infbarres(E,0) 
\ar{r}{\infbarres(\id)}
\ar{d}[description]{\infbarres(f_\bullet)}
&
\infbarres(E,p_\bullet) 
\ar{d}[description]{\infbarres(f_\bullet)}
\\
(F,q_\bullet) 
\ar{r}{\id}
&
(F,0) 
\ar{r}{F(\phi \circ \chi^0)}
&
\infbarres(F,0) 
\ar{r}{\infbarres(\id)}
&
\infbarres(F,q_\bullet).
\end{tikzcd}
\end{equation}
The left and the right squares commute trivially. For the central
square, we compute the upper-right and the lower-left
compositions around its perimeter. These are twisted complex maps 
$(E,0) \rightarrow \infbarres(F,0)$ and thus comprise 
$\Ainfty$-morphisms $(E,0) \rightarrow FA^i$ for $i \geq 1$. 
In the lower-left composition, the non-zero components 
of $(E,0) \rightarrow FA^i$ are the morphisms  
$EA^{i+j-1} \rightarrow  EA^{i}$ with $j \geq 0 $
in $\A$ given by
$$ \sum_{k = 0}^{j} 
(-1)^{(i-1)(i+j-k)}FA^{i-1}\left(\sum_{l = 0}^{j-k} (-1)^l \bareta_{l,j-k-l}\right) 
\circ  
(-1)^{\deg(f)(i+j-k+1)} f_{k+1} A^{i-1} A^{j-k}. $$
In the upper-right composition, the same components 
of $(E,0) \rightarrow FA^i$ are non-zero, but now they are given by
$$ \sum_{k = 0}^{j} 
(-1)^{\deg(f)(i+1)} f_{k+1} A^{i-1} A
\circ
(-1)^{(i+k-1)(i+j)}E A^k A^{i-1}\left(\sum_{l = 0}^{j-k} (-1)^l
\bareta_{l,j-k-l}\right).
$$
These two expressions are equal by DG bifunctoriality of the monoidal
operation. Thus the perimeter commutes, and $\chi$ is a natural transformation. 

Next, note that $d(\id)\colon (E,p_\bullet) \rightarrow (E,0)$
and $d(\id)\colon (E,0) \rightarrow (E,p_\bullet)$
are the maps $- \pi_1 \circ \id $ and $\id \circ \pi_1$ where
$\pi_1\colon (E,p_\bullet) \rightarrow (E,p_\bullet)$ is as 
defined in
\S\ref{section-bar-construction-as-a-complex-of-ainfty-A-modules}.
Thus
\begin{align*}
d(\chi_{(E,p_\bullet)}) = 
& \infbarres(\pi_1) \circ \infbarres(\id) \circ 
E(\phi \circ \chi^0) \circ \id
\; - \;  0 \; - 
\\
& - \;
\infbarres(\id) \circ
E(\phi \circ \chi^0) \circ
\id \circ \pi_1 = \infbarres(\pi_1) \circ \chi_{(E,p_\bullet)} -
\chi_{(E,p_\bullet)} \circ \pi_1. 
\end{align*}
This is zero since $\chi$ is a natural transformation. 
\end{proof}

We now define a good notion of strong homotopy unitality for
$\Ainfty$-$A$-modules:
\begin{defn}
\label{defn-strong-homotopy-unitality}
Let $A$ be bimodule homotopy unital. 
We say that a module $(E,p_\bullet) \in \nodA$ is \em strongly homotopy 
unital \rm if there exists a degree $-1$ endomorphism $h_\bullet$ 
of $(E,p_\bullet)$ in $\nodA$  such that 
$$ (E,p_\bullet) \xrightarrow{\chi} \infbarres(E,p_\bullet)
\xrightarrow{\rho} (E,p_\bullet) \quad = \quad \id + dh_\bullet. $$
\end{defn}

\begin{lemma}
\label{lemma-strong-homotopy-unitality-stable-under-homotopy-equivalences}
Strong homotopy unitality is stable under homotopy equivalences.  
\end{lemma}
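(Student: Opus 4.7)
The strategy is to exploit the (homotopy) naturality of $\rho$ and $\chi$ in order to transport the contracting homotopy from one object to any homotopy equivalent one. Let $f_\bullet\colon (E,p_\bullet)\to(E',p'_\bullet)$ be a homotopy equivalence in $\nodA$ with a homotopy inverse $g_\bullet\colon (E',p'_\bullet)\to(E,p_\bullet)$ and closed degree $0$; choose degree $-1$ morphisms $\sigma_\bullet,\tau_\bullet$ in $\nodA$ with $g_\bullet f_\bullet=\id+d\sigma_\bullet$ and $f_\bullet g_\bullet=\id+d\tau_\bullet$. Assume $(E,p_\bullet)$ is strongly homotopy unital, with homotopy $h_\bullet$ satisfying $\rho\circ\chi=\id+dh_\bullet$ at $(E,p_\bullet)$.

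First I would record the naturality properties established in the preceding sections: $\chi$ is a strictly natural transformation, so $\chi_{(E',p'_\bullet)}\circ f_\bullet=\infbarres(f_\bullet)\circ\chi_{(E,p_\bullet)}$, while $\rho$ is a homotopy natural transformation, i.e.~there exists a degree $-1$ morphism $\psi_{f}$ in $\pretriagmns(\nodA)$ (extracted from the explicit homotopy $f_{\bullet+\bullet}$ of \eqref{eqn-infbar-f_bullet-rewritten-as-square-with-bar-resolution-Ainfty-version}) such that
\[
\rho_{(E',p'_\bullet)}\circ\infbarres(f_\bullet)-f_\bullet\circ\rho_{(E,p_\bullet)}=d\psi_{f}.
\]
Composing with $\chi_{(E,p_\bullet)}$ (closed) on the right and using strict naturality of $\chi$ gives, writing $\Phi:=\rho\circ\chi$,
\[
\Phi_{(E',p'_\bullet)}\circ f_\bullet=f_\bullet\circ\Phi_{(E,p_\bullet)}+d\bigl(\psi_{f}\circ\chi_{(E,p_\bullet)}\bigr).
\]

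Next I would substitute $\id=f_\bullet g_\bullet-d\tau_\bullet$ into $\Phi_{(E',p'_\bullet)}$ and expand, using that $\Phi_{(E',p'_\bullet)}$, $f_\bullet$, $g_\bullet$ are closed of degree $0$, so $d(f_\bullet h_\bullet g_\bullet)=f_\bullet(dh_\bullet)g_\bullet$ and $d(\Phi_{(E',p'_\bullet)}\circ\tau_\bullet)=\Phi_{(E',p'_\bullet)}\circ d\tau_\bullet$. This yields
\[
\Phi_{(E',p'_\bullet)}=\id+d\bigl(\tau_\bullet+f_\bullet h_\bullet g_\bullet+\psi_{f}\circ\chi_{(E,p_\bullet)}\circ g_\bullet-\Phi_{(E',p'_\bullet)}\circ\tau_\bullet\bigr),
\]
so setting $h'_\bullet:=\tau_\bullet+f_\bullet h_\bullet g_\bullet+\psi_{f}\circ\chi_{(E,p_\bullet)}\circ g_\bullet-\Phi_{(E',p'_\bullet)}\circ\tau_\bullet$ produces the required homotopy witnessing strong homotopy unitality of $(E',p'_\bullet)$. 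The output $h'_\bullet$ lies a priori in $\pretriagmns(\nodA)$ but, as a morphism between objects concentrated in degree $0$, corresponds via the fully faithful inclusion $\nodA\hookrightarrow\pretriagmns(\nodA)$ to a genuine degree $-1$ endomorphism of $(E',p'_\bullet)$ in $\nodA$.

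The only real point to check is the existence and the correct degree of $\psi_{f}$; this follows from Definition~\ref{defn-natural-tranformation-rho-Ainfty-version} together with diagram~\eqref{eqn-infbar-f_bullet-rewritten-as-square-with-bar-resolution-Ainfty-version}, which explicitly furnishes the homotopy $f_{\bullet+\bullet}$ whenever $f_\bullet$ is closed of degree $0$. Apart from this bookkeeping the argument is a purely formal two-out-of-three style manipulation, which I do not expect to present any further obstacle.
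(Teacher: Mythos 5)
Your proof is correct and rests on exactly the same mechanism as the paper's: the naturality of $\chi$ and homotopy naturality of $\rho$, so that $\rho\circ\chi$ is a homotopy natural endotransformation of the identity, combined with the invertibility of a homotopy equivalence. The paper merely shortcuts the computation by passing to $H^0(\nodA)$, where $\rho\circ\chi$ becomes a genuine natural transformation of the identity functor and one invokes the standard fact that the objects on which such a transformation evaluates to the identity are closed under isomorphism; your chain-level homotopy $h'_\bullet=\tau_\bullet+f_\bullet h_\bullet g_\bullet+\psi_f\circ\chi_{(E,p_\bullet)}\circ g_\bullet-\Phi_{(E',p'_\bullet)}\circ\tau_\bullet$ is an explicit witness of the same argument.
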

\begin{proof}
Our definition of strong homotopy unitality asks that in 
the homotopy category $\rho \circ \chi(E,p_\bullet) =
\id_{(E,p_\bullet)}$. Since $\chi$ is natural transformation and $\rho$ is a
homotopy natural transformation, their composition $\rho \circ \chi$ is also 
a homotopy natural transformation. In any category, 
the set of objects on which any
given natural transformation of the identity functor evaluates to $\id$ 
is closed under isomorphisms. Applying this to $H^0(\nodA)$, we obtain 
the desired assertion. 
\end{proof}

The following is a surprisingly non-trivial result which makes use of 
all of the higher homotopies provided by the bimodule homotopy
unitality of $A$:

\begin{lemma}
\label{lemma-free-modules-are-strong-homotopy-unital}
If $A$ is bimodule homotopy unital, then free $A$-modules are strongly
homotopy unital. 
\end{lemma}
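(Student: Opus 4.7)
The plan is to exploit the functoriality of the $\chi$ and $\rho$ constructions in the generating object of a free module and reduce to the case $E = 1_\A$. For any $E \in \A$, unpacking the formulae following Defn.~\ref{defn-a-infinity-version-of-map-chi} together with the definition of the $\pi_i$ maps, the components of both $\chi_{EA}$ and $\rho_{EA}$ act only on the rightmost tensor factors, keeping $\id_E$ as a left tensor factor. Hence by DG bifunctoriality of $\otimes$,
\[
\rho_{EA} \circ \chi_{EA} \;=\; \id_E \otimes (\rho_A \circ \chi_A) \qquad\text{in }\nodA,
\]
where $A$ on the right denotes the regular free module ($E = 1_\A$). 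Consequently, if $h^A_\bullet \in \homm^{-1}_{\nodA}(A,A)$ satisfies $d h^A_\bullet = \rho_A \circ \chi_A - \id_A$, then $h_\bullet := \id_E \otimes h^A_\bullet$ satisfies the corresponding equation for $EA$, and the lemma reduces to producing $h^A_\bullet$.

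To produce $h^A_\bullet$, I would first identify $\rho_A \circ \chi_A$ intrinsically. Tracking through the construction of $\chi_A$ via $\bareta_{\bullet\bullet}$ and the diagonal $\Delta$, followed by the right-module collapse $\rho_A$ assembled from the $\pi_i$, the composition can be rewritten as the right $\Ainfty$-$A$-module morphism obtained by applying $\bareta_{\bullet\bullet}$ to the middle copy of $A$ and then contracting with the right action $\mu$ of $A$ on itself. Under this identification, the bimodule counitality relation $d\bareta_{\bullet\bullet} = \id_A^l + \id_A^r$ of Defn.~\ref{def-bimodule-homotopy-unitality} translates after right-collapse: the $(0,1)$ summand $\id_A^r$ contracts to $\id_A$ in $\nodA$, while the $(1,0)$ summand $\id_A^l$ contracts to $\mu_2 \circ \eta A$ plus higher correction terms.

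The candidate $h^A_\bullet$ is then constructed directly from the components of $\bareta_{\bullet\bullet}$, with $h^r_\bullet$ providing the first-order piece via the relation $dh^r_\bullet = \id_A - \mu_2 \circ \eta A$, and the higher homotopy $\kappa_{\bullet\bullet}$ providing the coherence needed beyond first order. The main obstacle is the bookkeeping of signs across the algebra, right-module, left-module and bimodule bar-constructions, and verifying termwise that $dh^A_\bullet$ matches $\rho_A \circ \chi_A - \id_A$ in every $\Ainfty$-component. This is precisely where the full strength of \emph{bimodule} (rather than merely strong) homotopy unitality is required: by Theorem~\ref{theorem-conditions-for-bimodule-homotopy-unitality}, $\kappa_{\bullet\bullet}$ satisfies $d\kappa_{\bullet\bullet} = \mu_2 \circ (h^l_\bullet A - A h^r_\bullet) - \mu_3 \circ A\eta A$, which is exactly the relation needed to absorb the higher-order terms that $h^r_\bullet$ alone cannot cancel.
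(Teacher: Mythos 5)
Your reduction to the module $A$ itself is correct and is exactly the paper's first step: $\rho_{EA}\circ\chi_{EA} = E\otimes(\rho_A\circ\chi_A)$, so a homotopy for $A$ tensors up on the left to one for $EA$. The gap is everything after that. You propose to assemble $h^A_\bullet$ ``directly from the components of $\bareta_{\bullet\bullet}$'' and then verify termwise that $dh^A_\bullet = \rho_A\circ\chi_A - \id$ --- but that construction and verification \emph{is} the lemma, and you give neither a formula for $h^A_\bullet$ nor any mechanism that makes the termwise check close up. Quoting $dh^r_\bullet = \id - \mu_2\circ\eta A$ and $d\kappa_{\bullet\bullet} = \mu_2\circ(h^l_\bullet A - Ah^r_\bullet) - \mu_3\circ A\eta A$ does not by itself organise the infinitely many components of $\rho_A\circ\chi_A - \id$ into an exact term; note in particular that the actual homotopy involves all four constituents $\eta$, $h^l_\bullet$, $h^r_\bullet$, $\kappa_{\bullet\bullet}$ of $\bareta_{\bullet\bullet}$, not just $h^r_\bullet$ and $\kappa_{\bullet\bullet}$ --- the paper flags the result as ``surprisingly non-trivial'' for precisely this reason.

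The idea you are missing is that no termwise sign-chase is needed: the homotopy can be \emph{read off} from a closedness equation. The paper's proof takes the closed degree-$0$ map $\chi^0 = \infbarl(\bareta_{\bullet\bullet})\circ\Delta$ and applies the right-module bar-construction, obtaining a closed map of twisted complexes $\infbarr(\id_\A)\to\infbarr(\infbarl(A))$. Presenting $\infbarr(\id_\A)$ as the trivial twisted complex $\widehat{\infbarr(A)}\xrightarrow{\;0\;}\id_\A$ (and decomposing the target similarly), the map $\infbarr(\chi^0)$ splits into pieces $\Delta_0,\Delta_1,\Delta_{10}$ and $\eta_0,\eta_1,\eta_{10}$, and its closedness in the relevant component is the single identity $d(\eta_0\circ\Delta_{10}+\eta_{10}\circ\Delta_1)+\infbarr(\rho)\circ\eta_1\circ\Delta_1=0$. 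Since $\eta_1\circ\Delta_1\circ\phi$ has the same components as $\infbarr(\chi)$, and $\eta_0\circ\Delta_{10}\circ\phi$ equals $(\bareta_{\bullet\bullet})^l$ up to an isomorphism of twisted complexes, so that $d(\eta_0\circ\Delta_{10}\circ\phi)=-\infbarr(\id_A)$, this identity says exactly that $\eta_{10}\circ\Delta_1\circ\phi$ is the bar-construction of the desired homotopy $h^A_\bullet$. (This also corrects a detail of your middle paragraph: in the paper's accounting the identity term arises through the left-module component $(\bareta_{\bullet\bullet})^l$ of $\bareta_{\bullet\bullet}$, whose differential is $\infbarl(\id^l_A)$, rather than from the summand $\id^r_A$ as you assert.) So your plan names the right raw materials but lacks the structural step --- exploiting the closedness of $\infbarr(\chi^0)$ --- that actually produces the homotopy and replaces the unbounded verification you defer.
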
 \begin{proof}
Let $E \in \A$. The composition $\rho\circ\chi$ for $EA$ is simply
the same composition for $A$ multiplied by $E$ on the left,
so it is enough to prove that $A$ as a right module over itself
is strongly homotopy unital.

Consider the bar-construction of $\chi^0$. It is a closed map
of degree $0$ between twisted complexes
$\infbarr(\id_\A)\to\infbarr(\infbarl(A))$.
Note that we can present $\infbarr(\id_\A)$ as the trivial twisted
complex $\widehat{\infbarr(A)} \xrightarrow{0} \id_\A$. 
Rewriting $\infbarr(\infbarl(\id_A))$  and $\infbarr(\infbarl(A))$
similarly, then the bar-construction of $\chi^0$ becomes
\begin{equation}
\label{eqn-decomposition-of-delta-bareta}
\begin{tikzcd}
\widehat{\infbarr(A)} 
\ar{r}{0}
\ar{d}[description]{\Delta_{1}}
\ar{dr}[description]{\Delta_{10}}
&
\id_{\A}
\ar{d}[description]{\Delta_0}
\\
\infbarbi(A^2)[1] \oplus \widehat{\infbarr(A)} 
\ar{r}{0}
\ar{d}[description]{\eta_{1}}
\ar{dr}[description]{\eta_{10}}
& 
\infbarl(A)[1] \oplus \id_\A 
\ar{d}[description]{\eta_0}
\\
\infbarr(\infbarres(A))
\ar{r}{\infbarr(\rho)}
&
\underset{\degzero}{\infbarr(A)}. 
\end{tikzcd}
\end{equation}
Here $\Delta_i$ and $\eta_i$ are the components of
$\infbarr(\Delta)$ and $\infbarr(\infbarl(\bareta_{\bullet\bullet}))$,
respectively. 

Now observe that the composition 
$
\eta_1\circ\Delta_1\circ \phi\colon \infbarr(A) \to \infbarr(\infbarres(A))
$
has the same components as $\infbarr(\chi)$. On the other hand, we have
$$
d(\eta_0\circ\Delta_{10}+\eta_{10}\circ\Delta_1)+\infbarr(\rho)\circ\eta_1\circ\Delta_1 = 0.
$$
Since $\eta_0\circ\Delta_{10}\circ\phi$ equals $(\bareta_{\bullet\bullet})^l$ composed
with the isomorphism of twisted complexes $\infbarr(A)\simeq\infbarl(A)$ whose components are
negative identity maps, we have $d(\eta_0\circ\Delta_{10}\circ\phi)=-\infbarr(\id_A)$. Then
$$
\infbarr(\chi) - \infbarr(\id)+d(\eta_{10}\circ\Delta_1\circ\phi)=0,
$$
which is what we want.

\end{proof}

%end of edits

We can now prove the main theorem of this section which 
can be compared to similar results for usual 
$\Ainfty$-modules, cf.~\cite[\S4.1.3]{Lefevre-SurLesAInftyCategories}.

\begin{theorem}
\label{theorem-tfae-unitality-conditions-for-A-modules}
Let $A$ be strongly homotopy unital and let $(E,p_\bullet)$ be an 
$\Ainfty$-$A$-module. The following are equivalent:
\begin{enumerate}
\item 
\label{item-A-module-is-homotopy-unital}
$(E,p_\bullet)$ is homotopy unital, 
\item 
\label{item-A-module-is-H-unital}
$(E,p_\bullet)$ is $H$-unital. 
\end{enumerate}
If $A$ is bimodule homotopy unital, these are further equivalent to:
\begin{enumerate}
\setcounter{enumi}{2} 
\item 
\label{item-A-module-is-strong-homotopy-unital}
$(E,p_\bullet)$ is strongly homotopy unital. 
\end{enumerate}
\end{theorem}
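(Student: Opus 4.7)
The plan is to prove (3)$\Rightarrow$(1)$\Rightarrow$(2)$\Rightarrow$(3) under the bimodule hypothesis on $A$, and separately to establish (2)$\Rightarrow$(1) under the weaker strong homotopy unitality hypothesis. For (3)$\Rightarrow$(1), I apply the forgetful functor $\nodA \to \A$ to the defining equation $\rho \circ \chi = \id + d h_\bullet$ and read off the leading component: by construction the leading component of $\chi$ is $E\eta$ and the leading $\A$-component of $\rho$ is $p_2$, yielding $p_2 \circ E\eta = \id_E + d h_1$ in $\A$.

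For (1)$\Rightarrow$(2), I adapt the strategy of Lemma \ref{lemma-strong-homotopy-unital-implies-H-unital}. I construct a degree $-1$ endomorphism $\beta$ of $\infbar(E,p_\bullet)$ in $\pretriagmns(\A)$ whose non-trivial components $EA^i \to EA^{i+1}$ are given, up to sign, by $E\eta \otimes \id^i$. A computation paralleling the algebra case identifies $d\beta$ with the bar-construction of an $\Ainfty$-morphism $\xi_\bullet \colon (E,p_\bullet) \to (E,p_\bullet)$ in $\nodA$ whose leading component is $p_2 \circ E\eta$. Using (1) together with the homotopies $h^r_\bullet$ provided by the strong homotopy unitality of $A$, I express $\xi_\bullet$ as $\id_{(E,p_\bullet)} + d\nu_\bullet$ for some $\nu_\bullet$; then $\beta - \infbar(\nu_\bullet)$ is the required contracting homotopy of $\infbar(E,p_\bullet)$ in $\A$.

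For (2)$\Rightarrow$(3) under bimodule homotopy unitality, Proposition \ref{prps-natural-transformation-rho-is-acyclic-on-H-unital-modules-Ainfty-version} gives that $\rho(E,p_\bullet)$ is a homotopy equivalence in $\pretriagmns(\nodA)$, so $(E,p_\bullet)$ is homotopy equivalent to the convolution of $\infbarres(E,p_\bullet)$ in $\nodA$. Each term $EA^i$ of the latter is a free $A$-module, hence strongly homotopy unital by Lemma \ref{lemma-free-modules-are-strong-homotopy-unital}. Since the relation $\rho \circ \chi \sim \id$ is expressed through natural transformations, it propagates from individual free terms to the twisted-complex totalization; then Lemma \ref{lemma-strong-homotopy-unitality-stable-under-homotopy-equivalences} transfers strong homotopy unitality back to $(E,p_\bullet)$. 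The implication (2)$\Rightarrow$(1) under only the strong homotopy unitality of $A$ proceeds similarly: the same Proposition still yields a homotopy inverse of $\rho$ in $\pretriagmns(\A)$, and combining its leading component with $\eta$ and $h^r_\bullet$ recovers $p_2 \circ E\eta = \id_E + dh$.

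The main obstacle is the explicit combinatorial bookkeeping in (1)$\Rightarrow$(2): identifying $d\beta$ cleanly with the bar-construction of an $\Ainfty$-morphism $\xi_\bullet$ built from $\eta$, $p_\bullet$ and $m_\bullet$, and then verifying that the $h^r_\bullet$ data of $A$ alone, together with (1), suffices to produce a well-defined $\nodA$-primitive $\nu_\bullet$ of $\xi_\bullet - \id_{(E,p_\bullet)}$. All remaining steps reduce to applications of the Homotopy Lemma, Proposition \ref{prps-natural-transformation-rho-is-acyclic-on-H-unital-modules-Ainfty-version}, and Lemmas \ref{lemma-free-modules-are-strong-homotopy-unital} and \ref{lemma-strong-homotopy-unitality-stable-under-homotopy-equivalences}.
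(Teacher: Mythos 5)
Your (1)$\Rightarrow$(2) step contains the central gap. For a general module the insertion homotopy $\beta$ with components $E\eta\otimes\id^i$ does \emph{not} have $d\beta = \infbar(\xi_\bullet)$ for any $\Ainfty$-endomorphism $\xi_\bullet$ of $(E,p_\bullet)$. Bar-constructions of module morphisms have components of the shape $f_{k}\otimes\id^{s}$ with $f_k\colon EA^{k-1}\to E$ landing in the module slot, whereas $d\beta$ retains, besides such terms $(p_{k+1}\circ E\eta A^{k-1})\otimes\id^{s}$, two further families: terms $E\otimes(m_{k+1}\circ\eta A^{k})\otimes\id^{s}$ where an $m$ swallows the inserted unit in the middle of $EA^i$, and terms $E\eta A^{i-k}\circ(p_{k+1}\otimes\id^{i-k})$ where the insertion is performed after a $p$-operation (insertion after $E$ commutes with the $m$'s acting to its right, so those cancel between $\delta\circ\beta$ and $\beta\circ\delta$, but it does \emph{not} commute with the $p$'s). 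The trick in Lemma \ref{lemma-strong-homotopy-unital-implies-H-unital} works only because there $E=A$: the unit is inserted to the \emph{left} of the module slot, which has no analogue for general $E$; this is precisely the obstruction the paper flags when it notes that a weak unit yields no natural contracting homotopy and that $E\eta$ is not even closed in $\noddinf$-$A$ (\S\ref{section-unitality-conditions-for-A-modules}). Moreover, even granting your identification, the step ``express $\xi_\bullet$ as $\id+d\nu_\bullet$ using (1) and $h^r_\bullet$'' is unjustified: hypothesis (1) controls only the first component $p_2\circ E\eta$, the higher components $p_{k+1}\circ E\eta A^{k-1}$ are controlled by nothing, and the Homotopy Lemma would only make $\xi_\bullet$ a homotopy \emph{equivalence} in $\nodA$, not homotopic to the identity. (The handedness is also off: in the paper's construction the algebra homotopies relevant to right modules are $h^l_\bullet$, cf.\ Cor.~\ref{cor-chi-rho-lemma-for-the-first-components}, where $\zeta_1$ involves only $\eta$ and $h^l_\bullet$.)

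The same root problem undermines your (2)$\Rightarrow$(3) and (2)$\Rightarrow$(1). The ``propagation'' of $\rho\circ\chi\sim\id$ from the free terms $EA^i$ to the totalization of $\infbarres(E,p_\bullet)$ is exactly the nontrivial content: $\infbarres$ is an unbounded one-sided twisted complex, $\rho$ is only homotopy natural, and homotopies $\tau_{a_i}=\id+dh_i$ on the terms do not totalize, since $d_{\mathrm{Tot}}(\bigoplus h_i)$ picks up uncontrolled cross-terms $[\delta,h]$ with the twisted differentials unless the $h_i$ are chosen coherently. Likewise in (2)$\Rightarrow$(1), knowing that $\rho$ has \emph{some} homotopy inverse says nothing about $E\eta$; you need $E\eta\circ\forget(\rho)\sim\id_{\forget\infbarres}$ on the nose. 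The paper's proof supplies precisely these missing chain-level homotopies via the Chi-Rho machinery: from the bimodule unit $\bareta_{\bullet\bullet}$ it builds the explicit contracting homotopy $\zeta\circ\bareta$ of $\infbarl A$ (Lemma \ref{lemma-the-contracting-homotopy-zeta-of-infbarl-a}) and hence an explicit $\xi$ with $d\xi=\id-\chi\circ\rho$ (Lemma \ref{lemma-chi-rho-lemma}); under strong homotopy unitality alone, the first-component truncation $\zeta_1$ still contracts $\forget(\infbarl A)$ and gives $d\xi_1=\id-E\eta\circ\forget(\rho)$ (Cor.~\ref{cor-chi-rho-lemma-for-the-first-components}). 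Both equivalences then fall out uniformly: $H$-unitality says $\forget(\rho)$ (equivalently, by the Homotopy Lemma, $\rho$) is a homotopy equivalence; a one-sided homotopy inverse exists by Chi-Rho; so $H$-unitality holds iff the other composite is homotopic to the identity, which is verbatim homotopy unitality (for $\forget(\rho)\circ E\eta=p_2\circ E\eta$) resp.\ strong homotopy unitality (for $\rho\circ\chi$). Your (3)$\Rightarrow$(1) by reading off first components is fine, but repairing the other implications requires rebuilding this machinery, which your sketch does not supply.
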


We need two preliminary results. The first constructs a specific 
contracting homotopy of $\infbarl A$, the bar-construction of $A$ as 
a left $A$-$\Ainfty$-module in $\nodA$:

\begin{lemma}
\label{lemma-the-contracting-homotopy-zeta-of-infbarl-a}
Let $A$ be bimodule homotopy unital with the unit
$\bareta_{\bullet\bullet}: \id \rightarrow A$. Let $\delta_{ij}\colon
A^i \rightarrow A^j$ be the single component morphism 
$\id_{A_j}$ if $j \geq i \geq 0$ and $0$ otherwise. 
Let $\zeta\colon \infbarl A \rightarrow \infbarl \id$ be the map 
whose $ij$-th component is $(-1)^{ij}\delta_{-(i-1),-j}$. 

Then the following composition is a contracting homotopy of $\infbarl{A}$:
$$ \infbarl{A} \xrightarrow{\zeta} \infbarl{\id_A}
\xrightarrow{\bareta} \infbarl{A}. $$
\end{lemma}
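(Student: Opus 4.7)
The proof is a direct calculation in the Hom complex of $\pretriagmns(\nodA)$, expanding $d(\bareta \circ \zeta)$ via the graded Leibniz rule and showing the result equals $\id_{\infbarl{A}}$. First I would expand $d(\bareta \circ \zeta) = d\bareta \circ \zeta + \bareta \circ d\zeta$. The differential $d\bareta$ is governed by the bimodule homotopy unit equation $d\bareta_{\bullet\bullet} = \id^l_A + \id^r_A$ from Theorem \ref{theorem-conditions-for-bimodule-homotopy-unitality}, translated across the isomorphism $\AnodA \simeq A\text{-}(\nodA)$ of Theorem \ref{theorem-ainfty-bimodules-are-ainfty-modules-in-cat-of-ainfty-modules}, so that $d\bareta$ acquires a very specific single-component form on each bidegree.

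Next I would compute $d\zeta$. Since each component $\zeta_{ij} = (-1)^{ij}\delta_{-(i-1),-j}$ is a signed identity morphism in $\nodA$ with vanishing internal differential, $d\zeta$ reduces to the twisted-complex commutator with the differentials of $\infbarl{A}$ and $\infbarl{\id_{\A}}$. The differentials of $\infbarl{A}$ in $\nodA$, per Prps.~\ref{prps-bar-construction-as-a-complex-of-Ainfty-modules}, involve both internal operations $m_k$ and the right-module lifts $\mu_k$, whereas those of $\infbarl{\id_{\A}}$ involve only $m_k$ (the left action on $\id_{\A}$ being zero). Because $\zeta$ is composed of identity components, the $m_k$-contributions from both bar-constructions cancel pairwise at all but a few positions, leaving a manageable sum of boundary $m_k$-terms and rightmost $\mu_k$-terms.

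The concluding step is to combine both pieces and show that $d\bareta \circ \zeta + \bareta \circ d\zeta = \id_{\infbarl{A}}$. Using the decomposition $\bareta_{\bullet\bullet} = \eta + h^r_\bullet + h^l_\bullet + \kappa_{\bullet\bullet}$ from the proof of Theorem \ref{theorem-conditions-for-bimodule-homotopy-unitality}, I would organise the total into four blocks labelled by these unitality data. The equations $d\eta = 0$, $dh^r_\bullet = \id - \mu_2 \circ \eta A$, $dh^l_\bullet = \id - \mu_2 \circ A\eta$ and $d\kappa_{\bullet\bullet} = \mu_2 \circ (h^l_\bullet A - A h^r_\bullet) - \mu_3 \circ A\eta A$ then make all blocks collapse: the $\id^l_A$ and $\id^r_A$ contributions from $d\bareta \circ \zeta$ combine with the corresponding terms of $\bareta \circ d\zeta$ so that all summands involving $h^l_\bullet$, $h^r_\bullet$ and $\kappa_{\bullet\bullet}$ cancel in pairs, and only the single-component identity $\id_{\infbarl{A}}$ survives.

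The principal obstacle will be the combinatorial bookkeeping of signs and indices in the shifted, unbounded twisted-complex formalism, together with the careful matching between the many summands of $d\bareta \circ \zeta + \bareta \circ d\zeta$ and the lone identity $\id_{\infbarl{A}}$. The computation runs parallel to the identity $\infbarr(\chi) - \infbarr(\id) + d(\eta_{10}\circ\Delta_1\circ\phi) = 0$ appearing in the proof of Lemma \ref{lemma-free-modules-are-strong-homotopy-unital}, but is now organised around the \emph{left}-module bar-construction in $\nodA$; as there, the full strength of bimodule homotopy unitality, i.e.~all four pieces $\eta,h^l_\bullet,h^r_\bullet,\kappa_{\bullet\bullet}$, is precisely what is needed to make the cancellations go through.
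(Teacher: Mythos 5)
The paper's own proof of this lemma consists of the two words ``Direct computation,'' so at the level of strategy your outline --- Leibniz rule, the single unitality input $d\bareta_{\bullet\bullet} = \id^l_A + \id^r_A$, and componentwise cancellation --- is exactly the computation the paper intends. But one step of your outline is wrong as stated and would make the bookkeeping fail. You claim that each component $\zeta_{ij} = (-1)^{ij}\delta_{-(i-1),-j}$ has ``vanishing internal differential'' in $\nodA$, so that $d\zeta$ reduces to the commutator with the twisted differentials of the two bar-constructions. That is false whenever $\delta_{ab}$ has $b > a$: a morphism between free modules whose sole nonzero component is an identity sitting in position $b - a + 1 > 1$ is not closed in $\nodA$. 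Its differential consists precisely of the mismatched $m$-/$\mu$-composition terms arising because the free-module structure of the source $A^a$ leaves the factors in positions $a,\dots,b$ free to be acted on, while that of the target $A^b$ does not; these terms appear on one side of the commutator but not the other and survive in $d_{\nodA}\delta_{ab}$. This is visible in the paper's own construction in \S\ref{section-unitality-conditions-for-A-modules}: the map $\Delta = \sum_i (-1)^i \delta_i$ is asserted to be closed as a map of twisted complexes, and this is only possible because the nonzero internal differentials $d_{\nodA}\delta_i$ cancel against the commutator terms; if each $\delta_i$ were individually closed, $\Delta$ would not be. Dropping the internal contributions from $d\zeta$ leaves the final sum $d\bareta \circ \zeta + \bareta \circ d\zeta$ short of exactly the summands needed to collapse to $\id_{\infbarl A}$.

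A secondary slip in the same step: the twisted differentials of $\infbarl(\id_\A)$ do not involve only the $m_k$. Since each term $A^j$ of $\infbarl(\id_\A)$ is the free right module on $A^{j-1}$, the operations touching the rightmost tensor factor lift to the $\mu_k$ of Defn.~\ref{defn-morphisms-pi_i-and-mu_i}; what vanishes is only the left-action ($\pi$-type) contribution, the left $A$-action on $\id_\A$ being zero. Both slips are repairable within your plan --- once the internal differentials of the $\delta$-components are reinstated, they supply exactly the $m$-/$\mu$-terms your cancellation scheme needs --- but as written the middle step of the computation would not close up.
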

\begin{proof}
Direct computation.
\end{proof}

Let $(E,p_\bullet)$ be an $\Ainfty$-$A$-module. Recall that
the composition $\rho \chi$ being homotopic to $\id_{(E,p_\bullet)}$ is
the definition of strong homotopy unitality. We now prove that, 
on the contrary, the composition $\chi \rho$ is always homotopic 
to $\id_{\infbarres(E,p_\bullet)}$:
 
\begin{lemma}[Chi-Rho Lemma]
\label{lemma-chi-rho-lemma}
Let $A$ be bimodule homotopy unital and let $(E,p_\bullet) \in \nodA$. There is a degree $-1$ endomorphism 
$\xi$ of $\infbarres (E,p_\bullet)$ with 
$$d\xi = \id_{\infbarres(E,p_\bullet)} - \chi \circ \rho. $$ 
\end{lemma}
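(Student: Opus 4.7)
The plan is to construct $\xi$ as an $\Ainfty$-version of the classical ``extra degeneracy'' contracting homotopy of the bar resolution of a strictly unital module. In the strict case, when $A$ is a unital algebra with unit $\eta$ and $(E,p_2)$ is a strict right $A$-module, the degree $-1$ self-map $s$ of the bar resolution whose $EA^{k+1}\to EA^{k+2}$ component is $EA^k\otimes\eta$ satisfies $ds=\id-\chi\rho$ on the nose, with $\chi$ the unit inclusion and $\rho$ the action. The bimodule homotopy unitality hypothesis on $A$ supplies precisely the coherent higher-order corrections needed to promote this classical $s$ to a $\pretriagmns(\nodA)$-homotopy $\xi$ in our general $\Ainfty$ setting, exactly paralleling how $\bareta_{\bullet\bullet}$ was used in Lemma \ref{lemma-free-modules-are-strong-homotopy-unital} to establish the other-sided equation $\rho\chi\simeq\id$ on each free summand.

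First I would construct $\xi$ component-wise. Since $\infbarres(E,p_\bullet)$ is a one-sided twisted complex in $\freeA$ with terms $EA^{k+1}$ concentrated in degree $-k$, the degree $-1$ endomorphism $\xi$ consists of $\nodA$-morphisms $\xi_{ij}\colon EA^{i+1}\to EA^{j+1}$ of the appropriate $\A$-degree for each $j\geq i$. The leading piece $\xi_{i,i+1}$ is built from the unit $\eta\colon \id_\A\to A$ inserted immediately to the right of the rightmost $A$-factor, and the higher $\Ainfty$-components together with $\xi_{ij}$ for $j>i+1$ assemble from components of $h^l_\bullet$, $h^r_\bullet$, and $\kappa_{\bullet\bullet}$. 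A cleaner invariant formulation uses the isomorphism $\AnodA\simeq A\text{-}(\nodA)$ of Theorem \ref{theorem-ainfty-bimodules-are-ainfty-modules-in-cat-of-ainfty-modules}: viewing $\bareta_{\bullet\bullet}$ as a morphism of left $\Ainfty$-$A$-modules in $\nodA$ and combining it with the contracting map $\zeta$ of Lemma \ref{lemma-the-contracting-homotopy-zeta-of-infbarl-a}, one transfers the contraction $\bareta\circ\zeta$ of $\infbarl A$ across the $(E,p_\bullet)$-twisted free-module structure of $\infbarres(E,p_\bullet)$.

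Then I would verify the equation $d\xi=\id-\chi\rho$ by decomposing both sides into their $\Ainfty$-components $EA^{i+1}\otimes A^l\to EA^{j+1}$ and checking cancellation level by level. The differential of $\xi$ in $\pretriagmns(\nodA)$ is the sum of the $\nodA$-differentials of the $\xi_{ij}$ and their pre- and post-compositions with the twisted differentials of $\infbarres(E,p_\bullet)$, which involve $\mu_\bullet$, $m_\bullet$, $p_\bullet$, and $\pi_\bullet$. The four defining relations of bimodule homotopy unitality from Theorem \ref{theorem-conditions-for-bimodule-homotopy-unitality}, namely $d\eta=0$, $dh^r_\bullet=\id-\mu_2\circ\eta A$, $dh^l_\bullet=\id-\mu_2\circ A\eta$, and $d\kappa_{\bullet\bullet}=\mu_2\circ(h^l_\bullet A-Ah^r_\bullet)-\mu_3\circ A\eta A$, combined with the $\Ainfty$-module axioms for $(E,p_\bullet)$, are exactly what is needed so that all terms cancel except $\id$ and $\chi\rho$. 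The higher homotopy $\kappa_{\bullet\bullet}$ enters essentially whenever $\eta$ is moved past both a left and a right $A$-factor simultaneously.

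The main obstacle is the sign bookkeeping and the combinatorial organisation of the higher corrections, which is notoriously delicate for bar-type constructions. A more conceptual route would be to package everything inside the iterated bar bicomplex in $\twbiosmns\A$: tensoring the bi-bar construction of $A$ on the left by $E$, deforming the bidifferential by the operations $p_\bullet$, and restricting to the appropriate row, one expects $\xi$ to emerge as the pullback of the canonical contracting homotopy of $\infbarl A$ via $\bareta_{\bullet\bullet}$, in direct analogy with diagram \eqref{eqn-decomposition-of-delta-bareta} from the proof of Lemma \ref{lemma-free-modules-are-strong-homotopy-unital}. Either way, the success of the verification hinges on the $\kappa_{\bullet\bullet}$ component of $\bareta_{\bullet\bullet}$ patching together the left- and right-unitality homotopies along the twisted differentials of $\infbarres(E,p_\bullet)$.
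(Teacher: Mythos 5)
You have found the right ingredients --- the bimodule unit $\bareta_{\bullet\bullet}$, the contraction $\bareta\circ\zeta$ of $\infbarl{A}$ from Lemma \ref{lemma-the-contracting-homotopy-zeta-of-infbarl-a}, and the identification of $E\infbarl(A)$ with a bar resolution --- and your ``cleaner invariant formulation'' points in the direction the paper actually takes. But the decisive step is missing: nothing in your write-up explains why the defect of the transferred homotopy is exactly $\chi\circ\rho$ rather than zero or some other term. The contraction $E(\zeta\circ\bareta)$ contracts $\infbarres(E,0)$, the bar resolution of $E$ with the \emph{zero} $A$-action; the twisted complexes $\infbarres(E,0)$ and $\infbarres(E,p_\bullet)$ share the same objects but have different twisted differentials, so ``transferring across the $(E,p_\bullet)$-twisted free-module structure'' is precisely the point at issue, and asserting that the four unitality relations make ``all terms cancel except $\id$ and $\chi\rho$'' restates the lemma instead of proving it. As you concede, the component-wise verification of $\xi_{ij}$ built from $\eta$, $h^l_\bullet$, $h^r_\bullet$, $\kappa_{\bullet\bullet}$ is a sign-bookkeeping morass, and your proposal does not show it closes.

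The paper sidesteps that morass with a short device you did not find. It conjugates by the two \emph{non-closed} degree-$0$ morphisms $\id\colon(E,p_\bullet)\to(E,0)$ and $\id\colon(E,0)\to(E,p_\bullet)$ whose sole component is $\id_E$, setting $\xi=\infbarres(\id)\circ E(\zeta\circ\bareta)\circ\infbarres(\id)$. Since the differentials of these identity maps are $\pi_1\circ\id$ and $\id\circ\pi_1$ with $\pi_1$ as in Defn.~\ref{defn-morphisms-pi_i-and-mu_i}, the Leibniz rule immediately gives $d\xi=\id-[\xi,\infbarres(\pi_1)]$, and the only remaining computation is the identification $[\xi,\infbarres(\pi_1)]=\chi\circ\rho$; all the higher corrections you intended to organize by hand are already packaged into $\bareta$ via Lemma \ref{lemma-the-contracting-homotopy-zeta-of-infbarl-a}. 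Two smaller points: in your strict-case model the extra-degeneracy component $EA^{k+1}\to EA^{k+2}$ should be $EA^{k+1}\eta$ (unit inserted in the last slot), not $EA^k\otimes\eta$; and even in the strict case that map is not $A$-linear, so $\xi$ only exists in $\pretriagmns(\nodA)$ after the $\bareta$-corrections --- which is exactly why the factorization through $(E,0)$ is structural and not cosmetic.
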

\begin{proof}
Recall the isomorphism  
$E\infbarl(A) \xrightarrow{E\phi} \infbarres(E,0)$
of twisted complexes over $\nodA$ used in the construction of the map $\chi$. 
Write 
$$ E(\zeta\circ\bareta)\colon \infbarres(E,0) \rightarrow \infbarres(E,0) $$
for the degree $-1$ morphism induced via $E\phi$
from the contracting homotopy $\zeta\circ\bareta$ of $\infbarl(A)$
of Lemma \ref{lemma-the-contracting-homotopy-zeta-of-infbarl-a}. 
Let $\id\colon (E,p_\bullet) \rightarrow (E,0)$
and $\id\colon (E,0) \rightarrow (E,p_\bullet)$
be the non-closed maps 
whose only non-zero component is $\id_E$. Their differentials 
are $\pi_1 \circ \id$ and $\id \circ \pi_1$ where $\pi_1: (E,p_\bullet)
\rightarrow (E,p_\bullet)$ is as in 
\S\ref{section-bar-construction-as-a-complex-of-ainfty-A-modules}.

Set $\xi$ to be the composition 
\begin{equation}
\infbarres(E,p_\bullet)
\xrightarrow{ \infbarres(\id)} 
\infbarres(E,0)
\xrightarrow{E(\zeta\circ\bareta)}
\infbarres(E,0)
\xrightarrow{\infbarres(\id) }
\infbarres(E,p_\bullet). 
\end{equation}
We then have
\begin{align*}
d(\xi) = 
\infbarres(\pi_1)\xi + \id - \xi\infbarres(\pi_1) = 
\id - [\xi,\infbarres(\pi_1)]. 
\end{align*}
One readily verifies that $[\xi,\infbarres(\pi_1)] = \chi\rho$, whence
the desired assertion. 
\end{proof}

If $A$ is only strong homotopy
unital, we do not have $\bareta$ but only $\eta$,
$h^l_\bullet$, and $h^r_\bullet$. 
Hence we do not have the contracting homotopy
$\zeta \circ \bareta$ of Lemma
\ref{lemma-the-contracting-homotopy-zeta-of-infbarl-a}, but 
we do have its first component $\zeta_1$ which involves only $\eta$ and
$h^l_\bullet$. The first component is the image
under the forgetful functor 
$\forget\colon \pretriagmns \nodA \rightarrow \pretriagmns \A$. 
Thus $\zeta_1$ is a contracting homotopy of $\forget(\infbarl(A))$,
and we still have a version of Chi-Rho Lemma for the first components: 
\begin{cor}
\label{cor-chi-rho-lemma-for-the-first-components}
Let $A$ be strongly homotopy unital and let $(E,p_\bullet) \in \nodA$. 
There is a degree $-1$ endomorphism 
$\xi_1$ of $\forget(\infbarres (E,p_\bullet))$ in $\pretriagmns \A$ with 
$$d\xi_1 = \id_{\forget(\infbarres(E,p_\bullet))} - E\eta \circ \forget(\rho). $$ 
\end{cor}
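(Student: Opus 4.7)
The plan is to replicate the proof of Lemma \ref{lemma-chi-rho-lemma} at the level of first components, i.e.~after applying the forgetful functor $\forget\colon \pretriagmns\nodA \to \pretriagmns\A$. The key observation, already made in the paragraph preceding the corollary, is that the first component $\zeta_1$ of $\zeta \circ \bareta$ involves only $\eta$ and $h^l_\bullet$, and is a contracting homotopy of $\forget(\infbarl(A))$. Thus $\zeta_1$ is available whenever $A$ is strongly homotopy unital, even though the full $\zeta \circ \bareta$ is not.

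First, I would define $\xi_1$ directly in $\pretriagmns\A$ by the formula analogous to the one for $\xi$ in Lemma \ref{lemma-chi-rho-lemma}:
$$
\xi_1 := \forget(\infbarres(\id)) \circ \bigl(\forget(E\phi) \circ E\zeta_1 \circ \forget(E\phi)^{-1}\bigr) \circ \forget(\infbarres(\id)),
$$
where the outer maps are the forgetful images of the non-closed maps $\id\colon (E,p_\bullet) \to (E,0)$ and $\id\colon (E,0) \to (E,p_\bullet)$ which involve only the single component $\id_E$ and thus are defined independently of $\bareta$. The isomorphism $\forget(E\phi)\colon E\forget(\infbarl(A)) \xrightarrow{\sim} \forget(\infbarres(E,0))$ in $\pretriagmns\A$ exists because $\phi$ itself depended only on signs. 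So the entire construction of $\xi_1$ uses only $\eta$, $h^l_\bullet$, and the module structure $p_\bullet$.

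Next, I would compute $d\xi_1$ by the Leibniz rule, as in the proof of Lemma \ref{lemma-chi-rho-lemma}. The differentials of the outer non-closed maps contribute terms of the form $\forget(\pi_1) \circ \forget(\infbarres(\id))$ and $\forget(\infbarres(\id)) \circ \forget(\pi_1)$, while the middle term contributes $\id$ because $d\zeta_1 = \id_{\forget(\infbarl(A))}$. Collecting terms gives
$$
d\xi_1 = \id_{\forget(\infbarres(E,p_\bullet))} - \bigl[\xi_1,\forget(\infbarres(\pi_1))\bigr].
$$
Finally, I would identify the commutator with $E\eta \circ \forget(\rho)$. By inspection of the explicit formula for the components $\chi_{ij}$ recorded after Defn.~\ref{defn-a-infinity-version-of-map-chi}, only the term with $i=j=1$ survives under $\forget$, and this term equals $E\eta\colon E \to EA$; thus $\forget(\chi) = E\eta$, and the desired identification follows by the same commutator computation as in the proof of Lemma \ref{lemma-chi-rho-lemma}, carried out now at the first-component level.

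The hard part will be verifying in Step 2 that the Leibniz computation actually closes up using only the data of $\zeta_1$, without silently invoking the higher components of $\zeta \circ \bareta$ or of $\bareta_{\bullet\bullet}$. A priori one might worry that the higher homotopies packaged in $\kappa_{\bullet\bullet}$ and $(h^r_\bullet)^{bi}$ appeared essentially in the full Chi-Rho Lemma; the point is that these contributions live entirely in strictly positive-component parts of morphisms in $\pretriagmns\nodA$ and therefore vanish under $\forget$. Making this rigorous is just a bookkeeping verification that every term arising in the Leibniz expansion of $d\xi_1$ is already expressible in terms of $\zeta_1$, $\pi_1$, and $\forget(\infbarres(\id))$ alone.
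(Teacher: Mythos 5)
Your proposal is correct and is exactly the argument the paper intends: the paper offers no separate proof, asserting the corollary from the preceding paragraph (only the first component $\zeta_1$ of $\zeta\circ\bareta$ is needed, and it involves only $\eta$ and $h^l_\bullet$, so the proof of Lemma \ref{lemma-chi-rho-lemma} goes through after applying $\forget$). Your construction of $\xi_1$ by conjugating $E\zeta_1$ with $\forget(E\phi)$, the Leibniz computation yielding $d\xi_1 = \id - \bigl[\xi_1,\forget(\infbarres(\pi_1))\bigr]$, and the observation that $\forget(\chi) = E\eta$ because $\chi_{ij}=0$ for $i>j$ and $\chi_{11}=E\bareta_{00}$ — so all contributions of $h^r_\bullet$ and $\kappa_{\bullet\bullet}$ die under $\forget$ — supply precisely the bookkeeping the paper leaves implicit.
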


\begin{proof}[Proof of Theorem~\ref{theorem-tfae-unitality-conditions-for-A-modules}]
$\eqref{item-A-module-is-homotopy-unital} 
\Leftrightarrow 
\eqref{item-A-module-is-H-unital}:$

Observe that $\infbar(E,p_\bullet)$ is the total complex of the
twisted complex map $\forget(\rho)$.  Thus $(E,p_\bullet)$ is $H$-unital if 
and only if $\forget(\rho)$ is a homotopy equivalence. By the 
first component version of Chi-Rho Lemma
(Cor.~\ref{cor-chi-rho-lemma-for-the-first-components}), 
$\forget(\rho)$ has a left homotopy inverse $E\eta$. Thus
$\forget(\rho)$ is a homotopy equivalence if and only if $E\eta$ is
also a right homotopy inverse of $\forget(\rho)$. Since 
$\forget(\rho) \circ E\eta = p_2 \circ \eta$, 
$E\eta$ is a right homotopy inverse of $\forget(\rho)$
if and only if $p_2 \circ \eta$ is homotopic to $\id_E$, 
which is the definition of $(E,p_\bullet)$ being homotopy unital.

$\eqref{item-A-module-is-strong-homotopy-unital} 
\Leftrightarrow 
\eqref{item-A-module-is-H-unital}:$

This is similar, but with $A$ bimodule homotopy unital we can 
witness the firepower of our fully armed and operational 
Chi-Rho Lemma (Lemma \ref{lemma-chi-rho-lemma}).

By Homotopy Lemma, $\forget(\rho)$ is a homotopy equivalence if and
only if $\rho$ is. By Chi-Rho Lemma, $\rho$ has a left homotopy inverse $\chi$.
Thus $(E,p_\bullet)$ is $H$-unital if and only if $\chi$ is also 
a right homotopy inverse of $\rho$, which is the definition of
strong homotopy unitality. 
\end{proof}

\subsection{Free-Forgetful homotopy adjunction}
\label{section-free-forgetful-homotopy-adjunction}

We now give the main application
of strong homotopy unitality: $\free\colon \A \rightarrow \nodA$ and
$\forget\colon \nodA \rightarrow \A$  
defined in \S\ref{section-free-modules-and-bimodules-over-Ainfty-algebra}
become homotopy adjoint functors if we restrict to the subcategory
$\nodhuA$ of homotopy unital modules. Note that as $\forget$ 
discards all the higher components of
$\Ainfty$-morphisms, there is no hope for a genuine adjunction. 

\begin{defn}
\label{defn-free-forgetful-adjunction-unit-counit}
Let $A$ be a strongly homotopy unital $\Ainfty$-algebra in a monoidal DG
category $\A$. Define the \em unit \rm natural transformation 
$\id_{A} \rightarrow \forget \free$
by setting it to be $ E \xrightarrow{E\eta} EA$ for any $E \in \A$. 
Define the \em counit \rm homotopy natural transformation 
$\free \forget \rightarrow \id_{\nodA}$
by setting it to be
$EA \xrightarrow{\pi_2} (E,p_\bullet)$
for any $(E,p_\bullet) \in \nodA$. 
\end{defn}

We first show that the counit map above is indeed a homotopy natural
transformation. Recall that $\pi_2$ is one of the maps $\pi_i: EA^i
\rightarrow (E,p_\bullet)$ defined
in \S\ref{section-bar-construction-as-a-complex-of-ainfty-A-modules}. 
\begin{lemma}
Let $A$ be an $\Ainfty$-algebra in a monoidal DG category $\A$. 
Then $\pi_2\colon EA \rightarrow (E,p_\bullet)$ defines a homotopy 
natural transformation $\free\forget \rightarrow \id_{\nodA}$. 
\end{lemma}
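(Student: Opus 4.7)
The plan is to verify two things: that $\pi_2^E\colon EA\to(E,p_\bullet)$ is a closed degree zero morphism in $\nodA$ for every module, and that for every closed degree zero $f_\bullet\colon(E,p_\bullet)\to(F,q_\bullet)$ in $\nodA$ the naturality square for $\pi_2$ commutes in $H^0(\nodA)$ with an explicit witnessing homotopy. For closedness, I would invoke Proposition~\ref{prps-bar-construction-as-a-complex-of-Ainfty-modules}: $\infbar(E,p_\bullet)$ is a one-sided twisted complex in $\nodA$ with $(E,p_\bullet)$ in degree $0$, $EA^n$ in degree $-n$ for $n\geq 1$, and $\pi_2^E$ the differential from $EA$ to $(E,p_\bullet)$. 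The twisted-complex condition \eqref{eqn-the-twisted-complex-condition} at the slot $(i,j)=(-1,0)$ reads $d(\pi_2^E)+\sum_k \alpha_{k,0}\circ\alpha_{-1,k}=0$. Every differential in the bar-complex strictly decreases bar-degree, so $\alpha_{-1,k}$ vanishes unless $k=0$, in which case $\alpha_{0,0}=0$; the sum collapses and $d(\pi_2^E)=0$.

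For homotopy naturality, I would take the homotopy $h_\bullet := f_{\bullet+1}\colon EA\to(F,q_\bullet)$, the degree $-1$ $\Ainfty$-morphism in $\nodA$ whose $j$-th component is (up to a sign from Definition~\ref{defn-right-module-bar-constructions-of-an-Ainfty-morphism}) the map $f_{j+1}\colon EA\otimes A^{j-1}\to F$. Conceptually this is the restriction to the $EA$ summand of the diagonal $f_{\bullet+\bullet}$ appearing in~\eqref{eqn-infbar-f_bullet-rewritten-as-square-with-bar-resolution-Ainfty-version}. To verify $d(f_{\bullet+1})=f_\bullet\circ\pi_2^E-\pi_2^F\circ(f_1\otimes\id_A)$, I would pass to bar-constructions via the faithful functor $\infbar$ and expand both sides componentwise, using that the $\Ainfty$-morphism equations for $f_\bullet$ being closed express each $df_{j+1}$ as exactly the combination of $f_i\circ p_{\cdot}$ and $q_{\cdot}\circ(f_i\otimes\id)$ terms that appear on the right-hand side.

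The main obstacle will be the sign bookkeeping: $\pi_2^F\circ(f_1\otimes\id_A)$ is a strict composition with $j$-th component $(-1)^{j-1}q_{j+1}\circ(f_1\otimes\id^j)$, whereas $f_\bullet\circ\pi_2^E$ is an honest $\Ainfty$-composition contributing a sum over $i$ of terms $f_i\circ(p_{j-i+2}\otimes\id^{i-1})$, each with its own sign. Checking that $d(f_{\bullet+1})$ precisely produces this difference requires patient tracking of the conventions from \eqref{eqn-differentials-in-right-module-bar-construction} and Definition~\ref{defn-right-module-bar-constructions-of-an-Ainfty-morphism}; once those are set up correctly, the identity reduces to the defining $\Ainfty$-morphism equations for $f_\bullet$, and no further input is needed.
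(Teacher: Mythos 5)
Your proposal is correct and follows essentially the same route as the paper: the witnessing homotopy is exactly $f_{\bullet+1}$, which you correctly identify as the leading-term restriction of the diagonal $f_{\bullet+\bullet}$ in the square presentation \eqref{eqn-infbar-f_bullet-rewritten-as-square-with-bar-resolution-Ainfty-version} of $\infbar(f_\bullet)$. The only difference is that the paper sidesteps your componentwise sign bookkeeping by directly citing that square, already established in \S\ref{section-bar-resolution}, so that closedness of $\infbar(f_\bullet)$ immediately yields the relation $d(f_{\bullet+1}) = f_\bullet \circ \pi_2 - \pi_2 \circ f_1 A$ up to convention-dependent signs, while your explicit check that $\pi_2$ is closed (via the one-sidedness of the bar complex) is a harmless addition the paper leaves implicit.
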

\begin{proof}
We need to show that for any closed degree $0$ morphism
$f_\bullet\colon (E,p_\bullet) \rightarrow (F,q_\bullet)$ in $\nodA$
the following square commutes up to homotopy:
\begin{equation}
\label{eqn-pi_2-homotopy-natural-transformation-square}
\begin{tikzcd}
EA 
\ar{r}{\pi_2}
\ar{d}{f_1 A}
&
(E,p_\bullet)
\ar{d}{f_\bullet}
\\
FA
\ar{r}{\pi_2}
&
(F,q_{\bullet}).  
\end{tikzcd}
\end{equation}

Observe that the square \eqref{eqn-pi_2-homotopy-natural-transformation-square}
is the restriction of the square
\eqref{eqn-infbar-f_bullet-rewritten-as-square-with-bar-resolution-Ainfty-version}
of one-sided twisted complexes to the leading term of each complex. 
We saw in \S\ref{section-bar-resolution} that the square 
\eqref{eqn-infbar-f_bullet-rewritten-as-square-with-bar-resolution-Ainfty-version} is a presentation of 
$\infbar(f_\bullet)$ 
and thus for closed $f$ it commutes up to the homotopy 
given by $f_{\bullet+\bullet}: \infbarres(E,p_\bullet) \rightarrow
(F,q_\bullet)$. It follows that
\eqref{eqn-pi_2-homotopy-natural-transformation-square} commutes up
to the homotopy given by $f_{\bullet+1}$, 
the restriction of $f_{\bullet+\bullet}$ to the leading terms. 
\end{proof}

\begin{theorem}
\label{theorem-free-forgetful-homotopy-adjunction}
Let $A$ be a strongly homotopy unital $\Ainfty$-algebra in a monoidal DG
category $\A$. The functors $\free$ and $\forget$ with the unit and
the counit maps of Definition \ref{defn-free-forgetful-adjunction-unit-counit}
are a homotopy adjoint pair of functors $\A \leftrightarrows \nodhuA$. 
\end{theorem}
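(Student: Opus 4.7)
The plan is to verify, under strong homotopy unitality of $A$, that $\free$ factors through $\nodhuA$ and then to check the two triangle identities up to homotopy, since a homotopy adjunction amounts to a pair of (homotopy) natural transformations whose zig-zag compositions are homotopic to the identity functors.

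First I will check that $\free(E) = EA$ lies in $\nodhuA$ for every $E \in \A$. The relevant composition is $E A \xrightarrow{E A\eta} E A A \xrightarrow{E m_2} E A$, which equals $E(m_2 \circ A\eta)$. By the weak homotopy unitality consequence of \eqref{eqn-strong-homotopy-unitality-conditions} (take the first component of $h^l_\bullet$), this equals $\id_{EA}$ up to a boundary in $\A$, so $EA$ is homotopy unital. Naturality of the unit $E\eta\colon E \to EA$ in $E$ is immediate from DG bifunctoriality of $\otimes$, while homotopy naturality of the counit $\pi_2\colon FA \to (F,q_\bullet)$ was already established in the lemma preceding the theorem. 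So both transformations are in place.

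Next I will verify the two triangle identities. For the first, fix $E \in \A$ and consider the composition in $\nodA$
\begin{equation*}
\free(E) = EA \xrightarrow{\free(E\eta)} EA \cdot A \xrightarrow{\pi_2} EA = \free(E),
\end{equation*}
where $\free(E\eta)$ is the strict morphism with first component $E\eta A\colon EA \to EA^2$, and where $\pi_2$ on the free module $\free(E) = EA$ is, by Defn.~\ref{defn-morphisms-pi_i-and-mu_i}, precisely $E\mu_2$. Hence the composition is $E(\mu_2 \circ \eta A)$, and strong homotopy unitality of $A$ provides $h^r_\bullet \in \nodA$ with $\mu_2 \circ \eta A - \id_A = d h^r_\bullet$. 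Tensoring on the left by $E$ yields $E h^r_\bullet$, which witnesses the first triangle identity in $\nodA$. For the second identity, fix $(F,q_\bullet) \in \nodhuA$ and consider
\begin{equation*}
\forget(F,q_\bullet) = F \xrightarrow{F\eta} FA \xrightarrow{\forget(\pi_2)} F.
\end{equation*}
Under the forgetful functor $\pi_2$ has first component $q_2$, so this is $q_2 \circ F\eta$, which is homotopic to $\id_F$ precisely by the definition of $(F,q_\bullet)$ being homotopy unital.

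The main obstacle is bookkeeping rather than conceptual: one must ensure that the homotopy exhibiting the first triangle identity lies in $\nodA$, not merely in $\A$. This is exactly the additional content of strong homotopy unitality over weak homotopy unitality, so once $h^r_\bullet$ is invoked the identity holds in the correct category. No further analytic input is needed; everything reduces to the definitions of \S\ref{section-free-modules-and-bimodules-over-Ainfty-algebra} and \S\ref{section-unitality-conditions-for-algebras} together with the homotopy naturality of $\pi_2$ already established.
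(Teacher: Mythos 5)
Your proof is correct and follows essentially the same route as the paper's: both verify the two zig-zag identities, the first by reducing $EA \xrightarrow{E\eta A} EA^2 \xrightarrow{E\mu_2} EA$ to the strong homotopy unitality relation $\mu_2 \circ \eta A = \id_A + dh^r_\bullet$ in $\nodA$, and the second by observing that $q_2 \circ F\eta \simeq \id_F$ is precisely the definition of homotopy unitality of $(F,q_\bullet)$. Your explicit check that $\free$ lands in $\nodhuA$ (via the first component of $h^l_\bullet$) is a sound addition the paper leaves to its earlier results (Lemma \ref{lemma-strong-homotopy-unital-implies-H-unital} together with Theorem \ref{theorem-tfae-unitality-conditions-for-A-modules}), and your emphasis that the first homotopy must live in $\nodA$ rather than $\A$ is exactly the point of strong homotopy unitality.
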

\begin{proof}
Let $E \in \A$. The composition 
$\free \xrightarrow{\free(\unit)} \free\forget\free
\xrightarrow{\counit} \free$
evaluated at $E$ is a $\nodA$ morphism
$EA \xrightarrow{E \eta A} EA^2 \xrightarrow{E \mu_2} EA$. 
This morphism is homotopic to $\id_{EA}$ since 
$A \xrightarrow{\eta A} A^2 \xrightarrow{\mu_2} A$
is homotopic to $\id_A$ by
the definition of strong homotopy unitality of $A$.

Let $(E,p_\bullet) \in \nodhuA$. The composition 
$\forget \xrightarrow{\unit} \forget\free\forget
\xrightarrow{\forget(\counit)} \forget$
evaluated at $(E,p_{\bullet})$ is an $\A$ morphism 
$E \xrightarrow{E\eta} EA \xrightarrow{p_2} E$. 
It being homotopic to $\id_E$ is the definition of 
homotopy unitality of $(E, p_\bullet)$. 
\end{proof}

\subsection{Kleisli category}
\label{section-kleisli-category}

Given an $\Ainfty$-algebra $A$ in a monoidal DG category $\A$ we can 
construct a (non-unital) $\Ainfty$-category in the sense of 
\cite[\S5]{Lefevre-SurLesAInftyCategories} in the same way 
as we construct the Kleisli category of a monad:

\begin{defn}
\label{defn-kleisli-category-of-an-ainfty-algebra}
Let $A$ be an $\Ainfty$-algebra in a monoidal DG category $\A$. Define
its \em Kleisli category \rm $\kleisliA$ to be the $\Ainfty$-category
(in the classical sense of \cite{Lefevre-SurLesAInftyCategories})
defined by the following data:
\begin{itemize}
\item Its objects are the objects of $\A$. 
\item For any $E,F \in \A$ the $\homm$-complex between them is
\begin{equation}
\homm_{\kleisliA}(E,F) := \homm_{\A}(E,FA).
\end{equation}
\item For any $E_1, E_2, \dots, E_{n+1} \in \A$ and any $\alpha_i \in 
\homm_{\kleisliA}(E_i, E_{i+1})$ define 
\begin{equation}
\label{eqn-defn-of-ainfty-structure-on-kleisli-category}
m_n^{\kleisliA}(\alpha_1, \dots, \alpha_n) := 
E_1 \xrightarrow{\alpha_1} E_2A \xrightarrow{\alpha_2 A} \dots
\xrightarrow{\alpha_n A^{n-1}} E_{n+1}A^n \xrightarrow{E_{n+1}m_n^A}
E_{n+1}A.
\end{equation}
\end{itemize}
\end{defn}

\begin{lemma}
\label{lemma-ainfty-category-kleisliA-is-well-defined}
The $\Ainfty$-category $\kleisliA$ in Definition
\ref{defn-kleisli-category-of-an-ainfty-algebra}
is well-defined. 
\end{lemma}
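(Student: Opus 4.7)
The plan is to verify the $\Ainfty$-relations for $\kleisliA$ by factoring each operation through a common ``unwrapping'' map into $\A$ and then reducing the identity to the $\Ainfty$-relations of $A$. For any composable sequence $\alpha_i \in \homm_{\kleisliA}(E_i, E_{i+1}) = \homm_\A(E_i, E_{i+1}A)$ with $i = 1, \dots, n$, I would introduce the unwrapping
\[
\tilde\omega(\alpha_1, \dots, \alpha_n) \; := \; (\alpha_n A^{n-1}) \circ (\alpha_{n-1} A^{n-2}) \circ \cdots \circ \alpha_1 \; : \; E_1 \longrightarrow E_{n+1} A^n,
\]
so that by definition \eqref{eqn-defn-of-ainfty-structure-on-kleisli-category} we have $m_n^{\kleisliA}(\alpha_1, \dots, \alpha_n) = (E_{n+1} m_n^A) \circ \tilde\omega(\alpha_1, \dots, \alpha_n)$, with $m_1^{\kleisliA}$ interpreted as the internal differential $d_\A$ on the hom-complexes $\homm_\A(-, -A)$.

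The main computational step will be to show that, for any decomposition $j + k + l = n$ with $k \geq 2$, the nested operation
\[
m_{j+1+l}^{\kleisliA}\bigl(\alpha_1, \dots, \alpha_j,\; m_k^{\kleisliA}(\alpha_{j+1}, \dots, \alpha_{j+k}),\; \alpha_{j+k+1}, \dots, \alpha_n\bigr)
\]
likewise factors through $\tilde\omega(\alpha_1, \dots, \alpha_n)$. I would expand the inner $m_k^{\kleisliA}(\alpha_{j+1}, \dots, \alpha_{j+k}) = (E_{j+k+1} m_k^A) \circ \tilde\omega(\alpha_{j+1}, \dots, \alpha_{j+k})$ and then use DG bifunctoriality (the Koszul interchange rule) of $\otimes$ in $\A$ to propagate $E_{j+k+1} m_k^A$ rightwards past each of $\alpha_{j+k+1} A^{j+1}, \dots, \alpha_n A^{n-1}$. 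Up to Koszul signs, the result is
\[
(-1)^{\varepsilon(j, k, l)} \Bigl(E_{n+1} \otimes \bigl(m_{j+1+l}^A \circ (\id^{l} \otimes m_k^A \otimes \id^{j})\bigr)\Bigr) \circ \tilde\omega(\alpha_1, \dots, \alpha_n),
\]
where the positions $l$ and $j$ of the identity blocks are swapped relative to equation \eqref{eqn-defining-equalities-for-new-definition-of-Ainfinity-algebra} because the $\alpha_i$ appear in reverse order inside $\tilde\omega$.

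I would then sum over the full $\Ainfty$-relation for $\kleisliA$, obtaining $(E_{n+1} \otimes X_n) \circ \tilde\omega(\alpha_1, \dots, \alpha_n)$ for a single operation $X_n \colon A^n \to A$. The $k = 1$ contributions, involving $m_1^{\kleisliA} = d_\A$ applied to individual $\alpha_i$, together with the outer $d_\A m_n^{\kleisliA}$ term, combine via the graded Leibniz rule so that the internal differentials of the $\alpha_i$ cancel pairwise while $d_\A m_n^A$ accumulates inside $X_n$. After reindexing $(j, l) \leftrightarrow (l, j)$, $X_n$ becomes exactly the left-hand side of equation \eqref{eqn-defining-equalities-for-new-definition-of-Ainfinity-algebra} for $A$, which vanishes by Proposition \ref{prps-comparing-new-and-old-defns-of-ainfty-algebra} since $A$ is an $\Ainfty$-algebra; hence the $\Ainfty$-relation for $\kleisliA$ holds.

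The hard part will be the sign bookkeeping. The Koszul sign $\varepsilon(j, k, l)$ acquired when propagating $m_k^A$ past the $\alpha_i$ depends on the homological degrees $|\alpha_i|$, and it must combine with the sign $(-1)^{jk+l}$ from the $\Ainfty$-relation for $\kleisliA$ to match the corresponding sign in the reindexed $\Ainfty$-relation for $A$. This is a routine but tedious verification; the sign conventions in definition \eqref{eqn-defn-of-ainfty-structure-on-kleisli-category} together with the reverse-order formation of $\tilde\omega$ have been arranged precisely so that this cancellation occurs.
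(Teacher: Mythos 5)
Your proposal is correct and is essentially the paper's own argument: the paper also factors every nested Kleisli operation through the common composition $\alpha_n A^{n-1} \circ \cdots \circ \alpha_1$ (your $\tilde\omega$) and uses DG bifunctoriality of the monoidal product to rewrite each term as $\id_{E_{n+1}}$ tensored with the corresponding operation on $A$, reducing the relations \eqref{eqn-defining-equalities-for-new-definition-of-Ainfinity-algebra} for $\kleisliA$ to those for $m^A_\bullet$. The only difference is presentational: the paper verifies the $dm_3$ relation in detail and declares the general case similar, whereas you state the general-$n$ reduction directly, including the reversed-order index swap and the Koszul-sign bookkeeping that the paper leaves implicit.
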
 
\begin{proof}
We need to show that the operations $m_i^{\kleisliA}$
satisfy the equations
\eqref{eqn-defining-equalities-for-new-definition-of-Ainfinity-algebra}. 
These can be reduced to the same equations for $m_i^A$. 
Consider the equation
\begin{equation}
\label{eqn-dm3-equation-for-the-kleisli-category}
d m^{\kleisliA}_3 + m^{\kleisliA}_2 \circ \left( \id \otimes
m^{\kleisliA}_2 - m^{\kleisliA}_2 \otimes \id \right) = 0.  
\end{equation}
For any 
$E_1 \xrightarrow{\alpha_1} E_2 \xrightarrow{\alpha_2} E_3
\xrightarrow{\alpha_3} E_4 \in \kleisliA$
the morphism $m^{\kleisliA}_2 \circ
\left( \id \otimes m^{\kleisliA}_2\right) (\alpha_3,
\alpha_2, \alpha_1)$ in $\kleisliA$ is defined by the composition in $\A$ which
forms the upper right perimeter of the following commutative diagram:
\begin{equation}
\begin{tikzcd}[column sep = 1.5cm, row sep = 0.5cm]
E_1
\ar{r}{\alpha_1}
&
E_2A
\ar{r}{\alpha_2 A}
&
E_3A^2
\ar{r}{E_3 m^A_2}
\ar{d}{\alpha_3 A^2}
&
E_3 A
\ar{d}{\alpha_3 A}
\\
& 
&
E_4 A^3
\ar{r}{E_4 A m^A_2}
&
E_4 A^2
\ar{d}{E_4 m^A_2}
\\
& & &
E_4 A.
\end{tikzcd}
\end{equation}
The diagram commutes by the bifunctoriality of the monoidal operation
of $\A$, thus  
$$ 
m^{\kleisliA}_2 \circ \left( \id \otimes m^{\kleisliA}_2\right) 
(\alpha_3, \alpha_2, \alpha_1)
=  
\id_{E_4} \otimes \left(m^A_2 \circ \left( \id \otimes
m^A_2\right)\right)
\circ \left(\alpha_3 A^2 \circ \alpha_2 A \circ \alpha_1\right). 
$$
Similarly, we have 
$$
m^{\kleisliA}_2 \circ \left( m^{\kleisliA}_2  \otimes \id \right) 
(\alpha_3, \alpha_2, \alpha_1)
=  
\id_{E_4} \otimes \left(
m^A_2 \circ \left( m^A_2 \otimes \id \right)\right) 
\circ \left(\alpha_3 A^2 \circ \alpha_2 A \circ \alpha_1\right), 
$$
$$ 
dm^{\kleisliA}_3 (\alpha_3, \alpha_2, \alpha_1) = 
(\id_{E_4} \otimes dm^{\A}_3) \circ (\alpha_3 A^2 \circ \alpha_2 A
\circ \alpha_1). 
$$
We conclude that \eqref{eqn-dm3-equation-for-the-kleisli-category} applied 
to $(\alpha_3, \alpha_2, \alpha_1)$ is $\id_{E_4}$ tensored
with the analogous equation for $\A$ and pre-composed with
$\alpha_3 A^2 \circ \alpha_2 A \circ \alpha_1$. 

The remaining equations 
\eqref{eqn-defining-equalities-for-new-definition-of-Ainfinity-algebra}
reduce similarly to those for $m^A_i$. 
\end{proof}

When $A$ is homotopy unital, so is clearly $\kleisliA$:

\begin{defn}
Let $A$ be a homotopy unital $\Ainfty$-algebra in a monoidal DG
category $\A$. Define a homotopy unital structure on $\kleisliA$ by setting
for any $E \in \A$ the unit morphism 
$\id_E \in \homm_{\kleisliA}(E,E)$
to be the morphism corresponding to the morphism
$E\eta\colon E \rightarrow EA$ in $\A$. Here $\eta$ is the unit
morphism $\id_\A \rightarrow A$ of $A$. 
\end{defn}

When $A$ is strongly homotopy unital, the Free-Forgetful adjunction
ensures that $\kleisliA$ is quasi-equivalent to the DG category 
$\freeA$:

\begin{defn}
\label{defn-kleisli-to-free-Ainfty-functor}
Let $A$ be an $\Ainfty$-algebra in a monoidal DG category $\A$. 
Define an $\Ainfty$-functor
$ f_\bullet\colon \kleisliA \rightarrow \freeA $
by setting 
$f_{\text{obj}}\colon \obj(\kleisliA) \rightarrow \obj(\freeA)$
be the map $E \mapsto EA$ and by setting 
$$ f_i: \homm_{\kleisliA}(E_i, E_{i+1}) \otimes_k 
\dots \otimes_k \homm_{\kleisliA}(E_1 , E_2) \rightarrow
\homm_{\freeA}(E_1A, E_{i+1}A) $$
to be the map which sends any $\alpha_i \otimes \dots \otimes
\alpha_1$ to the following morphism in $\nodA$: 
$$ E_1 A \xrightarrow{\alpha_1A} E_2 A^2 \xrightarrow{\alpha_2A^2}
\dots \xrightarrow{\alpha_i A^i} E_{i+1} A^{i+1} \xrightarrow{E_{i+1} \mu_{i+1}}
E_{i+1} A. $$
\end{defn}

\begin{theorem}
\label{theorem-ainfty-quasi-equivalence-from-kleisli-to-free}
The $\Ainfty$-functor 
$ f_\bullet\colon \kleisliA \rightarrow \freeA $
of Defn.~\ref{defn-kleisli-to-free-Ainfty-functor} is
well-defined. If $A$ is strongly homotopy unital, it is a quasi-equivalence. 
\end{theorem}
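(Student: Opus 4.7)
The plan is to prove well-definedness of $f_\bullet$ as an $\Ainfty$-functor first, and then deduce quasi-equivalence under strong homotopy unitality of $A$.

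For well-definedness, I would verify the $\Ainfty$-functor identities for $f_\bullet$ directly, mirroring the approach of Lemma \ref{lemma-ainfty-category-kleisliA-is-well-defined}. For any composable sequence $\alpha_1, \dots, \alpha_n$ in $\kleisliA$, each term appearing in the $\Ainfty$-functor relation for $f_\bullet$ unfolds, via bifunctoriality of $\otimes$, into an expression of the form $\bigl(\id_{E_{n+1}} \otimes P(m_\bullet^A)\bigr) \circ (\alpha_n A^{n-1}) \circ \cdots \circ \alpha_1$ for some signed composition $P$ of operations $m_k^A$ with identity morphisms. The $\Ainfty$-functor relation for $f_\bullet$ thereby reduces to the $\Ainfty$-algebra relation for $m_\bullet^A$ of the corresponding shape, which holds by Prps.~\ref{prps-comparing-new-and-old-defns-of-ainfty-algebra}.

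Assume now that $A$ is strongly homotopy unital. Essential surjectivity of $f_\bullet$ on homotopy categories is immediate: by Defn.~\ref{defn-free-Ainfty-modules} every object of $\freeA$ is of the form $EA = f_{\text{obj}}(E)$, so $f_{\text{obj}}$ is even surjective on objects. It remains to show that for every $E, F \in \A$ the map
$$
f_1 \colon \homm_{\kleisliA}(E, F) = \homm_\A(E, FA) \longrightarrow \homm_{\freeA}(EA, FA)
$$
is a quasi-isomorphism. The key step is to identify $f_1$ with the ``transpose'' of the Free-Forgetful homotopy adjunction (Theorem \ref{theorem-free-forgetful-homotopy-adjunction}) applied at the free module $FA \in \nodA$. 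A direct component-wise computation using the $\Ainfty$-module composition formula shows that $f_1(\alpha) = \pi_2^{FA} \circ \free(\alpha)$ in $\nodA$ for every $\alpha \in \homm_\A(E, FA)$, where $\pi_2^{FA} \colon FA \otimes A \to FA$ is the counit at $FA$, with $j$-th component $(-1)^{j-1} F \otimes m_{j+1}^A$. Next, $FA$ itself lies in $\nodhuA$: strong homotopy unitality of $A$ gives $m_2 \circ A\eta \simeq \id_A$ in $\A$ (by passing to first components in $\Anod$), whence $p_2^{FA} \circ FA\eta = F \otimes (m_2 \circ A\eta) \simeq \id_{FA}$. Theorem \ref{theorem-free-forgetful-homotopy-adjunction} then yields that $\alpha \mapsto \pi_2^{FA} \circ \free(\alpha)$ is a natural homotopy equivalence, so $f_1$ is a homotopy equivalence, hence a quasi-isomorphism.

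The principal obstacle is sign bookkeeping: verifying the $\Ainfty$-functor identities for $f_\bullet$ at all orders, and confirming the identification $f_1(\alpha) = \pi_2^{FA} \circ \free(\alpha)$ at every component of the $\nodA$-morphism, not merely at the leading one. Both are routine but tedious checks; once these identifications are in place, the quasi-equivalence of $f_\bullet$ follows formally from the homotopy adjunction of Theorem \ref{theorem-free-forgetful-homotopy-adjunction}.
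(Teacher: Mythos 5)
Your proposal is correct and follows essentially the same route as the paper: well-definedness by reducing the $\Ainfty$-functor equations for $f_\bullet$ to the defining equations of $A$ (the paper phrases this more precisely as the $i$-th equation for $f_\bullet$ reducing to the $\nodA$-lift of the $(i+1)$-st equation for $m^A_\bullet$ via Prps.~\ref{prps-bar-construction-as-a-complex-of-Ainfty-modules}, since the composition and differential in $\freeA$ involve the full $\Ainfty$-morphisms $\mu_\bullet$, not just the $m_\bullet$ in $\A$), and quasi-equivalence by noting $f_{\text{obj}}$ is bijective and identifying $f_1(\alpha)$ with $F\mu_2 \circ \free(\alpha)$, i.e.\ the Free-Forgetful homotopy adjunction map, which is a quasi-isomorphism by Theorem~\ref{theorem-free-forgetful-homotopy-adjunction}. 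Your additional verification that $FA$ lies in $\nodhuA$ is correct and makes explicit a point the paper leaves implicit.
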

\begin{proof}
To show that $f_\bullet$ is well defined we need to show that its
bar-construction $\infbar(f_\bullet)$ is a closed morphism of twisted
complexes. By
Prps.~\ref{prps-defining-equalities-of-Ainfty-morphism}
it suffices to show that the equations
\eqref{eqn-defining-equalities-for-new-definition-of-Ainfinity-algebra-morphism}
hold for $f_\bullet$. Similarly to the proof of Lemma
\ref{lemma-ainfty-category-kleisliA-is-well-defined}, the $i$-th equation 
\eqref{eqn-defining-equalities-for-new-definition-of-Ainfinity-algebra-morphism}
for $f_\bullet$ reduces to the 
$(i+1)$-st equation for $m^A_{\bullet}$. To be more precise, it
reduces to the $\nodA$-lift of this equation which is obtained
by replacing the twisted complex $\infbarnaug(A)$ in $\A$ with 
the twisted complex $\infbar(A)$ in $\nodA$ as described in 
Prps.~\ref{prps-bar-construction-as-a-complex-of-Ainfty-modules}. 

To show that $f_\bullet$ is a quasi-equivalence, we need to show that
$f_{obj}$ is quasi-essentially surjective and $f_1$ is a quasi-isomorphism 
on all $\homm$-complexes. For the former, $f_{obj}$ is actually
bijective. For the latter, under the identification 
of $\homm_{\kleisliA}(E,F)$ with $\homm_{\A}(E, FA)$, the map
\begin{equation}
\label{eqn-f_1-on-homm-spaces-from-A-to-free-A}
f_1\colon \homm_{\A}(E, FA) \rightarrow \homm_{\freeA}(EA,FA)
\end{equation}
sends any $\alpha\colon E \rightarrow FA$ to the composition 
$ EA \xrightarrow{\alpha A} FA^2 \xrightarrow{F\mu_2} FA. $
Its first composant is the functor $\free$ applied to $\alpha$ and
the second is the counit of the Free-Forgetful adjunction. Thus 
\eqref{eqn-f_1-on-homm-spaces-from-A-to-free-A} is the Free-Forgetful
adjunction map. By Theorem \ref{theorem-free-forgetful-homotopy-adjunction}
it is a quasi-isomorphism since $A$ is strongly homotopy unital. 
\end{proof}

\section{The derived category}
\label{section-the-derived-category}

Our results so far were formulated for a small monoidal DG category $\A$ 
and its Yoneda embedding into $\modA$. As 
explained in \S\ref{section-the-setting}, they work the same for 
a non-small $\A$ if we replace $\modA$ with a cocomplete, strongly
pretriangulated, closed monoidal DG category $\B$. Using $\modA$ 
instead of a general $\B$ was merely an exposition choice, as 
any particulars specific to $\modA$ and the Yoneda embedding were 
not relevant to our considerations so far.  

Now this changes. To describe the compact
objects in the derived category $D(A)$ we look at 
the objects of $\A$ which are compact in $H^0(\B)$.
When $\B = \modA$ this is the whole of $\A$, however
we also want to treat the cases when it is not. For example, 
when $\A = \modC$ for a small DG category $\C$ and $\B = \A$. 

We thus switch to working with a general 
ambient monoidal DG category $\B$ which is closed, cocomplete and
strongly pretriangulated. We also assume that 
$\A$ contains a set of compact objects which generate it in $H^0(\B)$. 
Then all compact objects of $H^0(\A)$ also form a set 
\cite[Theorem 5.3]{Keller-DerivingDGCategories}. When $\A$ is small, 
this assumption holds for $\B = \modA$. 

To consider $\Ainfty$-structures in $\B$, we set its ambient category 
to be $\B$ itself. Any $\Ainfty$-algebra $(A, \mu_i)$ in $\A$ is 
also an $\Ainfty$-algebra in $\B$. We write 
$\noddinf\text{-}A^\B$ for the category of right $\Ainfty$-$A$-modules
in $\B$. For any $C \subseteq \obj \B$ we write
$\noddinf\text{-}A^C$ for the full subcategory comprising modules whose
underlying objects lie in $C$. 

\subsection{The general case}
\label{section-the-derived-category-the-general-case}

If $A$ is a DG algebra over $k$, then the unbounded derived 
category $D(A)$ of $A$ is the category of DG $A$-modules 
localised by quasi-isomorphisms. The localisation is achieved by taking
the Verdier quotient by $\acyc$, the subcategory of acyclic
modules. Thus we set $D(A) : = H^0(\modd\text{-}A)/\acyc$. 
 
Let $A$ be a classical (non-unital) $\Ainfty$-algebra considered in 
\cite[\S1]{Lefevre-SurLesAInftyCategories}. An elegant approach to
defining the derived category is given in
\cite[\S4]{Lefevre-SurLesAInftyCategories}. First, for $A$ strictly
unital, we define $D(A)$ to be $H^0(\moddinf\text{-}A)$, where 
$\modd_\infty\text{-}A$ is the full subcategory of $\nodA$ comprising
strictly unital $A$-modules. We do not factor out acyclics, since
by the Homotopy Lemma all quasi-isomorphisms are already homotopy 
equivalences. For a general non-unital $A$, we consider its augmentation 
$A^+ := A \oplus k$ to a strictly unital algebra. 
The derived category $D(A)$ is defined to be the kernel of 
the derived functor $D(A^+) \rightarrow D(k)$ of 
extension of scalars along the augmentation projection $A^+ \rightarrow k$. 
The logic is that when $A$ is at least $H$-unital the derived functor 
of restriction of scalars is fully faithful, 
so we have a canonical semi-orthogonal decomposition 
of $D(A^+)$ into $D(A)$ and the derived category $D(k)$ of the
augmentation unit. 

The extension of scalars functor $D(A^+) \rightarrow D(k)$
sends any $(E,p_\bullet) \in \moddinf\text{-}A^+$ to its bar-construction 
as an $A$-module. Thus $D(A) \simeq H^0(\nodhuA)$, where
$\nodhuA$ is the full subcategory of $\nodA$ consisting of
$H$-unital modules. 

We expect the unbounded derived category to be triangulated and
cocomplete. In our fully general setting, $H^0(\nodA)$ and $H^0(\nodhuA)$ are
apriori neither. Indeed, both are only triangulated and cocomplete if $\A$ is, 
cf.~\cite[Cor.~5.13]{AnnoLogvinenko-UnboundedTwistedComplexes}.  
In particular, $H^0(\noddinf\text{-}A^\B)$ is triangulated and 
cocomplete. We thus define: 

\begin{defn}
\label{defn-the-derived-category-of-an-ainfty-algebra}
Let $A$ be an $\Ainfty$-algebra in a monoidal DG category $\A$.
The \em (unbounded) derived category \em $D(A)$ of $A$ is the triangulated 
cocomplete hull of $H^0(\nodhuA)$ in $H^0(\noddinf\text{-}A^\B)$. 
The \em compact derived category \rm $D_c(A)$ is the full subcategory 
of $D(A)$ comprising its compact objects. 

A module $(E, p_\bullet) \in \nodhuA$ is \em perfect \rm if it is
a compact object in $D(A)$. Let $\nodhupfA$ denote 
the full subcategory of $\nodhuA$ comprising perfect modules. 
\end{defn}

In this generality, we don't even know whether all free $A$-modules
lie in $D(A)$. 

\subsection{$H$-unital case}
\label{section-the-derived-category-the-h-unital-case}

When $A$ is $H$-unital, $D(A)$ contains all free $A$-modules and is 
their cocomplete triangulated hull:
\begin{prps}
\label{prps-derived-category-of-A-is-triangulated-cocomplete-hull-of-freeA}
Let $A$ be an $H$-unital $\Ainfty$-algebra in a monoidal DG
category $\A$. Then 
$ \left< \freeA \right>_{tr,cc} = D(A) =
H^0(\noddinfhu\text{-}A^{\left<\A\right>_{tr,cc}})$.
\end{prps}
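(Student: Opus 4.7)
The plan is to establish the chain of inclusions
\begin{equation*}
\left<\freeA\right>_{tr,cc} \subseteq D(A) \subseteq H^0(\noddinfhu\text{-}A^{\left<\A\right>_{tr,cc}}) \subseteq \left<\freeA\right>_{tr,cc},
\end{equation*}
forcing all three categories to coincide.

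For the first inclusion, I would first show that every free module $EA$ with $E \in \A$ is $H$-unital. Since the structure maps of $EA$ are $p_i = \id_E \otimes m_i$, the right-module bar-construction $\infbar(EA)$ is nothing but $E \otimes \infbarnaug(A)$, obtained by tensoring $E$ into every term and every differential of the non-augmented algebra bar-construction of $A$. The $H$-unitality of $A$ supplies a null-homotopy of $\infbarnaug(A)$ in $\pretriagmns(\A)$, and tensoring it with $\id_E$ produces one for $\infbar(EA)$. Hence $\freeA \subseteq H^0(\nodhuA) \subseteq D(A)$, and the triangulated cocompleteness of $D(A)$ yields $\left<\freeA\right>_{tr,cc} \subseteq D(A)$.

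For the second inclusion I would verify that $H^0(\noddinfhu\text{-}A^{\left<\A\right>_{tr,cc}})$ is itself a triangulated cocomplete subcategory of $H^0(\noddinf\text{-}A^\B)$. The forgetful functor to $\B$ preserves shifts, cones, and coproducts, so the class of modules with underlying object in $\left<\A\right>_{tr,cc}$ is closed under these. Similarly, $\infbar$ is a DG functor and hence commutes with shifts, cones, and coproducts; and null-homotopy is preserved under these operations, whence so is $H$-unitality. Since this subcategory contains $H^0(\nodhuA)$, it therefore contains $D(A)$.

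The key third inclusion uses the bar-resolution. Given $(F, p_\bullet) \in \noddinfhu\text{-}A^{\left<\A\right>_{tr,cc}}$, Proposition~\ref{prps-natural-transformation-rho-is-acyclic-on-H-unital-modules-Ainfty-version} provides a homotopy equivalence $\rho\colon \infbarres(F, p_\bullet) \xrightarrow{\sim} (F, p_\bullet)$, so it suffices to place $\infbarres(F, p_\bullet)$ in $\left<\freeA\right>_{tr,cc}$. Each of its terms is $FA^{i+1} = \free(F \otimes A^i)$. Since $\B$ is closed cocomplete and $\A$ is closed under the monoidal operation, the endofunctor $-\otimes A^i$ of $\B$ preserves cones and coproducts and sends $\A$ into $\A$, hence $\left<\A\right>_{tr,cc}$ into itself; so $F \otimes A^i \in \left<\A\right>_{tr,cc}$. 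The free-module functor $\free$ likewise preserves cones and coproducts, so it sends $\left<\A\right>_{tr,cc}$ into $\left<\freeA\right>_{tr,cc}$. Finally, the convolution of a one-sided bounded-above twisted complex is the homotopy colimit of its finite truncations, each an iterated cone, placing $\infbarres(F, p_\bullet)$ in $\left<\freeA\right>_{tr,cc}$. The main obstacle is precisely this last step: making rigorous the identification of the convolution of a one-sided twisted complex with a homotopy colimit built from cones and countable coproducts, which relies on the pretriangulated structure of $\noddinf\text{-}A^\B$ inherited from $\B$ via the embedding $\nodA \hookrightarrow \nodrepA$ of Proposition~\ref{prps-yoneda-embedding-gives-both-fiber-and-homotopy-fiber-square} and the analogous convolution results for $\B$ itself.
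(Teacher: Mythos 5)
Your proposal is correct and follows essentially the same route as the paper: both establish the cycle of inclusions via the identification $\infbar(EA) = E \otimes \infbarnaug(A)$ for the free modules, and both use the bar-resolution homotopy equivalence $\rho$ of Prop.~\ref{prps-natural-transformation-rho-is-acyclic-on-H-unital-modules-Ainfty-version} together with closure of the hull under convolutions of bounded-above one-sided twisted complexes for the converse. The extra detail you supply (closure properties of the middle category, convolutions as homotopy colimits of truncations) is exactly what the paper leaves implicit, relying on the fact that any cocomplete strongly pretriangulated category admits such convolutions.
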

\begin{proof}
For any $E \in \A$ we have $\infbar(EA) = E \otimes \infbarnaug(A)$.
Since $A$ is $H$-unital, $\infbarnaug(A)$ is null-homotopic, 
and hence so is $\infbar(EA)$. Thus $\free(A)$ is contained in 
$\nodhuA$ and therefore in $\noddinfhu\text{-}A^{\left<\A\right>_{tr,cc}}$. 
Taking 
triangulated cocomplete hulls we obtain the inclusions
$\left< \freeA \right>_{tr,cc} \subseteq D(A) \subseteq
H^0(\noddinfhu\text{-}A^{\left<\A\right>_{tr,cc}})$.

Conversely, any $(E,p_{\bullet}) \in \noddinfhu\text{-}A^{\left<\A\right>_{tr,cc}}$
is homotopy equivalent to its bar-resolution
$\infbarres(E,p_{\bullet})$ by
Prop.~\ref{prps-natural-transformation-rho-is-acyclic-on-H-unital-modules-Ainfty-version}. Since the object $E$ lies in $\left<\A\right>_{tr,cc}$,
each free module $EA^i$ lies in $\left< \freeA \right>_{tr,cc}$. 
Since $\infbarres(E,p_{\bullet})$ is a bounded above one-sided 
twisted complex of $EA^i$s, its convolution also lies in 
$\left< \freeA \right>_{tr,cc}$. Thus the inclusions above are
equalities. 
\end{proof}

\begin{defn}
Let $A$ be an $H$-unital $\Ainfty$-algebra in a monoidal DG
category $\A$. Let $\freepfA$ be the full subcategory of $\freeA$
comprising perfect modules. 
\end{defn}

To give similar descriptions of $D(A)$ and $D_c(A)$ in terms
of $\freepfA$ we need to know whether the perfect free $A$-modules 
generate everything. We assumed that $\A$ is generated by its compact 
objects in $H^0(\B)$, but it isn't apriori true that 
an $\Ainfty$-$A$-module $(E,p_\bullet)$ is perfect if and only if
its underlying object $E$ is perfect. 

\subsection{Strongly homotopy unital case}
\label{section-the-derived-category-the-strong-homotopy-unital-case}

When $A$ is strongly homotopy unital, the Free-Forgetful
homotopy adjunction allows for the best description of $D(A)$.  

\begin{defn}
An object $E \in \A$ is \em perfect \rm if it is a compact object 
in $H^0(\B)$. We denote by $\Aperf$ the full subcategory of $\A$
comprising the perfect objects.  
\end{defn}

By our assumptions, $\Aperf$ is a small category. 

\begin{lemma}
\label{lemma-perfect-in-nodhuaA-iff-perfect-in-A}
Let $A$ be a strongly homotopy unital $\Ainfty$-algebra in 
a monoidal DG category $\A$. For any $E \in \A$, the free module 
$EA \in \nodhuA$ is perfect if and only if $E \in \A$ is perfect.
\end{lemma}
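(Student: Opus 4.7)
The plan is to deduce the equivalence from the Free-Forgetful homotopy adjunction (Theorem~\ref{theorem-free-forgetful-homotopy-adjunction}) together with the fact that the relevant functors commute with direct sums. Strong homotopy unitality of $A$ implies $H$-unitality (Lemma~\ref{lemma-strong-homotopy-unital-implies-H-unital}), and since by hypothesis $\A$ contains a set of compact generators of $H^0(\B)$, we have $\langle\A\rangle_{tr,cc}=H^0(\B)$, so Proposition~\ref{prps-derived-category-of-A-is-triangulated-cocomplete-hull-of-freeA} identifies $D(A)$ with $H^0(\noddinfhu\text{-}A^\B)$. The proof of Theorem~\ref{theorem-free-forgetful-homotopy-adjunction} uses nothing beyond strong homotopy unitality of $A$, so it extends verbatim to a homotopy adjunction $\B\leftrightarrows\noddinfhu\text{-}A^\B$, yielding a natural isomorphism
$$\Hom_{D(A)}(EA,(F,q_\bullet))\simeq\Hom_{H^0(\B)}(E,F)$$
for all $E\in\B$ and $(F,q_\bullet)\in D(A)$. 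Two further observations will be used: the functor $-\otimes A\colon\B\to\noddinf\text{-}A^\B$ commutes with direct sums (as a left adjoint in the closed monoidal category $\B$), and the forgetful functor $\noddinf\text{-}A^\B\to\B$ commutes with direct sums (which are computed underlying-object-wise, and a coproduct of $H$-unital modules is $H$-unital since $\infbar$ commutes with coproducts).

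For the implication $E$ perfect $\Rightarrow$ $EA$ perfect, take any family $\{(F_j,q_j^\bullet)\}$ in $D(A)$ and form $\bigoplus_j (F_j,q_j^\bullet)\in D(A)$; its underlying object is $\bigoplus_j F_j\in H^0(\B)$ by the second observation. Chaining the adjunction isomorphism, compactness of $E$ in $H^0(\B)$, and the adjunction again gives
$$\Hom_{D(A)}\Bigl(EA,\bigoplus_j(F_j,q_j^\bullet)\Bigr)\simeq\Hom_{H^0(\B)}\Bigl(E,\bigoplus_j F_j\Bigr)\simeq\bigoplus_j\Hom_{D(A)}(EA,(F_j,q_j^\bullet)),$$
so $EA$ is compact, i.e., perfect in $D(A)$.

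For the converse, suppose $EA$ is perfect and consider any family $\{G_j\}$ in $H^0(\B)$. Each $G_jA$ is $H$-unital since $\infbar(G_jA)=G_j\otimes\infbarnaug(A)$ is null-homotopic, so $G_jA\in D(A)$; by the first observation $\bigl(\bigoplus_j G_j\bigr)A\simeq\bigoplus_j G_jA$ in $D(A)$. Applying the adjunction together with compactness of $EA$ in $D(A)$ yields
$$\Hom_{H^0(\B)}\Bigl(E,\bigoplus_j G_j\Bigr)\simeq\Hom_{D(A)}\Bigl(EA,\bigoplus_j G_jA\Bigr)\simeq\bigoplus_j\Hom_{H^0(\B)}(E,G_j),$$
which proves $E$ is compact in $H^0(\B)$. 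The only non-routine step is extending Theorem~\ref{theorem-free-forgetful-homotopy-adjunction} from $\A\leftrightarrows\nodhuA$ to $\B\leftrightarrows\noddinfhu\text{-}A^\B$ in such a way that the induced $\Hom$-isomorphisms descend to $D(A)$; once this is in hand, the rest of the argument is a formal manipulation of adjunctions and coproducts.
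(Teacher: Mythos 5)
Your proposal follows essentially the same route as the paper's own proof: the Free--Forgetful homotopy adjunction of Theorem \ref{theorem-free-forgetful-homotopy-adjunction}, run at the level of $\B$, descends to a genuine adjunction between $H^0(\B)$ and $H^0(\noddinfhu\text{-}A^{\B})$, and since both $\free$ and $\forget$ commute with small direct sums, compactness transfers formally in both directions -- the paper's proof is a terser version of exactly this argument. One caveat: your opening claim that $\left\langle\A\right\rangle_{tr,cc}=H^0(\B)$ misreads the standing assumption of \S\ref{section-the-derived-category}, which is that a set of compact objects generates $H^0(\A)$ in $H^0(\B)$, not that it generates $H^0(\B)$ itself (this is why Prop.~\ref{prps-derived-category-of-A-is-triangulated-cocomplete-hull-of-freeA} is stated with $\left\langle\A\right\rangle_{tr,cc}$ rather than $H^0(\B)$). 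You invoke this identification precisely where it matters, in the converse direction, to guarantee that the free modules $G_jA$ on \emph{arbitrary} $G_j\in\B$ lie in $D(A)$; without it, perfectness of $EA$ in $D(A)$ only lets you test $E$ against direct sums of objects of the form $\forget(F_j)$ with $F_j\in D(A)$. To be fair, the paper's own proof elides the same point (its assertion that ``the free functor factors through this full subcategory'' plays the identical role), and the step is harmless in the paper's model cases $\B=\modA$ and $\B=\A$, where $\A$ does generate $H^0(\B)$; so in substance your argument coincides with the paper's, but you should state the generation hypothesis you actually need rather than derive it from the paper's weaker standing assumption.
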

\begin{proof}
The Free-Forgetful
homotopy adjunction of Theorem \ref{theorem-free-forgetful-homotopy-adjunction}
gives a genuine adjunction on the level of the homotopy categories:
$$ \homm_{H^0(\noddinfhu\text{-}A^{\B})}(EA, -) \simeq 
\homm_{H^0(\B)}(E, \forget(-)). $$

By definition, $D(A)$ is a full subcategory of
$H^0(\noddinfhu\text{-}A^{\B})$ and, as $A$ is strongly homotopy unital, 
the free functor factors through this full subcategory. Since the 
forgetful functor commutes with small direct sums, it follows that 
$\homm_{D(A)}(EA,-)$ commutes with them if and only if
$\homm_{H^0(\B)}(E,-)$ does. 
\end{proof}
 
Let $S \subset \Aperf$ be any set of generators of $H^0(\A)$ in 
$H^0(\B)$. Denote by $\freeAS$ the full subcategory of $\nodA$ 
comprising the free modules on objects $S$. 
  
\begin{prps}
\label{prps-perfect-iff-lies-in-hperf-of-perfect-generator-frees}
Let $A$ be a strongly homotopy unital $\Ainfty$-algebra in 
a monoidal DG category $\A$. 
Let $S$ be a set of compact generators of $H^0(\A)$ in $H^0(\B)$. 
Then $\freeAS$ is a set of compact generators of $D(A)$ and
the following are equivalent:
\begin{enumerate}
\item 
\label{item-A-infinity-module-is-perfect}
$(E,p_\bullet) \in \nodhuA$ is perfect. 
\item 
\label{item-A-infinity-module-lies-in-hperf-of-perfect-generator-frees}
$(E,p_\bullet) \in \nodhuA$ is a homotopy direct summand of something in 
$\pretriag(\freeAS))$. 
\end{enumerate}
If these equivalent conditions hold, then furthermore $(E,p_\bullet)$
is a homotopy direct summand of a finite truncation of its bar resolution
$\infbarres(E,p_\bullet)$. 
\end{prps}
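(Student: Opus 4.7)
The plan is to deduce everything from the Free-Forgetful homotopy adjunction of Theorem \ref{theorem-free-forgetful-homotopy-adjunction}, the bar-resolution of \S\ref{section-bar-resolution}, and a standard compact generation argument in the style of Neeman.

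First I would show that $\freeAS$ is a set of compact generators of $D(A)$. Since strong homotopy unitality implies $H$-unitality by Lemma \ref{lemma-strong-homotopy-unital-implies-H-unital}, Proposition \ref{prps-derived-category-of-A-is-triangulated-cocomplete-hull-of-freeA} gives $D(A) = \langle \freeA \rangle_{tr,cc}$. Each $EA$ with $E \in S$ is compact in $D(A)$ by Lemma \ref{lemma-perfect-in-nodhuaA-iff-perfect-in-A}. To upgrade this to generation, I would argue that the free functor $\A \to \freeA$ takes direct sums, shifts, and cones to the same (all being computed in $\B$, with right-multiplication by $A$ preserving them thanks to closedness of $\B$); since $S$ compactly generates $H^0(\A)$ inside $H^0(\B)$, every $EA \in \freeA$ then lies in $\langle \freeAS \rangle_{tr,cc}$, and hence $\langle \freeAS \rangle_{tr,cc} = D(A)$.

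For the equivalence, $\eqref{item-A-infinity-module-lies-in-hperf-of-perfect-generator-frees} \Rightarrow \eqref{item-A-infinity-module-is-perfect}$ is immediate since homotopy direct summands of finite iterated extensions of compact objects remain compact. The converse $\eqref{item-A-infinity-module-is-perfect} \Rightarrow \eqref{item-A-infinity-module-lies-in-hperf-of-perfect-generator-frees}$ is the standard theorem that in a cocomplete triangulated category compactly generated by a set of compact objects $\mathcal{G}$, every compact object is a retract of an object of $\pretriag(\mathcal{G})$; applying this with $\mathcal{G} = \freeAS$ gives the claim.

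For the final assertion about finite truncations of the bar-resolution, I would use Proposition \ref{prps-natural-transformation-rho-is-acyclic-on-H-unital-modules-Ainfty-version}: since $(E,p_\bullet) \in \nodhuA$ is $H$-unital, the map $\rho\colon \infbarres(E,p_\bullet) \to (E,p_\bullet)$ is a homotopy equivalence. The bar-resolution \eqref{eqn-bar-resolution-of-Ainfty-A-module} is a one-sided twisted complex bounded above in degree $0$; its finite truncations $\infbarres(E,p_\bullet)_{\geq -n}$ lie in $\pretriag(\freeA)$, and the total complex $\infbarres(E,p_\bullet)$ is their homotopy colimit. Compactness of $(E,p_\bullet)$ then forces the map corresponding to $\id_{(E,p_\bullet)}$ under $(E,p_\bullet) \simeq \infbarres(E,p_\bullet)$ to factor through some truncation $\infbarres(E,p_\bullet)_{\geq -n}$, yielding the desired homotopy splitting.

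The main obstacle will be the generation step: verifying cleanly that the free functor $\A \to \freeA$ sends cocomplete-triangulated hulls to cocomplete-triangulated hulls requires being precise about how cones and direct sums in $\nodA$ relate to those in $\B$, which is somewhat delicate for $\Ainfty$-modules (as opposed to strict ones). A secondary subtlety is making the homotopy colimit description of $\infbarres(E,p_\bullet)$ sufficiently rigorous to invoke compactness; one may need to pass through the ambient category $\noddinf\text{-}A^{\B}$ where honest directed colimits of sub-twisted-complexes exist.
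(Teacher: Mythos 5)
Your proposal is correct and follows essentially the same route as the paper's proof: generation of $D(A)$ by $\freeAS$ via the free functor preserving direct sums and cones, the retract characterisation of compact objects (your Neeman-style statement is exactly the result the paper cites as \cite[Theorem 5.3]{Keller-DerivingDGCategories}), and for the final assertion the factorisation of $\id_{(E,p_\bullet)}$ through a finite truncation of $\infbarres(E,p_\bullet)$ by compactness, which is precisely the argument of Keller's proof that the paper invokes, with the bar-resolution playing the role of the arbitrary resolution there. The subtleties you flag (free functor vs.\ hulls, rigorising the colimit of truncations in the ambient category $\noddinf\text{-}A^{\B}$) are real but handled implicitly by the paper in the same way you suggest.
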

\begin{proof}

Since $S$ generates $\A$ in $H^0(\B)$, 
$\free(S)$ generates $\free(\A)$ in 
$H^0(\noddinf\text{-}A^{\B})$. The objects of $\freeA$ and $\freeAS$
are the same as those of $\free(\A)$ and $\free(S)$, and 
$\freeA$ generates $D(A)$ by Prop.~\ref{prps-derived-category-of-A-is-triangulated-cocomplete-hull-of-freeA}. Thus $\freeAS$ is a set of compact
generators of $D(A)$. 

$\eqref{item-A-infinity-module-is-perfect} \Rightarrow
\eqref{item-A-infinity-module-lies-in-hperf-of-perfect-generator-frees}$:
By \cite[Theorem 5.3]{Keller-DerivingDGCategories} the compact objects
of $D(A)$ lie in the Karoubi completion of the triangulated hull of 
$\freeAS$ in $D(A)$. Hence any perfect $(E,p_\bullet)$ is a homotopy
direct summand of something in $\pretriag(\freeAS)$. 

$\eqref{item-A-infinity-module-lies-in-hperf-of-perfect-generator-frees}
\Rightarrow \eqref{item-A-infinity-module-is-perfect}$:
By Lemma \ref{lemma-perfect-in-nodhuaA-iff-perfect-in-A} the objects
of $\freeAS$ are perfect in $\nodhuA$. If $(E,p_\bullet)$ is a homotopy 
direct summand of something in $\pretriag(\freeAS))$, its image in $D(A)$ 
lies in the Karoubi completion of the triangulated hull of $\freeAS$. 
It is therefore also compact, since both taking triangulated hull and 
taking Karoubi completion preserves compactness. Hence $(E,p_\bullet)$ is
perfect. 

The final assertion that $(E,p_\bullet)$
is a homotopy direct summand of a finite truncation of its bar resolution
$\infbarres(E,p_\bullet)$ follows by the same argument in the
proof of \cite[Theorem 5.3]{Keller-DerivingDGCategories}. That
proof uses arbitrary resolution of a compact object, and the bar resolution 
is an instance of one. 
\end{proof}

In \S\ref{section-kleisli-category} we defined the
Kleisli category $\kleisliA$ of $A$. It is an $\Ainfty$-category in 
the classical sense \cite{Lefevre-SurLesAInftyCategories}. Let 
$\kleisliAS$ be its full subcategory on the objects of $S$. 
\begin{theorem}
\label{theorem-compact-derived-category-of-A-is-that-of-frees-and-kleisli}
Let $A$ be a strongly homotopy unital $\Ainfty$-algebra in a monoidal 
DG category $\A$. Let $S$ be a set of compact generators of $H^0(\A)$ 
in $H^0(\B)$. Then 
$$ D_c(A) \simeq D_c(\freeAS) \simeq D_c(\kleisliAS). $$
\end{theorem}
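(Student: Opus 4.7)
The proof proceeds in two independent steps, one for each equivalence, each of which follows fairly directly from the machinery already established in the paper together with classical results on compactly generated triangulated categories.

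For the equivalence $D_c(A) \simeq D_c(\freeAS)$, the plan is to apply Keller's theorem on compactly generated triangulated categories. By Proposition~\ref{prps-perfect-iff-lies-in-hperf-of-perfect-generator-frees}, the set $\freeAS$ consists of compact objects of $D(A)$ and generates $D(A)$ as a triangulated cocomplete category. Since $\freeAS$ is a small DG category sitting inside the cocomplete triangulated category $D(A)$, we have a canonical restricted Yoneda functor $D(A) \to D(\freeAS)$, $(E,p_\bullet) \mapsto \homm_{D(A)}(-,(E,p_\bullet))\bigl|_{\freeAS}$, with left adjoint the tensor product. Standard Morita theory (cf.~\cite{Keller-DerivingDGCategories}, Theorem~4.3) gives that since $\freeAS$ is a set of compact generators, this adjoint pair is an equivalence $D(\freeAS) \simeq D(A)$, and passing to compact objects yields $D_c(\freeAS) \simeq D_c(A)$.

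For the equivalence $D_c(\freeAS) \simeq D_c(\kleisliAS)$, the plan is to restrict the $\Ainfty$-functor $f_\bullet\colon \kleisliA \to \freeA$ of Definition~\ref{defn-kleisli-to-free-Ainfty-functor} to the full $\Ainfty$-subcategories on the object set $S$. Because $f_{\mathrm{obj}}$ acts by $E \mapsto EA$, it restricts to a bijection between the object sets of $\kleisliAS$ and $\freeAS$. By Theorem~\ref{theorem-ainfty-quasi-equivalence-from-kleisli-to-free}, $f_1$ is a quasi-isomorphism on every $\homm$-complex when $A$ is strongly homotopy unital, so the restricted functor $\kleisliAS \to \freeAS$ is an $\Ainfty$-quasi-equivalence. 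It is a classical result for usual $\Ainfty$-categories \cite[\S7]{Lefevre-SurLesAInftyCategories} that an $\Ainfty$-quasi-equivalence between small $\Ainfty$-categories induces an equivalence of derived categories of $\Ainfty$-modules, hence an equivalence on compact parts: $D_c(\kleisliAS) \simeq D_c(\freeAS)$.

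Composing the two equivalences yields the claim. The main technical point to verify carefully is the first step: one must check that the conventions under which $D_c(\freeAS)$ is defined (namely, as the compact objects in the derived category of the small DG category $\freeAS$ viewed in isolation) match what is produced by the restricted Yoneda functor from $D(A)$. This reduces to the statement that the DG inclusion $\freeAS \hookrightarrow H^0(\noddinf\text{-}A^\B)$ is fully faithful on $\homm$-complexes — which is built into the construction of $\nodA$ — together with the fact that taking Karoubi completion of the pretriangulated hull of $\freeAS$ inside $D(A)$ produces all compact objects, which is exactly the content of Proposition~\ref{prps-perfect-iff-lies-in-hperf-of-perfect-generator-frees}. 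No part of the argument should require a genuinely new calculation beyond invoking the results already proved in the preceding sections.
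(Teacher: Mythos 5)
Your proposal is correct, and its second half coincides exactly with the paper's own proof: the paper likewise obtains $D_c(\freeAS) \simeq D_c(\kleisliAS)$ by restricting the $\Ainfty$-quasi-equivalence of Theorem~\ref{theorem-ainfty-quasi-equivalence-from-kleisli-to-free} to the objects of $S$. Your first half, however, takes a genuinely different route. The paper never invokes a Morita equivalence of unbounded derived categories: it works entirely at the compact level, writing $D_c(\freeAS) = H^0(\hperf(\freeAS))$ and using the universal property of $\hperf$ --- the target $\noddinf\text{-}A^{\B}$ being pretriangulated and homotopy Karoubi complete --- to extend the inclusion $\freeAS \hookrightarrow \noddinf\text{-}A^{\B}$ to a fully faithful embedding of $\hperf(\freeAS)$, whose essential image in $D(A)$ is the Karoubi-complete triangulated hull of $\freeAS$, identified with $D_c(A)$ by (the proof of) Prop.~\ref{prps-perfect-iff-lies-in-hperf-of-perfect-generator-frees}. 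You instead prove the stronger unbounded statement $D(A)\simeq D(\freeAS)$ and then restrict to compacts. This does go through, since Prop.~\ref{prps-perfect-iff-lies-in-hperf-of-perfect-generator-frees} together with Lemma~\ref{lemma-perfect-in-nodhuaA-iff-perfect-in-A} supplies precisely the hypotheses of the standard d\'evissage: a small full DG subcategory of compact generators inside a cocomplete pretriangulated DG-enhanced category.

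Two caveats are worth recording. First, your restricted Yoneda functor must be taken at the DG level, $(E,p_\bullet) \mapsto \homm_{\noddinf\text{-}A^{\B}}(-,(E,p_\bullet))|_{\freeAS}$, rather than with $\homm_{D(A)}$, which is only a graded group; and the citation should be adjusted --- the result the paper itself leans on is Theorem~5.3 of Keller's paper, and the unbounded Morita statement you need is the general ``an algebraic compactly generated triangulated category is the derived category of the DG category of its compact generators,'' whose d\'evissage you should run explicitly in this enriched setting since the paper has not set it up. Second, note that you are proving more than the authors claim to have available: immediately after the theorem they remark that $D(A)$ depends on $\B$ and that one ``can't hope to relate it to $D(\freeAS)$'' for general $\B$, proving the unbounded equivalence only for $\B = \modA$ (Prop.~\ref{prps-when-B-is-modA-we-get-the-classical-derived-category}) by a much longer argument. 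There is no actual contradiction --- $S$, and hence the DG category $\freeAS$, changes with $\B$, so your equivalence is consistent with that dependence --- but this is exactly why your first step deserves the explicit verification above rather than a bare citation, whereas the paper's compact-level argument sidesteps the issue entirely and is all the theorem requires.
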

\begin{proof}
The second equivalence $D_c(\freeAS) \simeq D_c(\kleisliAS)$ is induced 
by the $\Ainfty$-quasi-equivalence $\kleisliAS \rightarrow \freeAS$
constructed in Theorem
\ref{theorem-ainfty-quasi-equivalence-from-kleisli-to-free}. 
To construct the first equivalence, we note that 
$D_c(\freeAS) = H^0(\hperf(\freeAS))$. 
By the universal property of $\hperf$ the fully faithful embedding 
$\freeAS \hookrightarrow \noddinf\text{-}A^{\B}$
induces a fully faithful embedding 
$ \hperf(\freeAS) \hookrightarrow \noddinf\text{-}A^{\B}$
since the target category is pre-triangulated and homotopy Karoubi complete. 
On the homotopy level we have thus a fully faithful embedding
$ D_c(\freeAS) \hookrightarrow D(A) $
whose essential image is the Karoubi complete triangulated hull of 
$\freeAS$. By the proof of  
Prop.~\ref{prps-perfect-iff-lies-in-hperf-of-perfect-generator-frees}
$D_c(A)$ is the Karoubi complete triangulated hull
of $\freeAS$ in $D(A)$, so the embedding above restricts
to an equivalence $D_c(\freeAS) \simeq D_c(A)$.   
\end{proof}

\begin{cor}
\label{cor-quasi-isomorphism-of-Ainfty-algebra-implies-equiv-of-derived-cats}
Let $A$ and $B$ be a strongly homotopy unital $\Ainfty$-algebras in a monoidal 
DG category $\A$. If there exists a morphism $f_\bullet\colon A
\rightarrow B$ of $\Ainfty$-algebras with $f_1\colon A \rightarrow B$ 
a homotopy equivalence in $A$, then $D_c(A) \simeq D_c(B)$. 
\end{cor}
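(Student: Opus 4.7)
The strategy is to apply Theorem \ref{theorem-compact-derived-category-of-A-is-that-of-frees-and-kleisli} to both $A$ and $B$ and reduce the corollary to a quasi-equivalence of Kleisli $\Ainfty$-categories. Fix a set $S \subseteq \Aperf$ of compact generators of $H^0(\A)$ in $H^0(\B)$. Since both $A$ and $B$ are strongly homotopy unital, Theorem \ref{theorem-compact-derived-category-of-A-is-that-of-frees-and-kleisli} yields
\[ D_c(A) \simeq D_c(\kleisliAS) \quad \text{and} \quad D_c(B) \simeq D_c(\kleisli_S(B)). \]
It therefore suffices to produce a quasi-equivalence of $\Ainfty$-categories $\Phi\colon \kleisliA \rightarrow \kleisliB$ which is the identity on objects, restrict it to the full subcategories on $S$, and invoke the fact that quasi-equivalent $\Ainfty$-categories have equivalent compact derived categories.

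I will build $\Phi$ directly from the components $f_i\colon A^i \rightarrow B$. Set $\Phi_{\mathrm{obj}}$ to be the identity on $\obj(\A)$ and, for any $\alpha_i \in \homm_{\kleisliA}(E_i, E_{i+1}) = \homm_{\A}(E_i, E_{i+1}A)$, put
\[ \Phi_n(\alpha_1, \dots, \alpha_n) := E_{n+1} f_n \circ \alpha_n A^{n-1} \circ \dots \circ \alpha_2 A \circ \alpha_1 \;\in\; \homm_{\A}(E_1, E_{n+1}B). \]
This morphism has degree $1-n$, as required for an $\Ainfty$-functor. The verification of the $\Ainfty$-functor axioms for $\Phi$ parallels the argument of Lemma \ref{lemma-ainfty-category-kleisliA-is-well-defined} and of Theorem \ref{theorem-ainfty-quasi-equivalence-from-kleisli-to-free}: by DG bifunctoriality of the monoidal product each axiom, evaluated on a fixed tuple $(\alpha_1,\dots,\alpha_n)$, factors as the morphism $\alpha_n A^{n-1} \circ \dots \circ \alpha_1$ post-composed with $E_{n+1}$ tensored with the corresponding $\Ainfty$-morphism axiom \eqref{eqn-defining-equalities-for-new-definition-of-Ainfinity-algebra-morphism} for $f_\bullet$ (which then holds since $f_\bullet$ is an $\Ainfty$-morphism).

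To see that $\Phi$ is a quasi-equivalence, note first that $\Phi_{\mathrm{obj}}$ is a bijection so essential surjectivity on $H^0$ is automatic. Second, on any Hom complex
\[ \Phi_1 \colon \homm_{\A}(E, FA) \rightarrow \homm_{\A}(E, FB) \]
is post-composition with the map $Ff_1\colon FA \rightarrow FB$. Since $f_1$ is a homotopy equivalence in $\A$ and tensoring on the left with $F$ is a DG functor, $Ff_1$ is a homotopy equivalence in $\A$, whence $\Phi_1$ is a quasi-isomorphism (in fact a homotopy equivalence) on every Hom complex. Restricting $\Phi$ to the full subcategories on objects $S$ thus yields a quasi-equivalence $\kleisliAS \rightarrow \kleisli_S(B)$, inducing an equivalence $D_c(\kleisliAS) \simeq D_c(\kleisli_S(B))$ and, combined with the two displayed equivalences above, the desired $D_c(A) \simeq D_c(B)$.

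The main obstacle is the $\Ainfty$-functor axiom verification for $\Phi$: while conceptually the reduction to the $\Ainfty$-morphism axioms for $f_\bullet$ is forced by bifunctoriality of $\otimes$, bookkeeping the signs arising from the twisted complex conventions of \S\ref{section-Ainfty-algebras-in-a-monoidal-category} and Definition \ref{defn-kleisli-category-of-an-ainfty-algebra} requires care, exactly as in the proof of Theorem \ref{theorem-ainfty-quasi-equivalence-from-kleisli-to-free}.
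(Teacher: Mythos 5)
Your proposal is correct and follows essentially the same route as the paper's own proof: reduce via Theorem \ref{theorem-compact-derived-category-of-A-is-that-of-frees-and-kleisli} to the Kleisli categories, and show that $f_\bullet$ induces an $\Ainfty$-functor $\kleisliA \rightarrow \kleisliB$ which is the identity on objects, whose first component is post-composition with $\id \otimes f_1$, and which is therefore a quasi-equivalence. The only difference is that you make explicit the higher components $\Phi_n$ and the set $S$ of compact generators, which the paper leaves implicit.
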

\begin{proof}
Any morphism $f_\bullet\colon A
\rightarrow B$ induces an $\Ainfty$-functor
$\mathfrak{f}_\bullet: \kleisliA \rightarrow \kleisliB$ which is
identity on objects and whose 
$\mathfrak{f_1}\colon \homm_\kleisliA(E,F) \rightarrow 
\homm_\kleisliB(E,F)$ is given by 
$(\id_F \otimes f_1) \circ (-)$. 
If $f_1$ is a homotopy equivalence, then 
$\mathfrak{f}_1$ is a quasi-isomorphism, and thus $\mathfrak{f}$
is a quasi-equivalence. A quasi-equivalence of classical $\Ainfty$-categories 
induces an equivalence of their derived categories. 
\end{proof}

Theorem \ref{theorem-compact-derived-category-of-A-is-that-of-frees-and-kleisli}
shows that $D_c(A)$ is 
independent of the choice of the ambient category $\B$ where 
we take infinite direct sums. Since this is not true for
$D(A)$, we can't hope to relate it to $D(\freeAS) \simeq D(\kleisliAS)$ for a general $\B$. However, we can when $\A$ is small
and $\B = \modA$:

\begin{prps}
\label{prps-when-B-is-modA-we-get-the-classical-derived-category}
Let $A$ be a strongly homotopy unital $\Ainfty$-algebra in a small monoidal 
DG category $\A$. Let $\B = \modA$ with the induced monoidal
structure. 
$$ D(A) \simeq D(\freeA) \simeq D(\kleisliA). $$
\end{prps}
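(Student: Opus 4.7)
The plan has two steps: first identify $D(\kleisliA)$ with $D(\freeA)$ using the $\Ainfty$-quasi-equivalence of Theorem~\ref{theorem-ainfty-quasi-equivalence-from-kleisli-to-free}, then identify $D(\freeA)$ with $D(A)$ via the universal property of the derived category of a small DG category.

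For the first identification, Theorem~\ref{theorem-ainfty-quasi-equivalence-from-kleisli-to-free} supplies, under the strong homotopy unitality assumption, an $\Ainfty$-quasi-equivalence $f_\bullet\colon \kleisliA \to \freeA$. A quasi-equivalence of (classical) $\Ainfty$-categories induces an equivalence between their unbounded derived categories of $\Ainfty$-modules by the standard Morita-type argument in \cite{Lefevre-SurLesAInftyCategories}, giving $D(\kleisliA) \simeq D(\freeA)$.

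For the second identification, I would exploit the fact that $\A$ is small: the Yoneda embedding identifies each object of $\A$ with a compact object of $H^0(\modA)$. The Free-Forgetful homotopy adjunction of Theorem~\ref{theorem-free-forgetful-homotopy-adjunction}, applied exactly as in the proof of Lemma~\ref{lemma-perfect-in-nodhuaA-iff-perfect-in-A}, then shows that every free module $EA$ is compact in $H^0(\noddinf\text{-}A^{\modA})$. Since $\modA$ is cocomplete and strongly pretriangulated, so is $\noddinf\text{-}A^{\modA}$. Hence the fully faithful DG inclusion $\freeA \hookrightarrow \noddinf\text{-}A^{\modA}$ extends, by Kan extension along the Yoneda embedding $\freeA \hookrightarrow \modd\text{-}\freeA$, to a cocontinuous DG quasi-functor that descends to a cocontinuous exact functor $\Phi\colon D(\freeA) \to H^0(\noddinf\text{-}A^{\modA})$. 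Since $\Phi$ preserves small coproducts and is fully faithful on the set $\freeA$ of compact generators of $D(\freeA)$, whose images are compact in the target, $\Phi$ is fully faithful on all of $D(\freeA)$. Its essential image is the cocomplete triangulated hull of $\freeA$ in $H^0(\noddinf\text{-}A^{\modA})$, which by Prop.~\ref{prps-derived-category-of-A-is-triangulated-cocomplete-hull-of-freeA}, combined with Lemma~\ref{lemma-strong-homotopy-unital-implies-H-unital} (strong homotopy unitality implies $H$-unitality), is precisely $D(A)$.

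The principal technical point is verifying the fully faithfulness of $\Phi$. This is a standard consequence of the Keller--Toen formalism for compactly generated enhanced triangulated categories: on the generators $\freeA$ full faithfulness is built in from the inclusion $\freeA \hookrightarrow \noddinf\text{-}A^{\modA}$, and one propagates it to arbitrary objects using that $\Phi$ commutes with small coproducts, that the generators are compact on both sides, and that both $D(\freeA)$ and the essential image of $\Phi$ are cocomplete triangulated categories generated by these objects.
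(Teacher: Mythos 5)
Your proposal is correct, and its first half — using Theorem \ref{theorem-ainfty-quasi-equivalence-from-kleisli-to-free} together with the fact that a quasi-equivalence of classical $\Ainfty$-categories induces an equivalence of derived categories to get $D(\freeA) \simeq D(\kleisliA)$ — is exactly the paper's first step. Where you genuinely diverge is the identification $D(\freeA) \simeq D(A)$. The paper builds a functor in the opposite direction: it identifies $\noddinf\text{-}A^{\modA}$ with the category of $\Ainfty$-modules in $\modA$ over the bimodule $\Ainfty$-algebra $\homm_\A(-,-A)$ in $\AmodA$, uses lax monoidality of the forgetful functors to $k_\A$-bimodules to produce an explicit coproduct-preserving functor $\forget_\bullet\colon \noddinf\text{-}A^{\modA} \rightarrow \noddinf\text{-}\kleisliA$, and checks quasi-fully-faithfulness on $\freeA$ by recognising the restriction as the $\Ainfty$-Yoneda embedding of $\kleisliA$, quasi-fully faithful because $A$ (hence $\kleisliA$) is strongly homotopy unital; matching cocomplete triangulated hulls then yields $D(A) \simeq D(\kleisliA)$ directly. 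You instead extend the inclusion $\freeA \hookrightarrow \noddinf\text{-}A^{\modA}$ by a derived Kan extension to a cocontinuous $\Phi\colon D(\freeA) \rightarrow H^0(\noddinf\text{-}A^{\modA})$ and run the standard compact-generation d\'evissage, using Lemma \ref{lemma-perfect-in-nodhuaA-iff-perfect-in-A} for compactness of free modules and Prop.~\ref{prps-derived-category-of-A-is-triangulated-cocomplete-hull-of-freeA} (via Lemma \ref{lemma-strong-homotopy-unital-implies-H-unital}) to identify the essential image with $D(A)$. Your route is more formal and more portable — it works verbatim whenever free modules compactly generate, and needs no explicit model of $\kleisliA$-modules — at the cost of invoking the Keller--To\"en extension machinery; the paper's route avoids that machinery, produces a concrete comparison functor, and along the way exhibits $\noddinf\text{-}A^{\modA}$ as modules over a bimodule algebra, which has independent interest.

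One small imprecision to fix: the Free-Forgetful homotopy adjunction (Theorem \ref{theorem-free-forgetful-homotopy-adjunction}) holds only against homotopy unital modules, so Lemma \ref{lemma-perfect-in-nodhuaA-iff-perfect-in-A} gives compactness of $EA$ in $D(A)$ — equivalently in $H^0(\noddinfhu\text{-}A^{\modA})$ — not in all of $H^0(\noddinf\text{-}A^{\modA})$ as you state. This is harmless for your d\'evissage, since every coproduct you test $\homm(EA,-)$ against lies in the cocomplete hull of $\freeA$, which is $D(A)$; but the quantifier belongs there.
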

\begin{proof}
As before, by Theorem
\ref{theorem-ainfty-quasi-equivalence-from-kleisli-to-free} 
we have $D(\freeA) \simeq D(\kleisliA)$. 
Thus it suffices to demonstrate that $D(A) \simeq D(\kleisliA)$.
We first construct a functor
$$ f_\bullet\colon \noddinf\text{-}A^{\modA} \rightarrow
\noddinf\text{-}\kleisliA. $$
The monoidal structure induced on $\modA$ from $\A$, cf. 
\cite[\S4.5]{GyengeKoppensteinerLogvinenko-TheHeisenbergCategoryOfACategory}, 
has the property that for any $E \in \modA$ and $a \in \A$ the product
$E \homm_\A(-,a)$ is the $\A$-module 
$E \otimes_\A \homm_\A(-,-a),$
where $ \homm_\A(-,-a)$ is viewed as $\A$-$\A$-bimodule. Thus 
any $(E,p_\bullet) \in \noddinf\text{-}A^{\modA}$ is the data of
$\A$-module $E$ and structure morphisms
$$ p_i\colon  E \otimes_\A \homm_\A(-,-A) \otimes_\A \dots \otimes_\A \homm_\A(-,-A) \rightarrow E. $$
Furthermore, taking product in $\modA$ with 
the operations $m_i\colon A^i \rightarrow A$ corresponds to tensoring
over $\A$ with the operations defined 
in \eqref{eqn-defn-of-ainfty-structure-on-kleisli-category}
$$ m_i\colon \homm_\A(-,-A) \otimes_\A \dots \otimes_\A \homm_\A(-,-A)
\rightarrow \homm_\A(-,-A). $$
These are the $\Ainfty$-operations of $\kleisliA$ under 
$\homm_\A(-,-A) \simeq \homm_{\kleisliA}(-,-)$.
In other words, $\noddinf\text{-}A^{\modA}$ is isomorphic to 
the category of $\Ainfty$-modules in $\modA$ over bimodule $\Ainfty$-algebra
$\homm_\A(-,-A)$ in $\AmodA$. 

We now observe that the forgetful functors 
$\forget\colon \AmodA \rightarrow k_\A\text{-}\modd\text{-}k_\A$
and
$\forget\colon \modA \rightarrow \modd\text{-}k_\A$
are lax monoidal with respect to $\otimes_\A$ and $\otimes_k$ and 
commute with infinite direct sums.  By definition, 
$\noddinf\text{-}\kleisliA$ is the category of $\Ainfty$-modules in 
$\modd\text{-}k_\A$ over the $\Ainfty$-algebra $\kleisliA$ in 
$k_\A\text{-}\modd\text{-}k_\A$. This algebra is the image under $\forget$ 
of the bimodule $\Ainfty$-algebra $\homm_\A(-,-A)$ in $\AmodA$. 
Hence the forgetful functors induce a functor
$$  \forget_\bullet\colon \noddinf\text{-}A^{\modA} \rightarrow
\noddinf\text{-}\kleisliA,$$ 
which commutes with infinite direct sums. 

We now claim that $\forget_\bullet$ is quasi-fully faithful on $\freeA$.
Indeed, the composition of $\forget_\bullet$ with the functor 
$\kleisliA \rightarrow \freeA$ constructed in Theorem
\ref{theorem-ainfty-quasi-equivalence-from-kleisli-to-free} is 
the Yoneda embedding 
$\kleisliA \hookrightarrow \noddinf\text{-}\kleisliA$
 of a (classical) $\Ainfty$-category 
is as in \cite[\S7]{Lefevre-SurLesAInftyCategories}. 
The functor constructed in Theorem \ref{theorem-ainfty-quasi-equivalence-from-kleisli-to-free} is a
quasi-equivalence since $A$ is strongly homotopy unital. So to show
that $\forget_\bullet$ is quasi-faithful on $\freeA$ it suffices to show
that the Yoneda embedding is quasi-fully faithful. This holds because
translated into our framework, $\Ainfty$-category $\kleisliA$ is the 
Kleisli category of the $\Ainfty$-algebra $\kleisliA$ in
$k_\A\text{-}\modd\text{-}k_\A$. The Yoneda embedding is then
the composition of the $\Ainfty$-functor of Theorem
\ref{theorem-ainfty-quasi-equivalence-from-kleisli-to-free} 
from Kleisli category to the free $\Ainfty$-modules with the inclusion of
these into all $\Ainfty$-modules. It is therefore quasi-fully faithful 
since $A$, and hence $\kleisliA$, are strongly homotopy unital. 

Thus $\forget_\bullet$ gives an exact functor
$H^0(\noddinf\text{-}A^{\modA}) \rightarrow
H^0(\noddinf\text{-}\kleisliA)$ which is fully faithful on $\freeA$,
sends $\freeA$ to $\kleisliA$, and commutes with infinite direct sums. 
It gives therefore an equivalence of cocomplete triangulated hulls of 
$\freeA$ and $\kleisliA$ which are $D(A)$ and $D(\kleisliA)$, respectively. 
\end{proof}

Another common case worth considering separately is when $\B = \A$:
\begin{prps}
\label{prps-when-B-is-A-we-get-the-???}
Let $A$ be a strongly homotopy unital $\Ainfty$-algebra in a monoidal 
DG category $\A$. Suppose that $\A$ is cocomplete, strongly
pre-triangulated, its monoidal structure is closed, 
and $H^0(\A)$ is compactly generated. In other words, it
satisfies our assumptions on $\B$, and we set $\B = \A$.
Then 
\begin{align*}
D(A) =  H^0(\nodhuA)
\quad \quad \text{ and } \quad \quad
D_c(A) \subseteq 
H^0(\noddinfhu\text{-}A^{\A^{pf}})
\end{align*}
\end{prps}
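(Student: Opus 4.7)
Since $\B = \A$ by hypothesis, $\noddinf\text{-}A^\B = \nodA$, and by Definition \ref{defn-the-derived-category-of-an-ainfty-algebra} the category $D(A)$ is the triangulated cocomplete hull of $H^0(\nodhuA)$ inside $H^0(\nodA)$. For the first equality, the plan is to verify that $H^0(\nodhuA)$ is already triangulated and cocomplete, so that it coincides with its own hull. The ambient $H^0(\nodA)$ is triangulated and cocomplete: $\nodA$ inherits strong pretriangulation from $\A$ (via convolutions of one-sided twisted complexes over $\A$, which lift to $\nodA$ by the companion paper), and cocompleteness holds because closedness of the monoidal structure makes $-\otimes A^i$ preserve arbitrary coproducts, so direct sums of $\Ainfty$-$A$-modules are computed term-wise on the underlying objects.

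Closure of $H^0(\nodhuA)$ under shifts, cones, and coproducts is then checked by reducing each property to the corresponding operation on the bar-construction functor $\infbar\colon \nodA \rightarrow \pretriagmns(\nodA)$. Shifts are preserved trivially. For cones, additivity of $\infbar$ gives $\infbar(\cone f_\bullet) = \cone(\infbar f_\bullet)$, and a cone of null-homotopic twisted complexes in $\pretriagmns\A$ is null-homotopic. For coproducts, $\infbar$ commutes with them thanks to closedness of $\otimes$, and a term-wise assembly of the per-summand contracting homotopies yields a contracting homotopy of the coproduct bar-construction. This finishes $D(A) = H^0(\nodhuA)$.

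For the inclusion $D_c(A) \subseteq H^0(\noddinfhu\text{-}A^{\A^{pf}})$, the plan is to apply Proposition \ref{prps-perfect-iff-lies-in-hperf-of-perfect-generator-frees} with $S = \A^{pf}$; this is legitimate because $\A^{pf}$ is a set of compact generators of $H^0(\A) = H^0(\B)$ by assumption. Given a perfect module $(E, p_\bullet)$, the final assertion of that proposition realizes it as a homotopy direct summand in $\nodhuA$ of a finite truncation of its bar resolution $\infbarres(E, p_\bullet)_{\geq -n}$, whose terms are free modules on objects of $\A^{pf}$. Applying the forgetful functor and invoking the Homotopy Lemma (Lemma \ref{lemma-the-homotopy-lemma-for-nodA}) transports this summand relation from $H^0(\nodhuA)$ down to $H^0(\A)$, exhibiting $E$ as a direct summand of a finite iterated extension built from $\{EA^i\}_{i=1}^{n+1}$.

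The main obstacle is to promote this into the conclusion that one may choose a representative of $(E,p_\bullet)$ with underlying object in $\A^{pf}$. A naive thick-subcategory argument is not immediately available, since $s \otimes A$ need not be perfect for $s \in \A^{pf}$ when $A$ itself is not compact. The plan to handle this is to route through the equivalence $D_c(A) \simeq D_c(\freeAS)$ of Theorem \ref{theorem-compact-derived-category-of-A-is-that-of-frees-and-kleisli}, which presents every perfect $A$-module as a compact object in the DG-module category over $\freeAS$. From there, Karoubi closure of $\A^{pf}$ in the compactly generated $H^0(\A)$ together with the transfer (minimal model) theorem for $\Ainfty$-modules used in the proof of Proposition \ref{prps-yoneda-embedding-gives-both-fiber-and-homotopy-fiber-square} lets one replace $(E,p_\bullet)$ by a homotopy equivalent module whose underlying object is the chosen perfect representative in $\A^{pf}$.
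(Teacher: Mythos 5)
Your treatment of the first equality is correct but takes a different route from the paper. The paper obtains $D(A) = H^0(\nodhuA)$ in one line from the second equality of Prop.~\ref{prps-derived-category-of-A-is-triangulated-cocomplete-hull-of-freeA} (whose $H$-unitality hypothesis is supplied by Lemma~\ref{lemma-strong-homotopy-unital-implies-H-unital}), using that $\left<\A\right>_{tr,cc} = \A$ under the stated assumptions on $\A$. You instead verify directly that $H^0(\nodhuA)$ is closed under shifts, cones and coproducts inside $H^0(\noddinf\text{-}A^{\A})$, so that it equals its own triangulated cocomplete hull. That closure argument is sound: acyclicity of the bar-construction is stable under cones, and closedness of $\otimes$ makes $\infbar$ commute with coproducts, with the contracting homotopies assembling term-wise. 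It is more laborious than the paper's citation but does not even use $H$-unitality of $A$.

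The second half has a genuine gap, in two places. First, a factual slip: the terms of the truncated bar-resolution $\infbarres(E,p_\bullet)$ are the free modules $EA^i$ generated by $EA^{i-1}$ --- these are not free modules on objects of $\A^{pf}$; what Prop.~\ref{prps-perfect-iff-lies-in-hperf-of-perfect-generator-frees} with $S = \A^{pf}$ actually gives, and what the paper uses, is that a perfect module is a homotopy direct summand of an object of $\pretriag(\free\text{-}(\A^{pf}))$. Second, and decisively, your concluding step does not close the argument. The Morita equivalence $D_c(A) \simeq D_c(\freeAS)$ of Theorem~\ref{theorem-compact-derived-category-of-A-is-that-of-frees-and-kleisli} forgets the underlying object of a module entirely, so it cannot constrain that object; and the transfer theorem invoked in Prop.~\ref{prps-yoneda-embedding-gives-both-fiber-and-homotopy-fiber-square} takes as \emph{input} a homotopy equivalence in $\A$ between $E$ and some perfect object, which by the Homotopy Lemma (Lemma~\ref{lemma-the-homotopy-lemma-for-nodA}) is equivalent to the very conclusion you are after --- the step is circular. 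Note where this diverges from the paper: the paper's proof finishes directly from the summand presentation, asserting that the underlying object of a perfect module is a homotopy direct summand of a bounded twisted complex of perfect objects and hence perfect itself, i.e.\ it treats the underlying objects of the modules in $\free\text{-}(\A^{pf})$ as perfect. The obstruction you correctly flag --- that $s \otimes A$ need not be perfect for $s \in \A^{pf}$ when $A$ is not itself a perfect object (e.g.\ $\A = \modk$ and $A$ infinite-dimensional, where the free module $kA$ is perfect by Lemma~\ref{lemma-perfect-in-nodhuaA-iff-perfect-in-A} while its underlying object is not compact) --- is exactly the point that sentence of the paper passes over without argument. So your proposal identifies the crux of the second inclusion but does not resolve it, and as written the inclusion $D_c(A) \subseteq H^0(\noddinfhu\text{-}A^{\A^{pf}})$ remains unproven; to make either your route or the paper's final sentence work one needs an additional input forcing perfection of $s \otimes A^i$ for $s \in \A^{pf}$.
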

\begin{remark}
\label{remark-not-all-perfect-object-modules-are-perfect}
The inclusion in the second assertion can be strict. Set $k = \mathbb{C}$, 
$\A = \modk$, $A = k[x^2, xy, y^2]$, and consider the strict $A$-module 
$A/(x^2, xy, y^2)$. 
Its underlying object of $\modk$ is $k$ concentrated in degree $0$,
thus it lies in $H^0(\noddinfhu\text{-}A^{\A^{pf}})$. On the other hand, as explained
in \S\ref{section-examples-associative-algebras}, the derived category 
of $A$ as an $\Ainfty$-algebra in $\modk$ coincides with the usual
derived category of $A$ as an associative algebra. Thus 
$A/(x^2, xy, y^2)$ is not compact object in $D(A)$. 
\end{remark}

\begin{proof}
The first assertion follows from the second equality in
Prop.~\ref{prps-derived-category-of-A-is-triangulated-cocomplete-hull-of-freeA}.

For the compact derived category, since $A$ is strong homotopy unital
the modules in $\free\text{-}(\A^{pf})$ are perfect. 
By Prop.~\ref{prps-perfect-iff-lies-in-hperf-of-perfect-generator-frees} every 
perfect module in $\nodhuA$ is a homotopy direct summand of a bounded twisted
complex of $\free\text{-}(\A^{pf})$. Therefore its underlying object
of $\A$ is a homotopy direct summand of a bounded twisted complex of
perfect objects and thus perfect itself. 
\end{proof}

\subsection{Strict algebra case}
\label{section-the-derived-category-strict-algebra-case}

In this section we examine the case when $A$ is a strict algebra. 
We can then consider the category of strict modules over $A$:

\begin{defn}
\label{defn-strict-algebra-strict-module-categories}
Let $(A,\mu)$ be a strict algebra in a monoidal DG category $\A$. 
Define $\nodstrA$ to be the subcategory of $\nodA$ comprising 
strict $A$-modules with strict morphisms between them whose
differentials are also strict. Define $\nodstrhuA$ to be 
the full subcategory of $\nodstrA$ comprising the $H$-unital modules. 
If $A$ is strictly unital, denote by $\modd\text{-}A$ the full
subcategory of $\nodstrhuA$ comprising strictly unital modules. 
\end{defn}

By definition, the functor $\free\colon \A \rightarrow \nodhuA$
factors through $\nodstrhuA$. For any $(E,p) \in \nodstrhuA$ 
the adjunction counit $\free \circ \forget \rightarrow \id_{\nodhuA}$ is 
the strict morphism  $EA \rightarrow (E,p)$ given by $p$. We now
verify that the Free-Forgetful homotopy adjunction of 
\S\ref{section-free-forgetful-homotopy-adjunction} restricts 
from $\nodhuA$ to its non-full subcategory $\nodstrhuA$:

\begin{lemma}
\label{lemma-free-forgetful-homotopy-adjunction-restricts-to-strict-modules}
Let $A$ be a strict, strongly homotopy unital algebra 
in a monoidal DG category $\A$. The functors 
$\free, \forget \colon \A \leftrightarrows \nodstrhuA$
are homotopy adjoint with the unit is given by $E \xrightarrow{E\eta} EA$ 
for any $E \in \A$ and the counit by $EA \xrightarrow{p} (E,p)$. 
\end{lemma}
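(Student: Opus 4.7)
The plan is to show that the homotopy adjunction of Theorem~\ref{theorem-free-forgetful-homotopy-adjunction} restricts along the non-full inclusion $\nodstrhuA \hookrightarrow \nodhuA$, with no new content beyond observing that the relevant data become strict in the strict setting.

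First I would verify that the functors and natural transformations restrict as claimed. The forgetful functor restricts tautologically. For $\free$: since $A$ is strict, each free module $EA$ has $p_i = E\otimes m_i = 0$ for $i \geq 3$, so $EA \in \nodstrA$. Moreover $A$ is $H$-unital by Lemma~\ref{lemma-strong-homotopy-unital-implies-H-unital}, and $\infbar(EA) = E\otimes \infbarnaug(A)$, so $EA \in \nodstrhuA$. The unit $E\eta\colon E \to EA$ lives in $\A$, as before. The counit $\pi_2\colon EA \to (E,p)$ from Defn.~\ref{defn-free-forgetful-adjunction-unit-counit} has components $(\pi_2)_j = (-1)^{j-1} p_{j+1}$, which vanish for $j \geq 2$ whenever $(E,p)$ is strict; thus $\pi_2$ reduces to the strict morphism $p\colon EA \to (E,p)$. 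Its naturality in $\nodstrhuA$ is now strict (not merely up to homotopy): for any closed strict $f_1\colon (E,p)\to(F,q)$ the condition $f_1\circ p = q\circ (f_1\otimes \id_A)$ is precisely the strict module morphism condition, so the homotopy $f_{\bullet+1}$ used in the proof of Theorem~\ref{theorem-free-forgetful-homotopy-adjunction} vanishes identically.

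Next, the $\forget$-triangle identity. At $(E,p)\in\nodstrhuA$ the composition $\forget \to \forget\free\forget \to \forget$ is $E \xrightarrow{E\eta} EA \xrightarrow{p} E$, which must be homotopic to $\id_E$ in $\A$. This is precisely homotopy unitality of $(E,p)$, which holds by Theorem~\ref{theorem-tfae-unitality-conditions-for-A-modules} since $(E,p)$ is $H$-unital and $A$ is strongly homotopy unital.

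The $\free$-triangle is the main obstacle. At $E \in \A$ the composition is the strict morphism $E(m_2 \circ \eta A)\colon EA \to EA$, and we must supply a \emph{strict} degree $-1$ right-module homotopy to $\id_{EA}$. Strong homotopy unitality gives an $\Ainfty$-morphism $h^r_\bullet$ in $\nodA$ with $dh^r_\bullet = \id_A - \mu_2\circ\eta A$; its first component satisfies $d_\A h^r_1 = \id_A - m_2\circ (\eta\otimes\id_A)$ in $\A$, but $h^r_1$ need not commute strictly with right multiplication. I would resolve this by reducing the strict triangle identity to the existence of a \emph{strict} right-module homotopy $\tilde h\colon A \to A$, exploiting two facts: (i) $m_2\circ(\eta\otimes \id_A)$ is already a strict right $A$-module endomorphism of $A$ by associativity of $m_2$, and (ii) strong homotopy unitality for a strict $A$ can be upgraded to a strict $h^r_\bullet$, because a candidate $\tilde h = h^r_1$ satisfies the strict-module compatibility $\tilde h\circ m_2 - m_2\circ(\tilde h\otimes\id_A) = \pm d_\A h^r_2$, i.e.~the failure to commute with the action is itself a boundary controlled by $h^r_2$; absorbing this correction into a redefinition of $h^r_1$ produces the required strict right-module homotopy, whose image $E\otimes \tilde h$ in $\nodstrhuA$ witnesses the $\free$-triangle identity.
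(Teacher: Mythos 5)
Your bookkeeping (free modules over a strict $A$ are strict and $H$-unital, the counit $\pi_2$ reduces to the strict morphism $p$, and its naturality becomes strict on strict closed morphisms) and your treatment of the $\forget$-triangle agree with the paper: the paper likewise disposes of that triangle by noting that $p \circ E\eta \simeq \id_E$ is precisely homotopy unitality of $(E,p)$, which holds by Theorem~\ref{theorem-tfae-unitality-conditions-for-A-modules} because $(E,p)$ is $H$-unital and $A$ is strongly homotopy unital.

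The gap is in your $\free$-triangle. Your pivotal step --- ``absorbing this correction into a redefinition of $h^r_1$ produces the required strict right-module homotopy'' --- is asserted, not proved, and it does not follow from the identity you quote. Writing $D(c) := c \circ m_2 - m_2 \circ (c \otimes \id_A)$ for the defect operator, the second component of the strong homotopy unitality equation for strict $A$ indeed gives $D(h^r_1) = \pm\, d_\A h^r_2$, i.e.\ the defect is a \emph{boundary} in the complex $\homm_\A(A^2,A)$; but to strictify you would need a degree $-1$ map $c\colon A \to A$ with $D(c) = \mp\, d_\A h^r_2$, i.e.\ you need the defect to lie in the \emph{image of $D$}, and nothing in the hypotheses produces such a $c$. (If such strictification were automatic, strong homotopy unitality of a strict algebra would collapse to the existence of strict unit homotopies, making the $\Ainfty$-systems $h^r_\bullet$, $h^l_\bullet$ of Defn.~\ref{defn-strong-homotopy-unitality-for-ainfty-algebras} redundant in the strict case --- the paper claims no such thing.) In fact the paper's own proof never produces a strict homotopy: it verifies the $\free$-triangle exactly as in Theorem~\ref{theorem-free-forgetful-homotopy-adjunction}, observing that the composition at $E$ is the strict morphism $E(\mu \circ \eta A)$ and that $\mu \circ \eta A \simeq \id_A$ by (weak) homotopy unitality of $A$, the witnessing homotopy being $Eh^r$ (equivalently $Eh^r_\bullet$ after passing to $\nodA$), with no claim that it lies in the non-full subcategory $\nodstrhuA$. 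So you imposed a stronger requirement than the paper's reading of ``homotopy adjoint'' and then could not meet it: either drop the strictness demand and argue as the paper does, or supply an actual construction of the correction $c$ --- which your proposal does not contain and which is not available from strong homotopy unitality alone.
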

\begin{proof}
Let $E \in \A$. The composition 
$\free \xrightarrow{\free(\unit)} \free\forget\free
\xrightarrow{\counit} \free$
evaluated at $E$ is a $\nodstrA$ morphism
$E\mu \circ E \eta A \colon EA \rightarrow EA$. 
As $A$ is strongly homotopy unital, it is homotopy unital
and so $\mu \circ {\eta}A$ is homotopic to $\id_A$. Hence 
the composition above is homotopic to $\id_{EA}$. 

Let $(E,p) \in \nodstrhuA$. Since $A$ is strongly homotopy unital, 
$(E,p)$ is homotopy unital. The composition 
$\forget \xrightarrow{\unit} \forget\free\forget
\xrightarrow{\forget(\counit)} \forget$
evaluated at $(E,p)$ is an $\A$ morphism 
$p \circ E\eta \colon  E \rightarrow E$. 
It being homotopic to $\id_E$ is the definition of 
homotopy unitality of $(E, p)$. 
\end{proof}

Given any subcategory $\C$ of $\noddinfhu\text{-}A^{\B}$, 
we denote by $\acyc_\C$ the full subcategory of $\C$ comprising
acyclic objects, i.e. those modules whose underlying object is
null-homotopic. Where no confusion is possible, we write
$\acyc$ for $\acyc_\C$. 

\begin{defn}
Let $A$ be a strict algebra in a monoidal DG category $\A$.
A module $E \in \nodstrA$ is \em h-projective \rm 
if $\homm_{\nodstrA}^\bullet(E,F)$ is acyclic for any acyclic $F$.  
\end{defn}
\begin{lemma}
Let $A$ be a strict, strongly homotopy unital algebra in a monoidal 
DG category $\A$. Any free $A$-module is $h$-projective. 
\end{lemma}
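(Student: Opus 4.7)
The plan is to use the Free--Forgetful homotopy adjunction of Lemma~\ref{lemma-free-forgetful-homotopy-adjunction-restricts-to-strict-modules} to reduce acyclicity of $\homm^\bullet_{\nodstrA}(EA,F)$ to that of a plain hom-complex in $\A$. First I would verify that any acyclic $F\in\nodstrA$ automatically lies in $\nodstrhuA$: acyclicity means $\forget F$ is null-homotopic in $\A$, so by the Homotopy Lemma (Lemma~\ref{lemma-the-homotopy-lemma-for-nodA}) the module $F$ is null-homotopic in $\nodA$, and functoriality of the DG-functor $\infbar\colon\nodA\to\pretriagmns(\A)$ then forces $\infbar F$ to be null-homotopic in $\pretriagmns(\A)$, which is exactly $H$-unitality of $F$. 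Theorem~\ref{theorem-tfae-unitality-conditions-for-A-modules} then produces a homotopy $h_F\in\homm_\A^{-1}(F,F)$ with $q\circ F\eta=\id_F+d_\A h_F$.

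Next I would introduce the two natural chain maps
\[
\phi\colon\homm^\bullet_\A(E,\forget F)\longrightarrow\homm^\bullet_{\nodstrA}(EA,F),\qquad f\mapsto q\circ(f\otimes\id_A),
\]
\[
\psi\colon\homm^\bullet_{\nodstrA}(EA,F)\longrightarrow\homm^\bullet_\A(E,\forget F),\qquad g\mapsto g\circ(E\otimes\eta),
\]
coming from the unit and counit of the Free--Forgetful adjunction. A direct computation, using only $A$-linearity of $g$ and bifunctoriality of $\otimes$, gives $\psi\phi(f)=(q\circ F\eta)\circ f$ and $\phi\psi(g)=g\circ(E\otimes(\mu\circ\eta A))$. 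The first composition is chain-homotopic to $\id$ via left composition with $h_F$ from paragraph~1, and the second is chain-homotopic to $\id$ via an operator on $\homm^\bullet_{\nodstrA}(EA,F)$ built from the strong homotopy unitality datum $h^r_\bullet$ of $A$. Since $\forget F$ is null-homotopic in $\A$, the source of $\phi$ is null-homotopic and in particular acyclic, so by the quasi-isomorphism $\phi$ the hom-complex $\homm^\bullet_{\nodstrA}(EA,F)$ is acyclic, i.e.\ $EA$ is $h$-projective.

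The main difficulty will lie in producing the chain homotopy witnessing $\phi\psi\simeq\id$ inside $\homm^\bullet_{\nodstrA}(EA,F)$. The naive candidate $g\mapsto g\circ(E\otimes h^r_1)$ does not in general land in $\homm^\bullet_{\nodstrA}$, because the failure of $h^r_1\colon A\to A$ to be strictly right $A$-linear is only exact up to the second component $h^r_2$ of the $\Ainfty$-morphism $h^r_\bullet\in\homm^{-1}_{\nodA}(A,A)$. The required $A$-linear homotopy must therefore be assembled iteratively from the full tower $\{h^r_i\}_{i\geq 1}$, each successive term correcting the $A$-linearity defect of the previous partial sum via the closedness relations satisfied by $h^r_\bullet$ --- precisely the kind of inductive construction that strong homotopy unitality of $A$ is designed to enable.
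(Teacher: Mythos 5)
Your overall strategy is the paper's own: the paper's entire proof consists of citing the Free--Forgetful adjunction of Lemma~\ref{lemma-free-forgetful-homotopy-adjunction-restricts-to-strict-modules} to get a quasi-isomorphism $\homm^\bullet_{\nodstrA}(EA,(F,q)) \rightarrow \homm^\bullet_{\A}(E,F)$, whose target is null-homotopic because $F$ is. Your first two paragraphs are sound, and the first is a genuine improvement in explicitness over the paper: since that adjunction is stated for $\nodstrhuA$, one must check that an acyclic $(F,q)$ is homotopy unital, which you correctly deduce via the Homotopy Lemma, DG-functoriality of $\infbar$, and Theorem~\ref{theorem-tfae-unitality-conditions-for-A-modules}. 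Your formulas $\psi\phi(f)=(q\circ F\eta)\circ f$ and $\phi\psi(g)=g\circ E(\mu\circ\eta A)$ are also correct, and you have rightly identified that the crux is a chain homotopy for $\phi\psi\simeq\id$ living \emph{inside} the strict morphism complex.

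The gap is in your third paragraph: the iterative repair you sketch cannot be carried out, for degree reasons. A degree $j$ morphism in $\nodA$ has $i$-th component of degree $j-i+1$, so the components $h^r_i\colon A^i\rightarrow A$ of the strong homotopy unitality datum have degree $-i$, while every other map at your disposal --- $\eta$, $\mu$, $q$, and the strict morphism $g$ itself --- contributes degree $0$. A homotopy operator on $\homm^\bullet_{\nodstrA}(EA,F)$ must output elements of degree $|g|-1$, so any composite it produces can contain exactly one occurrence of $h^r_1$ (or $h^l_1$) and no occurrence of $h^r_i$ with $i\geq 2$: the higher tower is barred outright. Concretely, the $A$-linearity defect of the naive term $g\circ Eh^r_1$ is, up to sign, $g\circ E(d_\A h^r_2)$ --- this is the content of the vanishing of the second component of $dh^r_\bullet=\mu_2\circ\eta A-\id$ for strict $A$ --- and a correcting term cancelling it would have to contain $h^r_2$, of degree $-2$, which is one degree too low to appear; and while $d_\A h^r_2$ itself has degree $-1$, the same relation rewrites it as $\mp(\mu\circ h^r_1A + h^r_1\circ \mu)$, returning you to the vocabulary $\{h^r_1,\eta,\mu\}$ you started with. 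So there is no admissible ``successive term correcting the $A$-linearity defect of the previous partial sum'': your iteration has no second step, and the tower $\{h^r_i\}_{i\geq 2}$ supplies no new degree $-1$ ingredients. The repair is to do what the paper does: do not rebuild the triangle homotopies by hand, but invoke Lemma~\ref{lemma-free-forgetful-homotopy-adjunction-restricts-to-strict-modules} as a black box --- it asserts precisely that $\free,\forget\colon\A\leftrightarrows\nodstrhuA$ are homotopy adjoint with your unit and counit, so the adjunction map $\psi$ is a quasi-isomorphism --- and then conclude acyclicity of $\homm^\bullet_{\nodstrA}(EA,(F,q))$ from acyclicity of $\homm^\bullet_\A(E,F)$ exactly as in your second paragraph.
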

\begin{proof}
Let $E \in \A$. 
Let $(F,q) \in \nodstrA$ be an acyclic module, thus $F$ is
null-homotopic in $\A$. By Free-Forgetful 
adjunction we have a quasi-isomorphism 
$$ \homm^\bullet_{\nodstrA}(EA, (F,q)) \rightarrow 
\homm^\bullet_{\A}(E, F). $$
The RHS is null-homotopic in $\modk$, and hence acyclic. 
\end{proof}

The bar-resolution $\infbarres$  allows us to describe 
the derived category $D(A)$ as a Verdier quotient:
\begin{theorem}
\label{theorem-derived-category-as-localisation-in-the-strict-case}
Let $A$ be a strict, strongly homotopy unital algebra in a monoidal 
DG category $\A$. Then we have an exact equivalence
$$ D(A) \simeq H^0(\nodd^{hu}\text{-}A^{\left<\A\right>_{tr,cc}}) / \acyc. $$
\end{theorem}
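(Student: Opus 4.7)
The plan is to show that the inclusion $\iota\colon\nodstrhuA^{\left<\A\right>_{tr,cc}} \hookrightarrow \noddinfhu\text{-}A^{\left<\A\right>_{tr,cc}}$ descends, after passing to $H^0$ and localising the source at acyclics, to the desired equivalence. Since strong homotopy unitality implies $H$-unitality (Lemma~\ref{lemma-strong-homotopy-unital-implies-H-unital}), Prop.~\ref{prps-derived-category-of-A-is-triangulated-cocomplete-hull-of-freeA} gives $D(A) = H^0(\noddinfhu\text{-}A^{\left<\A\right>_{tr,cc}})$, and by the Homotopy Lemma (Lemma~\ref{lemma-the-homotopy-lemma-for-nodA}) all acyclic objects there are already null-homotopic. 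Thus $H^0(\iota)$ descends to an exact functor
$$\Phi\colon H^0(\nodstrhuA^{\left<\A\right>_{tr,cc}})/\acyc \longrightarrow D(A),$$
and the task is to show $\Phi$ is an equivalence.

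For essential surjectivity I would apply the bar-resolution. Because $A$ is strict we have $m_i = 0$ and $\mu_i = 0$ for $i \geq 3$ while $\mu_2 = m_2$, so every surviving differential in $\infbarres(E, p_\bullet)$ has the form $E m_2 A^i$, $EA^i m_2$, or $p_k A^i$; each is the free functor applied to a morphism in $\A$, hence a strict morphism in $\nodstrA$. Thus $\infbarres(E, p_\bullet)$ actually lies in $\pretriagmns(\nodstrA)$, so its convolution $\tilde E$ in $\left<\A\right>_{tr,cc}$ is a strict $A$-module. Prop.~\ref{prps-natural-transformation-rho-is-acyclic-on-H-unital-modules-Ainfty-version} provides a homotopy equivalence $\rho\colon \tilde E \xrightarrow{\sim} (E, p_\bullet)$ in $\noddinfhu\text{-}A^{\left<\A\right>_{tr,cc}}$, showing that $\tilde E$ is $H$-unital and $\Phi([\tilde E]) \cong (E, p_\bullet)$.

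For full faithfulness I would reduce everything to the behaviour on free modules via the bar-resolution. First, for any $E \in \A$ and strict $H$-unital $(F, q)$ the natural inclusion
$$\homm^\bullet_{\nodstrA^{\left<\A\right>_{tr,cc}}}(EA, (F, q)) \hookrightarrow \homm^\bullet_{\noddinf\text{-}A^{\left<\A\right>_{tr,cc}}}(EA, (F, q))$$
is a quasi-isomorphism, because both sides are quasi-isomorphic to $\homm^\bullet_\A(E, F)$ via, respectively, the strict Free-Forgetful homotopy adjunction (Lemma~\ref{lemma-free-forgetful-homotopy-adjunction-restricts-to-strict-modules}) and the $\Ainfty$ Free-Forgetful homotopy adjunction (Theorem~\ref{theorem-free-forgetful-homotopy-adjunction}). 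Second, since free modules are $h$-projective in $\nodstrA$ (by the lemma stated immediately above the theorem), a standard filtration-by-truncations argument on the bounded-above one-sided twisted complex $\infbarres(E, p)$ shows $\tilde E$ itself is $h$-projective. These together identify
$$\homm_{H^0(\nodstrhuA^{\left<\A\right>_{tr,cc}})/\acyc}((E, p), (F, q)) = \homm_{H^0(\nodstrhuA^{\left<\A\right>_{tr,cc}})}(\tilde E, (F, q)),$$
while the homotopy equivalence $\tilde E \simeq (E, p)$ in $D(A)$ gives $\homm_{D(A)}((E, p), (F, q)) = \homm_{D(A)}(\tilde E, (F, q))$. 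Writing both sides as the cohomology of the totalised Hom-complex out of $\infbarres(E, p)$ and applying the termwise quasi-isomorphism from the first step completes the comparison.

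The main obstacle will be the convergence step: $\infbarres(E, p)$ is bounded above but infinite below, so the relevant total Hom-complexes are \emph{products} rather than direct sums, and one must verify that the termwise quasi-isomorphism globalises. The hypothesis that $\left<\A\right>_{tr,cc}$ is strongly pre-triangulated, cocomplete, and admits convolutions of bounded-above one-sided twisted complexes is precisely what makes the filtration/spectral-sequence argument converge, and this is where the non-triviality of the claim genuinely lies.
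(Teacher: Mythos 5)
Your proof is correct, and it rests on the same two load-bearing ingredients as the paper's own argument --- the Homotopy Lemma (Lemma~\ref{lemma-the-homotopy-lemma-for-nodA}) to descend past acyclics, and the comparison of strict versus $\Ainfty$ $\homm$-complexes out of free modules via the two Free--Forgetful homotopy adjunctions (Lemma~\ref{lemma-free-forgetful-homotopy-adjunction-restricts-to-strict-modules} and Theorem~\ref{theorem-free-forgetful-homotopy-adjunction}) --- but you organise the d\'evissage differently. The paper proves essential surjectivity abstractly: the image of $H^0(\nodstrhuA)$ in $D(A)$ is cocomplete triangulated and contains $\freeA$, which generates. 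You instead build an explicit strict model; your observation that for strict $A$ one has $\mu_2$ strict and $\mu_{\geq 3}=0$, so that $\infbarres(E,p_\bullet)$ of an \emph{arbitrary} $\Ainfty$-module lies in $\pretriagmns(\nodstrA)$, is precisely what the paper's opening sentence (``the same argument using the bar-resolution\dots'') leaves implicit. For full faithfulness the paper sidesteps your convergence problem entirely by reducing to compact generators: frees on compact objects are compact and $h$-projective, so only the single free-to-free termwise quasi-isomorphism needs checking; you keep arbitrary strict targets and totalise over the whole bar-resolution. That route works, and the convergence worry you flag is easier than you suggest: present $\tilde E$ as the telescope of the finite stupid truncations of $\infbarres(E,p_\bullet)$, so that $\homm(\tilde E, N)$ is, up to shift, the cone of a map of countable products $\prod_n \homm(EA^{n+1},N) \to \prod_n \homm(EA^{n+1},N)$; products in $\modk$ are exact, so termwise quasi-isomorphisms pass to the totalisation with no spectral sequence, and the same telescope presentation yields the $h$-projectivity of $\tilde E$ that you correctly require. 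One point you should make explicit: to identify quotient $\homm$'s out of a strict $(E,p)$ with $\homm$'s out of $\tilde E$ you need $\tilde E \cong (E,p)$ already in the Verdier quotient of the \emph{strict} category, and a priori $\rho$ is only an $\Ainfty$-morphism; but for strict $(E,p)$ over strict $A$ the components $\pi_k$ vanish for $k\geq 3$ and $\pi_2$ is the strict action map, so $\rho$ is a strict closed morphism whose cone is acyclic by $H$-unitality, and the identification is legitimate. In sum, your approach buys explicit strict resolutions and avoids invoking the compactly-generated formalism; the paper's buys a shorter proof by outsourcing the infinite part of the argument to compact generation.
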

\begin{proof}
The same argument using the bar-resolution as in 
Prop.~\ref{prps-derived-category-of-A-is-triangulated-cocomplete-hull-of-freeA}
shows that
$$ \left< \freeA \right>_{tr,cc}^{strict}
=
H^0(\nodd^{hu}\text{-}A^{\left<\A\right>_{tr,cc}}) , $$
where on the LHS we take the triangulated cocomplete hull 
in $H^0(\nodd\text{-}A^\B)$. 

Consider the non-full inclusion 
$$ 
\nodd^{hu}\text{-}A^{\left<\A\right>_{tr,cc}}
\hookrightarrow 
\noddinfhu\text{-}A^{\left<\A\right>_{tr,cc}}. $$
By Prop.~\ref{prps-derived-category-of-A-is-triangulated-cocomplete-hull-of-freeA} the homotopy category of the RHS is $D(A)$, so we obtain an exact
non-full inclusion of triangulated categories
$$
H^0(\nodd^{hu}\text{-}A^{\left<\A\right>_{tr,cc}})
\hookrightarrow 
D(A).
$$
The source category is cocomplete and the inclusion respects direct
sums, thus its essential image is triangulated and cocomplete.
As the image contains $\freeA$, it must be the whole of $D(A)$. 
By Homotopy Lemma, the inclusion kills all acyclic modules. It
therefore induces an essentially surjective exact functor 
$$
H^0(\nodd^{hu}\text{-}A^{\left<\A\right>_{tr,cc}})/\acyc
\longrightarrow 
D(A).
$$
To show it to be fully faithful, it is enough to show it on a set 
of compact generators. By above, the free modules generate
$H^0(\nodd^{hu}\text{-}A^{\left<\A\right>_{tr,cc}})$ and thus its
Verdier quotient. Since $\A^{pf}$ generates $\A$ in $H^0(\B)$, compact 
free modules generate all free modules in $H^0(\nodd\text{-}A^\B)$. Since 
free modules are $h$-projective, $\homm$-spaces between them in the
Verdier quotient are the same as in 
$H^0(\nodd^{hu}\text{-}A^{\left<\A\right>_{tr,cc}})$ itself. 

It suffices now to show that for any $E,F \in \A$ the natural inclusion 
$$\homm_{\nodstrA}(EA,FA)
\hookrightarrow \homm_{\nodA}(EA,FA)$$
is a quasi-isomorphism.  By Lemma 
\ref{lemma-free-forgetful-homotopy-adjunction-restricts-to-strict-modules}
the Free-Forgetful homotopy adjunction restricts from
$\Ainfty$-modules to strict modules. Hence it gives quasi-isomorphisms 
from $\homm_{\nodstrA}(EA,FA)$ and 
$\homm_{\nodA}(EA,FA)$
to $\homm_{\A}(EA,F)$
which intertwines their natural inclusion. 
\end{proof}

\section{$\Ainfty$-coalgebras and comodules} 
\label{section-ainfty-coalgebras-and-comodules}

Most of the definitions and results for $\Ainfty$-algebras, modules, 
and their derived categories given in
\S\ref{section-ainfty-structures-in-monoidal-dg-categories}-\S\ref{section-the-derived-category}
can be duplicated for $\Ainfty$-coalgebras and comodules. 
In this section, we give the first few as an example, 
and leave the rest to the reader.  

A major subtlety is that with $\Ainfty$-algebras and their modules 
the bar-constructions (see Definitions
\ref{defn-algebra-bar-construction-in-a-monoidal-category},
\ref{defn-the-bar-construction-of-a-morphism-of-ainfty-algebras},
\ref{defn-right-module-bar-construction-in-a-monoidal-category},
\ref{defn-left-module-bar-construction-in-a-monoidal-category},
\ref{defn-right-module-bar-constructions-of-an-Ainfty-morphism}, and
\ref{defn-left-module-bar-constructions-of-an-Ainfty-morphism})  
have only a finite number of arrows coming out of any term of the
complex. Thus the definitions of
$\Ainfty$-algebras, modules, and their morphisms are
independent of the ambient category $\B$ used to define 
our unbounded twisted complexes. 

This is no longer the case for the cobar-construction. The condition
that the cobar-construction is a twisted complex -- over $\B$ -- now
involves a non-trivial condition that the sum of all its maps factors 
through the infinite direct sum -- in $\B$ -- 
of the terms of the complex. Thus what is and isn't an
$\Ainfty$-coalgebra or an $\Ainfty$-comodule depends on the choice of 
the ambient category $\B$. 
In this section, we always assume that a choice of $\B$ satisfying 
the assumptions in \S\ref{section-the-setting} and 
\S\ref{section-the-derived-category} is fixed and
"twisted complex" means a twisted complex in $\twbicx^{\pm}_\B(\A)$.

A consequence of the above is that the Homotopy Lemma 
(Lemma \ref{lemma-the-homotopy-lemma-for-nodA}) doesn't work for
$\Ainfty$-comodules. The reason is that the contracting homotopy
our method constructs doesn't apriori satisfy the condition  
that its cobar-construction is a valid map of twisted complexes in 
$\twbicx^{\pm}_\B(\A)$. We resolve some of the resulting 
complications by working only with strongly homotopy counital comodules. 
This makes it necessary to assume our
$\Ainfty$-coalgebra to be bicomodule homotopy counital. 

\subsection{$\Ainfty$-coalgebras}
\label{section-Ainfty-coalgebras-in-a-monoidal-category}
 
As in \S\ref{section-Ainfty-algebras-in-a-monoidal-category}, 
we first define the notion of a cobar-construction of a
collection of operations, and then say that these operations define
an $\Ainfty$-coalgebra if the cobar-construction is a twisted complex:

\begin{defn}
\label{defn-coalgebra-cobar-construction-in-a-monoidal-category}
Let $\A$ be a monoidal DG category, let $C \in \A$ and let 
$\left\{\Delta_i\right\}_{i \geq 2}$ be a collection of degree $2-i$
morphisms $C \rightarrow C^i$. 
Their \em (non-augmented) cobar-construction $\infbarnaug(C)$ \rm  
comprises objects $C^{i+1}$ for all $i \geq 0$ each placed
in degree $i$ and degree $k-1$ maps 
$d_{i(i+k)}\colon C^{i} \rightarrow C^{i+k}$ 
defined by
\begin{equation}
\label{eqn-differentials-in-non-aug-cobar-construction}
d_{i(i+k)} := (-1)^{(i-1)(k+1)} \sum_{j = 0}^{i-1} (-1)^{jk} 
\id^{i-j-1}\otimes \Delta_{k+1} \otimes \id^{j}. 
\end{equation} 
\begin{tiny}
\begin{equation}
\label{eqn-nonaugmented-cobar-construction-of-A-m_i}
\begin{tikzcd}[column sep = 2.5cm]
\underset{\degzero}{C}
\ar{r}[']{\Delta_2}
\ar[bend left=20]{rr}[description]{\Delta_3}
\ar[bend left=25]{rrr}[description]{\Delta_4}
\ar[bend left=30]{rrrr}[description]{\Delta_5}
&
C^2
\ar{r}[']{C\Delta_2-\Delta_2C}
\ar[bend left=20]{rr}[description]{-C\Delta_3 - \Delta_3C}
\ar[bend left=25]{rrr}[description]{C\Delta_4-\Delta_4C}
& 
C^3
\ar{r}[']{C^2\Delta_2-C\Delta_2C+\Delta_2C^2}
\ar[bend left=20]{rr}[description]{C^2\Delta_3+C\Delta_3C+\Delta_3C^2}
&
C^4
\ar{r}[']{\begin{smallmatrix}C^3 \Delta_2  - C^2\Delta_2C + \\ + C\Delta_2C^2 - \Delta_2 C^3 \end{smallmatrix}}
&
\dots
\end{tikzcd}
\end{equation}
\end{tiny}

\end{defn}
\begin{defn}
\label{defn-ainfty-coalgebra-in-a-monoidal-category}
Let $\A$ be a monoidal DG category. An \em $\Ainfty$-coalgebra 
$(C,\Delta_i)$ \rm in $\A$ is an object $C \in \A$ equipped 
with degree $2-i$ morphisms $\Delta_i \colon C \rightarrow C^i$ 
for all $i \geq 2$ whose non-augmented 
cobar-construction $\infbarnaug(C)$ is a twisted complex over $\A$. 
\end{defn}
\begin{defn} 
Let $(C, \Delta_k)$ and $(D, \Delta'_k)$ be $\Ainfty$-coalgebras in $\A$. 
Let $(f_i)_{i \geq 1}$ be a collection of degree $1-i$ morphisms 
$C \rightarrow D^i$. 
The \em cobar-construction \rm $\infbar(f_\bullet)$ is 
a collection of morphisms $C^{i} \rightarrow D^{i+k}$ defined by 
$$ \sum_{t_1 + \dots + t_i = i + k}
(-1)^{\sum_{l=2}^{i}(1-t_l)\sum_{n=1}^l t_n
} 
f_{t_1}\otimes\ldots \otimes f_{t_i} $$
\end{defn}

\subsection{$\Ainfty$-comodules}
\label{section-Ainfty-comodules-in-a-monoidal-category}

\begin{defn}
\label{defn-right-comodule-cobar-construction-in-a-monoidal-category}
Let $(C,\Delta_i)$ be an $\Ainfty$-coalgebra in a monoidal DG category $\A$. 
For $G \in \A$ and a collection $\left\{r_i\right\}_{i \geq 2}$ 
of degree $2-i$ morphisms $G \rightarrow G\otimes C^{i-1}$, 
their \em right comodule cobar-construction $\infbar(G)$ \rm 
comprises objects $G \otimes C^{i}$ for $i \geq 0$ 
placed in degree $i$ and degree $1-k$ maps 
$G \otimes C^{i-1} \rightarrow G \otimes C^{i+k-1}$ 
defined by
\begin{scriptsize}
\begin{equation}
\label{eqn-differentials-in-right-comodule-cobar-construction}
d_{i(i+k)} := (-1)^{(i-1)(k+1)} 
\left(\sum_{j = 0}^{i-2} \left( (-1)^{jk} \id^{i-j-1} \otimes \Delta_{k+1} \otimes
\id^{j}\right) + (-1)^{(i-1)k}r_{k+1}\otimes \id^{i-1} \right). 
\end{equation} 
\end{scriptsize}

For $G \in \A$ and a collection 
$\left\{r_i\right\}_{i \geq 2}$ of degree $2-i$
morphisms $ G\rightarrow C^{i-1}\otimes G$
their \em left comodule cobar-construction $\infbar(G)$ \rm
comprises objects $C^{i}\otimes G$ for all $i \geq 0$ placed
in degree $i$ and degree $1-k$ maps 
$C^{i-1}\otimes G \rightarrow C^{i+k-1}\otimes G$ 
defined by
\begin{equation}
\label{eqn-differentials-in-left-comodule-cobar-construction}
d_{i(i+k)} := (-1)^{(i-1)(k+1)} 
\left(\sum_{j = 1}^{i-1} \left( (-1)^{jk} \id^{i-j-1} \otimes \Delta_{k+1} \otimes
\id^{j}\right) + \id^{i-1} \otimes r_{k+1} \right). 
\end{equation} 
\end{defn}

\begin{defn}
Let $(C,\Delta_i)$ be an $\Ainfty$-coalgebra in a monoidal DG category
$\A$. A \em right (resp. left) $\Ainfty$-comodule 
$(G, r_i)$ over $C$ \rm is an object $G \in \A$ and a collection  
$\left\{r_i\right\}_{i \geq 2}$ of degree $2-i$
morphisms $G \rightarrow G \otimes C^{i-1}$ (resp. 
$G \rightarrow C^{i-1} \otimes G$) such that $\infbar(G)$ is a twisted complex. 
\end{defn} 

\begin{defn} 
Let $(C,\Delta_i)$ be an $\Ainfty$-coalgebra in a monoidal DG category
A \em degree $j$ morphism \rm 
$f_\bullet\colon (G, r_k) \rightarrow (H, s_k)$ of 
right $\Ainfty$-$C$-comodules is a collection $(f_i)_{i \geq 1}$ of 
degree $j - i + 1$ morphisms $G\rightarrow H \otimes C^{i-1}$. Its
\em cobar-construction \rm $\infbar(f_\bullet)$ is 
the morphism $\infbar(G) \rightarrow \infbar(H)$ in $\pretriagpls(\A)$
whose components are  
$$
G \otimes C^{i-1} \rightarrow H \otimes C^{i+k-1}\colon \;
(-1)^{j(i-1)}  f_{k+1}\otimes \id^{i-1}. $$
\end{defn}

\begin{defn} 
Let $(C,\Delta_i)$ be an $\Ainfty$-coalgebra in a monoidal DG category
A \em degree $j$ morphism $f_\bullet\colon (G, r_k) \rightarrow (H, s_k)$ of 
left $\Ainfty$-$C$-comodules \rm is a collection $(f_i)_{i \geq 1}$ of 
degree $j - i + 1$ morphisms $ G \rightarrow C^{i-1} \otimes H$. 
Its \em cobar-construction \rm $\infbar(f_\bullet)$ is 
the morphism $\infbar(G) \rightarrow \infbar(H)$ in $\pretriagpls(\A)$
whose components are 
$$ A^{i-1} \otimes G \rightarrow C^{i+k-1} \otimes H \colon \;
(-1)^{(j+k)(i-1)}  \id^{i-1} \otimes f_{k+1}. $$
\end{defn}

\begin{defn}
Let $(C,\Delta_i)$ be an $\Ainfty$-coalgebra in a monoidal DG category
Define the \em DG category $\conodC$ of right $\Ainfty$-$C$-comodules in $\A$ \rm by:
\begin{itemize}
\item Its objects are right $\Ainfty$-$C$-comodules in $\A$,
\item For any $G, H \in \obj \conodC$, $\homm^\bullet_{\conodC}(G, H)$
consists of $\Ainfty$-morphisms $f_\bullet\colon G \rightarrow H$
with their natural grading. 
The differential and the composition
are induced from those of their cobar-constructions.
\item For any $G \in \conodC$, its identity map $f_\bullet$ has 
$f_1 = \id_G$ and $f_{\geq 2} = 0$. 
\end{itemize}
The \em DG category $\Cconod$ of left $\Ainfty$-$C$-comodules 
in $\A$ \rm is defined analogously.  
\end{defn}

\subsection{Bicomodules}
\label{section-Ainfty-bicomodules-in-a-monoidal-category}

\begin{defn}
\label{defn-bicomodule-bar-construction-in-a-monoidal-category}
Let $(C,\Delta_i)$ and $(D,\Theta_i)$ be an $\Ainfty$-coalgebras 
in a monoidal DG category $\A$. 
Let $M \in \A$ and let $\left\{r_{ij}\right\}_{i + j \geq 1}$ be a collection 
of degree $1-i-j$ morphisms $M \rightarrow C^i \otimes M \otimes D^{j}$. 
The \em bicomodule cobar-construction $\infbar(M)$ \rm 
comprises objects $C^i \otimes M \otimes D^{j}$ with $i + j \geq 0$ 
placed in bidegree $i,j$ and degree $1+p+q-i-j$ maps 
$$ C^{i} \otimes M \otimes D^{j} \longrightarrow 
\bigoplus_{p + q = i + j + k} C^p \otimes M \otimes D^q $$
defined by
\begin{equation}
\label{eqn-differentials-in-the-bimodule-cobar-construction}
(-1)^{(i+j)(k+1)} 
\sum_{r = 0}^{i+j} (-1)^{rk} \id^{i + j - r} \otimes (\Delta r\Theta)_{k+1}
\otimes \id^{r},
\end{equation} 
where $(\Delta r \Theta)_{k+1}$ denotes the unique operation --- 
either $r_{s,t}$ with $s+t = k$ or $\Delta_{k+1}$ or $\Theta_{k+1}$ --- 
that can be applied to the corresponding factor
of $C^{i} \otimes M \otimes D^{j}$. 
\end{defn}

\begin{defn}
An \em $\Ainfty$-bicomodule \rm over $\Ainfty$-coalgebras $(C,\Delta_i)$ and $(D,\Theta_i)$ 
in a monoidal DG category $\A$ is
an object $M \in \A$ and a collection $\left\{r_{ij}\right\}_{i + j \geq 1}$ 
of degree $1-i-j$ morphisms $ M\rightarrow C^i \otimes M \otimes D^{j}$
such that $\infbar(M)$ is a twisted complex. 
\end{defn}

\begin{defn} 
Let $(C,\Delta_i)$ and $(D,\Theta_i)$ be an $\Ainfty$-coalgebras 
in a monoidal DG category $\A$.  Let $(M, r_{ij})$ and $(N, s_{ij})$ be
$\Ainfty$-$C$-$D$-bicomodules. 
A \em degree $k$ morphism \rm $f_{\bullet\bullet}\colon 
(M, r_{ij}) \rightarrow (N, s_{ij})$ of 
$\Ainfty$-$C$-$D$-bicomodules is a collection $(f_{lm})_{l + m \geq 0}$ of 
degree $k - l - m$ morphisms $M \rightarrow C^l \otimes  N \otimes D^{m}$. 
Its \em cobar-construction \rm $\infbar(f_{\bullet\bullet})$ is 
the morphism $\infbar(M) \rightarrow \infbar(N)$ in $\twbiospls(\A)$
whose components are  
$$
C^{i} \otimes M \otimes D^{j} \rightarrow C^{i+l} \otimes N \otimes D^{j+m}\colon \;
(-1)^{i(l+m) + k (i+j)}  \id^{i} \otimes f_{l,m} \otimes \id^{j}. $$
\end{defn}

\begin{defn}
Let $(C,\Delta_i)$ and $(D,\Theta_i)$ be $\Ainfty$-coalgebras in a monoidal DG category 
$\A$. The \em DG category $\CconodD$ of
$\Ainfty$-$C$-$D$-bicomodules in $\A$ \rm is:
\begin{itemize}
\item Its objects are $\Ainfty$-$C$-$D$-bicomodules in $\A$,
\item For any $M,N \in \CconodD$,  
$\homm^\bullet_{\CconodD}(M,N)$
consists of $\Ainfty$-morphisms $f_{\bullet\bullet}\colon M \rightarrow N$
with their natural grading. The differential and
the composition are induced from those of their cobar constructions. 
\item For any $M \in \CconodD$ its identity map $f_{\bullet\bullet}$
has $f_{00} = \id_M$, $f_{ij, i+j > 0} = 0$. 
\end{itemize}
\end{defn}

\subsection{Notions of homotopy counitality}
\label{section-notions-of-homotopy-counitality}

We define the notions of
\begin{itemize}
\item $H$-counitality, strong homotopy counitality, and bicomodule homotopy
counitality for $\Ainfty$-coalgebras,
\item $H$-counitality, homotopy counitality, and 
strong homotopy counitality for $\Ainfty$-comodules,
\end{itemize}
analogously to the way they are defined for algebras and modules
in \S\ref{section-unitality-conditions-for-algebras}-\ref{section-unitality-conditions-for-A-modules}. 

Since the Homotopy Lemma (Lemma
\ref{lemma-the-homotopy-lemma-for-nodA}) doesn't hold for
$\Ainfty$-comodules, we do not get an analogue of Theorem
\ref{theorem-tfae-unitality-conditions-for-A-modules}. Its proof
relies on the Homotopy Lemma to show that $H$-unitality implies strong homotopy
unitality. Indeed,  the Chi-Rho Lemma (Lemma \ref{lemma-chi-rho-lemma})
shows that an $\Ainfty$-$A$-module $(E,p_\bullet)$ is strongly homotopy unital 
if and only if the canonical map 
$\rho\colon \infbarres(E,p_\bullet) \rightarrow (E,p_\bullet) $
is a homotopy equivalence in $\nodA$. On the other hand, $H$-unitality
means that $\forget(\rho)$ is a homotopy equivalence in $\A$. 
In absence of the Homotopy Lemma, the two are not equivalent. 
Thus for $\Ainfty$-comodules, we only get:
\begin{prps}
Let $(C,\Delta_\bullet)$ be a strongly homotopy counital coalgebra 
in a monoidal DG category $\A$ and let $(G, r_\bullet)$ 
be an $\Ainfty$-$C$-comodule. Then $(G, r_\bullet)$ is homotopy counital iff 
it is $H$-counital. 
\end{prps}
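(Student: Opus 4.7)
The plan is to dualise the $(1)\Leftrightarrow(2)$ half of the proof of Theorem~\ref{theorem-tfae-unitality-conditions-for-A-modules}, which is the portion requiring only strong homotopy (co)unitality of the (co)algebra. I need three dual ingredients: a cobar-coresolution functor $\infbarres$ for right $\Ainfty$-$C$-comodules together with its canonical transformation $\rho'$, the identification of $\infbar(G,r_\bullet)$ with the total complex of $\forget(\rho')$, and the dual of the first-component Chi-Rho identity (Cor.~\ref{cor-chi-rho-lemma-for-the-first-components}).

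First I would dualise \S\ref{section-bar-resolution}. For $(G,r_\bullet) \in \conodC$ set
\[
\infbarres(G,r_\bullet) := \infbar(G,r_\bullet)_{\deg \geq 1}[1] \in \pretriagpls(\conodC),
\]
the cofree part of the cobar-construction, truncated and shifted. The components of $\infbar(G,r_\bullet)$ discarded in the truncation assemble into a homotopy natural transformation
\[
\rho'\colon \id_{\conodC} \longrightarrow \infbarres
\]
whose components $(G,r_\bullet) \to GC^i$ are the duals of the morphisms $\pi_i$ of Defn.~\ref{defn-morphisms-pi_i-and-mu_i}. By construction, the total complex of $\forget(\rho')$ is exactly $\infbar(G,r_\bullet)$, so $H$-counitality of $(G,r_\bullet)$ is equivalent to $\forget(\rho')$ being a homotopy equivalence in $\pretriagpls(\A)$.

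Second I would dualise the first-component version of Chi-Rho. Dualising Lemma~\ref{lemma-the-contracting-homotopy-zeta-of-infbarl-a} and Lemma~\ref{lemma-chi-rho-lemma}, but retaining only the first component as in Cor.~\ref{cor-chi-rho-lemma-for-the-first-components}, I obtain a degree $-1$ endomorphism $\xi_1$ of $\forget(\infbarres(G,r_\bullet))$ in $\pretriagpls(\A)$ satisfying
\[
d\xi_1 = \id - \forget(\rho') \circ G\epsilon.
\]
The key point is that the \emph{first} component of the dual contracting homotopy involves only the counit $\epsilon\colon C \to \id_\A$ and the left homotopy $h^l_\bullet$ provided by strong homotopy counitality of $C$; the higher components, which would require a full bicomodule homotopy counit $\bareta_{\bullet\bullet}$, are not needed. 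Hence $G\epsilon\colon GC \to G$ is a right homotopy inverse of $\forget(\rho')$. The equivalence now follows formally: $\forget(\rho')$ is a homotopy equivalence iff $G\epsilon$ is also a left homotopy inverse, iff
\[
G\epsilon \circ \forget(\rho') \; = \; G\epsilon \circ r_2 \colon G \to GC \to G
\]
is homotopic to $\id_G$ in $\A$, which is precisely the definition of homotopy counitality of $(G,r_\bullet)$.

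The main obstacle, and the reason we cannot simply dualise the full Theorem~\ref{theorem-tfae-unitality-conditions-for-A-modules}, is that the Homotopy Lemma fails in $\conodC$, so the analogue of $(2)\Leftrightarrow(3)$ is unavailable by this route. Here, however, the entire argument takes place at the level of $\pretriagpls(\A)$ rather than $\pretriagpls(\conodC)$, so the Homotopy Lemma is never invoked. The factorisation-through-$\B$ issue flagged at the beginning of \S\ref{section-ainfty-coalgebras-and-comodules} is not problematic for $\xi_1$ because each of its matrix entries is built from finitely many applications of $\epsilon$ and $h^l_\bullet$, exactly mirroring the algebra-side construction whose validity is not in question.
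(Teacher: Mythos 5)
Your proposal is correct and is exactly the argument the paper intends: the proposition is stated right after the paper explains that only the $(1)\Leftrightarrow(2)$ half of Theorem~\ref{theorem-tfae-unitality-conditions-for-A-modules} dualises, and your proof mirrors that half verbatim — the cobar-coresolution with its transformation $\rho'$, the identification of $\infbar(G,r_\bullet)$ with the total complex of $\forget(\rho')$, the dual of Cor.~\ref{cor-chi-rho-lemma-for-the-first-components} exhibiting $G\epsilon$ as a one-sided homotopy inverse, and the formal reduction of the remaining inverse condition to $G\epsilon\circ r_2\simeq\id_G$, all without invoking the (unavailable) Homotopy Lemma. One minor refinement: the factorisation-through-$\B$ issue is resolved not because each matrix entry uses finitely many operations, but because $\epsilon$ and $h^l_\bullet$ are by definition morphisms in the comodule categories, so their cobar-constructions already satisfy the assembly condition in $\twcxpls_\B(\A)$ and compositions of such remain valid.
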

For strongly homotopy counital $\Ainfty$-coalgebras and their homotopy
counital comodules we have the \em Forgetful-Free homotopy adjunction\rm, 
analogous to the Free-Forgetful homotopy adjunction for algebras and modules
described in \S\ref{section-free-forgetful-homotopy-adjunction}. Note 
that the direction of the adjunction has changed. We also have:
\begin{defn}
Let $(C,\Delta_\bullet)$ be a strongly homotopy counital $\Ainfty$-coalgebra 
in a monoidal DG category $\A$. Define its \em co-Kleisli category 
$\cokleisliC$ \em to be the $\Ainfty$-category
(in the classical sense of \cite{Lefevre-SurLesAInftyCategories})
defined by the following data:
\begin{itemize}
\item Its objects are the objects of $\A$. 
\item For any $G, H \in \A$ the $\homm$-complex between them is
\begin{equation}
\homm_{\cokleisliC}(G, H) := \homm_{\A}(GC, H).
\end{equation}
\item For any $G_1, G_2, \dots, G_{n+1} \in \A$ and any $\alpha_i \in 
\homm_{\cokleisliC}(G_i, G_{i+1})$ define 
\begin{equation}
\label{eqn-defn-of-ainfty-structure-on-cokleisli-category}
m_n^{\cokleisliC}(\alpha_1, \dots, \alpha_n) := 
G_1C \xrightarrow{G_{1}\Delta_n} G_1C^n \xrightarrow{\alpha_1C^{n-1}} G_2C^{n-1} \xrightarrow{\alpha_2 C^{n-2}} \dots \xrightarrow{\alpha_n} G_{n+1}.
\end{equation}
\end{itemize}
\end{defn}
We have the natural $\Ainfty$-functor 
$f_\bullet\colon \cokleisli(C) \rightarrow \free (C)$
defined analogously to Definition
\ref{defn-kleisli-to-free-Ainfty-functor}. When $C$ is strong
homotopy counital, it is a quasi-equivalence.

However, when constructing the derived category of an
$\Ainfty$-coalgebra $C$, the notion of $H$-counitality of its
$\Ainfty$-comodules is not enough. We want
the derived category to be the cocomplete triangulated hull of 
the free comodules. For $\Ainfty$-algebras, any $H$-unital module was
homotopy equivalent to its bar-resolution, and hence had a resolution by
free modules. 

For $\Ainfty$-coalgebras, $H$-counitality of a 
comodule means that its cobar-construction is null-homotopic as 
a twisted complex over $\A$. Without the Homotopy Lemma, this is no longer 
equivalent to its cobar-construction being null-homotopic as 
a twisted complex over $\conodC$. The latter is what we need to have 
the cobar-resolution by free modules. We therefore introduce a new 
notion:
\begin{defn}
Let $(C,\Delta_\bullet)$ be a coalgebra 
in a monoidal DG category $\A$. We say that $(G, r_\bullet) \in \conodC$ 
is {\it strongly $H$-counital} if its cobar-construction is null-homotopic 
as a twisted complex over $\conodC$. We denote the category of strongly 
$H$-counital comodules over $C$ by $\conodshuC$.
\end{defn}
This fixes the above mentioned place in the proof of Theorem
\ref{theorem-tfae-unitality-conditions-for-A-modules}:
\begin{lemma}
\label{lemma-tfae-strong-counitality-conditions-for-A-comodules}
Let $(C,\Delta_\bullet)$ be a bicomodule homotopy counital coalgebra 
in a monoidal DG category $\A$. Then any $(G, r_\bullet) \in \conodC$
is strongly homotopy counital iff it is strongly $H$-counital. 
\end{lemma}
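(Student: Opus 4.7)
The plan is to dualize the proof of the equivalence $(2) \Leftrightarrow (3)$ in Theorem \ref{theorem-tfae-unitality-conditions-for-A-modules}, replacing the combined use of $H$-unitality together with the Homotopy Lemma (which fails for comodules) by the single use of strong $H$-counitality. Let $\rho\colon (G, r_\bullet) \to \infcobarres(G, r_\bullet)$ and $\chi\colon \infcobarres(G, r_\bullet) \to (G, r_\bullet)$ be the natural transformations in $\pretriagpls(\conodC)$ obtained by dualizing the constructions of \S\ref{section-bar-resolution} and Definition \ref{defn-a-infinity-version-of-map-chi}. With this notation, strong homotopy counitality of $(G, r_\bullet)$ amounts to $\chi \circ \rho \sim \id_{(G,r_\bullet)}$ in $\conodC$.

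First, I would establish the dual Chi-Rho Lemma: for bicomodule homotopy counital $C$, there exists a degree $-1$ endomorphism $\xi$ of $\infcobarres(G, r_\bullet)$ in $\conodC$ with $d\xi = \id - \rho \circ \chi$. The construction mirrors that of Lemma \ref{lemma-chi-rho-lemma}: dualize Lemma \ref{lemma-the-contracting-homotopy-zeta-of-infbarl-a} using the bicomodule homotopy counit of $C$ to obtain a contracting homotopy of the left comodule cobar-construction of $C$, conjugate it against the non-closed maps $(G, r_\bullet) \to (G, 0)$ and $(G, 0) \to (G, r_\bullet)$, and check by a direct bracket computation that the result witnesses $\id - \rho\circ\chi$ as a coboundary in the endomorphism complex of $\infcobarres(G,r_\bullet)$ in $\conodC$. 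This exhibits $\chi$ as a right homotopy inverse of $\rho$ in $\conodC$.

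Second, I would observe that the cobar-construction $\infcobar(G, r_\bullet) \in \twcxpls(\conodC)$ is the convolution of $\rho$, viewed as a one-sided twisted complex morphism from $(G,r_\bullet)$ to $\infcobarres(G, r_\bullet)$ (the positive-degree part of $\infcobar$ being $\infcobarres$ by the comodule dual of Definition \ref{defn-bar-resolution-functor-for-Ainfty-A-modules}). Hence $(G, r_\bullet)$ is strongly $H$-counital --- i.e.~$\infcobar(G, r_\bullet)$ is null-homotopic in $\conodC$ --- if and only if $\rho$ is a homotopy equivalence in $\conodC$.

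Combining these two steps yields the lemma: $(G, r_\bullet)$ is strongly $H$-counital iff $\rho$ admits a two-sided homotopy inverse in $\conodC$. Since the dual Chi-Rho Lemma always supplies $\chi$ as a right inverse, this is equivalent to $\chi$ also being a left inverse, i.e.~$\chi \circ \rho \sim \id_{(G, r_\bullet)}$, which is strong homotopy counitality. The main obstacle I expect is verifying the dual Chi-Rho Lemma: one must carefully check that the contracting homotopy constructed dually defines a legitimate morphism in $\conodC$ relative to the fixed ambient category $\B$, since unlike in the algebra case the diagonals of the cobar-bicomplex receive contributions from infinitely many components of the bicomodule counit, so the statement that $\xi$ is a morphism in $\conodC$ is a genuine convergence condition rather than an automatic finiteness consequence.
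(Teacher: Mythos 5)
Your proposal is correct and is essentially the paper's own argument: the paper proves this lemma implicitly, as the dualization of the equivalence \eqref{item-A-module-is-strong-homotopy-unital}$\Leftrightarrow$\eqref{item-A-module-is-H-unital} in Theorem~\ref{theorem-tfae-unitality-conditions-for-A-modules}, where the combination ``$H$-unitality plus Homotopy Lemma'' (which fails for comodules) is replaced by strong $H$-counitality directly asserting that $\rho$ is a homotopy equivalence in $\conodC$, while the dual Chi-Rho Lemma (available because $C$ is bicomodule homotopy counital) supplies $\chi$ as the one-sided homotopy inverse. The convergence caveat you raise is precisely the subtlety the paper flags in the introduction to \S\ref{section-ainfty-coalgebras-and-comodules}, and it is resolved as you suggest: $\barepsilon_{\bullet\bullet}$ is by definition a morphism in the bicomodule category, so its cobar-construction is a legitimate twisted-complex morphism relative to $\B$, and $\xi$ is assembled from it by compositions that preserve this property.
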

The analogue of Lemma 
\ref{lemma-free-modules-are-strong-homotopy-unital}
then holds:
\begin{lemma}
$(C,\Delta_\bullet)$ be a bicomodule homotopy counital coalgebra 
in a monoidal DG category $\A$. Then free $C$-comodules are 
strongly homotopy counital.
\end{lemma}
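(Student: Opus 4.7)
The plan is to dualize the proof of Lemma \ref{lemma-free-modules-are-strong-homotopy-unital}. First I would reduce to the case where $C$ is considered as a right comodule over itself: for any $G \in \A$, the cobar-resolution $\infbarres(GC)$ and the associated morphisms $\chi$ and $\rho$ all factor as $\id_G$ tensored on the left with the corresponding structures built from $C$ alone. Hence any contracting homotopy in $\conodC$ witnessing strong homotopy counitality of $C$ produces one for $GC$ by tensoring on the left with $\id_G$.

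Second, I would invoke the coalgebra analogue of Theorem \ref{theorem-conditions-for-bimodule-homotopy-unitality}, which unpacks the bicomodule homotopy counitality datum $\barepsilon_{\bullet\bullet}\colon C \to \id_\A$ into its components: a counit $\epsilon$, one-sided contracting homotopies $h^l_\bullet$ and $h^r_\bullet$ for the left and right cobar-constructions of $C$, and a higher homotopy $\kappa_{\bullet\bullet}$ relating these. Just as $\bareta_{\bullet\bullet}$ packages the bimodule unital data in the algebra case, $\barepsilon_{\bullet\bullet}$ provides precisely the ingredients needed to mirror the construction (\ref{eqn-bicomplex-version-of-chi-underlying}) dually, yielding a closed degree zero map out of $\id_\A$ expressing strong counitality for $C$.

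Third, I would carry out the dual computation. Write the cobar analogue of the bicomplex (\ref{eqn-bar-bicomplex-of-Id-subcomplexes-ainfty}), in which the four quadrants correspond to the splitting of the cobar-bicomplex of $\id_\A$ indexed by the bicomodule corners, and identify the composition analogous to $\eta_1 \circ \Delta_1 \circ \phi$ with the cobar-image of the relevant comodule map. The desired identity --- the coalgebraic counterpart of $\infbarr(\chi) - \infbarr(\id) + d(\eta_{10} \circ \Delta_1 \circ \phi) = 0$ --- then follows from the differential conditions on $\epsilon$, $h^l_\bullet$, $h^r_\bullet$, and $\kappa_{\bullet\bullet}$ given by the dualized Theorem \ref{theorem-conditions-for-bimodule-homotopy-unitality}.

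The main obstacle will be the sign bookkeeping in the dualization, and particularly verifying that the higher homotopy $\kappa_{\bullet\bullet}$ is precisely what cancels the cross-terms that appear when $h^l_\bullet$ and $h^r_\bullet$ are applied on opposite sides across the cobar-bicomplex. This is exactly the reason the bicomodule (rather than merely strong) homotopy counitality of $C$ is required in the hypothesis, and also why the conclusion --- strong homotopy counitality of free comodules --- is the precise dual input needed to then feed into Lemma \ref{lemma-tfae-strong-counitality-conditions-for-A-comodules} for the derived category constructions that follow.
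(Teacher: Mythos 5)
Your proposal is correct and is essentially the paper's own argument: the paper states this lemma without proof as the direct dual of Lemma~\ref{lemma-free-modules-are-strong-homotopy-unital}, and your plan --- reduce to $C$ as a comodule over itself by tensoring with $\id_G$, then dualize the bicomplex computation built from the bicomodule counit --- is precisely that dualization. The only inessential deviation is that, as in the paper's proof of the algebra case, you can work directly with the packaged morphism $\barepsilon_{\bullet\bullet}$ and its cobar-construction rather than first unpacking it via the coalgebra analogue of Theorem~\ref{theorem-conditions-for-bimodule-homotopy-unitality}; the unpacked quadruple is equivalent data, and keeping it packaged streamlines the sign bookkeeping you flag as the main obstacle.
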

Thus a comodule is 
strongly homotopy counital if and only if it has a free resolution. 
Finally, for the strongly homotopy counital modules we do have
an analogue of the Homotopy Lemma: 
\begin{lemma}
\label{lemma-coalgebra-analogue-of-the-homotopy-lemma}
Let $(C,\Delta_\bullet)$ be a bicomodule homotopy counital coalgebra 
in a monoidal DG category $\A$.
\begin{enumerate}
\item A comodule in $\conodshuC$ is acyclic iff it is null-homotopic.
\item A twisted complex in $\twcxub(\conodshuC)$ is acyclic iff it is null-homotopic.
\item A morphism of comodules $f_\bullet$ is a homotopy equivalence in $\conodshuC$
iff its component $f_1$ is a homotopy equivalence in $\A$.
\end{enumerate}
\end{lemma}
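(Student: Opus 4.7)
My plan is to prove the three parts in order, mirroring the proof of the algebra Homotopy Lemma (Lemma \ref{lemma-the-homotopy-lemma-for-nodA}). The crucial new input is Lemma \ref{lemma-tfae-strong-counitality-conditions-for-A-comodules}: for any $(G, r_\bullet) \in \conodshuC$, the cobar-construction $\infbar(G, r_\bullet)$ is null-homotopic already as a twisted complex in $\conodC$, not merely in $\A$. This is precisely what the general $\Ainfty$-comodule case lacks, and what makes the argument below succeed.

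For part (1) ``only if'', given a contracting homotopy $h_\bullet$ of $(G, r_\bullet)$ in $\conodshuC$, inspect the slot $(G,G)$ of $d(\infbar h_\bullet)$. The target cobar has no differentials landing in degree $0$, and the components $(\infbar h_\bullet)_{m,0}$ for $m > 0$ vanish since cobar morphisms only have components from source degree to greater-or-equal target degree. Hence only the internal differential contributes, giving $(dh_\bullet)_1 = d_\A h_1$, so $dh_\bullet = \id$ forces $d_\A h_1 = \id_G$. Conversely, suppose $G$ is null-homotopic in $\A$ with contracting homotopy $h$. Strong $H$-counitality makes the co-augmentation $(G, r_\bullet) \to \infbarres(G, r_\bullet)$ a homotopy equivalence in $\conodC$. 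The free functor $\free\colon \A \to \conodC$, defined dually to the construction of \S\ref{section-free-modules-and-bimodules-over-Ainfty-algebra}, is a DG-functor, so each $h \otimes \id^i$ is a strict contracting homotopy of $GC^i$ in $\conodC$; thus $\infbarres(G, r_\bullet)$ is a one-sided bounded-below twisted complex in $\conodC$ whose every object is null-homotopic. Starting from the block-diagonal morphism $H^{(0)}$ assembled from these contractions and applying the standard perturbation iteration $H \mapsto H - H[\alpha, H]$, we construct a contracting homotopy of $\infbarres(G, r_\bullet)$ in $\conodC$: the twist $\alpha$ is strictly upper-triangular, so the series terminates at each slot $(GC^{i}, GC^{j})$ after at most $j-i$ steps. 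Combining with the homotopy equivalence above gives $(G, r_\bullet) \simeq 0$ in $H^0(\conodC)$, and this null-homotopy lies in the full subcategory $\conodshuC$.

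Part (2) is obtained by applying the comodule analogue of the fully-faithful functor $\Phi$ from \cite[Prop.~5.12]{AnnoLogvinenko-UnboundedTwistedComplexes}, which embeds $\twcxub(\conodC)$ into the category of $C$-comodules in $\twcxub_\B \A$; since it intertwines the cobar-constructions, it preserves strong $H$-counitality, reducing the statement for a twisted complex in $\twcxub(\conodshuC)$ to part (1) applied in the enlarged setting. Part (3) follows by applying (2) to the two-step one-sided twisted complex $(G, r_\bullet) \xrightarrow{f_\bullet} (H, s_\bullet)$: its null-homotopy amounts to $f_\bullet$ being a homotopy equivalence in $\conodshuC$, and its acyclicity amounts to $f_1$ being a homotopy equivalence in $\A$.

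The main obstacle throughout is ensuring that the perturbation in part (1) gives a bona fide morphism in $\conodC$ rather than merely a formal expression in $\B$; this is exactly the subtlety that fails for general $\Ainfty$-comodules and is underwritten here by the explicit $\conodC$-level null-homotopy of the cobar coming from strong $H$-counitality via Lemma \ref{lemma-tfae-strong-counitality-conditions-for-A-comodules}.
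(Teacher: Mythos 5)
Your parts (2) and (3), and the ``only if'' half of part (1), are fine and mirror the algebra-side Homotopy Lemma; but the ``if'' half of part (1) --- the heart of the lemma --- has a genuine gap, and it sits exactly where you flag it. After using strong $H$-counitality to replace $(G,r_\bullet)$ by $\infbarres(G,r_\bullet)$, you must contract a bounded-below, unbounded-above one-sided twisted complex of null-homotopic free comodules, and your perturbation series $H \mapsto H - H[\alpha,H]$ only stabilizes \emph{slotwise}. The slotwise limit has, for each fixed source $GC^i$, infinitely many nonzero components $GC^i \rightarrow GC^j$, and by Definition \ref{defn-unbounded-twisted-complexes} an endomorphism of an unbounded twisted complex is a single $\B$-morphism between the direct-sum convolutions: a slotwise-finite matrix of components does not in general assemble into a map into an infinite direct sum. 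Lemma \ref{lemma-tfae-strong-counitality-conditions-for-A-comodules} cannot ``underwrite'' this step: it supplies only the homotopy equivalence $(G,r_\bullet) \simeq \infbarres(G,r_\bullet)$, which you have already consumed, and says nothing about a contracting homotopy of $\infbarres(G,r_\bullet)$ assembled from the $h \otimes \id^i$. The failure is not hypothetical: direct-sum convolution is the wrong-sided totalization for $\pretriagpls$, and already over a field there are bounded-below one-sided twisted complexes of contractible objects (staircase-type examples) whose convolutions are not even acyclic. This is precisely the failure mode announced in the introduction to \S\ref{section-ainfty-coalgebras-and-comodules} as the reason the Homotopy Lemma breaks for comodules, so rerunning the module-side contraction cannot succeed without genuinely new input.

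The paper's proof avoids any infinite construction and argues globally: if $(G,r_\bullet) \in \conodshuC$ is acyclic, the Forgetful-Free homotopy adjunction gives $\homm_{C}((G,r_\bullet), HC) \simeq \homm_\A(G,H) \simeq 0$ for every free comodule $HC$, since $G$ is null-homotopic in $\A$; because every strongly $H$-counital comodule is homotopy equivalent to its cobar resolution, the free comodules generate $D(C) = H^0(\conodshuC)$, whence $(G,r_\bullet) \simeq 0$ there, i.e.\ it is null-homotopic. Parts (2) and (3) then follow as in Lemma \ref{lemma-the-homotopy-lemma-for-nodA}, essentially as you argue. To salvage your route you would need to prove that your particular perturbation series factors through the direct sums in $\B$ (and that the resulting homotopy is a legitimate morphism in $\conodC$, a second layer of the same issue); neither is available from the hypotheses.
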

\begin{proof}
Suppose that a comodule $(G, r_\bullet)\in \conodshuC$ is acyclic. By forgetful-free adjunction, 
for every free comodule $HC$ we have
\begin{equation*}
\homm_C((G, r_\bullet), HC) \simeq \homm_\A(G, H) \simeq 0
\end{equation*}
since $G$ is null-homotopic in $\A$. Since free comodules are generators for $D(C)$,
this means that $(G, r_\bullet)\simeq 0$ in $D(C)=H^0(\conodshuC)$, meaning $(G, r_\bullet)$ 
is null-homotopic in $\conodshuC$. The rest is proved similarly to 
Lemma \ref{lemma-the-homotopy-lemma-for-nodA}.
\end{proof}

Thus the bottom line is: provided that one works with bicomodule
homotopy counital $\Ainfty$-coalgebras and their strongly $H$-counital/strongly
homotopy counital comodules, we have 
the analogues of all the results and definitions 
in \S\ref{section-ainfty-structures-in-monoidal-dg-categories} and 
\S\ref{section-strong-homotopy-unitality}. 

\subsection{The derived category}
\label{section-the-derived-category-of-ainfty-coalgebra}

When defining the derived category of an $\Ainfty$-coalgebra
$(C,\Delta_\bullet)$, we need to work with strongly $H$-counital comodules, 
cf.~\S\ref{section-notions-of-homotopy-counitality}. 
The definitions and results of
\S\ref{section-the-derived-category-the-general-case}-\ref{section-the-derived-category-the-h-unital-case} translate straightforwardly. 
The unbounded derived category $D(C)$ is then 
the cocomplete triangulated hull of $H^0(\conodshuC)$ 
in $H^0(\conodC^\B)$, and the derived category $D_c(C)$ is its compact
part. The derived category $D(C)$ is then generated by free
$C$-comodules, see Lemma
\ref{lemma-tfae-strong-counitality-conditions-for-A-comodules}. 

When trying to translate  
\S\ref{section-the-derived-category-the-strong-homotopy-unital-case}
and describe $D_c(C)$ in terms of the free comodules 
generated by the compact generators of $\A$, and hence in terms of the
co-Kleisli category of $C$, we run into a difficulty. For modules,  
the free module generated by a perfect object of $\A$ was also perfect.
For comodules, this is not apriori so, since we have 
the Forgetful-Free homotopy adjunction, and not the Free-Forgetful
one. 

Hence, apriori, there is no good description of $D_c(C)$
along the lines of 
\S\ref{section-the-derived-category-the-strong-homotopy-unital-case}
even if $C$ is bicomodule homotopy counital. Indeed, even if it is  
strictly counital. 

\subsection{Coalgebras with perfect free comodules}
\label{section-coalgebras-with-perfect-free-comodules}

We can impose an additional condition on 
$\Ainfty$-coalgebra $(C,\Delta_\bullet)$: the free comodules 
generated by perfect objects of $\A$ are perfect. 
Then all the arguments in 
\S\ref{section-the-derived-category-the-strong-homotopy-unital-case}
translate straightforwardly to coalgebras and comodules. 
In particular, we have:
\begin{theorem}
\label{theorem-compact-derived-category-of-bimodule-homotopy-unital-C-is-that-of-frees-and-cokleisli}
Let $C$ be a bicomodule homotopy counital $\Ainfty$-coalgebra in a
monoidal DG category $\A$ and $S$ be a set of compact generators 
of $H^0(\A)$ in $H^0(\B)$. 

If for every perfect $G \in \A$, the free $\Ainfty$-comodule $GC$ is
also perfect, then 
$$ D_c(C) \simeq D_c(\free_S\text{-}C) \simeq D_c(\cokleisliCS). $$
Here $\free_S\text{-}C$ denotes the full subcategory of the DG
category of free $C$-comodules comprising the free comodules
generated by the objects of $S$. Moreover, $\cokleisliCS$ 
denotes the full subcategory of $\cokleisliC$ comprising all objects of $S$.
\end{theorem}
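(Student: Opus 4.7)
My plan is to mimic the proof of Theorem~\ref{theorem-compact-derived-category-of-A-is-that-of-frees-and-kleisli} (the algebra-side statement), replacing every ingredient by its coalgebra-side analogue promised in \S\ref{section-ainfty-coalgebras-and-comodules}--\S\ref{section-coalgebras-with-perfect-free-comodules}. First, the second equivalence $D_c(\free_S\text{-}C) \simeq D_c(\cokleisliCS)$ should come from the co-Kleisli-to-free $\Ainfty$-functor, i.e.\ the coalgebra analogue of Defn.~\ref{defn-kleisli-to-free-Ainfty-functor} and Theorem~\ref{theorem-ainfty-quasi-equivalence-from-kleisli-to-free}. Bicomodule homotopy counitality of $C$ implies strong homotopy counitality, which is exactly the hypothesis needed so that the co-Kleisli analogue of the map $f_\bullet$ is quasi-fully faithful on all $\homm$-complexes by the Forgetful-Free homotopy adjunction; being bijective on objects as well, it is a quasi-equivalence of classical $\Ainfty$-categories, inducing the stated equivalence of compact derived categories.

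For the first equivalence $D_c(C) \simeq D_c(\free_S\text{-}C)$, I would argue in two steps. First, I verify that $\free_S\text{-}C$ is a set of compact generators of $D(C)$. Generation: by the coalgebra analogue of Prop.~\ref{prps-derived-category-of-A-is-triangulated-cocomplete-hull-of-freeA}, every object of $\conodshuC$ is homotopy equivalent to its cobar-resolution (here we crucially use Lemma~\ref{lemma-coalgebra-analogue-of-the-homotopy-lemma}, the restricted Homotopy Lemma for strongly homotopy counital comodules, which works because $C$ is bicomodule homotopy counital), so $\free\text{-}C$ generates $D(C)$; since $S$ generates $H^0(\A)$ in $H^0(\B)$, the free-comodule functor then propagates this to the statement that $\free_S\text{-}C$ generates $\free\text{-}C$ cocompletely and triangulatedly, hence also $D(C)$. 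Compactness: this is exactly where the hypothesis ``$GC$ perfect for every perfect $G \in \A$'' is needed -- it asserts precisely that the objects of $\free_S\text{-}C$ lie in $D_c(C)$.

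Once $\free_S\text{-}C$ is identified as a compact generating set, I would invoke the universal property of $\hperf$ exactly as in the algebra case: the fully faithful inclusion $\free_S\text{-}C \hookrightarrow \conoddinf\text{-}C^\B$ extends to a fully faithful embedding $\hperf(\free_S\text{-}C) \hookrightarrow \conoddinf\text{-}C^\B$, and passing to $H^0$ gives a fully faithful exact embedding $D_c(\free_S\text{-}C) \hookrightarrow D(C)$ whose essential image is the Karoubi complete triangulated hull of $\free_S\text{-}C$. By \cite[Theorem~5.3]{Keller-DerivingDGCategories} applied to the compact generating set $\free_S\text{-}C$, this hull is exactly $D_c(C)$, yielding $D_c(\free_S\text{-}C) \simeq D_c(C)$.

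The main obstacle is the subtle point already flagged in \S\ref{section-notions-of-homotopy-counitality}: without the additional hypothesis that free comodules on perfect objects are perfect, the Forgetful-Free adjunction goes the ``wrong'' way and there is no automatic reason for $GC$ to be compact even when $G$ is. Once that hypothesis is assumed, every other step is a faithful transcription of the algebra argument, but one must be careful at each stage to restrict to $\conodshuC$ (rather than the apriori larger $\conod^{hu}\text{-}C$) so that Lemma~\ref{lemma-coalgebra-analogue-of-the-homotopy-lemma} applies and cobar-resolutions behave as expected.
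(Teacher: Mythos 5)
Your proposal is correct and is essentially the paper's own proof: the paper proves this theorem by asserting that, under the hypothesis that free comodules on perfect objects are perfect, all the arguments of \S\ref{section-the-derived-category-the-strong-homotopy-unital-case} (the co-Kleisli analogue of Theorem \ref{theorem-ainfty-quasi-equivalence-from-kleisli-to-free}, the cobar-resolution generation argument of Prop.~\ref{prps-derived-category-of-A-is-triangulated-cocomplete-hull-of-freeA}, and the $\hperf$/Keller argument of Prop.~\ref{prps-perfect-iff-lies-in-hperf-of-perfect-generator-frees} and Theorem \ref{theorem-compact-derived-category-of-A-is-that-of-frees-and-kleisli}) translate verbatim, which is exactly the transcription you carry out. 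You also correctly identify the two points where the translation is not automatic --- the perfectness hypothesis standing in for Lemma \ref{lemma-perfect-in-nodhuaA-iff-perfect-in-A} (since the Forgetful-Free adjunction goes the wrong way), and the restriction to $\conodshuC$ so that Lemma \ref{lemma-coalgebra-analogue-of-the-homotopy-lemma} applies.
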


This condition holds if there is a homotopy right adjoint
to $C$ in $\A$: 
\begin{lemma}
\label{lemma-has-right-adjoint-implies-perfect-objects-generate-perfect-comodules}
Let $C$ be a bicomodule homotopy counital $\Ainfty$-coalgebra in a
monoidal DG category $\A$. If there exists a homotopy right adjoint 
$A \in \A$ to $C$ as an object of $\A$, 
then for any perfect $G\in \A$ the free $\Ainfty$-comodule 
$GC \in \conoddinfshu\text{-}C$ is perfect.  
\end{lemma}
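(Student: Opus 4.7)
The plan is to prove that $GC$ is perfect as a comodule by first establishing it is compact in $H^0(\B)$, then leveraging the cobar-resolution together with the forgetful-free adjunction to reduce comodule compactness to compactness in the ambient category.

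For the first step, I would exploit the homotopy adjunction $-\otimes C \dashv -\otimes A$ on $\B$, which extends from $\A$ via Yoneda. Since $\B$ is closed monoidal, $-\otimes A$ preserves direct sums, and since $G$ is perfect, $\homm_\B(G,-)$ commutes with them. Chaining these gives a quasi-isomorphism
\begin{align*}
\homm_\B(GC, \textstyle\bigoplus_i H_i)
&\simeq \homm_\B(G, (\bigoplus_i H_i)A)
\simeq \homm_\B(G, \bigoplus_i H_iA) \\
&\simeq \bigoplus_i \homm_\B(G, H_iA)
\simeq \bigoplus_i \homm_\B(GC, H_i),
\end{align*}
showing $GC$ is compact in $H^0(\B)$, i.e.\ perfect in $\A$.

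For the second step, since $C$ is bicomodule homotopy counital, by the coalgebra analogue of Lemma~\ref{lemma-free-modules-are-strong-homotopy-unital} every free comodule $XC$ is strongly homotopy counital. Hence any $(X,s_\bullet)\in\conodshuC$ is homotopy equivalent in $\conodshuC$ to its cobar-resolution $\infbarres(X,s_\bullet) = [XC\to XC^2\to XC^3\to\dots]$, a one-sided unbounded twisted complex of free comodules. Moreover, since $\B$ is closed monoidal, $(\bigoplus_i X_i)C^k = \bigoplus_i X_iC^k$, and the twisted differentials of $\infbarres(\bigoplus_i X_i, \bigoplus_i s^i_\bullet)$ are block-diagonal with respect to the $i$-index (the structure maps of $C$ act uniformly, while the $s^i_\bullet$ act within each summand). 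Thus $\infbarres$ commutes with direct sums as a twisted complex.

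For the third step, I would compute, for any family $(X_i,s^i_\bullet)$ in $\conodshuC$,
\begin{align*}
\homm_{D(C)}(GC, \textstyle\bigoplus_i (X_i,s^i_\bullet))
&\simeq \homm_{\conodshuC^\B}(GC, \bigoplus_i \infbarres(X_i,s^i_\bullet)).
\end{align*}
By Defn.~\ref{defn-unbounded-twisted-complexes}, the underlying graded module of this $\homm$-complex is $\homm_\B(GC, \bigoplus_{i,k} X_i C^k[-(k-1)])$. Compactness of $GC$ in $\B$ from Step~1 moves the direct sum out, and applying the coalgebra forgetful-free adjunction $\homm_{\conodshuC^\B}(GC, YC)\simeq \homm_\B(GC, Y)$ termwise re-assembles this as $\bigoplus_i \homm_{\conodshuC^\B}(GC, \infbarres(X_i,s^i_\bullet)) \simeq \bigoplus_i \homm_{D(C)}(GC,(X_i,s^i_\bullet))$.

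The main obstacle I anticipate is verifying that this chain of isomorphisms respects the full twisted differential on the $\homm$-complex, not merely the underlying graded module. The block-diagonal structure of the twisted differentials of $\bigoplus_i\infbarres(X_i,s^i_\bullet)$ with respect to the $i$-index ensures that the contribution of the target differential to the $\homm$-complex differential splits as a direct sum over $i$ as well, so the chain isomorphism is indeed a chain map; the source contributions vanish since $GC$ is concentrated in a single degree. This last compatibility check is the only non-cosmetic point to execute carefully.
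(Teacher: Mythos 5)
Your proposal is correct and follows essentially the same route as the paper's proof: replace the target by its cobar-resolution into free comodules (legitimate because bicomodule homotopy counitality makes free comodules strongly homotopy counital, and the resolution of a direct sum is the block-diagonal direct sum of resolutions), then commute the direct sum termwise using the Forgetful-Free homotopy adjunction followed by the homotopy adjunction between $C$ and $A$ together with perfectness of $G$ --- your Step 1 merely front-loads these last two moves as the statement that $GC$ is compact in $H^0(\B)$, which is precisely the reduction the paper carries out termwise. One cosmetic slip to fix: in Step 3 the underlying graded module of the $\homm$-complex is the comodule-hom $\homm_{\conoddinf\text{-}C^\B}\bigl(GC, \bigoplus_{i,k} X_iC^k[-(k-1)]\bigr)$ rather than the $\B$-hom, so the identification with $\B$-homs is itself only the termwise homotopy equivalence supplied by the forgetful-free adjunction you invoke next; the correct order is adjunction first, then compactness of $GC$ in $H^0(\B)$, exactly as in the paper, after which your block-diagonality observation handles compatibility with the twisted differentials just as the paper's strict inclusion-component comparison map does.
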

\begin{proof}
Let $\left\{ H_i \right\}_{i \in I}$ be a collection 
in $\conoddinfshu$-$C^\B$. Then $\bigoplus_{i \in I} H_i$ is also
strong $H$-counital, and thus homotopy equivalent to its cobar-resolution. 
We therefore have a natural homotopy equivalence in $\modk$
\begin{small}
\begin{equation*}
\begin{tikzcd}[column sep=0.5cm]
\homm_{C^\B}(GC,
\ar{d}{\sim}
&
\bigoplus_{i \in I} H_i)
&
&
&
\\
\homm_{C^\B}\Bigl(GC, 
&
(\bigoplus_{i \in I} H_i)C
\ar{r}
\ar[bend left=10]{rr}
\ar[bend left=12.5]{rrr}
&
(\bigoplus_{i \in I} H_i)C^2
\ar{r}
\ar[bend left=10]{rr}
&
(\bigoplus_{i \in I} H_i)C^3
\ar{r}
&
\dots
\Bigr)
\end{tikzcd}
\end{equation*}
\end{small}
For brevity, we write $\homm_{C^\B}$ for $\homm_{\conoddinf\text{-}C^\B}$. 
The bottom expression is isomorphic to the following twisted
complex over $\modk$
\begin{scriptsize}
\begin{equation*}
\begin{tikzcd}[column sep=0.5cm]
\homm_{C^\B}\Bigl(GC, (\bigoplus_{i \in I} H_i)C\Bigr)
\ar{r}
\ar[bend left=10]{rr}
\ar[bend left=12.5]{rrr}
&
\homm_{C^\B}\Bigl(GC, (\bigoplus_{i \in I} H_i)C^2\Bigr)
\ar{r}
\ar[bend left=10]{rr}
&
\homm_{C^\B}\Bigl(GC, (\bigoplus_{i \in I} H_i)C^3\Bigr)
\ar{r}
&
\dots
\end{tikzcd}
\end{equation*}
\end{scriptsize}
Similarly, $\bigoplus_{i \in I} \homm_{\C^\B}(GC, H_i)$ is homotopy
equivalent to 
\begin{scriptsize}
\begin{equation*}
\begin{tikzcd}[column sep=0.5cm]
\bigoplus_{i \in I} \homm_{C^\B}\Bigl(GC, H_iC\Bigr)
\ar{r}
\ar[bend left=10]{rr}
\ar[bend left=12.5]{rrr}
&
\bigoplus_{i \in I} \homm_{C^\B}\Bigl(GC,  H_iC^2\Bigr)
\ar{r}
\ar[bend left=10]{rr}
&
\bigoplus_{i \in I} \homm_{C^\B}\Bigl(GC, H_iC^3\Bigr)
\ar{r}
&
\dots
\end{tikzcd}
\end{equation*}
\end{scriptsize}
Under these identifications, the natural inclusion map 
\begin{equation}
\label{eqn-bigoplus-homm-to-homm-bigoplus-for-EC}
\homm_{C^\B}(GC, \bigoplus_{i \in I} H_i)
\hookrightarrow 
\bigoplus_{i \in I} \homm_{C^\B}(GC, H_i)
\end{equation}
corresponds to the closed degree zero map of twisted complexes
\begin{scriptsize}
\begin{equation*}
\begin{tikzcd}[column sep=0.5cm, row sep = 1cm]
\homm_{C^\B}\Bigl(GC, (\bigoplus_{i \in I} H_i)C\Bigr)
\ar{r}
\ar[bend left=10]{rr}
\ar[bend left=12.5]{rrr}
&
\homm_{C^\B}\Bigl(GC, (\bigoplus_{i \in I} H_i)C^2\Bigr)
\ar{r}
\ar[bend left=10]{rr}
&
\homm_{C^\B}\Bigl(GC, (\bigoplus_{i \in I} H_i)C^3\Bigr)
\ar{r}
&
\dots
\\
\bigoplus_{i \in I} \homm_{C^\B}\Bigl(GC, H_iC\Bigr)
\ar{r}
\ar[bend left=10]{rr}
\ar[bend left=12.5]{rrr}
\ar{u}
&
\bigoplus_{i \in I} \homm_{C^\B}\Bigl(GC, H_iC^2\Bigr)
\ar{r}
\ar[bend left=10]{rr}
\ar{u}
&
\bigoplus_{i \in I}\homm_{C^\B}\Bigl(GC, H_iC^3\Bigr)
\ar{r}
\ar{u}
&
\dots
\end{tikzcd}
\end{equation*}
\end{scriptsize}
whose components are postcompositions with 
inclusions $H_i C^j \hookrightarrow \bigoplus_{i \in I} H_i C^j$. 

Thus, to show that 
\eqref{eqn-bigoplus-homm-to-homm-bigoplus-for-EC} is a homotopy 
equivalence and hence $GC$ is perfect, it suffices to establish 
that for all $j \geq 1$ the folowing map is a homotopy equivalence:
$$
\bigoplus_{i \in I}\homm_{C^\B}\Bigl(GC,H_iC^j\Bigr)
\hookrightarrow 
\homm_{C^\B}\Bigl(GC,\bigoplus_{i \in I}H_iC^j\Bigr)
$$
By Forgetful-Free homotopy adjunction, this is equivalent to the map 
$$
\bigoplus_{i \in I}\homm_{\B}\Bigl(GC,H_iC^{j-1}\Bigr)
\hookrightarrow 
\homm_{\B}\Bigl(GC,\bigoplus_{i \in I}H_iC^{j-1}\Bigr)
$$
being one. By the homotopy adjunction of $C$ and
$A$ this is further equivalent to 
$$
\bigoplus_{i \in I}\homm_{\B}\Bigl(G, H_iC^{j-1}A\Bigr)
\hookrightarrow 
\homm_{\B}\Bigl(G,\bigoplus_{i \in I}H_iC^{j-1}A\Bigr)
$$
being a homotopy equivalence. This holds since $G$ was assumed 
to be perfect.  
\end{proof}

\subsection{Comparison to known constructions}
\label{section-comparison-to-known-constructions}

Let $C$ be a counital DG coalgebra in the classical sense,
i.e.~$\A=\B=\modk$.
Positselski showed
\cite{Positselski-TwoKindsOfDerivedCategoriesKoszulDualityAndComoduleContramoduleCorrespondence}
that the quotient of the homotopy category of
the category $\comodd\text{-}C$ of
all DG $C$-comodules by the acyclics is
equivalent to the homotopy category of the category $\comodd_{hinj}\text{-}C$
of $h$-injective DG $C$-comodules. He called this the derived category 
of $C$ and then argued that it is not invariant under quasi-isomorphisms 
of coalgebras, citing a counterexample by Kaledin.

A DG $C$-comodule is a strictly counital strict $\Ainfty$
$C$-comodule. As it is strictly counital, its cobar construction 
is acyclic. If it is $h$-injective, its cobar construction is 
$h$-injective being a bounded below complex of
$h$-injectives. Being both acyclic and $h$-injective, it is 
null-homotopic. We conclude that $h$-injective DG $C$-comodules
are strong homotopy counital. Similarly,  
$h$-injective homotopy counital $\Ainfty$ $C$-comodules are strong
homotopy counital. Conversely, any strong homotopy counital $\Ainfty$
$C$-comodule is homotopy counital and $h$-injective as it is homotopy
equivalent to a bounded below twisted complex of free comodules.  
Thus for DG comodules, $h$-injectivity is equivalent to strong
homotopy counitality. 

In $\comodd\text{-}C$ we have genuine Forgetful-Free adjunction, 
and in $\conodhuC$ we have a homotopy one. Thus for any DG
$C$-comodules $E$ and $F$ with $F$ free the natural
inclusion $\homm_{\comodd\text{-}C}(E,F) \hookrightarrow \homm_{\conodC}(E,F)$
is a quasi-isomorphism. The same is true for any $h$-injective $F$, 
since such $F$ is homotopy equivalent 
to a bounded below twisted complex of free comodules. 
Thus the natural inclusion $\comodd_{hinj}\text{-}C
\hookrightarrow \conodshuC$ is quasi-fully faithful. Every strong homotopy
counital $\Ainfty$ comodule is homotopy equivalent to its cobar
resolution, and hence to an $h$-injective DG $C$-comodule. 
We conclude that $\comodd_{hinj}\text{-}C \hookrightarrow \conodshuC$  
is a quasi-equivalence. In particular, our $D(C)$ as defined in
\S\ref{section-the-derived-category-of-ainfty-coalgebra} is
equivalent to the derived category of $C$ defined by Positselski
\cite[\S2.4]{Positselski-TwoKindsOfDerivedCategoriesKoszulDualityAndComoduleContramoduleCorrespondence}. In his terminology, it is the derived category of first kind. 

The analogue of a quasi-isomorphism of DG coalgebras in our generality is
an $\Ainfty$-morphism $f_\bullet\colon C \rightarrow D$ of $\Ainfty$-coalgebras 
whose first component $f_1$ is a homotopy equivalence in the monoidal
category $\A$ where $C$ and $D$ are defined. This notion is, indeed, 
too weak to ensure equivalence $D(C) \simeq D(D)$. Furthermore, just 
like Homotopy Lemma fails for $\Ainfty$-comodules, we expect that 
the analogue of \cite[Corollaire 1.3.1.3]{Lefevre-SurLesAInftyCategories}
fails for $\Ainfty$-coalgebras: $f_1$ being a homotopy equivalence
in $\A$ doesn't imply that $f_\bullet$ is a homotopy equivalence of 
$\Ainfty$-coalgebras.

However, as explained in \S\ref{section-coalgebras-with-perfect-free-comodules}, if $C$ and $D$ satisfy the condition that free comodules generated 
by perfect objects of $\A$ are perfect then the analogues of all
results in \S\ref{section-the-derived-category-the-strong-homotopy-unital-case}
hold for them. In particular, $D_c(C) \simeq D_c(D)$ similar to Corollary
\ref{cor-quasi-isomorphism-of-Ainfty-algebra-implies-equiv-of-derived-cats}.

\section{Module-comodule correspondence}
\label{section-module-comodule-correspondence}
 
In this section we show that if a bicomodule homotopy counital 
$\Ainfty$-coalgebra $C$ and a strongly homotopy unital $\Ainfty$-algebra $A$ 
are homotopy adjoint, their derived categories of modules and comodules,
respectively, are equivalent.

\subsection{Homotopy adjoint $\Ainfty$-algebras and coalgebras}
\label{section-homotopy-adjoint-Ainfty-algebras-and-coalgebras}

We propose the following definition of 
an$\Ainfty$-algebra and an $\Ainfty$-coalgebra being homotopy adjoint:

\begin{defn}
\label{defn-homotopy-adjoint-ainfty-coalgebra-and-algebra}
Let $(C,\Delta_\bullet)$ and $(A,m_\bullet)$ be an $\Ainfty$-coalgebra
and $\Ainfty$-algebra in a monoidal DG category $\A$. We say that 
$(C,\Delta_\bullet)$ and $(A,m_\bullet)$ are \em homotopy adjoint
as $\Ainfty$-coalgebra and $\Ainfty$-algebra \rm if there exist 
\begin{itemize}
\item A collection $\left\{ \eta_i\colon \id \rightarrow C^i A
\right\}_{i \geq 1}$ of degree $1 - i$ morphisms in $\A$, 
\item A collection $\left\{ \epsilon_i\colon A^iC \rightarrow \id \right\}_{i \geq 1}$ of degree $1 - i$ morphisms in $\A$,  
\end{itemize}
such that 
\begin{enumerate}
\item 
\label{item-homotopy-adjoint-ainfty-coalgebra-and-algebra-eta1-epsilon1}
$\eta_1\colon \id \rightarrow CA$ and $\epsilon_1 \colon AC
\rightarrow \id$ are a counit and a unit of a homotopy adjunction of 
$C$ and $A$ as objects of $\A$, 
\item 
\label{item-homotopy-adjoint-ainfty-coalgebra-and-algebra-eta-twisted-complex}
The following is a twisted complex over $\A$: 
\begin{small} 
\begin{equation}
\label{eqn-the-twisted-complex-for-eta-i}
\begin{tikzcd}[column sep=3cm]
\id
\ar{r}{\eta_1}
\ar[bend left=16]{rr}[description]{\eta_2}
\ar[bend left=17]{rrr}[description]{\eta_3}
&
CA
\ar{r}{C^2m_2\circ C\eta_1A -}[']{-\Delta_2A}
\ar[bend right=16,']{rr}{- C^3m_2\circ C\eta_2A - C^3m_3\circ C^2\eta_1A^2\circ C\eta_1A - \Delta_3A }
&
C^2A
\ar{r}{C^3m_2\circ C\eta_1A -}[']{-C\Delta_2A + \Delta_2CA}
&
\ldots
\end{tikzcd}
\end{equation}
\end{small} 
The differentials $\id \to C^iA$ are $\eta_i$
and those $C^{i-1}A \rightarrow C^{k+i-1}A$ are: 
$$
(-1)^{(i-1)(k+1)} 
\left(\sum_{j = 1}^{i-1} \left( (-1)^{jk} C^{i-j-1} \Delta_{k+1}C^{j-1}A\right) + C^{i-1} r_{k+1} \right)
$$
where 
$$
r_{k+1} = \sum\limits_{k_1+\ldots + k_n=k} 
C^k m_{n+1} \circ C^{k_1+\ldots + k_{n-1}}\eta_{k_n} A^{n} \circ \ldots \circ C^{k_1}\eta_{k_2}A \circ \eta_{k_1}A \  \colon A \to C^kA.
$$
\item
\label{item-homotopy-adjoint-ainfty-coalgebra-and-algebra-epsilon-twisted-complex}
 Morphisms $\eta_i$ fit into a similar twisted complex over $\A$
whose differentials $A^i C \rightarrow \id$ are $\epsilon_i$
and those $A^{k+i-1}C \rightarrow A^{i-1}C$ are:
$$
(-1)^{(i-1)(k+1)} 
\left(\sum_{j = 1}^{i-1} \left( (-1)^{jk} A^{i-j-1}
m_{k+1}A^{j-1}C\right) + A^{i-1} p_{k+1} \right)
$$
where 
$$
p_{k+1} =\sum\limits_{k_1+\ldots+k_n=k}
\epsilon_{k_n}C \circ A^{k_n}\epsilon_{k_{n-1}}C^2 \circ \ldots \circ
A^{i-k_1}\epsilon_{k_1}C^{n-1} \circ A^i \Delta_{n}
\colon A^k C \to C.
$$
\end{enumerate}
\end{defn}

To motivate this definition, consider the situation where $C$
and $A$ are a strict colagebra and a strict algebra. In other words
$ m_{\geq 3} = \Delta_{\geq 3} = 0$ and we write $\Delta$ 
for the comultiplication $\Delta_2$ of $C$ and $\mu$ for the
multiplication $m_2$ of $A$. 

Suppose $C$ and $A$ are genuinely adjoint as objects of $\A$ with 
unit $\eta\colon \id \rightarrow CA$ and counit $\epsilon\colon AC
\rightarrow \id$. Then the composition 
$q\colon A \xrightarrow{\eta{A}} CA^2 \xrightarrow{C \mu} CA$
gives $A$ the structure of (strict) left $C$-comodule if and only
if the following diagram commutes:
\begin{equation}
\label{eqn-homotopy-adjunction-algebra-coalgebra-strict-case-condition-1}
\begin{tikzcd}
A 
\ar{r}{q} 
&
CA
\ar{d}[']{C\eta{A}}
\ar{dr}{\Delta{A}}
&
\\
&  
C^2A^2
\ar{r}[']{C^2\mu}
&
C^2 A.
\end{tikzcd}
\end{equation}
If it does, associativity of $\mu$ implies that $A$ is a 
$C$-comodule-$A$-module: the coaction commutes with the action. 
Similarly, 
$p\colon AC \xrightarrow{A{\Delta}} AC^2 \xrightarrow{\epsilon{C}} C$
gives $C$ the structure of (strict) left $A$-module if and 
only if the following diagram commutes
\begin{equation}
\label{eqn-homotopy-adjunction-algebra-coalgebra-strict-case-condition-2}
\begin{tikzcd}
A^2 C
\ar{r}{A^2\Delta} 
\ar{dr}{\mu{C}}
&
A^2C^2
\ar{d}{A\epsilon{C}} 
&
&
\\
&
AC
\ar{r}{p}
&
C.
\end{tikzcd}
\end{equation}
If it does, then $C$ is an $A$-module-$C$-comodule. 

If $\A$ admits kernels and cokernels, 
we can define the functor of tensor product over
$A$ and the functor of cotensor product over $C$. Then
cotensoring with the $C$-comodule-$A$-module $A$ and 
tensoring with the $A$-module-$C$-comodule $C$ gives mutually inverse
equivalences $\comodd\text{-}C \leftrightarrows \modd\text{-}A$ 
of categories of strict strictly counital
$C$-comodules and strict strictly unital $A$-modules. 

This suggests that in the strict case to be adjoint as an algebra and 
a coalgebra, $A$ and $C$ have to be adjoint as objects of $\A$ and 
the unit and counit of the adjunction have to be compatible with the
multiplication of $A$ and the comultiplication of $C$ in the way
specified by the conditions
\eqref{eqn-homotopy-adjunction-algebra-coalgebra-strict-case-condition-1}
and \eqref{eqn-homotopy-adjunction-algebra-coalgebra-strict-case-condition-2}. 

Our Definition \ref{defn-homotopy-adjoint-ainfty-coalgebra-and-algebra} 
generalises this to non-strict case, where
$(C,\Delta_\bullet)$ and $(A,\mu_\bullet)$ are arbitrary 
$\Ainfty$-coalgebra and $\Ainfty$-algebra. It was more 
natural to work with extensions of the desired structures on 
$A$ and $C$ by $\id_\A$ viewed as the zero comodule-module or 
the zero module-comodule. Indeed, an $\Ainfty$-$C$-comodule-$A$-module 
structure on $\id \oplus A$ such that
\begin{itemize}
\item The projection $\id \oplus A \rightarrow \id$ is 
a morphism of $\Ainfty$-$C$-comodule-$A$-modules, where we take  
$\id$ with the zero comodule-module structure, 
\item the $\Ainfty$-$A$-module structure is given by
$A \oplus A^2 \xrightarrow{\left(\begin{smallmatrix} 0 & 0 \\ \id &
\mu_\bullet \end{smallmatrix}\right)} \id \oplus A$, 
\end{itemize}
is the same as a collection of degree $1-i$ maps 
$\eta_i \colon \id \rightarrow C^iA$ which fit into the
twisted complex in 
Defn.~\ref{defn-homotopy-adjoint-ainfty-coalgebra-and-algebra}\eqref{item-homotopy-adjoint-ainfty-coalgebra-and-algebra-eta-twisted-complex}. 
Then $\eta_2$ is the homotopy up to which the diagram 
\begin{equation}
\label{eqn-strict-case-condition-for-C-coaction}
\begin{tikzcd}
\id 
\ar{r}{\eta_1} 
&
CA
\ar{d}[']{C\eta_1{A}}
\ar{dr}{\Delta{A}}
&
\\
&  
C^2A^2
\ar{r}[']{C^2\mu}
&
C^2 A
\end{tikzcd} 
\end{equation}
commutes. It is a stronger version of 
the condition 
\eqref{eqn-homotopy-adjunction-algebra-coalgebra-strict-case-condition-1}.
Similarly, the analogous $\Ainfty$-$A$-module-$C$-comodule 
structure on $C$ is  
the same as a collection of degree $1-i$ maps $\epsilon_i \colon  A^iC
\rightarrow \id$ which fit into the twisted complex 
in Defn.~\ref{defn-homotopy-adjoint-ainfty-coalgebra-and-algebra}\eqref{item-homotopy-adjoint-ainfty-coalgebra-and-algebra-epsilon-twisted-complex}.

Even in the strict case we might still need higher $\eta_i$ and
$\epsilon_i$. For example, when $\eta_1$ and $\epsilon_1$, 
the unit and counit of a homotopy adjunction of $C$ and $A$
as the objects of $\A$, only make the diagrams
\eqref{eqn-homotopy-adjunction-algebra-coalgebra-strict-case-condition-1}
and 
\eqref{eqn-homotopy-adjunction-algebra-coalgebra-strict-case-condition-2}
commute up to homotopy. An example is given by any homotopy adjoint triple 
of objects of $\A$:
\begin{prps}
\label{prps-LF-and-RF-are-homotopy-adjoint-in-a-monoidal-category}
Let $\A$ be a monoidal DG category. Let $(L,F)$ and $(F,R)$ be
homotopy adjoint pairs in $\A$ with unit and counit 
$(\eta_l,\epsilon_l)$ and $(\eta_r, \epsilon_r)$. 
Define
$$ \Delta := LF \xrightarrow{L\eta_lF} LFLF, $$
$$ \mu := RFRF \xrightarrow{R\epsilon_rF} RF. $$

Then $(LF, \Delta)$ and $(RF,\mu)$ are a strict coalgebra and a strict 
algebra which are strongly homotopy (co)unital and 
homotopy adjoint as per 
Definition \ref{defn-homotopy-adjoint-ainfty-coalgebra-and-algebra}.
\end{prps}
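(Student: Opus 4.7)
Plan:

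The proof requires verifying four things: (i) $\Delta$ makes $LF$ a strict (coassociative) coalgebra; (ii) $\mu$ makes $RF$ a strict (associative) algebra; (iii) these structures are strongly homotopy counital and unital, respectively; and (iv) the pair is homotopy adjoint in the sense of Definition \ref{defn-homotopy-adjoint-ainfty-coalgebra-and-algebra}. I would proceed in this order.

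Parts (i) and (ii) are the standard comonad/monad construction from an adjunction: coassociativity $\Delta LF \circ \Delta = LF\Delta \circ \Delta$ unfolds on both sides to $L\eta_l F L\eta_l F \colon LF \to LFLFLF$ by naturality of $\eta_l$, and dually $\mu \circ \mu RF = \mu \circ RF \mu$ by naturality of $\epsilon_r$. For (iii), I would take $\epsilon_l \colon LF \to \id$ as the counit of $LF$ and $\eta_r \colon \id \to RF$ as the unit of $RF$. The homotopy triangle identities of the pairs $(L,F)$ and $(F,R)$ give degree $-1$ morphisms in $\A$ such as $\beta_r \colon R \to R$ satisfying $R\epsilon_r \circ \eta_r R = \id_R + d\beta_r$, and whiskering with $F$ yields $\mu \circ \eta_r RF = \id_{RF} + d(\beta_r F)$. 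Since $RF$ is strict as an algebra (its operations $m_i$ vanish for $i \geq 3$), I would show that the strict $\Ainfty$-homotopy $h^r_\bullet = (\beta_r F, 0, 0, \ldots)$ in $\noddinf\text{-}RF$ realises the strong homotopy unitality identity $\mu_2 \circ \eta_r RF = \id_{RF} + dh^r_\bullet$: the bar differential applied to $h^r_\bullet$ has no higher components because the operations of $RF$ above arity $2$ are zero, and the same holds for $\mu_2 \circ \eta_r RF$. The construction of $h^l_\bullet$ and the dual counital statement for $(LF, \Delta)$ are entirely symmetric.

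For (iv), the first-order morphisms $\eta_1$ and $\epsilon_1$ come from the homotopy adjunction between the objects $LF$ and $RF$ in $\A$ induced by composing the adjunctions $L \dashv F$ and $F \dashv R$. To produce $\eta_i$ and $\epsilon_i$ for $i \geq 2$, I would reinterpret the data collectively: the system $(\eta_i)$ is equivalent to an $\Ainfty$-morphism $\id \to A$ in $\Cconod$, where $A = RF$ is equipped with an induced $\Ainfty$-$C$-comodule structure and $\id$ carries the zero structure; similarly $(\epsilon_i)$ corresponds to an $\Ainfty$-morphism $C \to \id$ in $\Anod$. I would construct these $\Ainfty$-morphisms by induction on $i$, at each step using the homotopy triangle identities and the strict coassociativity/associativity to lift the lower-order data. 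The main obstacle, and the lengthiest part of the argument, is the combinatorial bookkeeping: tracking signs, writing out the twisted complex differential of \eqref{eqn-the-twisted-complex-for-eta-i}, and at each inductive step exhibiting a homotopy that witnesses the obstruction cocycle as a boundary. No conceptually new ingredients beyond those developed in \S\ref{section-ainfty-structures-in-monoidal-dg-categories}--\S\ref{section-strong-homotopy-unitality} (bar-construction, homotopy unitality, the Homotopy Lemma) are required.
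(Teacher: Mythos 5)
Your steps (i)--(ii) (strict coassociativity of $\Delta$ and associativity of $\mu$ via naturality/bifunctoriality) match the paper, and the overall shape of (iii) is also the paper's: the right homotopy is the whiskered triangle homotopy $\beta_rF = h^r_RF$ and the left one is $Rh^r_F$. Two corrections there, though. First, your justification that ``the bar differential applied to $h^r_\bullet$ has no higher components because the operations of $RF$ above arity $2$ are zero'' is wrong as stated: even for a strict algebra, the differential in $\noddinf\text{-}RF$ of a strict degree $-1$ morphism $(f_1,0,0,\dots)$ has an arity-two component $\pm\bigl(\mu\circ (f_1\otimes\id_{RF}) - f_1\circ\mu\bigr)$; vanishing of $m_{\geq 3}$ only kills arities $\geq 3$. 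The step is saved because $h^r_RF$ acts on the leftmost factor while right multiplication contracts the right-hand factors, so they commute by bifunctoriality of $\otimes$ --- exactly the point the paper's proof emphasizes. Second, the left-module homotopy is not ``entirely symmetric'': it is $Rh^r_F$, built from the \emph{other} triangle homotopy of the same adjunction $(F,R)$, not from the $(L,F)$ data (which instead enters the counital statement for $LF$).

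The genuine gap is in (iv). You propose to produce the higher homotopies $\eta_i,\epsilon_i$ by induction, ``at each inductive step exhibiting a homotopy that witnesses the obstruction cocycle as a boundary,'' but you give no reason why these obstruction classes should be boundaries, and in an arbitrary monoidal DG category there is none: the complexes $\homm_\A(\id,(LF)^iRF)$ carry no acyclicity, and higher coherence data of this kind genuinely can fail to exist --- the paper's own remark immediately after the proposition (that $(RF,\mu)$ is \emph{not} in general bimodule homotopy unital, because $Rh^r_F$ and $h^r_RF$ need not be homotopic) shows that such systems of homotopies are not automatic. The paper's proof avoids induction entirely and instead writes closed formulas built from the adjunction data, schematically
\begin{align*}
\eta_i &= RFLh_F\cdots Lh_FLF \;\circ\; R\eta_l\cdots\eta_lF \;\circ\; \eta_r,\\
\epsilon_i &= \epsilon_l \;\circ\; L\epsilon_r\cdots\epsilon_rF \;\circ\; LFRh^l_F\cdots Rh^l_FRF,
\end{align*}
i.e.\ whiskered insertions of the units (resp.\ counits) followed by whiskered copies of the triangle-identity homotopies, and then verifies directly that these satisfy the twisted complex conditions of Definition \ref{defn-homotopy-adjoint-ainfty-coalgebra-and-algebra}. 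Your reinterpretation of the $(\eta_i)$ as an $\Ainfty$-morphism of comodule-modules is consistent with the paper's motivating discussion, but without either the explicit formulas or a proof that the inductive obstructions vanish, step (iv) of your plan is not a proof.
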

\begin{proof}
The associativity of $\mu$ and the coassociativity of $\Delta$ follows from $\A$
being a monoidal category. 

Let us denote the homotopies in homotopy adjunctions of $(L, F, R)$ as follows:
\begin{align*}
F \to FRF \to F \quad &= \quad \id + dh^r_F
\quad \quad \quad 
R \to RFR \to R \quad &= \quad \id + dh^r_R\;\\
F \to FLF \to F \quad &= \quad \id + dh^l_F
\quad \quad \quad 
L \;\to LFL \;\to L \quad &= \quad \id + dh^l_L.
\end{align*}
Homotopy unitality of $(RF, \mu)$ 
means that the maps $\mu\circ RF \eta_r$  and $\mu\circ \eta_r RF$ are homotopic to identity.
Indeed, they have the form $\id + Rh^r_F$ and $\id+dh^r_R F$.
Moreover, the homotopy $Rh^r_F$ commutes with the left action of $RF$ 
and the homotopy $h^r_R F$ commutes with the right action of $RF$, 
thus they are morphisms of left and right modules respectively, 
which makes $(RF, \mu)$ strongly homotopy unital.
Strong homotopy counitality of $(LF, \Delta)$ is shown analogously.

Finally, the morphisms $\eta_i$ and $\epsilon_i$ from Definition \ref{defn-homotopy-adjoint-ainfty-coalgebra-and-algebra}
have the form:
\begin{align*}
\eta_i=RFLh_FLh_F\ldots Lh_FLF \circ R\eta_l\eta_l\ldots \eta_l F \circ \eta_r \colon\quad &
\id \to RFLFLF\ldots LF\\
\epsilon_i=\epsilon_l \circ L \epsilon_r\epsilon_r\ldots \epsilon_r F \circ LFRh_F^lRh^l_F....Rh_F^lRF \colon\quad &
LFRFRF\ldots RF \to \id.
\end{align*}

\end{proof}

\begin{remark}
Note that $(RF, \mu)$ and $(LF, \Delta)$ are not generally bimodule homotopy unital
(resp. bicomodule homotopy counital) with the above structure. For that, we would require e.g.
$Rh^r_F$ and $h^r_RF$ to be homotopic, which we don't have for an arbitrary
homotopy adjunction.
\end{remark}

\subsection{Derived module-comodule correspondence}
\label{section-derived-module-comodule-correspondence-for-homotopy-adjoint-algebras-and-coalgebras}

If one took time to set up the formalism of
$\Ainfty$-tensor/cotensor products 
and $\Ainfty$-modules-comodules,
the above should construct derived 
equivalence 
$D_c(C) \simeq D_c(A)$ as mutually inverse derived functors of 
cotensoring with $A \oplus \id$ and tensoring with $\id \oplus C$.
However, there is a simpler approach via the Kleisli and the co-Kleisli
categories:

\begin{theorem}[Module-Comodule Correspondence]
\label{theorem-module-comodule-correspondence-in-a-monoidal-dg-category}
Let $(A,m_\bullet, \eta, h^r_\bullet, h^l_\bullet)$ 
and $(C,\Delta_\bullet, \barepsilon_{\bullet\bullet})$
be a strong homotopy unital $\Ainfty$-algebra and a bicomodule
homotopy counital $\Ainfty$-coalgebra in a DG monoidal category $\A$. 

If $C$ is homotopy left adjoint to $A$ in the sense of 
Definition \ref{defn-homotopy-adjoint-ainfty-coalgebra-and-algebra}, 
then 
$$ D_c(C) \simeq D_c(A). $$ 
\end{theorem}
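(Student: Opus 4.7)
My plan follows the strategy sketched in the paragraph immediately preceding the theorem statement: reduce both $D_c(A)$ and $D_c(C)$ to their (co-)Kleisli descriptions and then construct an $\Ainfty$-quasi-equivalence between the two (co-)Kleisli categories out of the higher homotopies $\eta_i$ and $\epsilon_i$.

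First, I would set up the two reductions. Since $C$ is homotopy left adjoint to $A$ in $\A$ (in particular, $A$ is a right homotopy adjoint of $C$ as an object of $\A$), Lemma \ref{lemma-has-right-adjoint-implies-perfect-objects-generate-perfect-comodules} applies and every free comodule $GC$ with $G \in \Aperf$ is perfect in $D(C)$. Fix a set $S$ of compact generators of $H^0(\A)$ in $H^0(\B)$ contained in $\Aperf$. Theorem \ref{theorem-compact-derived-category-of-bimodule-homotopy-unital-C-is-that-of-frees-and-cokleisli} then gives $D_c(C) \simeq D_c(\cokleisliCS)$, and Theorem \ref{theorem-compact-derived-category-of-A-is-that-of-frees-and-kleisli} gives $D_c(A) \simeq D_c(\kleisliAS)$. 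Thus it suffices to produce an $\Ainfty$-quasi-equivalence $\Phi\colon \cokleisliCS \to \kleisliAS$.

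Next, I construct $\Phi$. It is the identity on objects. Its first component
$$\Phi_1\colon \homm_\A(EC,F) \longrightarrow \homm_\A(E,FA), \qquad \alpha \longmapsto (\alpha A)\circ (E\eta_1),$$
is the genuine adjunction map coming from the homotopy adjunction $\eta_1,\epsilon_1$ between $C$ and $A$ as objects of $\A$; by
Defn.~\ref{defn-homotopy-adjoint-ainfty-coalgebra-and-algebra}\eqref{item-homotopy-adjoint-ainfty-coalgebra-and-algebra-eta1-epsilon1} it is a quasi-isomorphism of complexes, with homotopy inverse $\beta \mapsto (F\epsilon_1) \circ (\beta C)$. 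For the higher components
$$\Phi_n\colon \homm_\A(E_nC,E_{n+1}) \otimes \dots \otimes \homm_\A(E_1C,E_2) \longrightarrow \homm_\A(E_1,E_{n+1}A),$$
I use the higher data $\eta_i$ together with the higher operations $m_j$ of $A$: given $\alpha_n,\dots,\alpha_1$, insert $\eta_{i_1},\dots,\eta_{i_r}$ before the $\alpha_j$'s and multiply the trailing string of $A$'s by the operation $m_r$ of $A$, summing over all admissible insertions with the signs dictated by the twisted complex \eqref{eqn-the-twisted-complex-for-eta-i}. This is precisely the coKleisli analogue of the Kleisli-to-free $\Ainfty$-functor of Defn.~\ref{defn-kleisli-to-free-Ainfty-functor}, now bridged across the $(C,A)$ adjunction by the higher units $\eta_i$.

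The main obstacle is verifying that $\Phi$ is a well-defined $\Ainfty$-functor, i.e.~that its components satisfy the $\Ainfty$-functor equations (the analogues of
\eqref{eqn-defining-equalities-for-new-definition-of-Ainfinity-algebra-morphism}). The strategy is to rewrite the $\Ainfty$-functor relation for $\Phi$ applied to $(\alpha_n,\dots,\alpha_1)$ as the application of a certain combinatorial expression in the $\eta_i$'s and $m_j$'s to $E_1$, precomposed with the $\alpha_j$'s suitably tensored. That combinatorial expression is exactly the differential of the twisted complex
\eqref{eqn-the-twisted-complex-for-eta-i} of Defn.~\ref{defn-homotopy-adjoint-ainfty-coalgebra-and-algebra}\eqref{item-homotopy-adjoint-ainfty-coalgebra-and-algebra-eta-twisted-complex}, squared; the defining twisted complex condition therefore collapses it to zero. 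In essence, condition \eqref{item-homotopy-adjoint-ainfty-coalgebra-and-algebra-eta-twisted-complex} is exactly the statement that the $\eta_i$'s assemble into an $\Ainfty$-$C$-comodule-$A$-module structure on $\id \oplus A$, and this is precisely what is needed to make $\Phi$ into an $\Ainfty$-functor; similarly, condition \eqref{item-homotopy-adjoint-ainfty-coalgebra-and-algebra-epsilon-twisted-complex} makes the dual map an $\Ainfty$-functor in the other direction. This is analogous to the proof of Theorem \ref{theorem-ainfty-quasi-equivalence-from-kleisli-to-free}.

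Finally, since $\Phi_1$ is a quasi-isomorphism of $\homm$-complexes on each pair of objects and $\Phi$ is bijective on objects, $\Phi$ is an $\Ainfty$-quasi-equivalence. It therefore induces $D_c(\cokleisliCS) \simeq D_c(\kleisliAS)$, and assembling the three equivalences yields $D_c(C) \simeq D_c(A)$, as desired.
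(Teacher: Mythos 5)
Your overall architecture is exactly the paper's: the same two reductions (Lemma \ref{lemma-has-right-adjoint-implies-perfect-objects-generate-perfect-comodules} feeding Theorem \ref{theorem-compact-derived-category-of-bimodule-homotopy-unital-C-is-that-of-frees-and-cokleisli} on the coalgebra side, Theorem \ref{theorem-compact-derived-category-of-A-is-that-of-frees-and-kleisli} on the algebra side), followed by an $\Ainfty$-quasi-equivalence $\cokleisliCS \rightarrow \kleisliAS$ that is the identity on objects, has the adjunction map $\alpha \mapsto \alpha A \circ E\eta_1$ as first component, and whose functor equations are deduced from the twisted-complex condition on \eqref{eqn-the-twisted-complex-for-eta-i}. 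Your $\Phi_1$, the quasi-equivalence conclusion, and even the verification strategy match the paper; note the paper does not need your dual functor from condition \eqref{item-homotopy-adjoint-ainfty-coalgebra-and-algebra-epsilon-twisted-complex} -- checking that $\Phi_1$ is a quasi-isomorphism suffices.

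The genuine gap is your formula for the higher components $\Phi_n$. The paper's $n$-th component is a \emph{single} composite using exactly one higher homotopy: $\alpha_n A \circ \alpha_{n-1}CA \circ \cdots \circ \alpha_1 C^{n-1}A \circ F_1\eta_n$, where each $\alpha_j$ consumes one $C$ from $\eta_n\colon \id \rightarrow C^nA$ and the lone $A$ rides along -- no operation $m_r$ is ever applied at the top level. Your sum over insertions of several $\eta_{i_1},\dots,\eta_{i_r}$ collapsed by $m_r$ fails on two counts. Degree: the $\eta_{i_j}$ contribute $\sum_j(1-i_j)$ and $m_r$ contributes $2-r$, so with the $\sum_j i_j = n$ copies of $C$ needed to feed the $n$ morphisms $\alpha_j$, every term with $r \geq 2$ has degree $2-n$ rather than the required $1-n$; only the $r=1$ term (the paper's) is admissible. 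Typing: if the $\eta$'s are all inserted at the front, the interleaved $A$-factors strand the later $\alpha_j$'s, whose source is $F_jC$; if they are interleaved between blocks of $\alpha$'s so that the $A$'s trail as $F_{n+1}A^r$, the term type-checks but still has the wrong degree. What you are reaching for is the \emph{composite} with the Kleisli-to-free functor of Defn.~\ref{defn-kleisli-to-free-Ainfty-functor}, landing in $\freeAS$, where the trailing $A$'s are collapsed by the module lifts $\mu_{r+1}$ and the degree count is restored -- but that is a different functor from the one into $\kleisliAS$ needed here. The interaction with $m_\bullet$ that you tried to build into the components is already stored in the twisted differentials $r_{k+1}$ of \eqref{eqn-the-twisted-complex-for-eta-i}; it resurfaces only when computing $d\eta_n$ during the verification, exactly as in your (correct) sketch that the twisted-complex condition collapses the functor equations. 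Replace your $\Phi_n$ by the single-$\eta_n$ composite and your proof becomes the paper's.
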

\begin{proof}
Let $S$ be a set of compact generators of $H^0(\A)$ in $H^0(\B)$. 
Since $A$ is strongly homotopy unital, by Theorem
\ref{theorem-compact-derived-category-of-A-is-that-of-frees-and-kleisli}
we have $ D_c(A) \simeq D_c(\kleisli_S(A))$. 
Since $A$ is a homotopy right adjoint of $C$ as an object of 
$\A$, by Lemma 
\ref{lemma-has-right-adjoint-implies-perfect-objects-generate-perfect-comodules}
free $\Ainfty$-$C$-comodules generated by perfect objects of $\A$ are
perfect. Thus by 
Theorem.~\ref{theorem-compact-derived-category-of-bimodule-homotopy-unital-C-is-that-of-frees-and-cokleisli} we have 
$D_c(C) \simeq D_c(\cokleisliCS)$. 

It therefore suffices to show that 
$D_c(\kleisli_S(A)) \simeq  D_c(\cokleisliCS)$. 
For this, we show that $\kleisli_S(A)$ and $\cokleisliCS$
are $\Ainfty$-quasi-equivalent. Indeed, 
the set of objects of both $\kleisli_S(A)$ 
and $\cokleisliCS$ is $\A$. Define an $\Ainfty$-functor
$$ E_\bullet\colon \cokleisliCS \rightarrow \kleisli_S(A) $$
to be the identity map on objects and on morphisms set
\begin{align*}
E_n\colon\; 
 \homm_{\cokleisliCS}(F_n, F_{n+1}) \otimes_k \dots \otimes_k
\homm_{\cokleisliCS}(F_1 , F_2)  
\; \longrightarrow \; \homm_{\kleisli_S(A)}(F_1, F_{n+1}) 
\end{align*}
to be the map which sends any 
$\alpha_n \otimes \dots \otimes \alpha_1$
with $\alpha_i \in \homm_{\A}(F_iC, F_{i+1})$
to the map 
$$ F_1 \xrightarrow{F_1 \eta_n} F_1C^nA \xrightarrow{\alpha_1 C^{n-1}A}
F_2C^{n-1}A \xrightarrow{\alpha_2 C^{n-2}A} \dots
\xrightarrow{\alpha_{n-1}A^2}
F_nCA \xrightarrow{\alpha_nA} F_{n+1}A. $$
The defining conditions on $\eta_i$ in
Definition \ref{defn-homotopy-adjoint-ainfty-coalgebra-and-algebra}
imply that $E_\bullet$ is an $\Ainfty$-functor. 

Now observe that 
$E_1$ is the map 
$\homm_{A}(F_1C, F_2) \xrightarrow{(-)A \circ \eta_1}  \homm_{A}(F_1,
F_2A)$. 
It is a quasi-isomorphism for all $F_1, F_2 \in \A$ since 
$\eta_1$ is the unit of a homotopy adjunction of $C$
and $A$. We conclude that $E_\bullet$ is an $\Ainfty$-quasi-equivalence. 
\end{proof}

\section{Examples and applications}
\label{section-examples-and-applications}

\subsection{Associative algebras}
\label{section-examples-associative-algebras}

Let $k$ be a field. 
Let $\A$ be the category $\vectk$ of vector spaces over $k$ with 
the monoidal operation $\otimes_k$. 
It is a $k$-linear monoidal category which we consider as 
a monoidal DG category concentrated in degree $0$. 

Since all morphisms in $\A$ are of degree $0$, an 
$\Ainfty$-algebra $(A,m_i)$ in $\A$ 
in the sense of our Defn.~\ref{defn-ainfty-algebra-in-a-monoidal-category} 
is a vector space $A$ and a single linear map $\mu\colon A \otimes_k A
\rightarrow A$. Its bar-construction is the usual bar-construction of $A$:
$$ \infbar(A) = \quad \quad 
\dots \rightarrow A^4 \xrightarrow{A^2 \mu - A \mu A +
\mu A^2 } A^3 \xrightarrow{A \mu - \mu A} A^2 \xrightarrow{\mu} A. $$
The twisted complex condition in
Defn.~\ref{defn-ainfty-algebra-in-a-monoidal-category} asks that this 
is a complex, i.e. $d^2 = 0$. 
By Prop.~\ref{prps-comparing-new-and-old-defns-of-ainfty-algebra} 
we can only check it for the first two differentialls:
$\mu \circ (A\mu - \mu{A}) = 0$.
Thus our $\Ainfty$-algebras in $\vectk$ are non-unital associative
$k$-algebras $(A,\mu)$. All the homotopy unitality conditions given in 
\S\ref{section-strong-homotopy-unitality} are then equivalent to the strict
unitality of $A$ in the usual sense. 

Similarly, an $\Ainfty$-module over $A$ in $\vectk$ in our sense is 
a non-unital $A$-module $(E,\pi)$ in the usual sense and homotopy
unital modules are the strictly unital ones. Our bar-construction 
again reduces to the usual one:
\begin{equation}
\infbar(E,\pi) =
\quad \quad 
\dots 
\rightarrow 
EA^3 
\xrightarrow{EA\mu - E\mu{A} + \pi A^2}
EA^2
\xrightarrow{E\mu - \pi{A}}
EA
\xrightarrow{\pi}
E, 
\end{equation}
Morphisms in $\nodA$ are $\Ainfty$-morphisms in the usual sense. 
A degree $j$ morphism $(E,\pi) \rightarrow (F,\kappa)$ in $\nodA$ 
is a linear map $f: E \otimes A^{j} \rightarrow F$. 
Its bar-construction $\infbar(f)$ has components $(-1)^{j(i-1)} fA^{i-1}$. 
We compose and differentiate morphisms in $\nodA$ by 
composing and differentiating their bar-constructions. 

Thus $\nodhuA$ is the DG category of unital $A$-modules in the usual sense 
with $\Ainfty$-morphisms. 
Let $\modd\text{-}A$ be the category of unital $A$-modules with 
ordinary morphisms. We have an isomorphism of complexes of vector spaces
$$ \homm_{\nodhuA}(E,F) \simeq 
\homm_{\cx(\modd\text{-}A)}(\bar{B}^A_\infty(E), F) $$
where $\bar{B}^A_\infty(E)$ is the bar-resolution of $E$ in $\modd\text{-}A$:
$$ \bar{B}^A_\infty(E,\pi)\colon 
\rightarrow 
EA^3 
\xrightarrow{- EA\mu + E\mu{A} - \pi A^2}
EA^2
\xrightarrow{- E\mu + \pi{A}}
EA. 
$$
The isomorphism sends the morphism given by a linear
map $E \otimes_k A^{j} \rightarrow F$ to the $A$-module morphism 
$E \otimes_k A^{j+1} \rightarrow F$ which corresponds to it 
under the Free-Forgetful adjunction. The adjunction ensures 
bijectivity, but we need to check that this commutes with the
differentials. Here the contribution to the differential of 
$\homm_{\nodhuA}(E,F)$ made by the composition through $FA$ 
accounts for the extra term present 
in the differentials of $\bar{B}^A_\infty(E)$ as compared 
to $\infbar(E)$. Since the cohomologies of 
$\homm_{\cx(\modd\text{-}A)}(\bar{B}^A_\infty(E), F)$ are 
$\ext^i_{\modd\text{-}A}(E,F)$, it follows that 
$H^\bullet(\nodhuA)$ is the graded $\ext$-category 
of $\modd\text{-}A$. 

The $A$-modules free in the sense of our
\S\ref{section-free-modules-and-bimodules-over-Ainfty-algebra}
are those free in the usual sense. 
Thus $\freeA$ is the DG category of free $A$-modules with
$\Ainfty$-morphisms. The Kleisli category $\kleisliA$ is the category of
vector spaces with morphism spaces given by 
$\homm_{\vectk}(E, F \otimes_k A)$. By classical Free-Forgetful 
adjunction, it isomorphic to the category of free 
$A$-modules with ordinary $A$-module morphisms. 
Under this identification, the functor 
$\kleisliA \rightarrow \freeA$ of Theorem 
\ref{theorem-ainfty-quasi-equivalence-from-kleisli-to-free}
is the DG functor which sends any ordinary $A$-module morphism to
itself considered as a strict $\Ainfty$-morphism. It can be directly 
seen to be a quasi-equivalence: $H^\bullet(\nodhuA)$ is 
the graded $\ext$-category of $\modd\text{-}A$, 
and higher $\ext$s between free modules vanish. 

Category $\vectk$ is not small, so we can't set $\B = \modA$. 
Even if we used the category of finite-dimensional vector spaces $\vectkfg$ 
instead, $\modd\text{-}\vectkfg$ is needlessly large. 
We set $\B = \modk$, the category of DG $k$-modules. 
It contains $\vectk$ as the full subcategory of DG modules concentrated 
in degree $0$. It is also cocomplete, closed monoidal, and admits convolutions 
of twisted complexes. Thus it satisfies all the assumptions
in \S\ref{section-the-setting} and \S\ref{section-the-derived-category}. 

The DG category $\noddinf\text{-}A^{\modk}$ in the sense of our 
\S\ref{section-ainfty-structures-in-monoidal-dg-categories}
is the DG category of non-unital $\Ainfty$-$A$-modules in 
\cite[\S7]{Lefevre-SurLesAInftyCategories}. The derived
category $D(A)$ of $A$ in the sense of our
\S\ref{section-the-derived-category} is the cocomplete
triangulated hull of $\nodhuA$ in 
$H^0(\noddinf\text{-}A^{\modk})$. Since the co-complete triangulated
hull of $\vectk$ in $\modk$ is the whole of $\modk$, by
Prop.~\ref{prps-derived-category-of-A-is-triangulated-cocomplete-hull-of-freeA}
the derived category $D(A)$ is $H^0(\noddinfhu\text{-}A^{\modk})$. 

The whole of $H^0(\modk)$ is generated by a single compact object $k$. Thus 
we take the set $S$ of compact generators of $\vectk$ to be 
$\left\{ k \right\}$. Then $\freeAS$ is the full subcategory of $\nodA$
consisting of the free module $A$. The corresponding
Kleisli category $\kleisliAS$ has a single
object with the endomorphism space $\homm_{\vectk}(k,A) = A$, 
i.e. the algebra $A$ considered as a single-object category. 
By
Prop.~\ref{prps-perfect-iff-lies-in-hperf-of-perfect-generator-frees},
$D(A)$ in the sense of our \S\ref{section-the-derived-category} 
is the cocomplete triangulated hull of $\freeAS = \left\{ A \right\}$ in 
$H^0(\noddinf\text{-}A^{\modk})$. Thus our definition of $D(A)$
coincides with the definition of the derived category of an $\Ainfty$-algebra 
in \cite[\S4.1.2]{Lefevre-SurLesAInftyCategories}. 

The category $\nodstrA$
in the sense of \S\ref{section-the-derived-category-strict-algebra-case}
is the usual category of non-unital $A$-modules with $A$-module morphisms. 
Its full subcategory $\modd\text{-}A$ comprises unital $A$-modules.  
The category $\modd\text{-}A^{\modk}$ is the
DG category of complexes of unital $A$-modules and 
$H^0(\modd\text{-}A^{\modk})$ is the usual homotopy category of 
complexes of unital $A$-modules. The proof of Theorem 
\ref{theorem-derived-category-as-localisation-in-the-strict-case}
shows that $D(A)$ in the sense of  our \S\ref{section-the-derived-category} 
is equivalent to the Verdier quotient
of $H^0(\modd\text{-}A^{\modk})$ by the acyclics. 
Thus $D(A)$ is the localisation of $K(A)$, 
the homotopy category of complexes of $A$-modules, 
by quasi-isomorphisms, the classical definition 
of the derived category. 

Finally, we can take $k$ to be a commutative
ring and $\A$ the category of unital $k$-modules. This
is not covered by \cite{Lefevre-SurLesAInftyCategories}
which uses minimal models, a technique which works
only over a field, to prove its key results e.g. Lemme 4.1.3.7. 
Our theory works happily with commutative rings. 
Its output, however, is different. 

Firstly, our free $A$-modules $E \otimes_k A$ are not free in 
the usual sense, unless $E$ is a free $k$-module. 
Thus $\bar{B}^A_\infty(E)$ is no longer a projective resolution of $E$.
While $H^0(\nodhuA)$ is still $\modd\text{-}A$, it is no longer
true that $H^\bullet(\nodhuA)$ is the graded $\ext$-category of
$\modd\text{-}A$. 
More generally, $D(A)$ in the sense 
our \S\ref{section-the-derived-category} is no longer the classical
derived category of $A$. Instead, it is the
localisation of $K(A)$ by those 
maps which are homotopy equivalences as maps of complexes of $k$-modules.

This is not an oversight, but the intention. Homotopy equivalences
exist in any DG category, while quasi-isomorphisms only exist in 
$\A = \modk$ and its like. One of the morals of this paper is 
that working with $\Ainfty$-morphisms reduces homotopy equivalences of 
$A$-modules to the homotopy equivalences in the base category $\A$. 
Thus when $k$ is a field and $\A = \modk$, it reduces them to quasi-isomorphisms. 

To get the right derived category, $\A$ must have
the right class of morphisms as homotopy equivalences. If $A$ is 
an associative algebra over a commutative ring $k$, we can set $\A$ to be 
the monoidal category of projective $k$-modules and $\B$ to be 
the monoidal DG category of $h$-projective complexes of $k$-modules. 
Then our $D(A)$ is the usual localisation of $K(A)$ by quasi-isomorphisms. 

\subsection{DG and $\Ainfty$-algebras}
\label{section-examples-dg-and-ainfty-algebras}

Let $k$ be a commutative ring. Let $\A = \modk$, the category
of DG modules over $k$ with the monoidal operation $\otimes_k$ and
unit $k$.

The $\Ainfty$-algebras in $\A$ in the sense
of our \S\ref{section-ainfty-structures-in-monoidal-dg-categories}
are the usual ones in \cite{Keller-IntroductionToAInfinityAlgebrasAndModules}, 
\cite{Lefevre-SurLesAInftyCategories}. 
If $A$ is an $\Ainfty$-algebra in $\modk$, then our $\nodA$ is 
the DG category of non-unital $\Ainfty$-modules in 
\cite[\S5.1]{Lefevre-SurLesAInftyCategories}. 
Strict and strictly unital $\Ainfty$-algebras
are the usual DG algebras. If $A$ is a strict algebra in $\modk$ then our 
$\modd\text{-}A$ is the usual DG category of DG-modules over $A$, see 
\cite[\S2]{AnnoLogvinenko-SphericalDGFunctors},
\cite{Keller-DerivingDGCategories}, or
\cite{Toen-LecturesOnDGCategories}. Our $\nodstrA$ is the larger DG
category of non-unital DG $A$-modules.  

Let $A$ be a strongly homotopy unital $\Ainfty$-algebra in $\modk$. 
The free $A$-modules in the sense of our
\S\ref{section-free-modules-and-bimodules-over-Ainfty-algebra}
are the modules $E \otimes_k A$ with $E \in \modk$. When $k$ is a
field, such $A$-modules are semifree \cite[Lemma 2.16]{AnnoLogvinenko-BarCategoryOfModulesAndHomotopyAdjunctionForTensorFunctors},
and thus $h$-projective. Otherwise, 
they only have the $h$-projectivity property 
with respect to the $A$-modules null-homotopic as $k$-modules, as
opposed to all the ones with zero cohomology.  
$\freeA$ is the DG category of free modules $E \otimes_k A$
with $\Ainfty$-morphism complexes. The Kleisli category
$\kleisliA$ is the $\Ainfty$-category with the same objects as 
$\modk$ but $\homm$-complexes $\homm^\bullet_{\modk}(E, F \otimes_k A)$
between $E,F \in \modk$. 

The category $\modk$ satisfies our assumptions 
in \S\ref{section-the-setting} and
\S\ref{section-the-derived-category}. 
Let $\B = \modk$ and choose $S:= \left\{ k \right\}$
as the set of compact generators of $H^0(\modk)$. 
Then the category $\kleisliAS$ is the $\Ainfty$-algebra $A$ 
viewed as a single-object $\Ainfty$-category, while
$\freeAS$ is a single-object DG category consisting of 
the free module $A$ and its
$\Ainfty$-endomorphisms. The $\Ainfty$-functor 
$f_\bullet\colon \kleisliAS \rightarrow \freeAS$ of Theorem 
\ref{theorem-ainfty-quasi-equivalence-from-kleisli-to-free}
sends the single object $k$ 
of $\kleisliA$ to the free module $A$ and its component 
$$ f_n\colon \homm_{\kleisliA}(k, k)^{\otimes_k n} = 
A^{\otimes_k n} \rightarrow \homm_{\nodA}(A,A) $$
sends any $a_1 \otimes \dots \otimes a_n \in A^{\otimes_k n}$
to 
$\alpha_\bullet$ whose $\alpha_m\colon A^m \rightarrow A$ is the map
$$ b_1 \otimes \dots \otimes b_m \rightarrow 
m_{n+m}(a_1 \otimes \dots \otimes a_n \otimes b_1 \otimes \dots
\otimes b_m). $$
It is a special case of the $\Ainfty$-Yoneda embedding 
constructed in \cite[\S7.1]{Lefevre-SurLesAInftyCategories}. 

Since $\A = \B = \modk$, 
by Prop.~\ref{prps-when-B-is-modA-we-get-the-classical-derived-category}
our derived category $D(A)$ is $H^0(\nodhuA)$. It thus
agrees with the usual definition of $D(A)$ 
in \cite[\S4.1.2]{Lefevre-SurLesAInftyCategories}. 
By Theorem 
\ref{theorem-compact-derived-category-of-A-is-that-of-frees-and-kleisli}
we always have $D_c(A) \simeq D_c(\kleisliAS)$, but
here we also have $D(A) \simeq D(\kleisliAS)$
since $\kleisliAS$ is $A$ viewed as the usual $\Ainfty$-category. 
This is because $\B = \modk$, so we 
take infinite direct sums in $\modk$ which coincides with the
classical theory.  

A strict and strongly homotopy unital $A$ is a strongly homotopy 
unital DG algebra. $\nodstrhuA$ is the category 
of homotopy unital DG-modules over $A$. 
By Theorem \ref{theorem-derived-category-as-localisation-in-the-strict-case}, 
$D(A)$ is the localisation of
$H^0(\nodstrhuA)$ by the $\A$-homotopy equivalences, i.e.
$\Ainfty$-morphisms whose first component is a homotopy equivalence 
in $\modk$. When $k$ is a field these are the same as quasi-isomorphisms. 

A strict and strictly unital $A$ is the usual (strictly unital) DG algebra. 
By the proof of Theorem
\ref{theorem-derived-category-as-localisation-in-the-strict-case},
$D(A)$ is the localisation of 
$H^0(\modd\text{-}A)$ by $\A$-homotopy equivalences. When $k$ is a field, 
these are quasi-isomorphisms and the localisation is the classical 
derived category of $A$ as a DG algebra. We get another 
proof that for DG algebras the $\Ainfty$ derived category
coincides with the DG derived category. 

When $k$ is not a field, we see that $\Ainfty$ and DG derived
categories of $A$ are different. The former is localisation of 
$H^0(\modd\text{-}A)$ by $\A$-homotopy equivalences, the latter by
quasi-isomorphisms. We can fix this by changing our monoidal DG category 
$\A$ from $\modk$ to its monoidal subcategory comprising $h$-projective 
modules. Then $\A$-homotopy equivalences would be the same as
quasi-isomorphisms. 

\subsection{DG and $\Ainfty$-categories}
\label{section-examples-dg-and-ainfty-categories}

Usual DG categories \cite[\S2]{AnnoLogvinenko-SphericalDGFunctors}
\cite{Keller-DerivingDGCategories}
\cite{Toen-LecturesOnDGCategories} and $\Ainfty$-categories \cite{Keller-IntroductionToAInfinityAlgebrasAndModules} 
\cite{Lefevre-SurLesAInftyCategories} fit similarly into our framework,
however one needs to use a bicategorical setup. 
See \cite{Benabou-IntroductionToBicategories} for 
an introduction to bicategories. 

Let $k$ be a commutative ring. For any set $U$ define the category $k_U$
by $\obj k_U = U$ 
and $\homm_{k_U}(s,t) = k$ if $s = t$ and 
$0$ if $s \neq t$. Let $\A$ be the DG bicategory $\bikmodk$:
\begin{itemize}
\item The objects are categories $k_U$ for each small set $U$. 
\item The $1$-morphism DG categories are defined by
$$ \homm_{\bikmodk}(k_U, k_T) = k_U\text{-}\modd\text{-}k_T \quad
\quad \forall\; \text{ small sets } U,T.$$ 
\item $1$-composition is given by $\otimes_{k}$. 
\item For any small set $U$ the identity $1$-morphism is the diagonal
bimodule $k_U$.
\item Associator and unitor isomorphisms are the maps induced by the universal 
property of tensor product.  
\end{itemize}
When $U$ is the singleton set, we write $k$ for $k_U$. 

 Our $\Ainfty$-algebra $A$ in some 
$k_U\text{-}\modd\text{-}k_U$ is an $\Ainfty$-category   
\cite[\S5.1]{Lefevre-SurLesAInftyCategories} whose object set is $U$. 
Let $\C$ be this $\Ainfty$-category.  
Its DG category of right $\Ainfty$-$\C$-modules
\cite[\S5.2]{Lefevre-SurLesAInftyCategories} 
is our category $\nodd^{k,k_U}_\infty\text{-}A$ of right 
$\Ainfty$-$A$-modules in $k\text{-}\modd\text{-}k_U$. 
Note that we can consider the category 
$\nodd^{k_T,k_U}_\infty\text{-}A$ of right $\Ainfty$-$A$-modules in any 
$k_T\text{-}\modd\text{-}k_U$, but the result is just a Cartesian 
product of $|T|$ copies of $\nodd^{k,k_U}_\infty\text{-}A$. 
We thus write $\nodA$ for $\nodd^{k,k_U}_\infty\text{-}A$,
and similarly for left modules and bimodules. 
Finally, strict and strictly unital algebras, modules, and bimodules 
in $\bikmodk$ correspond to the usual DG categories, DG modules and bimodules. 

The $1$-morphism DG categories of $\bikmodk$ satisfy our assumptions 
in \S\ref{section-the-setting} and \S\ref{section-the-derived-category}. 
Each $H^0(k_U\text{-}\modd\text{-}k_T)$ is compactly 
generated by representable bimodules $k^{u,t} =
\homm_{k_U\text{-}\modd\text{-}k_T}(-, (u,t))$ whose only fiber 
is $k$ over $(u,t)$. We can thus set $\A = \B$ and take 
the set $S$ of the compact generators of $\modd\text{-}k_U$ 
to be $\left\{ k^u \;\middle|\; u \in U \right\}$. 

The Kleisli category $\kleisliAS$ is  
the $\Ainfty$-category $\C$ which corresponds to $A$. 
The category $\freeAS$ is the full subcategory of $\nodA$
comprising the modules corresponding to representable 
$\Ainfty$-$\C$-modules. The functor 
$f_\bullet\colon \kleisliAS \rightarrow \freeAS$ of Theorem 
\ref{theorem-ainfty-quasi-equivalence-from-kleisli-to-free}
is the $\Ainfty$-Yoneda embedding of $\C$ constructed in 
\cite[\S7.1]{Lefevre-SurLesAInftyCategories}.

The rest of our theory applies as in 
\S\ref{section-examples-dg-and-ainfty-algebras}. By 
Prop.~\ref{prps-derived-category-of-A-is-triangulated-cocomplete-hull-of-freeA}
for $H$-unital $A$ our derived category $D(A)$ coincides 
with the $\Ainfty$-derived category of $\C$  
\cite[\S4.1.2]{Lefevre-SurLesAInftyCategories}. If $k$ is a field
and $A$ is strict and strictly unital, $D(A)$ also 
coincides with the DG derived category of the DG category $\C$. 
When $k$ is not a field,  $D_{dg}(\C)$ is the localisation of 
$\modC$ by quasi-isomorphisms, while $D(A)$ is its localisation by 
$\modk$-homotopy equivalences.  
This can be fixed by switching from the DG bicategory $\bikmodk$ to its
$2$-full subcategory comprising $h$-projective bimodules. 

\subsection{Weak Eilenberg-Moore category and $\Ainfty$-monads}
\label{section-classical-and-ainfty-monads}

In
\S\ref{section-examples-associative-algebras}-\ref{section-examples-dg-and-ainfty-categories}
we gave examples of how known constructions 
fit into our theory in 
\S\ref{section-ainfty-structures-in-monoidal-dg-categories}-\S\ref{section-the-derived-category}.
From now on, we apply our theory to 
obtain the constructions that are, to our knowledge, new. 

Let $\A$ be the strict DG $2$-category $\DGFuntwocat$ of small DG categories, 
DG functors, and DG natural transformations. Let $\C$ be a small DG category. 
A strictly unital algebra $(T,\mu,\eta)$ in 
$\DGFun(\C,\C)$ is a DG monad $T: \C \rightarrow \C$ with multiplication 
$\mu\colon T^2 \rightarrow T$ and unit $\eta\colon \id_\C \rightarrow
T$, see \cite[\S{VI.1}]{MacLane-CategoriesfortheWorkingMathematician}. 
The category $T\text{-}\modd$ in the sense of our
Defn.~\ref{defn-strict-algebra-strict-module-categories}
is the \em Eilenberg-Moore \rm category $\eilmoor_T$ of $T$ 
\cite[\S{VI.2}]{MacLane-CategoriesfortheWorkingMathematician}. 
Its objects are $T$-modules: pairs $(a,p)$ where $a \in \C$
and $p$ is the structure morphism  $Ta \rightarrow a$ which satisfy 
strict associativity and unitality conditions.  Its morphisms $(a,p)
\rightarrow (b,q)$ are maps $f\colon a \rightarrow b$ in $\C$ which
intertwine $p$ and $q$. 

It has long been desired to define the weak Eilenberg-Moore
category $\eilmoorwk_T$ of a DG monad $T$. Naively, one could try 
to define the objects of $\eilmoorwk_T$ to be pairs $(a,p)$ for which 
associativity and unitality only hold up to homotopy and its
morphisms $(a,p) \rightarrow (b,q)$ to be maps 
$f\colon a \rightarrow b$ which intertwine $p$ and $q$ up to homotopy. 
This fails for the usual reasons – we can't define the composition of 
morphisms. Given three modules $(a,p), (b,q),$ and $(c,r)$ and 
two maps $f\colon a \rightarrow b$ and $g\colon b \rightarrow c$ 
which intertwine $p,q,$ and $r$ up to homotopy, their composition $gf$
doesn't, apriori, intertwine $p,r$. Indeed, if $fp - q(Tf) = dh$ and
$gq - r(Tg) = dj$, then $gfp - r(Tg)(Tf) = g(dh) - (dj)(Tf)$, and this
is not, apriori, a boundary. 

Conceptually, the fix is clear. One shouldn't
consider just the first homotopy $h$ up to which $f$ intertwines
$p,q$ but the whole $\Ainfty$-structure of homotopies. 
However, the usual definition of $\Ainfty$-structures 
\cite[\S4.1.2]{Lefevre-SurLesAInftyCategories} requires the underlying
DG category to be $\modk$, while here it is 
$\C$ and $\DGFun(\C,\C)$. It was this that led the authors to develop
the theory presented in this paper. 
With it, we define as per
\S\ref{section-ainfty-structures-in-monoidal-dg-categories}
the DG category $T\text{-}\noddinf$ of left $\Ainfty$-$T$-modules in $\C$ and
their $\Ainfty$-morphisms. As $T$ is strictly
unital, the notions of $H$-unitality, homotopy unitality, 
and strong homotopy unitality for $\Ainfty$-$T$-modules 
are equivalent as per \S\ref{section-strong-homotopy-unitality}. 
We then define: 
\begin{defn}
Let $T$ be a monad in a DG category $\C$. The 
\em weak Eilenberg-Moore category $\eilmoorwk_T$ \rm is  
DG category $T\text{-}\noddinfhu$ of homotopy unital $\Ainfty$-$T$-modules 
in $\C$. 
\end{defn}

Explicitly, an $\Ainfty$-$T$-module $(a,p_\bullet)$ is an object
$a \in \C$ and a collection of degree $2-i$ 
morphisms $p_i\colon T^{i-1} a \rightarrow a$ such that the maps 
$d_{i(i-k)}\colon T^i a \rightarrow T^{i-k} a$
given by
$
d_{i(i-1)} = \left( \sum_{j =
1}^{i-1} (-1)^{i-j} T^{j-1}\mu T^{i-j-1} \right)
 + T^{i-1} p_2 
$
and $d_{i(i-k)} = (-1)^{(i-k)(k+1)} T^{i-k} p_{k+1}$
for $k > 1$ give a twisted complex over $\C$
which is the \em bar-construction \rm $\infbar(a,p_\bullet)$:
\begin{equation}
\label{eqn-bar-construction-of-Ainfty-T-module}
\begin{tikzcd}[column sep = 2.0cm]
\dots
\ar{r}{}
\ar[dashed, bend left=25]{rrrr}[description]{p_5}
\ar[dashed, bend left=20]{rrr}[description]{Tp_4}
\ar[dashed, bend left=15]{rr}[description]{T^2 p_3}
& 
T^3a
\ar{r}[']{\mu T - T\mu + T^2 p_2}
\ar[dashed, bend left=20]{rrr}[description]{p_4}
\ar[dashed, bend left=15]{rr}[description]{(- T p_3)}
& 
T^2 a
\ar[dashed, bend left=15]{rr}[description]{p_3}
\ar{r}[']{-\mu + T p_2}
&
Ta
\ar{r}[']{p_2}
&
\underset{\degzero}{a}. 
\end{tikzcd}
\end{equation}

A degree $n$ morphism $(a,p_\bullet) \rightarrow (b,q_\bullet)$ is 
a collection $f_\bullet$ of degree $1-i+n$ morphisms $f_i\colon
T^{i-1}a \rightarrow b$ in $\C$. These are composed and differentiated
by the means of their bar-constructions. The bar-construction 
$\infbar(f_\bullet)$ is the twisted complex morphism 
$\infbar(a,p_\bullet) \rightarrow \infbar(b,q_\bullet)$ whose
components are the maps
$$ T^{i+k} a \rightarrow T^{i} b \quad \quad \text{ given by } \quad 
(-1)^{i(k+\deg_{f_\bullet})} T^{i} f_{k+1} \quad \quad k \geq 0. $$
We illustrate the case when $f_\bullet$ is of odd degree:
\begin{equation}
\label{eqn-bar-construction-of-ainfty-morphism-of-modules-over-monad}
\begin{tikzcd}[row sep=1.5cm, column sep = 2.0cm]
\dots
\ar{r}{}
& 
T^3a
\ar{r}
\ar{d}[description, pos=0.6]{-T^3f_1}
\ar{dr}[description, near start]{T^2f_2}
\ar{drr}[description, near start]{-Tf_3}
\ar{drrr}[description, near start]{f_4}
& 
T^2 a
\ar{r}
\ar{d}[description, pos=0.6]{T^2f_1}
\ar{dr}[description, near start]{ Tf_2}
\ar{drr}[description, near start]{f_3}
&
Ta
\ar{r}
\ar{d}[description, pos=0.6]{-Tf_1}
\ar{dr}[description, near start]{f_2}
&
a 
\ar{d}[description, pos=0.6]{f_1}
\\
\ar{r}{}
& 
T^3b
\ar{r}
& 
T^2 b
\ar{r}
&
Tb
\ar{r}
&
\underset{\degzero}{b}.
\end{tikzcd}
\end{equation} 

The Kleisli category $\kleisli(T)$ in the sense of our
\S\ref{section-kleisli-category} is the usual Kleisli category of the 
monad $(T,\mu, \eta)$ \cite{Kleisli-EveryStandardConstructionIsInducedByAPairOfAdjointFunctors}\cite[\S{VI.5}]{MacLane-CategoriesfortheWorkingMathematician}. It is the DG category whose objects are those of 
$\C$ and whose morphism complexes are defined as
$$ \homm^\bullet_{\kleisli(T)}(a,b) = \homm^\bullet_\C (a,Tb). $$
The composition of $f\colon a \rightarrow Tb$ and 
$g\colon b \rightarrow Tc$ in $\kleisli(T)$ is 
$ a \xrightarrow{f} Tb \xrightarrow{Tg} T^2c \xrightarrow{{\mu}c} Tc$.
The identity $1_a$ in $\kleisli(T)$ is the morphism
corresponding to $ a \xrightarrow{{\eta}a} Ta$ in $\C$. 

There is a natural non-full inclusion 
$\eilmoor_T \hookrightarrow \eilmoorwk_T$ whose image
comprises strict modules and their strict morphisms with strict 
differentials. The classical Free-Forgetful adjunction 
$\C \leftrightarrows \eilmoor_T$ extends to the homotopy adjunction 
$\C \leftrightarrows \eilmoorwk_T$ of 
\S\ref{section-free-forgetful-homotopy-adjunction}. The classical
adjunction yields a fully faithful embedding
$\kleisli(T) \hookrightarrow \eilmoor_T$ whose image is 
the full subcategory comprising free modules. 
Its composition with $\eilmoor_T \hookrightarrow \eilmoorwk_T$
is the quasi-fully faithful functor 
$\kleisli(T) \rightarrow \eilmoorwk_T$ of Theorem 
\ref{theorem-ainfty-quasi-equivalence-from-kleisli-to-free}.

More generally, by considering $\Ainfty$-algebras in $\DGFuntwocat$ 
as per \S\ref{section-ainfty-structures-in-monoidal-dg-categories}
we get a new notion of $\Ainfty$-monads over DG categories:
\begin{defn}
\label{defn-ainfty-monad-over-dg-category}
Let $\C$ be a DG category. An \em $\Ainfty$-monad $(T,m_\bullet)$ \rm is 
\begin{itemize}
\item a DG functor $T: \C \rightarrow \C$, 
\item a collection of degree $2-i$ natural transformations 
$m_i\colon T^i \rightarrow T$ for $i \geq 2$,
\end{itemize}
whose bar-construction $\infbarnaug(T)$
(see Defn.~\ref{eqn-nonaugmented-bar-construction-of-A-m_i}) is
a twisted complex over $\DGFun(\C,\C)$. 
\end{defn}
By Prop.~\ref{prps-defining-equalities-of-Ainfty-morphism}
the twisted complex condition in 
Defn.~\ref{defn-ainfty-monad-over-dg-category}
is equivalent to:
\begin{equation}
d_{\DGFun(\C,\C)} m_i + \sum_{\begin{smallmatrix}j+k+l = i, \\ k \geq
2\end{smallmatrix}} (-1)^{jk+l} m_{j+1+l} \circ \left(\id^j \otimes
m_k \otimes \id^l\right) = 0. 
\end{equation}
As explained in \S\ref{section-Ainfty-algebras-in-a-monoidal-category}, 
these are $m_1$-free analogues of the defining equations for the usual 
$\Ainfty$-algebras \cite[Defn.~1.2.1.2]{Lefevre-SurLesAInftyCategories}. 

\begin{defn}
\label{defn-ainfty-module-over-ainfty-monad}
Let $\C$ be a DG category and let $(T,m_\bullet): \C \rightarrow \C$
be an $\Ainfty$-monad.
An \em $\Ainfty$-module $(a,p_\bullet)$ \rm over $(T,m_\bullet)$ is:
\begin{itemize}
\item an object $a \in \C$,
\item a collection of degree $2-i$ morphisms 
$m_{i} \colon T^{i-1} a \rightarrow a$ for $i \geq 2$,
\end{itemize}
whose bar-construction $\infbarnaug(a,p_\bullet)$
(see Defn.~\ref{defn-left-module-bar-construction-in-a-monoidal-category}) is
a twisted complex over $\C$. 

An \em $\Ainfty$-morphism 
$f_\bullet\colon (a,p_\bullet) \rightarrow (b,q_\bullet)$  
of $\Ainfty$-$T$-modules \rm is a collection of degree $1-i$ 
morphisms $T^{i-1}a \rightarrow b$ in $\C$. The composition and the
differential of such morphisms are defined by the means of their
bar-constructions, see
Defn.~\ref{defn-right-module-bar-constructions-of-an-Ainfty-morphism}-\ref{defn-left-module-bar-constructions-of-an-Ainfty-morphism}.
We denote the resulting DG category of $\Ainfty$-modules over $T$ by
$T\text{-}\noddinf$. 
\end{defn}

To consider $\Ainfty$-monads over $\C$ as enhancements of genuine
monads on $H^0(\C)$, one needs some notion of homotopy unitality. We
can use the \em strong homotopy unitality \rm of
Defn.~\ref{defn-strong-homotopy-unitality-for-ainfty-algebras}. 
By Theorem \ref{theorem-tfae-unitality-conditions-for-A-modules},
if an $\Ainfty$-monad $T$ is strongly homotopy unital then 
for $\Ainfty$-$T$-modules it suffices to use weak homotopy unitality: 
$(a,p_\bullet)$ is homotopy unital if $(a,p_2)$ is a strictly unital 
$H^0(T)$-module. 
We thus define:
\begin{defn}
Let $\C$ be a DG category and let $(T,m_\bullet): \C \rightarrow \C$
be an strongly homotopy unital $\Ainfty$-monad. The \em weak Eilenberg-Moore 
category $\eilmoorwk_T$ \rm is the DG category $T\text{-}\noddinfhu$
of homotopy unital $\Ainfty$-$T$-modules in $\C$. 
\end{defn}

One application of this new notion is
to fix the well-known problem of the (strong) Eilenberg-Moore category 
of an exact monad over a triangulated category not being 
necessary triangulated:
\begin{theorem}
Let $C$ be a triangulated category and let $t: C \rightarrow C$ be 
an exact monad. Let $\C$ be a pretriangulated DG category such that
$H^0(\C) \simeq C$ and let $T\colon \C \rightarrow \C$ be a strong
homotopy unital $\Ainfty$-monad such that $H^0(T) = t$. In other
words, let $\C$ and $T$ be DG/$\Ainfty$-enhancements of $C$ and $t$.  

The homotopy category $H^0(\eilmoorwk_T)$ of the weak
Eilenberg-Moore category of $T$ is a triangulated category
and we have a natural functor 
$$ F\colon H^0(\eilmoorwk_T) \rightarrow \eilmoor_t $$ 
given by $(a,p_\bullet) \mapsto (a,p_1)$ and $f_\bullet \mapsto f_1$
which is not necessarily either full or faithful. 
\end{theorem}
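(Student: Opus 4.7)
The plan is to build the triangulated structure on $H^0(\eilmoorwk_T) = H^0(T\text{-}\noddinfhu)$ by transferring shifts and cones from $\C$ via the twisted-complex formalism developed in the body of the paper, and then to define $F$ by extracting the leading structure map and verifying its compatibility with the $t$-module axioms in $H^0(\C)$.

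\textbf{Shifts and cones.} Shifts are immediate: for $(a, p_\bullet) \in \eilmoorwk_T$, the shifted operations $p_i[1]\colon T^{i-1}a[1] \rightarrow a[1]$ define an $\Ainfty$-$T$-module $(a[1], p_\bullet[1])$, and homotopy unitality is preserved because $\C$ is pretriangulated and $T$ is a DG functor commuting with shifts. For cones, given a closed degree-zero $f_\bullet\colon (a, p_\bullet) \rightarrow (b, q_\bullet)$, I would form the two-term twisted complex $X = \{(a, p_\bullet)[1] \xrightarrow{f_\bullet} (b, q_\bullet)\}$ in $\pretriag(T\text{-}\noddinf)$. The fully faithful embedding of $\twcxub(T\text{-}\noddinf)$ into the category of $\Ainfty$-$T$-modules in $\twcxub(\C)$ (the left-module analogue of the functor $\Phi$ used in the proof of the Homotopy Lemma~\ref{lemma-the-homotopy-lemma-for-nodA}) converts $X$ into an $\Ainfty$-$T$-module whose underlying object is the twisted complex $\{a[1] \xrightarrow{f_1} b\}$ in $\twcxub(\C)$. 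Since $\C$ is pretriangulated, this convolves to $c = \cone(f_1) \in \C$, inheriting an $\Ainfty$-$T$-module structure $(c, r_\bullet)$. To verify $(c, r_\bullet) \in \eilmoorwk_T$, I would invoke Theorem~\ref{theorem-tfae-unitality-conditions-for-A-modules}: since $T$ is strongly homotopy unital, homotopy unitality is equivalent to $H$-unitality, and $\infbar(c, r_\bullet)$ is a cone of $\infbar(f_\bullet)$ with source and target null-homotopic by hypothesis, hence itself null-homotopic. Declaring the standard triangles $(a, p_\bullet) \rightarrow (b, q_\bullet) \rightarrow (c, r_\bullet) \rightarrow (a, p_\bullet)[1]$ distinguished, the axioms of a triangulated category then follow as for the homotopy category of any pretriangulated DG category.

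\textbf{The functor $F$.} On objects $F$ sends $(a, p_\bullet)$ to the pair $(a, [p_2])$, where $[p_2]\colon ta \rightarrow a$ is the $H^0$-class of the basic action map (which is what the theorem statement denotes by ``$p_1$'', adopting the single-index notation of the classical Eilenberg-Moore category). Homotopy unitality gives the unit axiom strictly in $H^0(\C)$, while the twisted-complex relation encoding $dp_3 = \mu_a \circ Tp_2 - p_2 \circ Tp_2$ (up to sign) yields strict associativity in $H^0(\C)$. On morphisms, $F$ sends $f_\bullet$ to $[f_1]$; the closedness relation $df_2 = q_2 \circ Tf_1 - f_1 \circ p_2$ extracted from $\infbar(f_\bullet)$ provides the intertwining equation in $H^0(\C)$. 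Exactness of $F$ is automatic from the explicit cone construction above, since $F$ sends the distinguished triangle on $\cone(f_\bullet)$ to the standard triangle on $\cone(f_1)$ in $C$.

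\textbf{Failure of fullness and faithfulness.} For explicit counterexamples I would take $T = \id_\C$, in which case $\Ainfty$-$T$-modules are the $\Ainfty$-idempotents of \S\ref{section-ainfty-modules-over-the-identity-functor} and $\eilmoor_t = C$. Non-faithfulness is witnessed by an $\Ainfty$-morphism whose first component $f_1$ is null-homotopic in $\C$ but whose higher components carry a non-trivial secondary cohomology class, so that $F([f_\bullet]) = 0$ while $[f_\bullet] \neq 0$ in $H^0(\eilmoorwk_T)$. Non-fullness: lifting a closed $t$-module morphism $\bar{g}\colon a \rightarrow b$ to a full $\Ainfty$-tower requires inductively solving a sequence of cohomological obstruction equations in $\homm^\bullet_\C(T^{i-1}a, b)$, and on a non-formal DG model the resulting Massey-product type obstructions can be arranged to persist. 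The main obstacle I anticipate is the cone construction itself: one must combine the twisted-complex interchange of the companion paper with Theorem~\ref{theorem-tfae-unitality-conditions-for-A-modules} to promote the easy $H$-unitality of $(c, r_\bullet)$ into the homotopy unitality required to keep it inside $\eilmoorwk_T$. Once that bridge is in place the remaining verifications are routine.
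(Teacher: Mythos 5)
Your construction of the triangulated structure and of the functor $F$ is correct and is essentially the paper's own argument, just unwound: the paper obtains pretriangulatedness of $T\text{-}\noddinf$ from \cite[Cor.~5.13]{AnnoLogvinenko-UnboundedTwistedComplexes}, notes that $H$-unitality is stable under convolutions of bounded twisted complexes (your cone-of-bar-constructions observation), and then invokes Theorem~\ref{theorem-tfae-unitality-conditions-for-A-modules} exactly as you do to conclude that $\eilmoorwk_T$ is a pretriangulated subcategory. Your reading of ``$p_1$'' in the statement as the basic action map $p_2$ in the paper's indexing is also right.

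The genuine gap is in the last part, where the theorem actually has content to prove: that $F$ can fail to be full and faithful requires an explicit example, and your chosen setting $T = \id_\C$ cannot produce one. By the paper's own \S\ref{section-ainfty-modules-over-the-identity-functor}, every homotopy unital $\Ainfty$-module over $\id_\C$ is homotopy equivalent to a free one via $\pi_2\colon (a,\id) \rightarrow (a,p_\bullet)$, so $\eilmoorwk_{\id_\C}$ is quasi-equivalent to $\C$, while $\eilmoor_t \cong C$ since a strict unital module over the identity monad forces $p = \id_a$. Under these identifications $F$ is an equivalence: on free modules the map $f_\bullet \mapsto [f_1]$ is a one-sided inverse of the quasi-equivalence $\free\colon \C \rightarrow \id_\C\text{-}\free$ and hence bijective on $H^0$-Homs, and $F(\pi_2) = [p_2] = \id$, so fullness and faithfulness propagate from the frees to all of $H^0(\eilmoorwk_{\id_\C})$. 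In particular no $\Ainfty$-morphism of the kind you posit (with $[f_1] = 0$ but $[f_\bullet] \neq 0$) exists there, and the ``Massey-product obstructions can be arranged to persist'' claim is never instantiated. The paper instead takes $\C = \modk$ over a field and the strict, strictly unital monad $T = (-) \otimes_k k[x,y]$, so that by \S\ref{section-examples-dg-and-ainfty-algebras} one has $H^0(\eilmoorwk_T) \simeq D(k[x,y])$ while $\eilmoor_t$ is the category of graded $k[x,y]$-modules and $F$ is the functor taking cohomology; non-faithfulness is witnessed by the nonzero morphism in $D(k[x,y])$ from the complex $k[x,y] \xrightarrow{x} k[x,y]$ to $k[y]$ which is zero on cohomology, and non-fullness by the complexes $k[x,y]^{\oplus 2} \xrightarrow{x \oplus y} k[x,y]$ and $k[x,y] \xrightarrow{0} k$, which have equal cohomology yet are non-isomorphic in $D(k[x,y])$, so the identity of their common image admits no lift (any lift would be a quasi-isomorphism, hence an isomorphism). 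Replacing your final paragraph with a concrete example of this kind would complete the proof.
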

\begin{proof}
Since $\C$ is pretriangulated,  so is
$T\text{-}\noddinf$ by 
\cite{AnnoLogvinenko-UnboundedTwistedComplexes}, Cor. 5.13. 
Since acyclicity is stable under taking
convolutions of bounded twisted complexes, so is $H$-unitality. 
Since $T$ is strongly homotopy unital, $H$-unitality is equivalent 
to homotopy unitality by 
Theorem \ref{theorem-tfae-unitality-conditions-for-A-modules}. 
We conclude that $\eilmoorwk_T$ is a pretriangulated subcategory 
of $T\text{-}\noddinf$. Hence $H^0(\eilmoorwk_T)$ is
triangulated, as desired. 

The fact that the functor $F$ is well-defined follows readily from 
the definitions. To see that the resulting functor is a priori 
neither full nor faithful, let $k$ be a field and let $\C = \modk$, 
the DG category of DG $k$-modules.  Let $R$ be the polynomial algebra
$k[x,y]$ and let $T = (-) \otimes_k R$. We are thus in the setup of 
\S\ref{section-examples-dg-and-ainfty-algebras}. As explained there, 
$H^0(\eilmoorwk_T)$ is the usual derived category $D(R)$
of complexes of $R$-modules.

On the other hand, $H^0(\C)$ is 
$\modd^{gr}\text{-}k$ the category of graded $k$-modules. 
The monad $H^0(T)$ is tensoring with $R$ in every
degree, thus $\eilmoor_t$ is the category 
$\modd^{gr}\text{-}R$ of graded $R$-modules. 
The functor $F\colon D(R) \rightarrow \modd^{gr}\text{-}R$  
is taking cohomologies of the complex. It is well-known to
be neither full nor faithful: 
\begin{equation}
\begin{tikzcd}[row sep = 0.25cm, column sep = 1cm]
k[x,y]
\ar{r}{x}
\ar{d}{x=0}
&
\underset{\degzero}{k[x,y]}
\\
k[y],
&
\end{tikzcd}
\end{equation}
is a non-trivial morphism in $D(R)$ which is zero on the level of
cohomologies, while
\begin{align*}
k[x,y] \oplus k[x,y] \xrightarrow{x \oplus y}
\underset{\degzero}{k[x,y]} \quad\quad \text{ and }  \quad\quad
k[x,y] \xrightarrow{0} \underset{\degzero}{k}
\end{align*}
are complexes non-isomorphic in $D(R)$ which have the same cohomologies,
i.e. the same image under $F$. Any morphism of complexes
lifting the identity map of that image would have to be a
quasi-isomorphism, i.e. an isomorphism in $D(R)$.  
\end{proof}

The $1$-morphism categories of $\DGFuntwocat$ are not pretriangulated
and cocomplete. For a small DG category $\C$ the
DG category $\DGFun(k,\C) \simeq \C$ isn't cocomplete. To construct
the derived category of an $\Ainfty$-monad, we need to embed 
$\DGFuntwocat$ into a bicategory $\B$ satisfying the assumptions
in \S\ref{section-the-setting} and \S\ref{section-the-derived-category}. 
In the spirit of the standard Yoneda embedding, 
we want to embed each $1$-morphism category
$\DGFun(\C,\D)$ into the continuous DG functor category 
$\DGFun_{cts}(\modC, \modD)$. By
\cite[Theorem 7.2]{Toen-TheHomotopyTheoryOfDGCategoriesAndDerivedMoritaTheory} 
every such continuous functor is homotopy equivalent to tensoring with
some $\C$-$\D$ bimodule. Thus we work with the following bicategory:
\begin{defn}
Let $\DGModtwocat$ be the following DG bicategory:
\begin{itemize}
\item The objects are small DG categories.  
\item The $1$-morphism DG categories are defined by
$$ \homm_{\DGModtwocat}(\C,\D) = \C{-}\modd\text{-}\D \quad
\quad \forall\; \text{ small DG-categories } \C,\D.$$ 
\item $1$-composition is given by the tensor product of bimodules. 
\item For any $\C$ its identity $1$-morphism is the diagonal
bimodule $\C$.
\item Associator and unitor isomorphisms are the maps induced by the universal 
property of tensor product.  
\end{itemize}
\end{defn}

DG bicategory $\DGModtwocat$ satisfies all assumptions in 
\S\ref{section-the-setting} and
\S\ref{section-the-derived-category} and we set $\B =
\DGModtwocat$. We have the $2$-fully faithful $2$-functor 
$\DGFuntwocat \hookrightarrow \DGModtwocat$
which is $\id$ on objects and on $1$-morphisms it is the
functor 
$\leftidx{_{(-)}}{\D} \colon \DGFun(\C,\D) \rightarrow \CmodD$
which sends any $F\colon \C \rightarrow \D$ to 
the extension of scalars bimodule $\leftidx{_F}{\D} = \homm_\D(-,F-)$.
It sends an $\Ainfty$-monad $T$ in $\DGFun(\C,\C)$ 
to an $\Ainfty$-algebra $\leftidx{_T}{\C}$ in $\CmodC$, which can 
be viewed as an $\Ainfty$-monad structure on $T^*: \modC
\rightarrow \modC$. Similarly, $\Ainfty$-modules over 
$\leftidx{_T}{\C}$ in $\DGModtwocat$ 
can be viewed as $\Ainfty$-modules over $T^*$ in $\modC$. 

The rest of the theory in \S\ref{section-the-derived-category}
now applies. In particular, by 
Prop.~\ref{prps-when-B-is-modA-we-get-the-classical-derived-category}
the derived category $D(T)$ of $T$ in the sense of
\S\ref{section-the-derived-category} is equivalent to the usual
derived category $D(\kleisli(T))$ of the $\Ainfty$-category $\kleisli(T)$. 
The resulting fully-faithful embedding $H^0(\eilmoorwk_T)
\hookrightarrow D(\kleisli(T))$ generalises the embedding
of the Eilenberg-Moore category of a monad into the category of
modules over its Kleisli category \cite[\S5]{Street-TheFormalTheoryOfMonads}. 

Finally, there is a version of this whole theory of $\Ainfty$-monads
and their modules for big categories. There 
we set both $\A$ and $\B$ to be the strict $2$-category of strongly
pretriangulated, cocomplete DG categories and continuous functors
between them. 

\subsection{$\Ainfty$-modules over the identity and strong
homotopy idempotents}
\label{section-ainfty-modules-over-the-identity-functor}

The notion of an $\Ainfty$-module over a DG monad described in
\S\ref{section-classical-and-ainfty-monads}
gives non-trivial results even when applied to the trivial monad, 
i.e. the identity functor. 

For an additive category $\C$ the Eilenberg-Moore category 
over $\eilmoor_{\id_\C}$ is $\C$. The non-unital $\id_\C$-modules  
are the idempotents of $\C$, however their category 
is not the Karoubi-comletion of $\C$. Given $a,b \in \C$
with idempotents $p$ and $q$, the morphisms in the former are
$f\colon a \rightarrow b$ such that $fp = qf$ and in the latter ––
such that $fp = qf = f$. 

Let $\C$ be a small DG category. $\Ainfty$-modules 
over the identity monad $\id_C$ in the sense of our 
\S\ref{section-ainfty-structures-in-monoidal-dg-categories} are 
the \em $\Ainfty$-idempotents \rm which appear in 
\cite[\S4]{GorskyHogancampWedrich-DerivedTracesOfSoergelCategories}. 
These are collections $(a,p_\bullet)$ with $a \in \C$ and 
degree $2-i$ endomorphisms $p_i: a \rightarrow a$ for $i \geq 2$ 
such that the bar-construction below is a twisted complex:
\begin{equation}
\label{eqn-bar-construction-of-Ainfty-Id-module}
\begin{tikzcd}[column sep = 2.0cm]
\dots
\ar{r}{p_2 - \id_a}
\ar[dashed, bend left=25]{rrrr}[description]{p_5}
\ar[dashed, bend left=20]{rrr}[description]{p_4}
& 
a
\ar{r}[']{p_2}
\ar[dashed, bend left=20]{rrr}[description]{p_4}
\ar[dashed, bend left=15]{rr}[description]{(- p_3)}
& 
a
\ar[dashed, bend left=15]{rr}[description]{p_3}
\ar{r}[']{p_2 - \id_a}
&
a
\ar{r}[']{p_2}
&
\underset{\degzero}{a}. 
\end{tikzcd}
\end{equation}
Thus $p_2$ is a homotopy idempotent of $a$ in $\C$, 
$p_3$ is the homotopy up to which $p_2^2 = p_2$ in $\C$, 
and $p_{\geq 4}$ are the higher homotopies involved.

The Kleisli category of $\id_\C$ is $\C$ itself and the
$\Ainfty$-functor $\kleisli(\id_\C) \rightarrow \id_\C\text{-}\free$ 
of Defn.~\ref{defn-kleisli-to-free-Ainfty-functor}
coincides with the natural non-full inclusion 
$\C \rightarrow \id_\C\text{-}\free$ given by the $\free$ functor. 
By Theorem \ref{theorem-ainfty-quasi-equivalence-from-kleisli-to-free}
this inclusion is therefore a quasi-equivalence. 

A module $(a,p_\bullet)$ over $\id_\C$ is homotopy unital if $p_2$ is
homotopic to $\id_a$. By the Homotopy Lemma, the morphism 
$\pi_2\colon \free(a) = (a,\id) \rightarrow  (a,p_\bullet)$ is then
a homotopy equivalence. 
Thus $\eilmoorwk_{\id_\C}$ is quasi-equivalent to 
$\id_\C\text{-}\free$ and hence to $\C$. 

As above, we can also consider 
the category $\id_\C\text{-}\noddinf$ of all (not necessarily unital)
$\Ainfty$-modules over $\id_\C$. Its objects are
$\Ainfty$-idempotents of $\C$, but it is not the universal homotopy 
Karoubi completion of $\C$ constructed in 
\cite[\S4]{GorskyHogancampWedrich-DerivedTracesOfSoergelCategories}.
In the latter, the morphisms are all twisted complex morphisms
between the respective bar-constructions, while in $\id_\C\text{-}\noddinf$
it only those twisted complex morphisms which are bar-constructions
of $\Ainfty$-morphism data. The morphisms constructed in
\cite[\S4]{GorskyHogancampWedrich-DerivedTracesOfSoergelCategories}
which show that $(a,p_\bullet)$ is the homotopy image of the homotopy
idempotent $p_2\colon a \rightarrow a$ are not bar-constructions of
$\Ainfty$-morphisms. 

Indeed, suppose $(a,p_\bullet)$ is an image of an 
idempotent $p_2 \colon a \rightarrow a$ in
$H^0(\id_\C\text{-}\noddinf)$. 
Applying $\forget$, $a$ is an image of the idempotent 
$p_2\colon a \rightarrow a$ in $H^0(\C)$. Hence either $a$ is 
a non-trivial retract of itself in $H^0(\C)$, or 
$p_2$ is a homotopy equivalence. Thus we see that we do
not, in general, get $(a,p_\bullet)$ to be the homotopy image of $p_2$. 

On the other hand,
$\infbarres(a,p_\bullet)$ is always the homotopy image of $p_2$
in $\pretriagmns \C$ by
\cite[\S4]{GorskyHogancampWedrich-DerivedTracesOfSoergelCategories}. 
This illustrates our 
Theorem \ref{theorem-tfae-unitality-conditions-for-A-modules}:
$\infbarres(a,p_\bullet) \xrightarrow{\rho} (a,p_\bullet)$
is a homotopy equivalence if and only if $(a, p_\bullet)$ is homotopy 
unital. To summarise:
\begin{theorem}
Let $\C$ be a DG category. The full subcategory of $\pretriagmns \C$
supported on the image of $\id_\C\text{-}\noddinf$ under the
bar-resolution functor $\infbarres(-)$ is the universal homotopy
Karoubi completion of $\C$ constructed in 
\cite[\S4]{GorskyHogancampWedrich-DerivedTracesOfSoergelCategories}. 
\end{theorem}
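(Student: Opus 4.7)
The plan is to match both objects and morphisms with the explicit construction of the universal homotopy Karoubi completion given in \cite[\S4]{GorskyHogancampWedrich-DerivedTracesOfSoergelCategories}. On the level of objects this is already asserted in the excerpt: for an $\Ainfty$-idempotent $(a,p_\bullet)\in\id_\C\text{-}\noddinf$ the one-sided bounded-above twisted complex $\infbarres(a,p_\bullet)\in\pretriagmns\C$ realises the homotopy image of the homotopy idempotent $p_2\colon a\to a$, and conversely any such homotopy image arises in this way (the choice of $p_{\geq 3}$ recording the coherence data needed to extend $p_2$ to an $\Ainfty$-idempotent). Thus the object class of the full subcategory in question and the object class of the universal homotopy Karoubi completion of $\C$ agree up to the natural identifications.

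For the morphisms, I would compare the two $\homm$-complexes $\homm^\bullet_{\pretriagmns\C}\bigl(\infbarres(a,p_\bullet),\infbarres(b,q_\bullet)\bigr)$ and the morphism complex defined in \cite[\S4]{GorskyHogancampWedrich-DerivedTracesOfSoergelCategories}. Unwinding Definition~\ref{defn-unbounded-twisted-complexes}, a morphism in $\pretriagmns\C$ between $\infbarres(a,p_\bullet)$ and $\infbarres(b,q_\bullet)$ is a collection of morphisms $f_{ij}\colon a\to b$ in $\C$ (indexed by the terms of the two bar-resolutions) with the differential \eqref{eqn-the-hom-complex-of-twisted-complexes-differential} that twists the internal differential of $\C$ by pre- and post-composition with the twisted differentials of the two bar-resolutions. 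By direct inspection, this is exactly the morphism complex used in \cite[\S4]{GorskyHogancampWedrich-DerivedTracesOfSoergelCategories}, since there also the morphisms between two homotopy images are exactly the $\C$-morphism systems between the terms of their standard resolutions, with the same twisted differential. The functor $\infbarres$ induces a map from bar-constructions of $\Ainfty$-morphisms into this morphism complex, and this map is fully faithful but not essentially surjective, which is precisely the distinction highlighted in the excerpt between $\id_\C\text{-}\noddinf$ and its image in $\pretriagmns\C$.

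It remains to verify that composition and identities match. Composition in $\pretriagmns\C$ is defined via formula \eqref{eqn-the-hom-complex-of-unbounded-twisted-complexes-naive} by composing the component $\C$-morphisms and summing over indices, which coincides with the composition in the Karoubi completion. The identity of $\infbarres(a,p_\bullet)$ in $\pretriagmns\C$ is the diagonal system of identity maps on each $a$, matching the identity used in \cite[\S4]{GorskyHogancampWedrich-DerivedTracesOfSoergelCategories}.

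The main obstacle is the bookkeeping required to make the identification of morphism complexes fully explicit: one must check that the signs in \eqref{eqn-differentials-in-right-module-bar-construction}, translated through $\infbarres$ into sign conventions on the differentials of $\infbarres(a,p_\bullet)$, agree (up to a chosen isomorphism of twisted complexes) with the sign conventions in \cite[\S4]{GorskyHogancampWedrich-DerivedTracesOfSoergelCategories}. This is purely a matter of matching conventions and does not affect the underlying statement.
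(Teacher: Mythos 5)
Your proposal is correct and takes essentially the same route as the paper, which treats this theorem as a summary of the identifications made just before it: objects of the completion in \cite[\S4]{GorskyHogancampWedrich-DerivedTracesOfSoergelCategories} are exactly the $\Ainfty$-idempotents, i.e.\ the objects of $\id_\C\text{-}\noddinf$, realised by $\infbarres$ as one-sided twisted complexes, and passing to the \emph{full} subcategory of $\pretriagmns\C$ recovers precisely their morphism complexes (all twisted-complex maps, not only bar-resolutions of $\Ainfty$-morphism data). One terminological slip: the comparison map on $\homm$-complexes is injective but not surjective --- that is, $\infbarres$ is faithful but not full --- rather than ``fully faithful but not essentially surjective.''
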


\subsection{Enhancing monads over enhanced triangulated categories}
\label{section-enhanced-monads}

Another application of our theory in
\S\ref{section-ainfty-structures-in-monoidal-dg-categories}-\ref{section-the-derived-category} is to enhancing exact monads over triangulated categories. 
For the details on the notion of a DG enhanced triangulated 
category, see \cite{BondalKapranov-EnhancedTriangulatedCategories}
\cite[\S1]{LuntsOrlov-UniquenessOfEnhancementForTriangulatedCategories}
\cite[\S3]{AnnoLogvinenko-SphericalDGFunctors},
\cite[\S4.4]{GyengeKoppensteinerLogvinenko-TheHeisenbergCategoryOfACategory}. 

A \em DG enhancement \rm of a triangulated category $C$ is 
a pretriangulated DG category $\C$ with an exact equivalence 
$C \simeq H^0(\C)$. A \em Morita enhancement \rm of $C$ is a small DG
category $\C$ with an exact equivalence $C \simeq D_c(\C)$. Note that
for any $\C$ its compact derived category $D_c(\C)$ is Karoubi
complete, so Morita enhancements can only be used for Karoubi complete
triangulated categories. 

We only work with Morita enhancements, thus for us a 
\em Karoubi complete enhanced triangulated category $\C$ \rm is 
a small DG category. Its underlying triangulated category 
is derived category $D_c(\C)$. We refer the reader to
\cite[\S4.4]{GyengeKoppensteinerLogvinenko-TheHeisenbergCategoryOfACategory}
for an overview of how the fundamental results in 
\cite{Toen-TheHomotopyTheoryOfDGCategoriesAndDerivedMoritaTheory} 
and \cite{Tabuada-InvariantsAdditifsDeDGCategories} imply that 
Karoubi complete enhanced triangulated categories form naturally 
$1$-category $\mor(\DGCat^1)$ which can then be refined to 
a strict $2$-category $\enhcatkc$. However, 
as explained in \S\ref{section-classical-and-ainfty-monads}, to define
an enhanced monad it is not enough to fix an algebra structure on 
a $1$-endomorphism $T$ in $\enhcatkc$. This doesn't give enough data
to construct an Eilenberg-Moore category which would be enhanced 
triangulated. We need a DG enhancement of $\enhcatkc$ and a lift of $T$ 
to an $\Ainfty$-algebra structure in this enhancement. 

The following DG enhancement of $\enhcatkc$ was constructed 
in \cite[\S4.4]{GyengeKoppensteinerLogvinenko-TheHeisenbergCategoryOfACategory}
with the notions developed in
\cite{AnnoLogvinenko-BarCategoryOfModulesAndHomotopyAdjunctionForTensorFunctors}. 
The \em bar category $\CmodbarD$ of bimodules \em is
a DG category isomorphic the category of DG $\C$-$\D$-bimodules 
with $\Ainfty$-morphisms. However, it has an intrinsic
definition and is simpler to work with. We refer the reader to 
\cite[\S3]{AnnoLogvinenko-BarCategoryOfModulesAndHomotopyAdjunctionForTensorFunctors} for the definition and the technical details. 
\begin{defn}[Defn.~4.19,
\cite{GyengeKoppensteinerLogvinenko-TheHeisenbergCategoryOfACategory}]
\label{defn-enhcatkcdg}
The \em homotopy unital DG bicategory $\enhcatkcdg$ of 
Karoubi complete enhanced triangulated categories \rm comprises: 
\begin{enumerate}
\item The \em objects \rm are all small DG categories, 
\item For any objects $\C,\D$, 
the \em $1$-morphism category \rm $\enhcatkcdg(\C,\D)$ is
the full subcategory of $\CmodbarD$ comprising
$\D$-perfect bimodules,
\item For any objects $\C,\D,\E$, the \em
$1$-composition functor \em is 
\begin{align*}
\enhcatkcdg(\D,\E) \otimes \enhcatkcdg(\C,\D) &\rightarrow \enhcatkcdg(\C,\E) \\
(M,N) \mapsto N \bartimes M \quad &\text{and} \quad
(f,g) \mapsto (-1)^{|f||g|} g \bartimes f, 
\end{align*}
where $M,N$ are objects and $f,g$ are morphisms. 
\item For any object $\C$, its \em identity
$1$-morphism \rm is the diagonal bimodule $\C$,
\item The \em associator \rm isomorphism is the natural isomorphism
$$(L \bartimes_{\C} M) \bartimes_{\D} N \simeq L \bartimes_{\C} (M
\bartimes_{\D} N).$$
\item  The \em unitor \rm morphisms are the homotopy equivalences 
of
\cite[\S3.3]{AnnoLogvinenko-BarCategoryOfModulesAndHomotopyAdjunctionForTensorFunctors}:
$$ {\alpha_\C} \colon \C \bartimes_\C M \rightarrow M 
\quad \quad \text{and} \quad \quad 
{\alpha_\D} \colon M \bartimes_\D \D \rightarrow M. $$ 
\end{enumerate}
\end{defn}

We thus set $\A$ to be $\enhcatkcdg$. It is only homotopy unital: the unitor
morphisms $\alpha_\bullet$ are homotopy equivalences, and not
isomorphisms. But they have natural homotopy inverses $\beta_\bullet$ which are moreover their genuine inverses on the right, see 
\cite[\S3.3]{AnnoLogvinenko-BarCategoryOfModulesAndHomotopyAdjunctionForTensorFunctors}. This weakening doesn't affect 
the theory of $\Ainfty$-structures developed in \S\ref{section-ainfty-structures-in-monoidal-dg-categories}-\S\ref{section-the-derived-category}.  
on its $1$-morphisms. The only additional subtlety is that when working with
the unital structures in \S\ref{section-strong-homotopy-unitality} one
must never suppress the identity $1$-morphism which is the source of 
the unit $2$-morphism $\eta\colon \id_\A \rightarrow A$ of the
$\Ainfty$-algebra in question. For example, when defining strong
homotopy unitality in Defn.~\ref{defn-strong-homotopy-unitality}, 
$h^r_\bullet$ is a degree $-1$ morphism $\id_\A \circ_1 A \rightarrow A$
in $\nodA$ satisfying $\mu_2 \circ_2 (\eta \circ_1 A) = \alpha +
dh^r_\bullet$ where $\alpha$ is the unitor $\id_\A
\circ_1 A \rightarrow A$, and similarly for $h^l_\bullet$. 

\begin{defn}
Let $\C$ be an enhanced triangulated category. An \em enhanced exact
monad \rm over $\C$ is a strongly homotopy unital $\Ainfty$-algebra
in $\homm_{\enhcatkcdg}(\C,\C)$. 

Explicitly, it is a collection 
$(T,p_\bullet, \eta, h^l_\bullet, h^r_\bullet)$ where:
\begin{itemize}
\item $T$ is a right-perfect $\C$-$\C$-bimodule, i.e. 
an enhanced exact functor $\C \rightarrow \C$,
\item $p_\bullet$ is a collection of degree $2-i$ morphisms 
$p_i\colon T^i \rightarrow T$ in $\CmodbarC$ such that their 
bar-construction \eqref{eqn-nonaugmented-bar-construction-of-A-m_i} is 
a twisted complex.
\item $\eta$ is a morphism $\C \rightarrow T$ in $\CmodbarC$, 
\item $h^l_\bullet$ and $h^r_\bullet$ are the degree $-1$ 
morphisms $T \rightarrow T$ in $T$-$\noddinf$ and $\noddinf$-$T$
such that the strong homotopy unitality conditions 
\eqref{eqn-strong-homotopy-unitality-conditions} hold. 
\end{itemize}

The underlying exact monad of $(T,p_\bullet)$ is 
$t\colon D_c(\C) \rightarrow D_c(\C)$ given by $(-) \bartimes_\C T$
with the operation $t^2 \rightarrow t$ given by $p_2$ and 
the unit $\id \rightarrow t$ given by $\eta$.  
\end{defn}

To define the derived categories of $\Ainfty$-algebras in
$\enhcatkcdg$, we set $\B$ to be:
\begin{defn}
The \em homotopy unital DG bicategory
$\BarModtwocat$ of bar categories of bimodules \rm is 
defined identically to $\enhcatkc$ above, only with 
the $1$-morphism category $\homm_{\BarModtwocat}(\C,\D)$ being
the whole of $\CmodbarD$. 
\end{defn}
It is readily checked that $\BarModtwocat$ 
satisfies our assumptions in \S\ref{section-the-setting} and 
\S\ref{section-the-derived-category}. 

\begin{defn}
Let $T$ be an enhanced monad over an enhanced triangulated category
$\C$. The \em Eilenberg-Moore category $\eilmoor_T$ \rm is the enhanced 
triangulated category $\noddinfhu\text{-}T^{\barperfC}$ of homotopy 
unital $\Ainfty$-$T$-modules in 
$\barperfC = \homm_{\enhcatkcdg}(k,\C)$. 
\end{defn}

Considering $\Ainfty$-$T$-modules in $\barperfC$ 
makes sense since we work in Morita setting and the triangulated 
category underlying $\C$ is $D_c(\C) \simeq H^0(\barperfC)$. 
Indeed, $\C$ and $\barperfC$
are Morita equivalent, but $\barperfC$ enhances
$D_c(\C)$ both in the Morita sense and in the classical one. 
\begin{lemma}
Let $T$ be an enhanced monad over an enhanced triangulated category
$\C$.
DG category $\eilmoor_T$ is pretriangulated and homotopy Karoubi complete.  
\end{lemma}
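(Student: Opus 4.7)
The plan is to deduce both assertions from the corresponding properties of the one-morphism category $\barperfC$, using the machinery of unbounded twisted complexes together with the equivalence between $H$-unitality and homotopy unitality established in Theorem \ref{theorem-tfae-unitality-conditions-for-A-modules}.

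First I would handle pretriangulatedness. Since $\barperfC$ is pretriangulated, being an enhancement of $D_c(\C)$, the DG category $\noddinf\text{-}T^{\barperfC}$ is pretriangulated by \cite[Cor.~5.13]{AnnoLogvinenko-UnboundedTwistedComplexes}; shifts and cones of $\Ainfty$-modules can be computed component-wise on underlying objects in $\barperfC$ and on the structure data. The bar-construction functor $\infbar$ commutes with shifts and carries cones of $\Ainfty$-morphisms to cones of the corresponding twisted bicomplex maps, so a shift or cone of $H$-unital modules has bar-construction obtained as a shift or cone of null-homotopic twisted complexes, hence null-homotopic. Because $T$ is strongly homotopy unital, $H$-unitality coincides with homotopy unitality by Theorem \ref{theorem-tfae-unitality-conditions-for-A-modules}, so $\eilmoor_T = \noddinfhu\text{-}T^{\barperfC}$ is closed in $\noddinf\text{-}T^{\barperfC}$ under shifts and cones and is therefore itself pretriangulated.

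Next I would address homotopy Karoubi completeness. Let $(E,p_\bullet) \in \eilmoor_T$ and let $e_\bullet$ be a homotopy idempotent endomorphism. Its first component $e_1$ is a homotopy idempotent on $E$ in $\barperfC$. As $\barperfC$ is itself homotopy Karoubi complete, being a Morita enhancement of the Karoubi complete triangulated category $D_c(\C)$, there is a splitting $E \simeq F \oplus G$ in $\barperfC$ under which $e_1$ corresponds, up to homotopy, to the strict projector onto $F$. Transferring $p_\bullet$ along this homotopy equivalence via the homotopy transfer of $\Ainfty$-structures \cite[Thm.~A.2]{AnnoLogvinenko-UnboundedTwistedComplexes} produces a module $(F \oplus G, \tilde p_\bullet) \in \eilmoor_T$ homotopy equivalent to $(E, p_\bullet)$; homotopy unitality is preserved, since the bar-construction of a module homotopy equivalent to an $H$-unital one is itself null-homotopic.

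The hard part is the last step: promoting $(F \oplus G, \tilde p_\bullet)$ to a genuine direct sum of $\Ainfty$-$T$-modules on $F$ and $G$ that realises the splitting of $e_\bullet$. Under the homotopy equivalence the transferred idempotent $\tilde e_\bullet$ has first component equal to the strict projector onto $F$, while its higher components, together with the off-block components of $\tilde p_\bullet$, must be eliminated inductively: at each degree, the obstruction to block-diagonalising lives in a cochain complex on the $\homm$-spaces of $\barperfC$ which is acyclic because $\tilde e_\bullet^2 = \tilde e_\bullet$ only up to coherent homotopy, and these homotopies can be absorbed into successive deformations of the higher components of $\tilde p_\bullet$ using the obstruction-theoretic framework underlying \cite[Thm.~A.2]{AnnoLogvinenko-UnboundedTwistedComplexes}. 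Once the block-diagonalisation is achieved, we obtain $(F, p^F_\bullet)$ and $(G, p^G_\bullet)$ whose direct sum is homotopy equivalent to $(E, p_\bullet)$, splitting $e_\bullet$. Each summand is homotopy unital since its bar-construction is a direct summand of a null-homotopic twisted complex, hence itself null-homotopic, so $H$-unital and therefore homotopy unital by Theorem \ref{theorem-tfae-unitality-conditions-for-A-modules}.
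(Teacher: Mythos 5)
Your pretriangulatedness argument is essentially the paper's (Cor.~5.13 of the twisted complexes paper plus stability of $H$-unitality under cones and shifts, then Theorem \ref{theorem-tfae-unitality-conditions-for-A-modules}), and that half is fine. The gap is in your treatment of homotopy Karoubi completeness. Your ``hard part'' is not an argument: you assert that after splitting the underlying idempotent $e_1$ in $\barperfC$ and transferring the $\Ainfty$-structure, the higher components of the transferred idempotent and the off-block components of $\tilde p_\bullet$ ``must be eliminated inductively'' because the relevant obstruction complex ``is acyclic.'' No such acyclicity is available. The datum you start from is a bare homotopy idempotent, i.e.\ $e_\bullet^2 = e_\bullet$ only in $H^0(\eilmoor_T)$; promoting this to a coherent system of homotopies is itself obstructed, and even a fully coherent homotopy idempotent does not split in a general pretriangulated DG category --- the known splitting constructions (the telescope, or the construction of \cite[\S4]{GorskyHogancampWedrich-DerivedTracesOfSoergelCategories}, which the paper discusses in \S\ref{section-ainfty-modules-over-the-identity-functor}) require countable direct sums, which $\barperfC$ does not have. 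So the inductive block-diagonalisation inside the perfect category is exactly the step that can fail, and nothing in the homotopy transfer theorem \cite[Thm.~A.2]{AnnoLogvinenko-UnboundedTwistedComplexes} supplies the needed vanishing.

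The paper's proof sidesteps all of this by working where direct sums exist: since $\noddinf\text{-}T^{\modbarC}$ is cocomplete, it is homotopy Karoubi complete (the telescope argument), so any homotopy idempotent on $(a,p_\bullet) \in \noddinf\text{-}T^{\barperfC}$ splits there, producing a homotopy direct summand $(b,q_\bullet)$. One then descends: the underlying object $b$ is a homotopy direct summand of the perfect object $a$ in $\modbarC$, hence perfect because $D_c(\C)$ is Karoubi complete, so $(b,q_\bullet)$ already lies in $\noddinf\text{-}T^{\barperfC}$; finally $H$-unitality is stable under homotopy direct summands, so the splitting stays in $\eilmoor_T$. If you want to salvage your approach, replace the obstruction-theoretic induction by precisely this detour through the cocomplete ambient category --- splitting at the level of $\barperfC$ alone is not enough, and splitting only $e_1$ rather than $e_\bullet$ loses the module-level splitting you need.
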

It follows that $D_c(\eilmoor_T) \simeq H^0(\eilmoor_T)$. Thus 
when viewed as enhanced triangulated category, 
the underlying triangulated category of $\eilmoor_T$ is $H^0(\eilmoor_T)$.
\begin{proof}
Since $\barperfC$ is pretriangulated, so is 
$\noddinf\text{-}T^{\barperfC}$ by 
\cite{AnnoLogvinenko-UnboundedTwistedComplexes}, Cor.~5.13. 
Since $\noddinf\text{-}T^{\modbarC}$ is cocomplete, it is homotopy
Karoubi complete. Hence every homotopy idempotent of 
$\noddinf\text{-}T^{\barperfC}$ splits in
$\noddinf\text{-}T^{\modbarC}$. But if $(b,q_\bullet)$ is a homotopy
direct summand of some $(a,p_\bullet) \in
\noddinf\text{-}T^{\barperfC}$, then $b$ is homotopy direct summand 
of $a$ in $\modbarC$. Since $a$ is perfect, 
$(b, q_\bullet)$ lies in $\noddinf\text{-}T^{\barperfC}$. 
Thus $\noddinf\text{-}T^{\barperfC}$ is pretriangulated and
homotopy Karoubi complete. Hence so is 
$\eilmoor_T = \noddinfhu\text{-}T^{\barperfC}$, since $H$-unitality 
is stable under taking convolutions of twisted complexes 
and homotopy direct summands. 
\end{proof}

Let $t\colon D_c(\C) \rightarrow D_c(\C)$ be the exact monad 
$(-) \ldertimes T$ underlying $T$. Recall that  
$\eilmoor_{t}$ is not apriori triangulated or Karoubi complete.
We have a natural functor 
$H^0(\eilmoor_T) \rightarrow \eilmoor_{t}$
which sends  every $(a,p_\bullet) \in H^0(\eilmoor_T)$ to
$(a,p_2)$ and every $f_\bullet$ to $f_1$.

$\barperfC$ generates the whole of $H^0(\modbarC)$. 
By Prop.~\ref{prps-derived-category-of-A-is-triangulated-cocomplete-hull-of-freeA},
the derived category $D(T)$ is $H^0(\noddinfhu\text{-}T^{\modbarC})$.  
By Prop.~\ref{prps-perfect-iff-lies-in-hperf-of-perfect-generator-frees}, 
the objects of $D_c(T)$, i.e. perfect $T$-modules, lie in 
$\eilmoor_T = \noddinfhu\text{-}T^{\barperfC}$, but the converse is not
true, cf.~Remark.~\ref{remark-not-all-perfect-object-modules-are-perfect}.

\begin{defn}
Let $T$ be an enhanced monad over an enhanced triangulated category
$\C$. The \em perfect Eilenberg-Moore category
$\eilmoor^{pf}_T$ of $T$ \rm is the full subcategory of $\eilmoor_T$ 
consisting of perfect $\Ainfty$-$T$-modules. 
\end{defn}

\begin{theorem}
Let $\C$ be an enhanced triangulated category and $T$ be an enhanced
exact monad over $\C$. As a DG category, $\eilmoor^{pf}_T$ 
is pretriangulated, homotopy Karoubi complete, and
contains all free $T$-modules. The full inclusion 
$\free\text{-}T \hookrightarrow \eilmoor^{pf}_T$
is an equivalence of enhanced triangulated categories. The underlying 
triangulated category is further equivalent to $D_c(T)$ and $D_c(\kleisli(T))$. 
\end{theorem}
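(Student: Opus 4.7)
The plan is to combine the preceding lemma, which already gives that $\eilmoor_T$ is pretriangulated and homotopy Karoubi complete, with the results of \S\ref{section-the-derived-category} on compact derived categories of strongly homotopy unital $\Ainfty$-algebras, applied now inside the homotopy unital DG bicategory $\enhcatkcdg$ with ambient $\BarModtwocat$. First I would observe that $\eilmoor^{pf}_T$ is by definition the full subcategory of $\eilmoor_T$ consisting of those objects which are compact in $D(T)$, and that compactness is stable under shifts, cones, and homotopy direct summands. Since these closure properties hold in $\eilmoor_T$, they descend to $\eilmoor^{pf}_T$, making it pretriangulated and homotopy Karoubi complete. Furthermore, every object of $\barperfC$ is by definition compact in $H^0(\modbarC)$, so Lemma \ref{lemma-perfect-in-nodhuaA-iff-perfect-in-A} applied in this bicategorical setting shows that $E \bartimes T$ is perfect as a $T$-module for every $E \in \barperfC$; since all objects of $\free\text{-}T$ have this form, $\free\text{-}T \subseteq \eilmoor^{pf}_T$.

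For the final assertion I read ``equivalence of enhanced triangulated categories'' as a Morita equivalence $D_c(\free\text{-}T) \simeq D_c(\eilmoor^{pf}_T)$, combined with the identification $D_c(\eilmoor^{pf}_T) = H^0(\eilmoor^{pf}_T)$, which holds because $\eilmoor^{pf}_T$ is pretriangulated, homotopy Karoubi complete, and consists entirely of compact objects of $D(T)$. I would pick a small set $S$ of compact generators of $H^0(\barperfC)$ in $H^0(\modbarC)$ and apply Theorem \ref{theorem-compact-derived-category-of-A-is-that-of-frees-and-kleisli} to $T$ to obtain $D_c(T) \simeq D_c(\free_S\text{-}T) \simeq D_c(\kleisli_S(T))$. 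By Prop.~\ref{prps-perfect-iff-lies-in-hperf-of-perfect-generator-frees} the objects of $\free_S\text{-}T$ form a set of compact generators of $D(T)$ and sit quasi-fully faithfully inside both $\free\text{-}T$ and $\eilmoor^{pf}_T$; standard Morita invariance for compact derived categories then yields $D_c(\free_S\text{-}T) \simeq D_c(\free\text{-}T)$ and $D_c(\free_S\text{-}T) \simeq D_c(\eilmoor^{pf}_T)$, assembling into the chain $H^0(\eilmoor^{pf}_T) \simeq D_c(\free\text{-}T) \simeq D_c(T) \simeq D_c(\kleisli(T))$.

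The main obstacle I anticipate is justifying the use of the machinery of \S\ref{section-the-derived-category}, originally written for a strict monoidal DG category $\A$, inside the homotopy unital bicategory $\enhcatkcdg$ of Defn.~\ref{defn-enhcatkcdg}, whose identity $1$-morphisms act only up to the unitor homotopy equivalences $\alpha_\bullet, \beta_\bullet$. Concretely, one must check that the truncated bar resolution argument of Prop.~\ref{prps-perfect-iff-lies-in-hperf-of-perfect-generator-frees} --- which witnesses a perfect $T$-module as a homotopy retract of a bounded twisted complex of free modules on objects of $S$ --- still applies here, and that each such truncation remains inside $\barperfC$ objectwise. The latter follows because $T$ is a right-perfect $\C$-bimodule and each bar-resolution term is then a composition of right-perfect bimodules, hence right-perfect; the former requires verifying that the Free-Forgetful homotopy adjunction of Theorem \ref{theorem-free-forgetful-homotopy-adjunction} and the proof of Prop.~\ref{prps-perfect-iff-lies-in-hperf-of-perfect-generator-frees} go through with the unitors replaced by the homotopy equivalences $\alpha_\bullet, \beta_\bullet$ throughout, a routine but notation-heavy bookkeeping.
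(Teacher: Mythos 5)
Your proposal is correct and follows essentially the same route as the paper: perfection (compactness in $D(T)$) is closed under shifts, cones, and homotopy direct summands, so $\eilmoor^{pf}_T$ is pretriangulated and homotopy Karoubi complete; free modules on objects of $\barperfC$ are perfect by Lemma \ref{lemma-perfect-in-nodhuaA-iff-perfect-in-A}; and the identifications with $D_c(T)$ and $D_c(\kleisli(T))$ come from Theorem \ref{theorem-compact-derived-category-of-A-is-that-of-frees-and-kleisli} together with Prop.~\ref{prps-perfect-iff-lies-in-hperf-of-perfect-generator-frees}. Your closing worry about the homotopy-unital bicategory $\enhcatkcdg$ is legitimate but is already dispatched globally by the paper's remark after Defn.~\ref{defn-enhcatkcdg} (one keeps the unit $1$-morphisms explicit and replaces unitors by $\alpha_\bullet,\beta_\bullet$), so no separate verification is needed in this proof.
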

\begin{proof}
By definition, $\eilmoor^{pf}_T$ comprises all perfect
$\Ainfty$-$T$-modules in $\modbarC$. Hence it 
is pretriangulated and homotopy Karoubi complete. 
Any $a \in \barperfC$ is compact in $\modbarC$, and hence by 
Lemma \ref{lemma-perfect-in-nodhuaA-iff-perfect-in-A}
the free module $Ta$ lies in $\eilmoor^{pf}_T$. 
The remaining assertions now follow by Theorem 
\ref{theorem-compact-derived-category-of-A-is-that-of-frees-and-kleisli}. 
\end{proof}

\subsection{Adjunction monads and comonads}
\label{section-adjunction-monads-and-comonads}

In this section we continue working in the setting of enhanced Karoubi
complete triangulated categories of 
\S\ref{section-enhanced-monads}. We show that for any enhanced
functor its adjunction monad and comonad can be enhanced by 
strictly (co)associative but bimodule homotopy unital algebra and
coalgebra in $\enhcatkcdg$. We then show that therefore for any 
adjoint triple $(L,F,R)$ the monad $RF$ and the comonad $LF$ are
derived module-comodule equivalent. 

Let $\C$ and $\D$ be enhanced triangulated categories, i.e. objects in
$\enhcatkcdg$. Let $F\colon \C \rightarrow \D$ be an enhanced exact
functor, i.e. a $1$-morphism in $\enhcatkcdg$. 
By definition, $F$ is some right-perfect bimodule $M \in \CmodbarD$. 

Let $F$ have an enhanced right adjoint $R\colon \D \rightarrow \C$. 
By this we mean, that $F$ and $R$ are $2$-categorically adjoint in 
the homotopy bicategory $\enhcatkc$ of $\enhcatkcdg$. In \cite[Theorem
4.1,Prop.~4.6]{AnnoLogvinenko-BarCategoryOfModulesAndHomotopyAdjunctionForTensorFunctors}
we showed that such $R$ exists if and only if the bar dual 
$M^{\barD} : = \barhom_{\D}(M,\D)$
is also a right-perfect $\D$-$\C$-bimodule. Moreover, in such case, 
up to homotopy equivalence, we can take $R = M^{\barD}$, the
adjunction counit $FR \xrightarrow{\counit} \id_\D$ to be the evaluation map 
\begin{equation}
\label{eqn-counit-of-F,R-adjunction}
\barhom_{\D}(M,\D) \bartimes_\C M \xrightarrow{\eval} \D,
\end{equation}
and the adjunction unit $\id_\C \xrightarrow{\unit} RF$ to be the composition 
\begin{equation}
\label{eqn-unit-of-F,R-adjunction}
\C \xrightarrow{\action} \barhom_{\D}(M,M) \xrightarrow{\zeta}
M \bartimes_\D \barhom_{\D}(M,\D),
\end{equation}
of the action map and any homotopy inverse $\zeta$ of the natural map 
$$ M \bartimes_\D \barhom_{\D}(M,\D) \xrightarrow{\eta} \barhom_{\D}(M,M). $$
See \cite[\S4.2]{AnnoLogvinenko-BarCategoryOfModulesAndHomotopyAdjunctionForTensorFunctors} for the definitions and technical details. 

Since $F$ and $R$ are homotopy adjoint, the operations 
$$ RFRF \xrightarrow{R\counit{F}} RF 
\quad \quad \text{ and } \quad \quad 
   \id \xrightarrow{unit} RF, $$
give the composition $RF$ the structure of a stricly unital algebra 
in the homotopy bicategory $\enhcatkc$. Similarly, the operations
$$ FR \xrightarrow{F\unit{R}} FRFR
\quad \quad \text{ and } \quad \quad 
FR \xrightarrow{\counit} \id, $$
give the composition $FR$ the structure of strictly counital coalgebra
in $\enhcatkc$. On the level of underlying exact functors these
become the adjunction monad $rf$ and comonad $fr$ of the adjunction 
$(f,r)$.  

As explained in \S\ref{section-enhanced-monads}, the above is not
enough to give $RF$ and $FR$ the structure of an enhanced monad and
comonad. We need to equip $RF$ and $FR$ with full $\Ainfty$- and 
strong/bimodule homotopy unitality structures in $\enhcatkcdg$ which 
descend to the above algebra and coalgebra structures in 
$\enhcatkc$. 

To simplify the computations involved, we modify 
a little the above setup. In
\cite[\S3.3]{AnnoLogvinenko-BarCategoryOfModulesAndHomotopyAdjunctionForTensorFunctors}
we have constructed natural quasi-inverse homotopy equivalences 
\begin{equation*}
\begin{tikzcd}
M \bartimes_\D \D 
\ar[shift left = 0.75]{r}{\alpha}
&
M
\ar[shift left = 0.75]{l}{\beta}
\end{tikzcd}
\quad \text{ and } \quad 
\begin{tikzcd}
M 
\ar[shift left = 0.75]{r}{\gamma}
&
\barhom_\D(\D,M).
\ar[shift left = 0.75]{l}{\delta}
\end{tikzcd}
\end{equation*}
We replace the bimodule $M$ by a homotopy equivalent bimodule 
$\barhom_\D(\D,M)$. This doesn't change the isomorphism class of the
corresponding $1$-morphism in $\enhcatkc$, and
hence doesn't change the enhanced functor. Henceforth we 
use $F$ to denote the $1$-morphism of $\enhcatkcdg$ defined by
$\barhom_\D(\D,M)$. 

Next, we construct the multiplication of $RF$ and 
the comultiplication of $FR$ which descend to the same
$2$-morphisms in $\enhcatkc$, under the identification of $M$ 
and $\barhom_\D(\D,M)$ provided by $\gamma$, as 
the homotopy adjunction multiplication and comultiplication above. 
For this, we need to make the following choice. By
\cite[Lemma
3.36]{AnnoLogvinenko-BarCategoryOfModulesAndHomotopyAdjunctionForTensorFunctors},
the map 
$$ \composition \colon\quad \barhom_\D(\D,M) \bartimes_\D \barhom_\D(M,\D)
\rightarrow \barhom_\D(M,M), $$
is a homotopy equivalence. We choose, once and for all, 
a homotopy inverse 
$$ \zeta\colon\quad \barhom_\D(M,M) \rightarrow \barhom_\D(\D,M)
\bartimes_\D \barhom_\D(M,\D) $$
and degree $-1$ endomorphisms
$$ \omega\colon\quad \barhom_\D(M,M) \rightarrow \barhom_\D(M,M), $$ 
$$ \omega'\colon\quad 
\barhom_\D(\D,M) \bartimes_\D \barhom_\D(M,\D)
\rightarrow 
\barhom_\D(\D,M) \bartimes_\D \barhom_\D(M,\D)$$
such that $d\omega = \composition \circ \zeta - \id$ and 
$d\omega' = \id - \zeta \circ \composition$. 

Since $\enhcatkcdg$ is only homotopy unital, we do not 
suppress the unit $1$-morphisms when they act as sources or images of 
$2$-morphisms we use. For example, $F\id_\C{R}$ denotes the bimodule 
$\barhom_{\D}(M,\D) \bartimes_\C \C \bartimes_\C \barhom_{\D}(\D,M)$
and $F \action R\colon\; F \id_\C R \rightarrow F(\barhom_{\D}(M,M))\R$ 
denotes the map $\id \bartimes \action \bartimes \id$ 
$$ 
\barhom_{\D}(M,\D) \bartimes_\C \C \bartimes_\C \barhom_{\D}(\D,M) 
\rightarrow 
\barhom_{\D}(M,\D) \bartimes_\C \barhom_{\D}(M,M) \bartimes_\C \barhom_{\D}(\D,M). $$

In \cite[\S2.11]{AnnoLogvinenko-BarCategoryOfModulesAndHomotopyAdjunctionForTensorFunctors}
we showed that for any small DG category $\C$ 
the bar-complex $\barC$ is a strictly
coassociative coalgebra in $\CmodC$. In particular,  
for any bimodules $E_1$ and $E_2$ which are  
right and left $\C$-modules
there is a canonical map 
$\beta^n\colon E_1 \bartimes_\C E_2 \rightarrow E_1 \bartimes_\C 
\C^{\bartimes n} \bartimes_\C E_2$
which equals the composition of any $n$ applications of $\beta$ to
any of the tensor factors involved. In our setup, we write e.g.
$F\beta^nR$ for the induced $2$-morphism $FR \rightarrow F \id^n R$
in $\enhcatkcdg$. 

\begin{defn}
\label{defn-multiplication-and-comultiplication-for-RF-and-FR}
Let $\C$ and $\D$ be $DG$-categories and let $M \in \CmodbarD$ be
a right-perfect bimodule such that $M^{\barD}$ is also right-perfect. 
Let $F$ and $R$ be $\barhom_\D(\D,M)$ and $\barhom_{\D}(M,\D)$ considered 
as $1$-morphisms in $\enhcatkcdg$. 

Define $2$-morphism 
$\mu\colon RFRF \rightarrow RF$
to be $(\composition^2)F$
where $\composition$ is the map of taking the composition in $\modbarD$, see
\cite[Defn.~3.11]{AnnoLogvinenko-BarCategoryOfModulesAndHomotopyAdjunctionForTensorFunctors}.
% Explicitly, it is the map 
% \begin{scriptsize}
% \begin{equation*}
% \barhom_{\D}(\D,M) \bartimes_\D \barhom_{\D}(M,\D) \bartimes_\C \barhom_{\D}(\D,M)
% \bartimes_\D \barhom_{\D}(M,\D) 
% \xrightarrow{\id \bartimes \composition^2}
% \barhom_{\D}(\D,M) \bartimes_\D \barhom_{\D}(M,\D),
% \end{equation*} 
% \end{scriptsize}
Define $2$-morphism 
$\Delta\colon FR \rightarrow FRFR$
to be $F(\zeta \circ \action \circ \beta)R$. 
\end{defn}
\begin{prps}
\label{prps-RF-LF-associativity-coassociativity}
Let $\C$ and $\D$ be $DG$-categories and let $M \in \CmodbarD$ be
a right-perfect bimodule such that $M^{\barD}$ is also right-perfect. 
Let $F$ and $R$ be $\barhom_\D(\D,M)$ and $\barhom_{\D}(M,\D)$ considered 
as $1$-morphisms in $\enhcatkcdg$. 

The $2$-morphisms $\mu\colon RFRF \rightarrow RF$ and $\Delta\colon FR
\rightarrow FRFR$ in Defn.~\ref{defn-multiplication-and-comultiplication-for-RF-and-FR}
are a strictly associative multiplication and 
a strictly coassociative comultiplication. 
\end{prps}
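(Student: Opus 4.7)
The plan is to reduce both strictness claims to strict algebraic identities already satisfied by the structural maps of the bar category of bimodules, as documented in \cite[\S2.11,~\S3]{AnnoLogvinenko-BarCategoryOfModulesAndHomotopyAdjunctionForTensorFunctors}. For $\mu$, I would first observe that, by construction, $\mu$ arises from a single application of the composition pairing $\composition$ on bar-homs to the middle $F \bartimes_\D R = \barhom_\D(\D,M) \bartimes_\D \barhom_\D(M,\D)$ factor of $RFRF$. The pairing $\composition$ is strictly associative: for any quadruple $A_1, A_2, A_3, A_4$ of right $\D$-modules, the two bracketings of the chain $\barhom_\D(A_1,A_2) \bartimes \barhom_\D(A_2,A_3) \bartimes \barhom_\D(A_3,A_4)$ yield the same map into $\barhom_\D(A_1,A_4)$. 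Applied to the three consecutive $F \bartimes_\D R$ blocks in $RFRFRF$ and combined with the routine bookkeeping against the associators and unitors of $\enhcatkcdg$, this identity yields $\mu \circ (RF\mu) = \mu \circ (\mu RF)$ on the nose.

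For $\Delta$, I would unpack the comultiplication as a three-step composition: the strictly coassociative bar-comultiplication $\beta$ inserts a new middle $\C$-slot; the action map $\action\colon \C \to \barhom_\D(M,M)$ converts this slot into a $\barhom_\D(M,M)$-slot, as a strict algebra morphism; and the fixed homotopy inverse $\zeta\colon \barhom_\D(M,M) \to F \bartimes_\D R$ then decomposes the result into an $F \bartimes_\D R$ insertion. The two iterated comultiplications $(\Delta \bartimes \id_{FR}) \circ \Delta$ and $(\id_{FR} \bartimes \Delta) \circ \Delta$ insert two copies of $F \bartimes_\D R$ into $RFRF$ in disjoint positions (one in the leftmost $FR$ block and one in the rightmost). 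Since these insertions live in non-overlapping tensor positions, they commute on the nose, and each composition reduces to the common expression built from the iterated $\beta^2$, two applications of $\action$, and two applications of $\zeta$; strict coassociativity of $\beta$ then makes the two bracketings match. Crucially, in this iterated expression $\zeta$ is never post-composed with $\composition$, so the fact that $\zeta$ is only a homotopy inverse to $\composition$ never intervenes, and the correction homotopies $\omega, \omega'$ play no role.

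The main obstacle I expect is the combinatorial bookkeeping of the $\C$- versus $\D$-tensor middles and of the signs inherent to the bar formalism when unfolding each iterated (co)composition in $\enhcatkcdg$. I would organise this by drawing both sides of each identity as a planar tree of bar-resolutions, verifying that the two trees agree as planar decorated trees, and then invoking the universal property of the bar resolution together with the strict associativity of $\composition$ and the strict coassociativity of $\beta$ to conclude that the two corresponding maps of bar bimodules are equal.
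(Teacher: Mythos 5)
Your proposal is correct and takes essentially the same route as the paper: strict associativity of $\composition$ in the bar category makes both bracketings of the multiplication equal the single expression $(\composition^4)F$, while both coassociativity iterates of $\Delta$ reduce to the common expression $F\bigl(\zeta^2 \circ \action^2 \circ \beta^2\bigr)R$ via strict coassociativity of $\beta$ and the disjointness of the two insertion slots, and your observation that $\zeta$ is never post-composed with $\composition$ is exactly why strictness survives despite $\zeta$ being only a homotopy inverse. Two small inaccuracies, neither fatal: $\mu$ is $(\composition^2)F$, collapsing the three factors $RFR$ to $R$ by two composition steps, not a single application of $\composition$ to the middle $F \bartimes_\D R$ block (that would land in $\barhom_\D(\D,\D)$ rather than eliminate the block), and no unitors actually enter the bookkeeping --- fortunately, since in $\enhcatkcdg$ the unitors are only homotopy equivalences and any genuine reliance on them would defeat the strictness claim.
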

\begin{proof}
The multiplication $\mu\colon RFRF \rightarrow RF$ is strictly
associative because the composition in $\modbarD$ is strictly
associative: both $\mu \circ RF\mu$ and
$\mu \circ \mu{RF}$ are the same $2$-morphism 
$(\composition^4)F\colon RFRFRF \rightarrow RF$. 

The comultiplication $\Delta\colon FR \rightarrow FRFR$ is strictly
coassociative because both $FR\Delta \circ \Delta$ and 
$\Delta{FR} \circ \Delta$ are the same $2$-morphism 
$FR \rightarrow FRFRFR$ given by 
$F\bigl(\zeta^2 \circ  \action^2 \circ \beta^2 \bigr) R$. 
\end{proof}

We next show that $RF$ is bimodule homotopy unital, 
see Defn.~\ref{def-bimodule-homotopy-unitality} and more generally
\S\ref{section-unitality-conditions-for-algebras}. We also 
show that $FR$ is bicomodule homotopy counital. Moreover, 
most of the higher components of the bimodule homotopy unit
$\bareta_{\bullet\bullet}$ of $RF$ and bicomodule homotopy counit
$\barepsilon_{\bullet\bullet}$ of $FR$ are zero. 

\begin{remark}
\label{remark-bimodule-homotopy-unitality-via-four-morphisms-for-RF}
By Theorem \ref{theorem-conditions-for-bimodule-homotopy-unitality}, 
bimodule homotopy unitality of $RF$ is equivalent to the existence
of following four morphisms: 
\begin{itemize}
\item $\eta\colon \id \rightarrow RF$ in $\CmodbarC$ with $d\eta = 0$, 
\item $h^r_\bullet\colon \id RF \rightarrow RF$ in $\noddinf\text{-}RF$
such that $d h^r_\bullet = \alpha{F} - \mu \circ \eta RF$, 
\item $h^l_\bullet\colon RF\id \rightarrow RF$ in $RF\text{-}\noddinf$
such that $d h^l_\bullet = R{\alpha} - \mu \circ RF \eta$, 
\item $\kappa_{\bullet\bullet}\colon RF{\id}RF \rightarrow RF$ in  
$RF\text{-}\noddinf\text{-}RF$ such that $d \kappa_{\bullet \bullet} = \mu\circ( h^l_\bullet RF - RF h^r_\bullet).$
\end{itemize}
The conditions on the four morphisms are simpler than in 
the original definition due to $(RF,\mu)$ being a strict algebra. 
\end{remark}
\begin{remark}
\label{remark-bicomodule-homotopy-counitality-via-four-morphisms-for-FR}
By the coalgebra analogue of 
Theorem \ref{theorem-conditions-for-bimodule-homotopy-unitality}, 
bicomodule homotopy counitality of $FR$ is equivalent to the existence
of the following four morphisms:
\begin{itemize}
\item $\epsilon\colon FR \rightarrow \id$ in $\DmodbarD$ with
$d\epsilon = 0$, 
\item $g^r_\bullet\colon FR \rightarrow \id FR$ in $\conoddinf\text{-}FR$
such that $d g^r_\bullet = \beta{R} - \epsilon FR \circ \Delta$, 
\item $g^l_\bullet\colon FR \rightarrow FR\id$ in $FR\text{-}\conoddinf$
such that $d g^l_\bullet = {F}\beta - FR \epsilon \circ \Delta$, 
\item $\lambda_{\bullet\bullet}\colon FR \rightarrow FR{\id}FR$ in  
$FR\text{-}\conoddinf\text{-}FR$ with 
$d \lambda_{\bullet \bullet} = ( g^l_\bullet FR - FR g^r_\bullet)\circ
\Delta.$
\end{itemize}
\end{remark}

To construct these morphisms, we need the following auxilliary definition:

\begin{defn}
\label{defn-eta-epsilon-chi-chi'-lambda}
Let $\C$ and $\D$ be $DG$-categories and let $M \in \CmodbarD$ be
a right-perfect bimodule such that $M^{\barD}$ is also right-perfect. 
Let $F$ and $R$ be $\barhom_{\D}(\D,M)$ and $\barhom_{\D}(M,\D)$ 
considered as $1$-morphisms in $\enhcatkcdg$. 

We define the following $2$-morphisms in $\enhcatkcdg$:
\begin{itemize}
\item $\eta\colon \id \rightarrow RF$ is the 
degree $0$ map $\zeta \circ \action$. 
\item $\epsilon\colon FR \rightarrow \id$ is the 
$\DmodbarD$ degree $0$ map $\delta \circ \composition$. 
\item $\chi\colon F\id \rightarrow F$ 
is the degree $-1$ map 
$\composition \circ (F\omega) \circ (F\action)$. 
\item $\chi'\colon \id{R} \rightarrow R$ i
is the degree $-1$ map
$\composition \circ (\omega{R}) \circ (\action{R})$. 
\item 
$\xi\colon F \rightarrow \id F$ is the degree 
$-1$ map
\begin{align*}
\theta \circ \delta{F} \circ \composition{F}
\circ F\zeta \circ F\action \circ \beta
\;+\;
\beta \circ \composition \circ \kappa{F} \circ 
\composition{F} \circ F\zeta \circ F\action
\circ \beta. 
\end{align*}
Here $\theta$ and $\kappa$ are the natural maps constructed in 
\cite[\S3.3]{AnnoLogvinenko-BarCategoryOfModulesAndHomotopyAdjunctionForTensorFunctors}
such that $d\theta = \id - \beta \circ \alpha$ and $d\kappa = \gamma \circ
\delta - \id$. 

\item 
$\xi'\colon R \rightarrow R\id$ is the $\DmodbarC$ degree $-1$ map
\begin{align*}
\theta \circ R\delta \circ R\composition
\circ \zeta{R} \circ \action{R} \circ \beta
\; + \;
\beta \circ \composition \circ R\kappa \circ 
R\composition \circ \zeta{R} \circ 
\action{R} \circ \beta. 
\end{align*}

\item $\lambda\colon FR \rightarrow FR{\id}FR$
is the degree $-2$ morphism in $\DmodbarD$ defined as follows. 
First, let $\nu\colon FR \rightarrow F(\barhom_\D(M,M)R$ 
be the degree $-2$ map
\begin{equation*}
F\bigl(\composition \circ \omega^2 \circ \action^2 \circ \beta^2 \bigl)R.
\end{equation*}
Its differential $d\nu$ is the composition 
of the closed degree $-1$ morphism  
\begin{equation}
\label{eqn-h^lFR-RFh^r-circ-Delta}
FR \xrightarrow{\Delta}
FRFR \xrightarrow{
FR(\beta \circ \chi \circ \beta + \xi)R - F(\beta \circ \chi' \circ
\beta + \xi')FR }
FR{\id}FR,
\end{equation}
with the homotopy equivalence 
$F\bigl(\composition \circ R\alpha\bigr)R$. 
In \cite[Lemma
4.8(1)]{AnnoLogvinenko-BarCategoryOfModulesAndHomotopyAdjunctionForTensorFunctors}
we showed that a homotopy lift of a composition of 
a closed morphism with a homotopy equivalence induces
a homotopy lift of the morphism itself.  
We define $\lambda$ to be the homotopy lift of
\eqref{eqn-h^lFR-RFh^r-circ-Delta} induced by $\nu$. 
Indeed,  
the homotopy equivalence $\composition \circ R\alpha$
has a homotopy inverse $R\beta \circ \zeta$ and 
$$\upsilon := \quad
R\beta \circ \omega' \circ R\alpha 
+ R\theta$$
is a homotopy between $\id$ and their composition. 
We thus define
\begin{align}
\label{eqn-lambda-definition}
\lambda := \quad F\upsilon{R} \circ \eqref{eqn-h^lFR-RFh^r-circ-Delta} 
+ 
F\Bigl(\composition \circ R\alpha\Bigr)R \circ \nu. 
\end{align}
\end{itemize}
\end{defn}
\begin{prps}
\label{prps-RF-LF-unitality-counitality}
Let $\C$ and $\D$ be $DG$-categories and $M \in \CmodbarD$ be
a right-perfect bimodule with $M^{\barD}$ also right-perfect. 
Let $F$ and $R$ be $\barhom_\D(\D,M)$ and $\barhom_{\D}(M,\D)$ considered 
as $1$-morphisms in $\enhcatkcdg$. 
Let $\mu$,$\Delta$ be as in 
Defn.~\ref{defn-multiplication-and-comultiplication-for-RF-and-FR}
and $\eta$, $\epsilon$, $\chi$, $\chi'$, and $\lambda$ be as in 
Defn.~\ref{defn-eta-epsilon-chi-chi'-lambda}.
 
Setting 
$(\eta,h^r_\bullet, h^l_\bullet, \kappa_{\bullet\bullet})$
in Remark \ref{remark-bimodule-homotopy-unitality-via-four-morphisms-for-RF}
to be $(\unit,-\chi'{F}, -R\chi, 0)$ defines 
a bimodule homotopy unital structure on the strict algebra $(RF,\mu)$. 
Setting $(\epsilon, g^r_\bullet, g^l_\bullet,
\lambda_{\bullet\bullet})$ 
in Remark \ref{remark-bicomodule-homotopy-counitality-via-four-morphisms-for-FR}
to be $(\counit, -(\beta \circ \chi \circ \beta + \xi){R}, -{F}(\beta \circ
\chi' \circ \beta +\xi'), \lambda)$ a bicomodule homotopy 
counital structure on the strict coalgebra $(FR,\Delta)$.  
\end{prps}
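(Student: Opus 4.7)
The plan is to verify, for each of the two claims, the four differential equations from Remark \ref{remark-bimodule-homotopy-unitality-via-four-morphisms-for-RF} (for the algebra $RF$) and Remark \ref{remark-bicomodule-homotopy-counitality-via-four-morphisms-for-FR} (for the coalgebra $FR$). All of these reduce, via bifunctoriality of $\bartimes$ and the known relations
$d\omega = \composition \circ \zeta - \id$, $d\omega' = \id - \zeta \circ \composition$, $d\theta = \id - \beta \circ \alpha$, $d\kappa = \gamma \circ \delta - \id$,
together with closedness of $\zeta$, $\action$, $\composition$, $\alpha$, $\beta$, $\gamma$, $\delta$, to direct bookkeeping with the definitions in Defn.~\ref{defn-eta-epsilon-chi-chi'-lambda} and the strict (co)associativity of $\mu$, $\Delta$ established in Prps.~\ref{prps-RF-LF-associativity-coassociativity}.

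For the algebra claim, $d\eta = 0$ because $\eta = \zeta \circ \action$ is a composition of closed morphisms. Differentiating $h^r_\bullet = -\chi'F$ using $d(\omega R) = (\composition \circ \zeta - \id)R$ produces two pieces: the $-\id$ piece telescopes, after applying $\composition \circ \action R = \alpha$ (the left unitor identity for the action map), to $\alpha F$; while the $\composition \circ \zeta$ piece reassembles into $\mu \circ \eta RF$ once associativity of $\composition$ and the definition $\mu = (\composition^2)F$ are invoked. The equation for $h^l_\bullet = -R\chi$ is symmetric. The choice $\kappa_{\bullet\bullet} = 0$ works because strict associativity of $\mu$ makes $\mu \circ h^l_\bullet RF$ and $\mu \circ RF h^r_\bullet$ identical: both are $(\composition^3)F$ precomposed with $R\omega RF$ in the only position where $\omega$ can sit, so they cancel.

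The coalgebra claim is the bulk of the work. The morphisms $g^r_\bullet, g^l_\bullet$ each split as a sum of two summands ($\beta\chi\beta$ or $F\beta$-related part plus $\xi$ or $\xi'$), reflecting the replacement of $M$ by $\barhom_\D(\D,M)$; verifying their differential equations uses the explicit homotopies $\theta, \kappa$ and the interplay of $\zeta$ with $\composition$. The genuinely delicate point is $\lambda$. By construction, $\nu$ has differential
\begin{equation*}
d\nu = F\bigl(\composition \circ R\alpha\bigr)R \circ \Theta,
\end{equation*}
where $\Theta$ is the morphism \eqref{eqn-h^lFR-RFh^r-circ-Delta}, i.e.\ exactly $(g^l_\bullet FR - FR g^r_\bullet) \circ \Delta$. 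Since $F(\composition \circ R\alpha)R$ is a homotopy equivalence with explicit inverse $FR\beta \circ F\zeta R$ and explicit homotopy $F\upsilon R$, applying \cite[Lemma 4.8(1)]{AnnoLogvinenko-BarCategoryOfModulesAndHomotopyAdjunctionForTensorFunctors} yields a lift $\lambda$ of $\Theta$, which is precisely the formula \eqref{eqn-lambda-definition}. One must additionally check that $\lambda$ is genuinely a morphism in $FR\text{-}\conoddinf\text{-}FR$, i.e.\ that its components land in the correct bidegrees of $\infbarbi(FRFR)$; this is immediate from the formula once the $\bartimes$-factorisation is spelled out.

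The main obstacle is not any conceptual difficulty but rather the careful tracking of signs and of the placement of the auxiliary homotopies $\omega, \omega', \theta, \kappa$ within nested bar tensor products, especially for $\xi, \xi'$, which each combine a $\theta$-term and a $\kappa$-term. The $\lambda$-computation is the single non-routine step, relying essentially on the homotopy-lifting lemma of \cite{AnnoLogvinenko-BarCategoryOfModulesAndHomotopyAdjunctionForTensorFunctors}; everything else is a mechanical, if lengthy, application of the differential identities above.
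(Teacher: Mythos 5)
Your route coincides with the paper's own: reduce the four equations in each Remark to identities in $\CmodbarC$ and $\DmodbarD$ using the differential relations for $\omega$, $\omega'$, $\theta$, $\kappa$; take $\kappa_{\bullet\bullet}=0$ because $\mu \circ R\chi{RF}$ and $\mu \circ RF\chi'F$ both equal $(\composition^3)F \circ \bigl(RF(\omega \circ \action)RF\bigr)$ (your ``$R\omega RF$'' placement is garbled, but the mechanism is the right one); and realise $\lambda$ as the homotopy lift of \eqref{eqn-h^lFR-RFh^r-circ-Delta} through the equivalence $F(\composition \circ R\alpha)R$ via the cited lifting lemma, with explicit inverse $R\beta \circ \zeta$ and homotopy $\upsilon$. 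All of this matches the paper (the $\lambda$-construction is in fact already carried out in Defn.~\ref{defn-eta-epsilon-chi-chi'-lambda}, so the proof proper only interprets it).

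The genuine gap is the justification for computing any of these differentials in the plain bimodule categories at all. The equations of Remarks \ref{remark-bimodule-homotopy-unitality-via-four-morphisms-for-RF} and \ref{remark-bicomodule-homotopy-counitality-via-four-morphisms-for-FR} live in $\noddinf\text{-}RF$, $RF\text{-}\noddinf$, $\conoddinf\text{-}FR$, $FR\text{-}\conoddinf$ and the bi(co)module categories, where the differential of a single-component morphism is computed through the bar-construction and acquires higher components from commutators with the (co)action maps. Your bookkeeping computes $dh^r_\bullet$, $dg^r_\bullet$, etc.\ as differentials in $\CmodbarC$ or $\DmodbarD$; this is legitimate only after one checks that each homotopy is a \emph{strict} morphism of (co)modules, i.e.\ strictly intertwines the (co)actions, so that no higher components arise. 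The paper devotes the first half of its proof to exactly this: $R\chi$ commutes with $\mu$ because $\mu$ has the form $(RFR \rightarrow R)F$; $\chi'F$ is checked by hand, with $\mu \circ \chi'FRF = \chi'F \circ \mu = (\composition^3)F \circ (\omega \circ \action)RFRF$; the comodule homotopies are strict because $\Delta$ can be written both as $F(R \rightarrow RFR)$ and as $(F \rightarrow FRF)R$ using $\beta \bartimes \id = \id \bartimes \beta$; and both summands of $\lambda$ factor through $F\beta = \beta R$, hence are strict bicomodule morphisms. Your substitute check --- that the components of $\lambda$ ``land in the correct bidegrees'' --- misidentifies the issue: any single-component collection is automatically a morphism of the bicomodule category, so there is nothing to verify there; what must be verified is strict commutation with the coactions, without which $d\lambda_{\bullet\bullet}$ would contain components beyond $d\lambda$ in $\DmodbarD$ and the identity $d\lambda_{\bullet\bullet} = (g^l_\bullet FR - FR g^r_\bullet)\circ\Delta$ would not follow. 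Supplying these strictness verifications (for all five homotopies, not just $\lambda$) closes the gap and turns your outline into the paper's proof.
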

Here we view $\chi'F$, $R\chi$, $(\beta \circ \chi + \xi){R}$, 
and ${F}(\beta \circ \chi'+\xi')$ as strict 
$\Ainfty$-morphisms in $\noddinf\text{-}RF$, $RF\text{-}\noddinf$,
$\conoddinf\text{-}FR$, and $FR\text{-}\conoddinf$. 
\begin{proof}
The maps $\eta$ and $\epsilon$ are closed of degree $0$ 
by their definitions in Defn.~\ref{defn-eta-epsilon-chi-chi'-lambda}. 
Hence conditions $d\eta = 0$ and $d\epsilon = 0$ hold.  

Recall the definition of the comultiplication $\Delta$ in
Defn.~\ref{defn-multiplication-and-comultiplication-for-RF-and-FR}. 
Since $\beta \bartimes \id$ in it can be equally written as $\id \bartimes \beta$, we can write $\Delta$ both as a morphism 
$F(R \rightarrow RFR)$ and $(F \rightarrow FRF)R$. It follows
that $(\beta \circ {\chi} \circ \beta + \xi)R$ and $F(\beta \circ
\chi' \circ \beta + \xi')$ are strict morphisms of right and
left $FR$-comodules, respectively. We conclude that the differentials of 
$-(\beta \circ \chi \circ \beta + \xi)R$ and 
$-F(\beta \circ \chi' \circ \beta + \xi')$ as $\Ainfty$-morphisms of right 
and left $FR$-comodules are just their differentials as morphisms 
in $\DmodbarD$. 

Similarly, by the definition of the multiplication $\mu$  in
Defn.~\ref{defn-multiplication-and-comultiplication-for-RF-and-FR}
it is a morphism $(RFR \rightarrow R)F$, whence $R\chi$ 
is a strict morphisms of left $RF$-modules. Finally, 
we check by hand that $\chi'F$ 
is a strict morphism of right $RF$-modules. Indeed, 
both $\mu \circ \chi'FRF$ and $\chi'F \circ \mu$ equal the following
morphism ${\id}RFRF \rightarrow RF$ in $\CmodbarC$
\begin{equation*}
(\composition^3) F\circ (\omega \circ \action)RFRF.
\end{equation*}
We conclude that the differentials of $-\chi'F$ and $ - \chi R$ 
as $\Ainfty$-morphisms of right and left $FR$-modules
are the same as their differentials as morphisms in $\CmodbarC$. 

It can now be readily verified that we have in $\CmodbarC$ and $\DmodbarD$ 
\begin{align}
\id{RF} \xrightarrow{\eta{RF}} RFRF \xrightarrow{\mu} RF \quad &= 
\alpha{F} + d(\chi'{F}), \\
RF\id \xrightarrow{{RF}\eta} RFRF \xrightarrow{\mu} RF \quad &= 
R\alpha + d(R\chi)), \\
FR \xrightarrow{\Delta} FRFR \xrightarrow{\epsilon{FR}} \id{FR} \quad &=
\beta{R} + d((\beta \circ \chi \circ \beta){R} + \xi{R}), \\
\label{eqn-Delta-FR-epsilon}
FR \xrightarrow{\Delta} FRFR \xrightarrow{{FR}\epsilon} FR\id \quad &=
F\beta + d(F(\beta \circ \chi' \circ \beta) + F\xi). 
\end{align}
It follows that
\begin{itemize}
\item  
$d h^r_\bullet = \id - \mu \circ \eta RF$ in $\noddinf\text{-}RF$, 
\item 
$d h^l_\bullet = \id - \mu \circ RF \eta$ in $RF\text{-}\noddinf$,
\item
$d g^r_\bullet = \id - \epsilon FR \circ \Delta$ in
$\conoddinf\text{-}FR$, 
\item
$d g^l_\bullet = \id - FR \epsilon \circ \Delta$ in 
$FR\text{-}\conoddinf$. 
\end{itemize}

Finally, we verify the conditions 
\begin{itemize}
\item $\mu\circ( h^l_\bullet RF - RF h^r_\bullet) = 0$,
\item $(g^l_\bullet FR - FR g^r_\bullet)\circ \Delta = d \lambda_{\bullet \bullet}.$
\end{itemize}
For the former, by associativity of the composition 
in $\modbarD$ both $\mu \circ R\chi{RF}$ and 
$\mu \circ RF\chi'F$ equal the same $\CmodbarC$ given by
map ${RF}\id{RF} \rightarrow RF$
\begin{equation*}
(\composition^3)F \circ \bigl(RF(\omega \circ \action)RF\bigr).
\end{equation*}

For the latter,
by definition of $\lambda$ we have in $\DmodbarD$
$$ d\lambda = (-F(\beta \circ \chi' \circ \beta + \xi')FR + FR(\beta
\circ \chi \circ \beta + \xi)R) \circ \Delta. $$
Next, observe that both summands in the definition \eqref{eqn-lambda-definition}
of $\lambda$ are of the form 
$$ FR \xrightarrow{F\beta = \beta R} F{\id}R \xrightarrow{ 
F(...)R} FR{\id}FR $$
and hence are morphisms of $FR$-$FR$
bicomodules. Thus the differential of $\lambda$ as an
$\Ainfty$-morphism of $FR$-$FR$-bicomodules is the same as its
differential as a morphism in $\DmodbarD$. Thus
$d\lambda_{\bullet\bullet} = (g^l_\bullet FR - FR
g^r_\bullet)\circ\Delta$, as desired.
\end{proof}

We thus have the first main result of this section.  It tells us that
for any adjunction of enhanced exact functors, the adjunction monad
and comonad of their underlying exact functors can be enhanced by an
enhanced monad and comonad which are strictly (co)associative and
bi(co)module homotopy (co)unital:
\begin{theorem}
\label{theorem-bimodule-homotopy-unitality-for-enhanced-adjunction-monads-and-comonads}
Let $\C,\D$ be enhanced triangulated categories, 
$F\colon \C \rightarrow \D$ an enhanced exact functor, and 
$R$ its right adjoint. Replace $F$ and $R$ by equivalent 
functors $\barhom_\D(\D,M)$ and $\barhom_{\D}(M,\D)$ where
$M \in \CmodbarD$ is the bimodule defining $F$.  

Then $RF$ with the multiplication of  
Prps.~\ref{prps-RF-LF-associativity-coassociativity} and the unital 
structure of Prps.~\ref{prps-RF-LF-unitality-counitality} is a
strictly associative, bimodule homotopy unital enhanced monad. 
$FR$ with the comultiplication of  
Prps.~\ref{prps-RF-LF-associativity-coassociativity} and the counital 
structure of Prps.~\ref{prps-RF-LF-unitality-counitality} is a
strictly coassociative, bicomodule homotopy counital enhanced comonad. 
\end{theorem}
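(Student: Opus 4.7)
The theorem essentially packages Propositions~\ref{prps-RF-LF-associativity-coassociativity} and~\ref{prps-RF-LF-unitality-counitality} and promotes the resulting structures from $\BarModtwocat$ to $\enhcatkcdg$. The plan is to verify three things in order: that all bimodules in question are right-perfect and hence valid $1$-morphisms of $\enhcatkcdg$; that the strict (co)associativity and bi(co)module homotopy (co)unitality already constructed do qualify these data as enhanced (co)monads; and that bimodule homotopy unitality suffices to satisfy the strong homotopy unitality baked into the definition of an enhanced exact monad in \S\ref{section-enhanced-monads}.

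First, I would observe that since $F = \barhom_\D(\D,M)$ and $R = \barhom_\D(M,\D)$ are right-perfect bimodules by the adjointness hypothesis, their $\bartimes$-compositions $RF \in \CmodbarC$ and $FR \in \DmodbarD$ are again right-perfect; likewise all tensor products of copies of $F$, $R$, $\id_\C$ and $\id_\D$ appearing in Defn.~\ref{defn-multiplication-and-comultiplication-for-RF-and-FR} and Defn.~\ref{defn-eta-epsilon-chi-chi'-lambda} remain right-perfect. Hence every $2$-morphism $\mu$, $\Delta$, $\eta$, $\epsilon$, $\chi$, $\chi'$, $\xi$, $\xi'$ and $\lambda$ manufactured earlier is automatically a $2$-morphism of the subbicategory $\enhcatkcdg \subset \BarModtwocat$, and all the identities verified in Prps.~\ref{prps-RF-LF-associativity-coassociativity}--\ref{prps-RF-LF-unitality-counitality} persist on restriction.

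Next, I would assemble the two pieces of structure. Strict associativity of $\mu$ and strict coassociativity of $\Delta$ are exactly the content of Prps.~\ref{prps-RF-LF-associativity-coassociativity}. For the unital structures, Prps.~\ref{prps-RF-LF-unitality-counitality} produces data $(\eta,-\chi'F,-R\chi,0)$ which satisfies the four equalities of Remark~\ref{remark-bimodule-homotopy-unitality-via-four-morphisms-for-RF}, so by Theorem~\ref{theorem-conditions-for-bimodule-homotopy-unitality} it assembles into a bimodule homotopy unit $\bareta_{\bullet\bullet}\colon\id_\C\to RF$ in $\C\text{-}\noddinf\text{-}RF$. Its coalgebra analogue, applied to $(\epsilon,-(\beta\circ\chi\circ\beta+\xi)R,-F(\beta\circ\chi'\circ\beta+\xi'),\lambda)$, gives a bicomodule homotopy counit $\barepsilon_{\bullet\bullet}\colon FR\to\id_\D$ in the corresponding bicomodule category. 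The corollary immediately following Theorem~\ref{theorem-conditions-for-bimodule-homotopy-unitality} (and its coalgebra analogue) then upgrades these to strong homotopy (co)unitality, which is exactly what the definition of enhanced exact monad and comonad in \S\ref{section-enhanced-monads} demands.

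I expect the main obstacle to lie in the verification of the last condition for $\lambda$, namely that $(g^l_\bullet FR - FR\, g^r_\bullet)\circ\Delta = d\lambda_{\bullet\bullet}$. This is where the construction in Defn.~\ref{defn-eta-epsilon-chi-chi'-lambda} uses the lifting lemma from the companion paper: the composition $F(\composition\circ R\alpha)R\circ\nu$ exhibits \eqref{eqn-h^lFR-RFh^r-circ-Delta} as nullhomotopic after composing with a homotopy equivalence, and the explicit homotopy $\upsilon$ between the identity and the composition of that equivalence with its chosen quasi-inverse is used to lift the nullhomotopy back. Once one observes that both summands of $\lambda$ factor through $F\id R$ via $F\beta = \beta R$, they are automatically morphisms of $FR$-$FR$-bicomodules, so the differential computed in $\DmodbarD$ agrees with the bicomodule differential. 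All other verifications in Prps.~\ref{prps-RF-LF-unitality-counitality} amount to straightforward manipulations of $\composition$, $\omega$, $\action$, $\beta$ and $\gamma$ using their defining homotopy relations, so the theorem follows by direct assembly.
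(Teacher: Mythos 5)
Your proposal is correct and follows the paper's own (implicit) route: the paper gives no separate proof of this theorem, treating it as the direct assembly of Prps.~\ref{prps-RF-LF-associativity-coassociativity} and \ref{prps-RF-LF-unitality-counitality}, exactly as you do. Your additional glue steps --- closure of right-perfect bimodules under $\bartimes$ so that all $2$-morphisms live in $\enhcatkcdg$, and the corollary to Theorem~\ref{theorem-conditions-for-bimodule-homotopy-unitality} upgrading bimodule homotopy unitality to the strong homotopy unitality required by the definition of an enhanced exact monad --- are precisely the right observations and are left tacit in the paper.
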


It follows that for an adjoint triple $(L,F,R)$ of enhanced functors
monad $RF$ and comonad $LF$ are derived module-comodule equivalent:

\begin{theorem}[Module-comodule correspondence for enhanced adjoint
triples] 
\label{theorem-module-comodule-correspondence-for-enhanced-adjoint-triples}

Let $\C,\D$ be enhanced triangulated categories, 
$F\colon \C \rightarrow \D$ an enhanced exact functor, and 
$L,R$ its left and right adjoints. Replace $L$, $F$ and $R$ by 
homotopy equivalent $1$-morphisms $\barhom_\C(M,\C)$, $\barhom_\C(\C,M)$ 
and $\barhom_{\D}(M,\D)$ where
$M \in \CmodbarD$ is the bimodule defining $F$. 

Then $RF$ with the structure of a strongly homotopy unital enhanced monad 
of Prop.~\ref{prps-LF-and-RF-are-homotopy-adjoint-in-a-monoidal-category}
and $LF$ with the structure of a bicomodule homotopy counital
enhanced comonad of Theorem
\ref{theorem-bimodule-homotopy-unitality-for-enhanced-adjunction-monads-and-comonads} are derived module-comodule equivalent:
$$ D_c(RF) \simeq D_c(LF). $$
\end{theorem}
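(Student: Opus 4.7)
The plan is to deduce the theorem directly from the general module-comodule correspondence, Theorem \ref{theorem-module-comodule-correspondence-in-a-monoidal-dg-category}, applied in the monoidal DG category $\A := \enhcatkcdg(\C,\C)$ of $\C$-$\C$-bimodules, taking $A := RF$ and $C := LF$ as an $\Ainfty$-algebra and $\Ainfty$-coalgebra in $\A$. Three hypotheses must be checked: that $A$ is strongly homotopy unital, that $C$ is bicomodule homotopy counital, and that $A$ and $C$ are homotopy adjoint in the sense of Definition \ref{defn-homotopy-adjoint-ainfty-coalgebra-and-algebra}.

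For the first, Proposition \ref{prps-LF-and-RF-are-homotopy-adjoint-in-a-monoidal-category} applied to the adjoint triple $(L,F,R)$ in $\A$ equips $RF$ with a strict, strongly homotopy unital algebra structure. For the second, I would apply Theorem \ref{theorem-bimodule-homotopy-unitality-for-enhanced-adjunction-monads-and-comonads} to the adjunction $(L,F)$: here $L$ plays the role that $F$ plays in that theorem, $F$ plays the role of $R$, and the output gives $LF$ (the analogue of $FR$) a strict, bicomodule homotopy counital comonad structure. These are exactly the structures stipulated in the theorem statement.

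For the third hypothesis, homotopy adjointness, the data is provided again by Proposition \ref{prps-LF-and-RF-are-homotopy-adjoint-in-a-monoidal-category}: iterated composites of the adjunction units $\eta_l, \eta_r$, counits $\epsilon_l, \epsilon_r$, and the homotopies $h^l_F, h^r_F$ of the triple give morphisms $\eta_i\colon \id \to (LF)^i(RF)$ and $\epsilon_i\colon (RF)^i(LF)\to \id$ whose first components are a unit and counit of the homotopy adjunction of $LF$ and $RF$ as objects of $\A$, and which fit into the twisted complexes required by Definition \ref{defn-homotopy-adjoint-ainfty-coalgebra-and-algebra}. The main obstacle, and the hard part, is that Proposition \ref{prps-LF-and-RF-are-homotopy-adjoint-in-a-monoidal-category} uses the multiplication $\mu := R\epsilon_r F$ and comultiplication $\Delta := L\eta_l F$, while Theorem \ref{theorem-bimodule-homotopy-unitality-for-enhanced-adjunction-monads-and-comonads} uses the (co)multiplications of Definition \ref{defn-multiplication-and-comultiplication-for-RF-and-FR}, formulated via the composition map $\composition$ and auxiliary choices $\zeta, \omega, \beta$ of the bar-category formalism. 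I would reconcile the two by observing that in $\enhcatkcdg$ the counit $\epsilon_r$ is realised by the evaluation map \eqref{eqn-counit-of-F,R-adjunction} and the unit $\eta_l$ by the action map of \eqref{eqn-unit-of-F,R-adjunction}, so after the identifications provided by the homotopy equivalences of \cite[\S3.3]{AnnoLogvinenko-BarCategoryOfModulesAndHomotopyAdjunctionForTensorFunctors} the two pairs of structure maps define $\Ainfty$-structures related by a strict $\Ainfty$-morphism whose first component is a homotopy equivalence. By Corollary \ref{cor-quasi-isomorphism-of-Ainfty-algebra-implies-equiv-of-derived-cats} and its coalgebra analogue, this allows the homotopy adjunction data furnished by Proposition \ref{prps-LF-and-RF-are-homotopy-adjoint-in-a-monoidal-category} to be transported to the bimodule/bicomodule-counital structures of Theorem \ref{theorem-bimodule-homotopy-unitality-for-enhanced-adjunction-monads-and-comonads} without altering the compact derived categories.

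With all three hypotheses verified, Theorem \ref{theorem-module-comodule-correspondence-in-a-monoidal-dg-category} yields the desired equivalence $D_c(RF) \simeq D_c(LF)$.
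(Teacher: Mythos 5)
Your skeleton is exactly the paper's proof, which consists of precisely the three citations you make: Prop.~\ref{prps-LF-and-RF-are-homotopy-adjoint-in-a-monoidal-category} for homotopy adjointness of the pair and strong homotopy unitality of $RF$, Theorem~\ref{theorem-bimodule-homotopy-unitality-for-enhanced-adjunction-monads-and-comonads} (applied to the adjunction $(L,F)$, with the roles swapped as you say) for bicomodule homotopy counitality of $LF$, and then Theorem~\ref{theorem-module-comodule-correspondence-in-a-monoidal-dg-category} to conclude. Where you diverge is in the reconciliation step, and there your mechanism is both unnecessary and not quite licensed by the paper. Corollary~\ref{cor-quasi-isomorphism-of-Ainfty-algebra-implies-equiv-of-derived-cats} only yields $D_c(A)\simeq D_c(B)$ for quasi-isomorphic \emph{strongly homotopy unital} algebras; it says nothing about transporting the homotopy-adjunction data $(\eta_i,\epsilon_i)$ of Defn.~\ref{defn-homotopy-adjoint-ainfty-coalgebra-and-algebra} across a quasi-isomorphism of coalgebras, and its coalgebra analogue (cf.\ \S\ref{section-comparison-to-known-constructions}) needs the perfect-free-comodules hypothesis, which the paper only supplies via Lemma~\ref{lemma-has-right-adjoint-implies-perfect-objects-generate-perfect-comodules} for \emph{bicomodule} homotopy counital structures --- exactly what the Prop-side comultiplication $\Delta = L\eta_l F$ on $LF$ is not known to have (see the remark following Prop.~\ref{prps-LF-and-RF-are-homotopy-adjoint-in-a-monoidal-category}); for the same reason you cannot first run the correspondence theorem on the Prop-side structures and compare afterwards. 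The intended resolution is simpler and silent in the paper: Prop.~\ref{prps-LF-and-RF-are-homotopy-adjoint-in-a-monoidal-category} holds for \emph{any} choice of homotopy adjunction units and counits of the triple, and one applies it with the specific bar-category choices $\eta_l = \zeta\circ\action$ (with the unitor $\beta$ inserted, as $\enhcatkcdg$ is only homotopy unital) and $\epsilon_r = \delta\circ\composition$ of Defn.~\ref{defn-eta-epsilon-chi-chi'-lambda}; with these choices $\Delta = L\eta_lF$ and $\mu = R\epsilon_rF$ agree, after the unitor bookkeeping, with the (co)multiplications of Defn.~\ref{defn-multiplication-and-comultiplication-for-RF-and-FR}, so the homotopy adjointness furnished by the Prop is already stated for the very structures named in the theorem and no transport is needed. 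If you prefer your transport route, you would first have to exhibit a \emph{strict} closed intertwiner between the two comultiplications and check by hand that conjugating the twisted complexes in Defn.~\ref{defn-homotopy-adjoint-ainfty-coalgebra-and-algebra}\eqref{item-homotopy-adjoint-ainfty-coalgebra-and-algebra-eta-twisted-complex}--\eqref{item-homotopy-adjoint-ainfty-coalgebra-and-algebra-epsilon-twisted-complex} by its tensor powers preserves the twisted-complex conditions --- a check the paper nowhere performs and which your appeal to Corollary~\ref{cor-quasi-isomorphism-of-Ainfty-algebra-implies-equiv-of-derived-cats} does not substitute for.
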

\begin{proof}
By Prop.~\ref{prps-LF-and-RF-are-homotopy-adjoint-in-a-monoidal-category},
the monad $RF$ and the comonad $LF$ are homotopy adjoint. Since $RF$ 
is strongly homotopy unital and $LF$ is bicomodule
homotopy counital, they are derived module-comodule equivalent by Theorem
\ref{theorem-module-comodule-correspondence-in-a-monoidal-dg-category}. 
\end{proof}

\bibliography{references}
\bibliographystyle{amsalpha}
\end{document}